\newtheorem{theorem}{Theorem}[section]
\newtheorem{lemma}[theorem]{Lemma}
\newtheorem{Proposition}[theorem]{Proposition}
\newtheorem{Remark}[theorem]{Remark}
\numberwithin{equation}{section}
\def\Lbrack{\left \llbracket}
\def\Rbrack{\right \rrbracket}
\DeclareMathOperator{\diverge}{div}
\DeclareMathOperator{\supp}{supp}
\providecommand{\norm}[1]{\left\Vert#1\right\Vert}
\def\ls{\lesssim}
\def\weak{\rightharpoonup}
\def\wstar{\overset{*}{\rightharpoonup}}
\def\dt{\partial_t}
\def\H{{}_0H^1}
\def\Hd{(\H)^\ast}
\def\na{\nabla_\ast}
\def\bna{\bar{\nabla}_\ast}
\def\rj{\Lbrack \rho \Rbrack}
\title[The viscous surface-internal wave problem]{The viscous surface-internal wave problem:\\ global well-posedness and decay}
\author{Yanjin Wang}
\address{
School of Mathematical Sciences\\
Xiamen University\\
Fujian 361005, China}
\email[Y. J. Wang]{yanjin$\_$wang@xmu.edu.cn}
\thanks{Y. J. Wang was partially supported by National Natural Science Foundation of China-NSAF (No. 10976026)}
\author{Ian Tice}
\address{
Division of Applied Mathematics\\
Brown University \\
Providence, RI 02912, USA
}
\email[I. Tice]{tice@dam.brown.edu}
\thanks{I. Tice was supported by an NSF Postdoctoral Research Fellowship.}
\author{Chanwoo Kim}
\address{
DPMMS\\
University of Cambridge\\
Cambridge CB3 0WA, UK
}
\email[C. Kim]{C.W.Kim@dpmms.cam.ac.uk}
\thanks{C. Kim was supported in part by NSF Grant FRG DMS 07-57227.}
\subjclass[2010]{Primary 35Q30, 35R35, 76D03; Secondary 35B40, 76D45, 76E17}
\begin{document}

\begin{abstract}
We consider the free boundary problem for two layers of immiscible, viscous, incompressible fluid in a uniform gravitational field, lying above a general rigid bottom in a three-dimensional horizontally periodic setting.  We establish the global well-posedness of the problem  both with and without surface tension.  We prove that without surface tension the solution decays to the equilibrium state at an almost exponential rate;  with surface tension, we show that the solution decays at an exponential rate.  Our results include the case in which a heavier fluid lies above a lighter one, provided that the surface tension at the free internal interface is above a critical value, which we identify. This means that sufficiently large surface tension stabilizes the Rayleigh-Taylor instability in the nonlinear setting.  As a part of our analysis, we establish elliptic estimates for the two-phase stationary Stokes problem.
\end{abstract}

\maketitle


\section{Introduction}

\subsection{Formulation of the problem in Eulerian coordinates}

In this paper we study the viscous surface-internal wave problem, which concerns the dynamics of two layers of  distinct, immiscible, viscous, incompressible fluid lying above a general rigid bottom and below an atmosphere of constant pressure.  We assume that a uniform gravitational field points in the direction of the rigid bottom.  This is a free boundary problem since both the upper surface, where the upper fluid meets the atmosphere, and the internal interface, where the upper and lower fluids meet, are free to evolve in time with the motion of the fluids.  In our analysis we consider the dynamics both with and without the effect of surface tension on the free surfaces.  

We will assume that the two fluids occupy the moving domain $\Omega(t)$, which we take to be three-dimensional and horizontally periodic.  One fluid $(+)$ fills the upper domain
\begin{equation}\label{omega_plus}
\Omega_+(t)=\{y\in   \mathrm{T}^2\times \mathbb{R}\mid \eta_-(t,y_1,y_2)<y_3< 1 +\eta_+(t,y_1,y_2)\},
\end{equation}
and the other fluid $(-)$ fills the lower domain
\begin{equation}
\Omega_-(t)=\{y\in  \mathrm{T}^2\times \mathbb{R}\mid  -b(y_1,y_2)<y_3<\eta_-(t,y_1,y_2)\}.
\end{equation}
Here we have written the periodic horizontal cross-section as $\mathrm{T}^2=(2\pi L_1\mathbb{T}) \times (2\pi L_2\mathbb{T})$, where $\mathbb{T} = \mathbb{R}/\mathbb{Z}$ is the usual 1--torus and $L_1,L_2>0$ are fixed constants.  We assume that the function $0<b\in C^\infty(\mathrm{T}^2)$ is fixed but that the surface functions $\eta_\pm$ are unknowns in the problem.  The surface $\Gamma_+(t) = \{y_3= 1  + \eta_+(t,y_1,y_2)\}$ is the moving upper boundary of $\Omega_+(t)$, $\Gamma_-(t) = \{y_3=\eta_-(t,y_1,y_2)\}$ is the moving internal interface between the two fluids, and $\Sigma_b = \{y_3=-b(y_1,y_2)\}$ is the fixed lower boundary of $\Omega_-(t)$.  

The fluids are described by their velocity and pressure functions, which are given for each $t\ge0$ by $v_\pm (t,\cdot):\Omega_\pm (t)\rightarrow \mathbb{R}^3$ and $\bar{p}_\pm (t,\cdot):\Omega_\pm (t)\rightarrow \mathbb{R}$, respectively. The fluid motion is governed by the pair of the incompressible Navier-Stokes equations:
\begin{equation}\label{e1}
\left\{\begin{array}{ll}
\rho_\pm \partial_t v_\pm  + \rho_\pm  v_\pm
\cdot\nabla v_\pm +\nabla \bar{p}_\pm =\mu_\pm \Delta v_\pm
-g\rho_\pm e_3
\\
\diverge{v_\pm} =0.
\end{array}\right.\quad\text{in }\Omega_\pm (t).
\end{equation}
Here the positive constants $\rho_\pm $ and $\mu_\pm $ denote the densities and viscosities of the respective fluids, $g>0$ is the acceleration of gravity, $e_3=(0,0,1)^T$  and $-g \rho_\pm e_3$ is the gravitational force.  Throughout the paper we will write $\rj := \rho_+ - \rho_-$.

We now prescribe the boundary conditions. We assume that the upper fluid is in contact with an atmosphere of constant pressure $p_{e}$. The standard assumptions at the free internal interface (cf. \cite{WL}) are that the velocity is continuous across the interface and the jump in the normal stress is proportional to the mean curvature of the surface multiplied by the normal to the surface. These give  rise to the conditions at the free boundaries:
\begin{equation} \label{e2}
\left\{\begin{array}{ll}
(\bar{p}_+I-\mu_+\mathbb{D}(v_+))n_+=p_{e}n_+-\sigma_+ H_+n_+
\quad &\text{on }\Gamma_+(t)\\
v_+=v_-,\
(\bar{p}_+I-\mu_+\mathbb{D}(v_+))n_-=(\bar{p}_-I-\mu_-\mathbb{D}(v_-))n_-+
\sigma_- H_-n_- \quad &\text{on
}\Gamma_-(t),
\end{array}\right.
\end{equation}
where we have written $I$ for the $3\times3$ identity matrix, $(\mathbb{D} v_\pm)_{ij} = \partial_j v_\pm^i + \partial_i v_\pm^j$ for twice the velocity deformation tensor, and $n_\pm$ for the unit normal of $\Gamma_\pm (t)$ (pointing up), which is given by
\begin{equation}
n_\pm=\frac{(-\partial_{y_1}\eta_\pm,-\partial_{y_2}\eta_\pm,1)}{\sqrt{1+|\nabla_\ast\eta_\pm|^2}}.
\end{equation}
In this paper,  we let $\nabla_\ast$ denote the horizontal gradient, $\diverge_\ast$ denote the horizontal divergence  and $\Delta_\ast$ denote the horizontal Laplace operator. The quantity $S(\bar{p}_\pm,v_\pm) = \bar{p}_\pm I-\mu_\pm\mathbb{D} v_\pm$ is the viscous stress tensor.  We take $\sigma_\pm\ge0$ to be the constant
coefficient of surface tension, and  $H_\pm$ to be twice the mean curvature of the surface $\Gamma_\pm(t)$, which is given by the formula
\begin{equation}
H_\pm=\diverge_\ast\left(\frac{\nabla_\ast\eta_\pm}{\sqrt{1+|\nabla_\ast\eta_\pm|^2}}\right).
\end{equation}
We enforce the no-slip condition at the fixed lower boundary:
\begin{equation} \label{e3}
v_-=0 \quad\text{on }\Sigma_b.
\end{equation}

The free surfaces are advected with the fluids according to the kinematic boundary condition
\begin{equation} \label{e4}
\partial_t\eta_\pm  =v_{3,\pm}-v_{1,\pm}\partial_{y_1}\eta_\pm  -v_{2,\pm}\partial_{y_2}\eta_\pm \quad\text{on }
\Gamma_\pm(t).
\end{equation}
Notice that on $\Gamma_-(t)$, the continuity of velocity implies that $v_+ = v_-$, which means that it is the common value of $v_\pm$ that advects the surface in \eqref{e4}.

Note that the constant $1$ appearing above in the definition of $\Omega_+(t)$ and $\Gamma_+(t)$ is the equilibrium height of the upper fluid, i.e. the height of a solution with $v_\pm =0$, $\eta_\pm =0$, etc.  It is not a loss of generality for us to assume this value is unity.  Indeed, if we were to replace the $1$ in $\Omega_+(t)$ and $\Gamma_+(t)$ by an arbitrary equilibrium height $L_3 >0$, then a standard scaling argument would allow us to rescale the coordinates and unknowns in such a way that $L_3 >0$ would be replaced by $1$.  Of course, in doing so we would have to multiply $L_1,$ $L_2,$ $\mu_\pm$, $\sigma_\pm$, $g$ and $b$ by positive constants and to rescale $b$ as well.  In our above formulation of the problem we assume that this procedure has already been done.

To complete the statement of the problem, we must specify initial conditions. We suppose that the initial surfaces $\Gamma_\pm(0)$ are given by the graphs of the functions $\eta_\pm(0)=\eta_{0,\pm}$, which yield the open sets $\Omega_\pm(0)$ on which we specify the initial data for the velocity, $v_\pm(0)=v_{0,\pm}:\ \Omega_\pm(0) \rightarrow \mathbb{R}^3$. We will assume that $1+\eta_{0,+}>\eta_{0,-}>-b$ on $\mathrm{T}^2$ and that $\eta_{0,\pm}, v_{0,\pm}$ satisfy certain compatibility conditions, which we will present in detail later. We assume further that the initial surface functions satisfy the ``zero-average'' condition
\begin{equation}\label{zero0}
\int_{\mathrm{T}^2}\eta_{0,\pm}=0.
\end{equation}
For sufficiently regular solutions to the problem, the condition \eqref{zero0} persists in time, i.e.
\begin{equation}
\label{zerot}\int_{\mathrm{T}^2}\eta_{\pm}(t)=0 \text{ for } t\ge 0.
\end{equation}
Indeed, from the equations $\eqref{e1}_2$, \eqref{e3}, and \eqref{e4}   we obtain
\begin{equation}
\frac{d}{dt} \int_{\mathrm{T}^2} \eta_-=\int_{\mathrm{T}^2}\partial_t\eta_-=\int_{\Gamma_-(t)}v_-\cdot
n_-=\int_{\Omega_-(t)} \diverge{v_-}=0,
\end{equation}
and
\begin{equation}
\frac{d}{dt} \int_{\mathrm{T}^2} \eta_+ = \int_{\mathrm{T}^2}\partial_t\eta_+ = \int_{\Gamma_+(t)}v_+ \cdot
n_+  = \int_{\Omega_+(t)} \diverge{v_+} + \int_{\Gamma_-(t)} v_- \cdot n_- = 0.
\end{equation}

If it happens that the initial surfaces do not satisfy the zero average condition \eqref{zero0}, then it is possible to shift the data and the coordinate system so that \eqref{zero0} is satisfied, provided that the extra condition $\inf_{\mathrm{T}^2} b + (\eta_{0,-}) >0$ is satisfied, where
\begin{equation}
 (\eta_{0,-}) := \frac{1}{|\mathrm{T}^2|} \int_{\mathrm{T}^2} \eta_{0,-}.
\end{equation}
In this case we may shift
\begin{equation}
\begin{split}
 &y_3 \mapsto y_3 - (\eta_{0,-}), b \mapsto b+ (\eta_{0,-}), L_3 =1 \mapsto L_3 = 1 + (\eta_{0,+}) - (\eta_{0,-}), \\
& \eta_- \mapsto \eta_0 - (\eta_{0,-}),  \eta_+ \mapsto \eta_+ - (\eta_{0,+})
\end{split}
\end{equation}
to obtain a new solution with the initial surfaces satisfying \eqref{zero0}. The shifted initial data will still satisfy the compatibility conditions, while $b + (\eta_{0,-}) \ge \inf_{\mathrm{T}^2} b + (\eta_{0,-}) >0$ and $L_3 = 1 + (\eta_{0,+}) - (\eta_{0,-}) >0$ because of the assumption that $1 + \eta_{0,+} > \eta_{0,-}$. This new $L_3>0$ may not be unity, but we may then again rescale the problem as discussed above to set $L_3 =1$ without disrupting the zero average condition.

To simply the equations we introduce the indicator function $\chi$ and denote
\begin{eqnarray}\label{uni-notation}
\left\{\begin{array}{ll}
v=v_+{\chi_{\Omega_+(t)}}+v_-{\chi_{\Omega_-(t)}},
\quad
\bar{p}=\bar{p}_+{\chi_{\Omega_+}(t)}+\bar{p}_-{\chi_{\Omega_-(t)}},
\\\rho=\rho_+{\chi_{\Omega_+(t)}}+\rho_-{\chi_{\Omega_-(t)}},\quad
\mu=\mu_+{\chi_{\Omega_+(t)}}+\mu_-{\chi_{\Omega_-(t)}}
,\end{array}\right.
\end{eqnarray}
on $\Omega(t) = \Omega_+(t) \cup \Omega_-(t)$.  We similarly denote quantities on $\Gamma(t): = \Gamma_+(t) \cup \Gamma_-(t)$, etc.  We define the modified pressure through $\tilde{p}=\bar{p}+\rho gy_3-p_e-\rho_+g$. Then the modified equations are
\begin{equation}\label{form1}
\left\{\begin{array}{ll}
\rho\partial_t v + \rho v\cdot\nabla v+\nabla \tilde{p}=\mu\Delta v\quad&\text{in }\Omega(t)
\\ \diverge{v}=0\quad&\text{in }\Omega(t)
\\(\tilde{p}_+I-\mu_+\mathbb{D}(v_+))n_+  =\rho_+g\eta_+ n_+-\sigma_+ H_+n_+
\quad &\text{on }\Gamma_+(t)\\
v_+=v_-\quad&\text{on }\Gamma_-(t)
\\
(\tilde{p}_+I-\mu_+\mathbb{D}(v_+))n_--(\tilde{p}_-I-\mu_-\mathbb{D}(v_-))n_-=\rj g\eta_-
n_-+ \sigma_- H_-n_- \quad &\text{on }\Gamma_-(t)
\\\partial_t\eta =v_3 -v_1\partial_{y_1}\eta -v_2\partial_{y_2}\eta \quad &\text{on }\Gamma (t)
\\(v,\eta)\mid_{t=0}=(v_0,\eta_0).
\end{array}\right.
\end{equation}
In this paper we shall always unify the notations by means of \eqref{uni-notation} to suppress the subscript ``$\pm$'' unless clarification is needed.

\subsection{Reformulation via a flattening coordinate transformation}

\subsubsection{Special flattening coordinate transformation}
The movement of the free boundaries $\Gamma_\pm(t)$ and the subsequent change of the domains $\Omega_\pm(t)$  create numerous mathematical difficulties. To circumvent these, as usual, we will transform the free boundary problem under consideration to a problem with a fixed domain and fixed boundary. Since we are interested in the asymptotic decay of solutions, we will use the equilibrium domain. We will not use the usual Lagrangian coordinate transformation as in \cite{So, B1}, but rather utilize a special flattening coordinate transformation motivated by the one introduced in \cite{B2}.

To this end, we define the fixed domain
\begin{equation}
\Omega = \Omega_+\cup\Omega_-\text{ with }\Omega_+:=\{0<x_3<1\}, \Omega_-:=\{-b<x_3<0\}
\end{equation}
for which we have written the coordinates as $x\in \Omega$. We shall write $\Sigma_+:=\{x_3= 1\}$ for the upper boundary, $\Sigma_-:=\{x_3=0\}$ for the internal interface and $\Sigma_b:=\{x_3=-b(x_1,x_2)\}$ for the lower boundary.  Throughout the paper we will write $\Sigma = \Sigma_+ \cup \Sigma_-$.   We think of $\eta_\pm$ as a function on $\Sigma_\pm$ according to $\eta_+: (\mathrm{T}^2\times\{1\}) \times \mathbb{R}^{+} \rightarrow\mathbb{R}$ and $\eta_-:(\mathrm{T}^2\times\{0\}) \times \mathbb{R}^{+} \rightarrow \mathbb{R}$, respectively, where $\mathbb{R}^{+} = (0,\infty)$. We will transform the free boundary problem in $\Omega(t)$ to one in the fixed domain $\Omega $ by using the unknown free surface functions $\eta_\pm$. For this we define
\begin{equation}
\bar{\eta}_+:=\mathcal{P}_+\eta_+=\text{ Poisson extension of }\eta_+ \text{ into }\mathrm{T}^2 \times \{x_3\le 1\}
\end{equation}
and
\begin{equation}
\bar{\eta}_-:=\mathcal{P}_-\eta_-=\text{ specialized Poisson extension of }\eta_-\text{ into }\mathrm{T}^2 \times \mathbb{R},
\end{equation}
where $\mathcal{P}_\pm$ are defined by \eqref{P+def} and \eqref{P-def}.  We now encounter the first key problem of how to define an appropriate coordinate transformation to flatten the two free surfaces together.  Since we only need to transform the third component of the spatial coordinate and keep the other two fixed, we can flatten the domain by a linear combination of the three boundary functions, as introduced by Beale in \cite{B2}. However, this would result in the discontinuity of the Jacobian matrix of the coordinate transformation, which would then lead to severe technical difficulties in our proof of the local well-posedness of the problem. We overcome this difficulty by flattening  the coordinate domain via the following special coordinate transformation, writing $\tilde{b}=1+ x_3/b$:
\begin{equation}\label{cotr}
\left\{\begin{array}{lll}
\Omega_+\ni x\mapsto(x_1,x_2,
x_3^2(\bar{\eta}_+-(1+1/b)\bar{\eta}_-)+x_3+\tilde{b}\bar{\eta}_-)=\Theta_+(t,x)=(y_1,y_2,y_3)\in\Omega_+(t),
\\\Omega_-\ni x\mapsto(x_1,x_2,x_3+\tilde{b}\bar{\eta}_-)=\Theta_-(t,x)=(y_1,y_2,y_3)\in\Omega_-(t).
\end{array}\right.
\end{equation}
Note that $\Theta(\Sigma_+,t)=\Gamma_+(t),\ \Theta (\Sigma_-,t)=\Gamma_-(t)$ and $\Theta_-(\cdot,t) \mid_{\Sigma_b} = Id \mid_{\Sigma_b}$. In order to write down the equations in the new coordinate system,  we compute
\begin{equation}
\begin{array}{ll} \nabla\Theta =\left(\begin{array}{ccc}1&0&0\\0&1&0\\A &B &J \end{array}\right)
\text{ and }\mathcal{A} := \left(\nabla\Theta
^{-1}\right)^T=\left(\begin{array}{ccc}1&0&-A  K \\0&1&-B  K \\0&0&K
\end{array}\right)\end{array}.
\end{equation}
Here the components in the matrix are
\begin{equation}\label{ABJ_def}
\left\{\begin{array}{lll}
A_+=x_3^2\left(\partial_1\bar{\eta}_+-(1+1/b)\partial_1\bar{\eta}_-+(\partial_1b\bar{\eta}_-)/b^2\right)+\partial_1\bar{\eta}_-\tilde{b}-(x_3\bar{\eta}_-\partial_1b)/b^2,
\\ B_+=x_3^2\left(\partial_2\bar{\eta}_+-(1+1/b)\partial_2\bar{\eta}_-+(\partial_2b\bar{\eta}_-)/b^2\right)+\partial_2\bar{\eta}_-\tilde{b}-(x_3\bar{\eta}_-\partial_2b)/b^2,
\\ J_+=2x_3 (\bar{\eta}_+-(1+1/b)\bar{\eta}_-)+x_3^2(\partial_3\bar{\eta}_+-(1+1/b)\partial_3\bar{\eta}_-)+1+\bar{\eta}_-/b+\partial_3\bar{\eta}_- \tilde{b},
\\ A_-=\partial_1\bar{\eta}_-\tilde{b}-(x_3\bar{\eta}_-\partial_1b)/b^2,
\
B_-=\partial_2\bar{\eta}_-\tilde{b}-(x_3\bar{\eta}_-\partial_2b)/b^2,
\\  J_-=1+\bar{\eta}_-/b+\partial_3\bar{\eta}_- \tilde{b},\  K =J ^{-1}.
\end{array}\right.
\end{equation}
Notice that $J={\rm det}\, \nabla\Theta $ is the Jacobian of the coordinate transformation. We also denote
\begin{equation}\label{WNT_def}
 \begin{split}
    W & = \partial_t\Theta^3K \\
    \mathcal{N} &= (-\partial_1\eta ,-\partial_2\eta ,1) \\
    \mathcal{T}^i &= e_i + \partial_i\eta e_3 \text{ for }i=1,2\\
    \theta_{ij}&=\partial_j\Theta_i\text{ for }i,j=1,2,3.
 \end{split}
\end{equation}
It is straightforward to check that, because of how we have defined $\bar{\eta}_\pm$,  $\mathcal{A}$, etc., are all continuous across the internal interface $\Sigma_-$. This is crucial for our whole analysis. Also, we may remark that one can replace $x_3^2$ in the coordinate transformation \eqref{cotr} by some smooth function $h(x_3)$ with $h(1)=1$ and $h(0)=h'(0)=\cdots=0$, to let $\Theta$ be more regular across the interface $\Sigma_-$.

Note that if $\eta $ is sufficiently small (in an appropriate Sobolev space), then the mapping $\Theta $ is a  $C^1$
diffeomorphism.  This allows us to transform the problem to one in the fixed spatial domain $\Omega$ for each $t\ge 0$.

\subsubsection{``Geometric'' reformulation without surface tension}

Without  surface tension, i.e. $\sigma_\pm=0$, we  define the velocity $u$ and the pressure $p$ on $\Omega$ directly by the composition with $\Theta$ as $u(t,x)=v\circ\Theta(t,x)$ and $p=\tilde{p}\circ \Theta(t,x)$. Hence in the new coordinates, the system \eqref{form1} without surface tension becomes
\begin{equation}\label{nosurface}
\left\{\begin{array}{lll}
\rho\partial_t u-\rho W\partial_3u
+ \rho u\cdot
\nabla_{\mathcal{A}}u-\mu\Delta_{\mathcal{A}}u+\nabla_{\mathcal{A}}p=0\quad&\text{in
}\Omega
\\ \diverge_{\mathcal{A}} u=0&\text{in }\Omega
\\  S_{\mathcal{A}}(p_+,u_+) \mathcal{ N}_+=\rho_+g\eta_+ \mathcal{N}_+&\text{on }\Sigma_+
\\ \Lbrack u\Rbrack=0,\quad\Lbrack S_{\mathcal{A}}(p,u)\Rbrack \mathcal{N}_-=\rj g\eta_- \mathcal{N}_-&\text{on }\Sigma_-
\\ \partial_t\eta=u\cdot \mathcal{N}&\text{on }\Sigma
\\ u=0 &\text{on }\Sigma_b
\\(u,\eta )\mid_{t=0}=(u_0,\eta_0).
\end{array}\right.
\end{equation}
Here we have written the differential operators $\nabla_\mathcal{A}, \diverge_\mathcal{A}$  and $\Delta_\mathcal{A}$ with their actions given by $(\nabla_\mathcal{A}f)_i:= \mathcal{A}_{ij}\partial_jf$, $\diverge_\mathcal{A}X := \mathcal{A}_{ij}\partial_jX_i$, and $\Delta_\mathcal{A} f = \diverge_\mathcal{A}\nabla_\mathcal{A}f$ for
appropriate $f$ and $X$.   We write $S_\mathcal{A}(p, u):=pI - \mu \mathbb{D}_\mathcal{A}u$ for the stress tensor, where $(\mathbb{D}_\mathcal{A}u)_{ij} = \mathcal{A}_{ik} \partial_k u^j + \mathcal{A}_{jk} \partial_k u^i$ is the
symmetric $\mathcal{A}$--gradient. Note that if we extend $\diverge_\mathcal{A}$ to act on symmetric tensors in the natural way, then $\diverge_\mathcal{A}S_\mathcal{A}(p,u)=-\mu\Delta_{\mathcal{A}}u+\nabla_{\mathcal{A}}p$ for those $u$ satisfying $\diverge_\mathcal{A} u =0$. We have also denoted the interfacial jump as $\Lbrack f \Rbrack
= f_+ \mid_{x_3=0} - f_-\mid_{x_3=0}$.  We refer to \eqref{nosurface} as a ``geometric'' reformulation of the problem since the differential operators involve $\mathcal{A}$, which encodes the geometry of the domains $\Omega_\pm(t)$.

The form \eqref{nosurface} is more faithful to the geometry of the free boundary problem, but it is inconvenient for many of our a priori estimates. It is unsuitable to apply the differential operators since the underlying linear operators have non-constant coefficients. To get around this problem, we will  also analyze the PDE in a different formulation, which looks like a perturbation of the linearized problem:
\begin{equation}\label{nosurface2}
\left\{\begin{array}{lll}
\rho\partial_t u-\mu\Delta u+\nabla p=G^1\quad&\text{in }\Omega
\\ \diverge{u}=G^2&\text{in }\Omega
\\ ( p_+I-\mu_+\mathbb{D}(u_+)) e_3= \rho_+g\eta_+ e_3+G^3_+&\text{on }\Sigma_+
\\ \Lbrack u\Rbrack=0,\quad \Lbrack pI-\mu\mathbb{D}(u)\Rbrack e_3=\rj g\eta_- e_3-G^3_- &\text{on }\Sigma_-
\\ \partial_t\eta-u_3=G^4&\text{on }\Sigma
\\ u_-=0 &\text{on }\Sigma_b
\\(u,\eta )\mid_{t=0}=(u_0,\eta_0),
\end{array}\right.
\end{equation}
where
\begin{eqnarray} &&
G^1=\rho W\partial_3u-\rho
u\cdot\nabla_\mathcal{A}u+\mu(\Delta_\mathcal{A}-\Delta)u-(\nabla_\mathcal{A}-\nabla)p,
\\&& G^2=(\diverge-\diverge_\mathcal{A})u,
\\ &&G^3_+=(p-\rho_+ g \eta_+)(e_3-\mathcal{N}_+)+\mu_+(\mathbb{D}u e_3-\mathbb{D}_{\mathcal{A}_+}u_+ \mathcal{N}_+),
\\ &&G^3_-=-(\Lbrack p \Rbrack-\rj  g \eta_-)(e_3-\mathcal{N}_-)-\Lbrack \mu(\mathbb{D}u e_3-\mathbb{D}_\mathcal{A}u \mathcal{N})\Rbrack,
\\ &&G^4=-u_1\partial_1\eta-u_2\partial_2\eta.
\end{eqnarray}
One may also refer to \cite{GT_inf,GT_per,H,S} for the the explicit expressions of $G^i$ in components.

\subsubsection{Reformulation with surface tension}

With surface tension, i.e. $\sigma_\pm>0$, $\eta$ will be in a higher regularity class than $u$. This allows us to use the clever idea introduced by Beale in \cite{B2} to transform the velocity field in a manner that preserves the divergence-free condition. We define the pressure $p$ on $\Omega$ by the composition $p(t,x)=\tilde{p} \circ \Theta(t,x)$, but we define the velocity $u$ on $\Omega$ according to $v_i\circ \Theta(t,x)=K \theta_{ij} u_j(t,x)$ (equivalently, $u_i(t,x)=J \mathcal{A}_{ji} v_j \circ \Theta(t,x)$). The advantages of this transform are twofold.  First, $u$ has divergence zero in $\Omega$ if and only if $v$ has the same property in $\Omega(t)$.  Second,
the right-hand side of $\eqref{form1}_6$ is replaced simply by $u_3$.

In this case, we shall only need to write down the equations for the new unknowns $(u,p)$  in the perturbation form. In the new coordinates, the momentum equations $\eqref{form1}_1$ become
\begin{equation}
\rho\partial_t u-\mu\Delta u+\nabla p=f\quad \text{ in } \Omega,
\end{equation}
where $f=(f^1,f^2,f^3)$ is given by
\begin{equation}\label{f}
\begin{split}
f^i= &-\rho J \mathcal{A}_{ji}[ \partial_t(K \theta_{jk})u_k-W\partial_3(K \theta_{jk})u_k+K \partial_k(K\theta_{jl})u_ku_l] + \rho [W \partial_3 u_i -K u_k \partial_k u_i ]  \\&
+\mu J\mathcal{A}_{ni}\mathcal{A}_{jk}\partial_k(\mathcal{A}_{jl})\partial_l(K\theta_{nm})u_m
+\mu   \mathcal{A}_{jk}\partial_k(\mathcal{A}_{jl})   \partial_lu_i
 +\mu J\mathcal{A}_{ni}\mathcal{A}_{jk} \mathcal{A}_{jl}\partial_k\partial_l(K\theta_{nm})u_m
\\& +\mu J\mathcal{A}_{ni}\mathcal{A}_{jk}
\mathcal{A}_{jl}\partial_l(K\theta_{nm})\partial_k u_m +\mu
J\mathcal{A}_{ni}\mathcal{A}_{jk}
\mathcal{A}_{jl}\partial_k(K\theta_{nm})\partial_l u_m
\\& +\mu   (\mathcal{A}_{jk} \mathcal{A}_{jl}-\delta_{kl})    \partial_k \partial_l u_i
 +(\delta_{ki}-J\mathcal{A}_{ji}\mathcal{A}_{jk} )\partial_k p,\quad i=1,2,3.
\end{split}
\end{equation}
The boundary condition at $\Sigma_+$ becomes
\begin{equation}
(p_+ I - \mu_+\mathbb{D}(u_+)) e_3= (\rho_+g\eta_+  -\sigma_+\Delta_\ast\eta_+)e_3+g_+ \quad\text{on }\Sigma_+
\end{equation}
where $g_+=(g_+^1,g_+^2,g_+^3)$ is given by (here $\mathcal{N}$ and  $\mathcal{T}^i$ are as defined in \eqref{WNT_def})
\begin{equation}\label{g_+^i}
 \begin{split}
g_+^i&=\mu J\mathcal{A}_{mk} \partial_k(K\theta_{jl}) u_l \mathcal{N}_j \mathcal{T}_m^i + \mu(\mathcal{A}_{mk}\theta_{jl}-\delta_{ik}\delta_{jl})\partial_k u_l \mathcal{ N}_j \mathcal{T}_m^i
\\&+ \mu J\mathcal{A}_{jk} \partial_k (K\theta_{ml}) u_l \mathcal{N}_j \mathcal{T}_m^i
 + \mu (\mathcal{A}_{jk}\theta_{ml}-\delta_{jk}\delta_{il}) \partial_k u_l \mathcal{ N}_j\mathcal{T}_m^i -\mu\partial_i u_1 \partial_1 \eta - \mu\partial_i u_2 \partial_2\eta
\\& + \partial_3 u_l \mathcal{N}_l \partial_i \eta
 -\mu \partial_1 u_i \partial_1\eta -\mu \partial_2 u_i \partial_2 \eta  + \mu \partial_k u_3 \mathcal{ N}_k \partial_i \eta \text{ for } i=1,2,
 \end{split}
\end{equation}
\begin{equation}\label{g_+^3}
 \begin{split}
g_+^3 = &2\mu  (\mathcal{A}_{3k} K\theta_{3l}-\delta_{3k}\delta_{3l})\partial_ku_l +2\mu \mathcal{A}_{3k} \partial_k (K\theta_{3l}) u_l
\\& +2 \mu (\mathcal{A}_{ik} \partial_k(K\theta_{jl}) u_l + \mathcal{A}_{ik} K \theta_{jl}\partial_k u_l  )
 (\mathcal{ N}_j\mathcal{ N}_i-\delta_{j3}\delta_{i3})|\mathcal{N}|^{-2}
\\& + 2 \mu  (\mathcal{A}_{3k}\partial_k(K\theta_{3l} ) u_l + \mathcal{A}_{3k} K \theta_{3l} \partial_k u_l
)(|\mathcal{N}|^{-2}-1)  - \sigma_+((1+|\nabla_\ast\eta|^2)^{-1/2} -1) \Delta_\ast\eta
 \\& + \sigma_+(1+|\nabla_\ast\eta|^2)^{-3/2}((\partial_1\eta)^2 \partial_1^2 \eta + 2 \partial_1 \eta \partial_2 \eta \partial_1 \partial_2 \eta +(\partial_2\eta)^2 \partial_2^2 \eta).
 \end{split}
\end{equation}
Similarly, the second jump condition on $\Sigma_-$ becomes
\begin{equation}
\Lbrack p -\mu\mathbb{D}(u)\Rbrack e_3=(\rj g\eta_- +\sigma_-\Delta_\ast\eta_-)e_3-g_-\quad\text{on } \Sigma_-,
\end{equation}
where $g_-=(g_-^1,g_-^2,g_-^3)$ is given by
\begin{equation}\label{g_-^i}
 \begin{split}
-g_-^i = &\Lbrack \mu\Rbrack J\mathcal{A}_{mk}\partial_k(K\theta_{jl})u_l\mathcal{
N}_j \mathcal{T}_m^i + (\mathcal{A}_{mk}\theta_{jl}-\delta_{ik}\delta_{jl}) \Lbrack \mu
\partial_k u_l\Rbrack\mathcal{ N}_j\mathcal{T}_m^i
\\& + \Lbrack \mu\Rbrack J\mathcal{A}_{jk}\partial_k(K\theta_{ml})u_l\mathcal{ N}_j\mathcal{T}_m^i
 + (\mathcal{A}_{jk}\theta_{ml}-\delta_{jk}\delta_{il})\Lbrack\mu\partial_k
u_l \Rbrack\mathcal{ N}_j \mathcal{T}_m^i
\\& - \Lbrack\mu\Rbrack\partial_i u_1 \partial_1\eta
- \Lbrack\mu\Rbrack\partial_i u_2 \partial_2\eta + \Lbrack\mu\partial_3u_l\Rbrack\mathcal{
N}_l\partial_i\eta
\\& -\Lbrack\mu\Rbrack \partial_1 u_i \partial_1\eta -\Lbrack\mu \Rbrack\partial_2 u_i \partial_2\eta + \Lbrack\mu\partial_k u_3 \Rbrack \mathcal{ N}_k\partial_i\eta, \text{ for } i=1,2,
 \end{split}
\end{equation}
\begin{equation}\label{g_-^3}
 \begin{split}
-g^3 = &2  (\mathcal{A}_{3k} K\theta_{3l}-\delta_{3k}\delta_{3l})\Lbrack\mu\partial_ku_l\Rbrack
+ 2\Lbrack\mu  \Rbrack \mathcal{A}_{3k}\partial_k(K\theta_{3l})u_l
\\ & + 2 (\Lbrack\mu\Rbrack\mathcal{A}_{ik}\partial_k(K\theta_{jl})u_l
+ \mathcal{A}_{ik}K\theta_{jl}\Lbrack\mu\partial_k u_l\Rbrack  ) (\mathcal{ N}_j\mathcal{ N}_i-\delta_{j3}\delta_{i3})|\mathcal{N}|^{-2}
\\& + 2 (\Lbrack\mu\Rbrack\mathcal{A}_{3k}\partial_k(K\theta_{3l}) u_l
+ \mathcal{A}_{3k}K\theta_{3l}\Lbrack\mu\partial_k u_l\Rbrack )(|\mathcal{N}|^{-2}-1)
\\& +\sigma_- ((1+|\nabla_\ast\eta|^2)^{-1/2} -1) \Delta_\ast\eta
 \\ & -\sigma_- (1+|\nabla_\ast\eta|^2)^{-3/2}((\partial_1\eta)^2 \partial_1^2 \eta + 2 \partial_1 \eta \partial_2 \eta \partial_1\partial_2\eta + (\partial_2\eta)^2 \partial_2^2\eta).
 \end{split}
\end{equation}
Note that  the coordinate transformation \eqref{cotr} guarantees that $\Lbrack u\Rbrack=0$ on $\Sigma_-$.

Combining these, we see that in the new coordinates  the system \eqref{form1} with  surface tension becomes
 \begin{equation}\label{surface}
\left\{\begin{array}{lll}
\rho\partial_t u-\mu\Delta u+\nabla p=f\quad&\text{in }\Omega
\\ \diverge{u}=0&\text{in }\Omega
\\ ( p_+I-\mu_+\mathbb{D}(u_+)) e_3= (\rho_+g\eta_+  -\sigma_+\Delta_\ast \eta_+)e_3+g_+&\text{on }\Sigma_+
\\\Lbrack u\Rbrack=0,\quad \Lbrack pI-\mu\mathbb{D}(u)\Rbrack e_3=(\rj g\eta_- +\sigma_- \Delta_\ast \eta_-)e_3-g_-&\text{on }\Sigma_-
\\ \partial_t\eta=u_3 &\text{on }\Sigma
\\ u_-=0 &\text{on }\Sigma_b
\\(u,\eta )\mid_{t=0}=(u_0,\eta_0),
\end{array}\right.
\end{equation}
where $f$ is given by \eqref{f} and $g_\pm$ is given by \eqref{g_+^i}, \eqref{g_+^3}, \eqref{g_-^i}, \eqref{g_-^3}. One may also refer to \cite{B,NTY} for the the  expressions of $f$ and $g$.

\subsection{Some previous work}

Free boundary problems in fluid mechanics have been studied by many authors in many different contexts. Here we will mention only the work most relevant to our present setting. For a more thorough review of the literature, we refer to the review paper of Shibata and Shimizu \cite{ShSh} and the references therein.

The one-phase problem of describing the motion of an isolated mass of viscous fluid bounded by a free boundary, i.e. a drop in a vacuum, was studied by Solonnikov in a series of papers; for instance, \cite{So} considers the case without surface tension, while \cite{So_2,So_3} concern the problem with surface tension.  Solonnikov's technique for proving local existence in these papers did not rely on energy methods, but rather on H\"older estimates in the case without surface tension and Fourier-Laplace transform methods in the case with surface tension.  A local existence theory based on the energy method was developed by Coutand and Shkoller for the problem with surface tension \cite{CS}.

Next, we recall some works on the two-phase problem of describing the motion of two viscous fluids separated by a closed free interface, i.e. a drop in another fluid medium. For the case of the fluids filling the whole space, Denisova \cite{D} proved local-in-time unique solvability in Sobolev-Slobodetskdii spaces with or without surface tension; Denisova and Solonnikov \cite{DS} proved the local-in-time unique solvability in H\"older spaces with surface tension. For the case of the fluids filling a bounded domain, Tanaka \cite{T1} proved the local solvability of the problem with general data in the Sobolev-Slobodetskdii spaces and Tanaka \cite{T2} proved the global solvability for data close  to equilibrium  with surface tension; Denisova \cite{D2} proved the global solvability in H\"older spaces without surface tension.

The one-phase problem of describing  the motion of a layer of viscous fluid lying above a fixed bottom, i.e. the viscous surface wave problem, has attracted the attention of many mathematicians since the pioneering work of Beale \cite{B1}. For the case without surface tension, Beale \cite{B1} proved the local well-posedness in the Sobolev spaces and Sylvester \cite{S} studied the global well-posedness by using Beale's method. In the case of horizontally periodic domains with a completely flat fixed lower boundary,  Hataya \cite{H} obtained the global existence of small solutions with an algebraic decay rate in time. Recently, Guo and Tice \cite{GT_lwp,GT_inf,GT_per} developed a new two-tier energy method to prove global well-posedness and decay of this problem.  They proved that if the free boundary is horizontally infinite, then the solution decays to equilibrium at an algebraic rate; on the other hand, if the free boundary is horizontally periodic, then the solution decays at an almost exponential rate. For the case with surface tension, Beale \cite{B2} proved global well-posedness of the problem, while Allain \cite{A} obtained a local existence theorem in two dimension using a different method. Bae \cite{B} showed the global solvability in Sobolev spaces via the energy method. Beale and Nishida \cite{BN} showed that the solution obtained in \cite{B2} decays in time with an optimal algebraic decay rate. For the periodic motion, Nishida, Teramoto and Yoshihara \cite{NTY}  showed the global existence of solutions with an exponential decay rate in the case of a domain with a flat fixed lower boundary. Tani \cite{Ta} and Tani and Tanaka \cite{TT} also discussed the solvability of the problem with or without surface tension by using Solonnikov's method.

There are very few results on the two-phase free boundary problem of describing the motion of two layers of viscous fluids, i.e. the viscous surface-internal wave problem and viscous internal wave problem. Hataya \cite{H2} proved an existence result for a periodic free interface problem with surface tension as a perturbation around the plane Couette flow of two fluids; he showed the local  existence of small smooth solution for any physical constants, and the existence of exponentially decaying small solution if the viscosities of the two fluids are sufficiently large and their difference is small. Pr\"uss and Simonett \cite{PS} proved the local well-posedness of a free interface problem with surface tension in which two layers of viscous fluids fill  the whole space and are separated by a horizontal interface. For the same problem in our setting but without horizontally periodic assumption and taking into account the effect of surface tension, Xu and Zhang \cite{XZ} proved  the local solvability for general  data and global solvability for data near the equilibrium state by using Tani and Tanaka's argument. The purpose of this paper is to investigate in the energy spaces the well-posedness and decay of solutions to the viscous surface-internal wave problem under the horizontally periodic assumption, both with and without surface tension and without any constraints on the viscosities.

\section{Main results}

\subsection{Notation}

In this paper, for any given domain, we write $H^k$ for the usual $L^2$-based Sobolev space of order $k\ge0$. We define the piecewise Sobolev spaces $\ddot{H}^k(\Omega), k\ge 0$ by
\begin{equation}
\ddot{H}^k(\Omega) = \{u\chi_{\Omega_\pm}\in H^k(\Omega_\pm)\}\text{ with norm } \|u\|_{k}^2 :=\|u\|_{\ddot{H}^k(\Omega)}^2 := \|u_+\|_{H^k(\Omega_+)}^2 + \|u_-\|_{H^k(\Omega_-)}^2.
\end{equation}
When $k=0$, $\ddot{H}^0(\Omega)={H}^0(\Omega)=L^2(\Omega)$. We let the Sobolev spaces $H^s( \Sigma_\pm)$ for $s\in \mathbb{R}$ be equivalent to $H^s( \mathrm{T}^2)$, with norm $\|\cdot\|_{H^s(\Sigma_\pm)}$; we  write
\begin{equation}
\|\eta\|_{s}^2 := \|\eta\|_{{H}^s(\Sigma)}^2 :=\|\eta_+\|_{H^s(\Sigma_+)}^2 + \|\eta_-\|_{H^s(\Sigma_-)}^2.
\end{equation}
We do not distinguish the notation of norms on the domain or on the boundary. Basically, when we write  $\|\partial_t^ju\|_{k}$ and $\|\partial_t^jp\|_{k}$ we always mean that the space is $\ddot{H}^k(\Omega)$, and when we write $\|\partial_t^j\eta\|_{k}$ we always mean that the space is $ {H}^k(\Sigma)$. When there is potential for confusion, typically when trace estimates are employed, we will clarify as needed. 
We let the spaces ${}_0H^1(\Omega)$ and ${}^0H^1(\Omega)$ be as defined later in \eqref{0H}. It is easy to see that $u\in {}_0H^1(\Omega)$ if and only if $u\in \ddot{H}^1(\Omega)$ with $\Lbrack u\Rbrack=0$ on $\Sigma_-$ and
$u=0$ on $\Sigma_b$.  We will not need negative index spaces on $\Omega$ except for $({}_0H^1(\Omega))^\ast$.  In our analysis, we will occasionally abuse notation by writing  $\|\cdot\|_{-1}$ for the norm in $({}_0H^1(\Omega))^\ast$.  Here it is not the case that $({}_0H^1(\Omega))^\ast = H^{-1}$ because of boundary conditions;  we employ this abuse of notation in order to have indexed sums of norms include terms like $\|\cdot\|_{4N-2j+1}$ for $j=2N+1$.  Sometimes we use $\|\cdot\|_{L^pX}$ to denote the norm of the space $L^p(0,T;X)$. We will sometimes extend the above abuse of notation by writing $\|\cdot\|_{L^p H^{k}}$ for $k=-1$ in a sum of norms; when we do this we actually mean $\|\cdot\|_{L^p ({}_0H^1(\Omega))^\ast}$.

We will employ the Einstein convention of summing over  repeated indices.  Throughout the paper $C>0$ will denote a generic constant that can depend on the parameters of the problem, $N$, and $\Omega$, but does not depend on the data, etc.  We refer to such constants as ``universal.''  Such constants are allowed to change from line to line. When a constant depends on a quantity $z$ we will write $C = C(z)$ to indicate this.  We will employ the notation $a \lesssim b$ to mean that $a \le C b$ for a universal constant $C>0$. To indicate some constants in some places so that they can be referred to later, we will denote them in particular by $C_1,C_2$, etc.

\subsection{Main theorem without surface tension}

We will show the well-posedness and decay of the system \eqref{nosurface} by using the mathematical framework of a two-tier energy method, as developed by Guo and Tice \cite{GT_lwp,GT_inf,GT_per}. Since the problem is considered in a domain with boundary, we should impose some compatibility conditions for the initial data $(u_0,\eta_0)$. We will work in a high-regularity context, essentially with regularity up to $2N$ time derivatives for $N \ge 3$ an integer. This requires us to use $u_0$ and $\eta_0$ to construct the initial data $\partial_t^j  u(0)$ and $\partial_t^j \eta(0)$ for $j=1,\dotsc,2N$ and $\partial_t^j  p(0)$ for $j = 0,\dotsc, 2N-1$. These other data must then satisfy various conditions (essentially what one gets by applying $\partial_t^j $ to \eqref{nosurface} and then setting $t=0$), which in turn require $u_0$ and $\eta_0$ to satisfy the $(2N)^{th}$ compatibility conditions which will be described in \eqref{2Ncompatibility}.

To state our result, we define the energies and dissipations. These definitions rely on the linear energy identity of the homogeneous form of \eqref{nosurface2}:
\begin{equation}\label{lin_en_ev}
\frac{d}{dt}\left(\frac{1}{2}\int_\Omega \rho |u|^2 +\frac{1}{2}\int_{\Sigma_+}
\rho_+g|\eta_+|^2+\frac{1}{2}\int_{\Sigma_-} -\rj g|\eta_-|^2\right)+\frac{1}{2}\int_{\Omega}\mu|\mathbb{D}
u|^2=0.
\end{equation}
According to this energy identity and the structure of the equations, as in \cite{GT_per}, we define the energies
and dissipations as follows. For any integer $N \ge 3$ we write the high-order energy as
\begin{equation}
 \mathcal{E}_{2N} =
 \sum_{j=0}^{2N} \left( \|\partial_t^j u\|_{4N-2j}^2 + \|\partial_t^j \eta\|_{4N-2j}^2 \right)
 + \sum_{j=0}^{2N-1} \|\partial_t^j p\|_{4N-2j-1}^2
\end{equation}
and the corresponding dissipation as
\begin{multline}
  \mathcal{D}_{2N} = \sum_{j=0}^{2N} \|\partial_t^j u\|_{4N-2j+1}^2 + \sum_{j=0}^{2N-1} \|\partial_t^j p\|_{4N-2j}^2 \\
+  \| \eta\|_{4N-1/2}^2 + \|\partial_t \eta\|_{4N-1/2}^2 + \sum_{j=2}^{2N+1} \|\partial_t^j \eta\|_{4N-2j+5/2}^2.
\end{multline}
We define the low-order energy as
\begin{equation}
 \mathcal{E}_{N+2} = \sum_{j=0}^{N+2} \left( \|\partial_t^j u\|_{2(N+2)-2j}^2
 + \|\partial_t^j \eta\|_{2(N+2)-2j} ^2\right)
 + \sum_{j=0}^{N+1} \|\partial_t^j p\|_{2(N+2)-2j-1}^2.
\end{equation}
We also  define
\begin{equation}
\mathcal{F}_{2N}=   \|\eta\|_{4N+1/2}^2.\label{F_2N}.
\end{equation}
Finally, we define the total energy functional as
\begin{equation}
\mathcal{G}_{2N}(t) = \sup_{0 \le r \le t} \mathcal{E}_{2N}(r) +
\int_0^t \mathcal{D}_{2N}(r) dr + \sup_{0 \le r \le t} (1+r)^{4N-8}
\mathcal{E}_{N+2}(r) + \sup_{0 \le r \le t} \frac{
\mathcal{F}_{2N}(r)}{(1+r)}.
\end{equation}

The main results for the case without surface tension are stated as follows.

\begin{theorem}\label{th0}
Let $N\ge 3$ be an integer. Assume that $u_0\in {}_0H^1(\Omega) \cap \ddot{H}^{4N}(\Omega)$ and $\eta_0\in {H}^{4N+1/2}(\Sigma)$ satisfy the $(2N)^{th}$ order compatibility conditions \eqref{2Ncompatibility}. There exist  $\delta_0,T_0 >0$ so that if
\begin{equation}
0 < T \le  T_0 \min\left\{1, \frac{1}{\|\eta_0\|_{4N+1/2}} \right\}
\end{equation}
and $\|u_0\|_{4N}^2+\|\eta_0\|_{4N}^2\le \delta_0$, then there exists a unique solution $(u,p,\eta)$ to the problem \eqref{nosurface} on $[0,T]$. The solution satisfies the estimates
\begin{multline} \label{localestimate}
  \sup_{0\le t\le T}\mathcal{E}_{2N}(t)
  +\int_0^T\mathcal{D}_{2N}(t)dt+\int_0^T\left(\|\rho\partial_t^{2N+1}u(t)\|_{-1}^2+\|\partial_t^{2N}p(t)\|_0^2\right)dt\\
\le C_1( \|u_0\|_{4N}^2 + \|\eta_0\|_{4N}^2 + T \|\eta_0\|_{4N+1/2}^2)
\end{multline}
and
\begin{equation}\label{localestimate2}
\sup_{0\le t\le T}\mathcal{F}_{2N}(t) \le C_1(\|u_0\|_{4N}^2 + (1+T)\|\eta_0\|_{4N+1/2}^2).
\end{equation}

If additionally $\rj <0$ and $\eta_0$ satisfies the zero-average condition \eqref{zero0}, then  there exists a $ \kappa=\kappa(N)>0 $ so that if $ \mathcal{E}_{2N}(0) + \mathcal{F}_{2N}(0) < \kappa$,  then there exists a unique solution $(u,p,\eta)$ to the problem \eqref{nosurface} on $[0,\infty)$ that satisfies the estimate
\begin{equation}\label{Gbound}
 \mathcal{G}_{2N}(\infty) \le C_2\left( \mathcal{E}_{2N}(0) + \mathcal{F}_{2N}(0) \right) <
 C_2  \kappa.
\end{equation}
\end{theorem}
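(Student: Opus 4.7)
The plan is to follow the two-tier energy framework of Guo and Tice, adapted to the two-phase geometry. Local well-posedness in the first step would be produced by a linearization/fixed-point argument: the compatibility conditions \eqref{2Ncompatibility} let us construct $\partial_t^j u(0)$, $\partial_t^j p(0)$, $\partial_t^j \eta(0)$ for $j$ up to $2N$ by differentiating \eqref{nosurface} formally in time; the linearized version of \eqref{nosurface2} is then solved via the two-phase stationary Stokes estimates mentioned in the abstract together with a time-dependent Galerkin scheme; and a contraction on a short time interval yields \eqref{localestimate}--\eqref{localestimate2}. Global existence reduces by continuation to an a priori bound of the form $\mathcal{G}_{2N}(T) \le C(\mathcal{E}_{2N}(0) + \mathcal{F}_{2N}(0)) + C\,\mathcal{G}_{2N}(T)^{3/2}$, which closes by a standard continuity/bootstrap argument once $\kappa$ is small.

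The a priori estimate itself would be assembled from three ingredients. First, a high-order energy--dissipation inequality: apply $\partial_t^j$ for $0 \le j \le 2N$ to the perturbed system \eqref{nosurface2}, test against $\partial_t^j u$, and exploit the energy identity \eqref{lin_en_ev}. The hypothesis $\rj<0$ is essential at this stage because it makes the boundary contribution $-\tfrac12\int_{\Sigma_-} \rj g|\eta_-|^2$ positive-definite, producing a genuine energy. Pressure and normal-derivative information missing from the time-differentiated test is recovered by elliptic estimates for the two-phase stationary Stokes problem, and one ends with $\tfrac{d}{dt}\overline{\mathcal{E}}_{2N} + \mathcal{D}_{2N} \lesssim \sqrt{\mathcal{E}_{2N}}\,\mathcal{D}_{2N} + \textrm{(cubic remainders)}$. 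Second, a transport estimate for $\eta$: differentiating $\partial_t\eta = u_3 + G^4$ tangentially on $\Sigma$ to order $4N+1/2$, using horizontal periodicity and the zero-average condition \eqref{zerot}, yields $\|\eta(t)\|_{4N+1/2} \lesssim \|\eta_0\|_{4N+1/2} + t\sqrt{\mathcal{G}_{2N}(t)}$, which accounts for the $\mathcal{F}_{2N}/(1+t)$ term in $\mathcal{G}_{2N}$. Third, a low-order analog at regularity $N+2$ yields $\tfrac{d}{dt}\overline{\mathcal{E}}_{N+2} + \mathcal{D}_{N+2} \lesssim 0$; an interpolation of the shape
\[
\mathcal{E}_{N+2} \lesssim \mathcal{D}_{N+2}^{\,1-\frac{1}{4N-8}}\bigl(\mathcal{E}_{2N} + \mathcal{F}_{2N}\bigr)^{\frac{1}{4N-8}}
\]
combined with $\mathcal{E}_{2N} + \mathcal{F}_{2N}(t)/(1+t) \lesssim \mathcal{G}_{2N}$ converts this into an ODE inequality whose solution decays like $(1+t)^{-(4N-8)}$, supplying the final term of $\mathcal{G}_{2N}$.

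The principal obstacle is that $\partial_t^{2N}$ applied to the nonlinear forcing $f$, $g_\pm$, $G^4$ produces borderline terms---typically of type $\partial_t^{2N}\eta \cdot \partial u$ or $\partial_t^j\eta \cdot \partial_t^{2N-j+1}\eta$---whose top-order factor is not absorbed by $\mathcal{D}_{2N}$ alone; its total regularity sits in $\mathcal{F}_{2N}$, which is only known to grow linearly in $t$. Closing the estimate therefore requires repeatedly redistributing derivatives so that each such non-integrable high-regularity factor is multiplied by a low-regularity factor controlled by $\mathcal{E}_{N+2}$, whose almost-exponential decay from the third ingredient renders the time integrals convergent. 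A secondary technical point is keeping $\Theta$ a $C^1$-diffeomorphism across $\Sigma_-$ for all times: this relies on the special choice of \eqref{cotr} (which makes $\mathcal{A}$, and in particular the Jacobian $J$, continuous at $\Sigma_-$) together with smallness of $\|\eta\|_{H^{5/2}} \subset \mathcal{E}_{N+2}$ to keep $J$ uniformly positive throughout $[0,\infty)$.
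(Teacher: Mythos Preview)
Your outline follows the paper's architecture and captures the two-tier logic, the role of $\rj<0$, the transport growth of $\mathcal{F}_{2N}$, and the interpolation-driven decay of $\mathcal{E}_{N+2}$. But there is a genuine gap in the step you describe as ``pressure and normal-derivative information \ldots\ is recovered by elliptic estimates for the two-phase stationary Stokes problem.'' This works for the \emph{energy} comparison $\mathcal{E}_{2N}\lesssim\bar{\mathcal{E}}_{2N}$, because the temporal energies already control $\partial_t^j\eta$ and hence the boundary data in Theorem~\ref{cStheorem}. It does \emph{not} work for the \emph{dissipation} comparison $\mathcal{D}_{2N}\lesssim\bar{\mathcal{D}}_{2N}$: the temporal dissipation $\bar{\mathcal{D}}^0_{2N}=\sum_j\|\mathbb{D}\partial_t^j u\|_0^2$ contains no $\eta$ information, so the two-phase Stokes estimate---whose right-hand side carries $\|\partial_t^j\eta\|_{2n-2j-1/2}$---cannot be applied. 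The paper resolves this by (i) running a second, \emph{localized} horizontal energy estimate (cutoff $\chi$ away from the non-flat $\Sigma_b$) to produce $\bar{\mathcal{D}}^+_{2N}$, which controls horizontal derivatives of $u$ and hence traces $\|u\|_{H^{2n-2j+1/2}(\Sigma_\pm)}$ on the flat interfaces; and then (ii) applying the \emph{one-phase} Stokes problem with Dirichlet data separately in $\Omega_+$ and $\Omega_-$ (Lemma~\ref{cS1phaselemma2}), which needs only $u|_{\Sigma_-}$, not $\eta$. Only after $u$ and $\nabla p$ are controlled does one bootstrap $\eta$ from the dynamic boundary conditions and Poincar\'e on $\mathrm{T}^2$. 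This one-phase Dirichlet trick is the paper's main innovation over the Guo--Tice vorticity argument, which fails here because $\mu_+\neq\mu_-$ ruins the vorticity boundary condition on $\Sigma_-$.

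Two smaller points. First, the paper uses the \emph{geometric} form \eqref{nosurface} for the pure temporal energies (Proposition~\ref{temp}) and the perturbed form \eqref{nosurface2} only for the localized horizontal estimates; your description uses only the perturbed form, which would force you to control $\partial_t^{2N}G^2$ at a regularity the dissipation does not provide (the paper integrates by parts in time to avoid $\partial_t^{2N}p$). Second, the interpolation in the paper is $\mathcal{E}_{N+2}\lesssim\mathcal{D}_{N+2}^{\theta}\mathcal{E}_{2N}^{1-\theta}$ with $\theta=(4N-8)/(4N-7)$ and $\mathcal{E}_{2N}$ alone on the right; inserting $\mathcal{F}_{2N}$ there, as you do, would reintroduce the linear growth you are trying to defeat.
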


\begin{Remark}
Note that the bound \eqref{Gbound}  on $\mathcal{G}_{2N}$ implies the decay estimate
\begin{equation}\label{decayn+2}
\mathcal{E}_{N+2}(t) \le C_2 \kappa (1+t)^{-4N+8}.
\end{equation}
Since $N$ may be taken to be arbitrarily large, this decay result can be regarded as an ``almost exponential'' decay rate.
\end{Remark}

\begin{Remark}
The requirement that $\rj <0$ is essential for our global result. Indeed, if $\rj >0$, i.e. a heavier fluid lying above a lighter one, then the well-known (linear) Rayleigh-Taylor instability occurs (see \cite{C,R,Tay}). In a forthcoming paper, we will show the nonlinear Rayleigh-Taylor instability for the system \eqref{nosurface}. It is also interesting that our energy method fails  if  $\rj =0$. In this case, we lose the control of the $\eta_-$ part of the energy. In fact, in this  case there is no effect of the gravitational force on $\Sigma_-$ and $\eta_-=0$  is not the only equilibrium solution.
\end{Remark}

\begin{Remark}
The surface $\eta$ is sufficiently small to guarantee that the mapping $\Theta(\cdot,t)$, defined in \eqref{cotr}, is a diffeomorphism for each $t\ge 0$ when restricted to $\Omega_\pm$.  As such, we may change coordinates to $y \in \Omega(t)$ to produce a global-in-time, decaying solution to \eqref{form1} in the case without surface tension.
\end{Remark}

We will prove Theorem \ref{th0} in Section \ref{0surface}.  The proof is based on the mathematical framework of a two-tier energy method recently developed by Guo and Tice \cite{GT_inf,GT_per} for the one-phase problem.  The necessity of the two-tier framework in an analysis of the decay properties of solutions to \eqref{nosurface} is explained in detail in the introductions of \cite{GT_inf,GT_per}.  For the sake of completeness, here we will briefly sketch the two-tier method by referring to  the energy evolution equation \eqref{lin_en_ev}.  In this equation, the energy (say $\mathcal{E}$ denotes the terms in the time derivative) involves both $u$ and $\eta$, while the dissipation (say $\mathcal{D}$ for the $\mathbb{D} u$ term) only involves $u$.  We might hope to use the structure of the equations \eqref{nosurface} to prove that  $C \mathcal{E} \le \mathcal{D}$ for some constant $C>0$; this could then be used with \eqref{lin_en_ev} to deduce the differential inequality $d \mathcal{E}/dt + C \mathcal{E} \le 0$, which would then imply the exponential decay $\mathcal{E}(t) \le \mathcal{E}(0) e^{-Ct}$.   It turns out that the equations \eqref{nosurface} do not allow us to prove  that $C \mathcal{E} \le \mathcal{D}$, and so this method cannot work.  Moreover, we run into the same problem with any higher regularity version of $\mathcal{E}$ and $\mathcal{D}$; in particular, we cannot show that $C \mathcal{E}_{2N} \le \mathcal{D}_{2N}$ for any $N$.

Instead, we pursue an alternate strategy that employs two tiers of energy: the high regularity $\mathcal{E}_{2N}$ and $\mathcal{D}_{2N}$, and the lower regularity $\mathcal{E}_{N+2}$ and $\mathcal{D}_{N+2}$.  Instead of the differential inequality mentioned above, we aim to establish the pair of inequalities
\begin{equation}\label{tt_e_1}
 \mathcal{E}_{2N}(t) +  \int_0^t \mathcal{D}_{2N}(s)ds \lesssim \mathcal{E}_{2N}(0) \text{ and }   \frac{d}{dt} \mathcal{E}_{N+2} + C \mathcal{D}_{N+2} \le 0
\end{equation}
and the  interpolation inequality
\begin{equation}\label{tt_e_2}
 \mathcal{E}_{N+2} \lesssim (\mathcal{E}_{2N})^{\theta/(1+\theta)} (\mathcal{D}_{N+2})^{1/(1+\theta)} \text{ for some } \theta >0.
\end{equation}
With these three estimates in hand we may deduce that
\begin{equation}\label{tt_e_3}
 \frac{d}{dt} \mathcal{E}_{N+2} + C \mathcal{E}_{N+2}^{1+\theta} \le 0
\text{ and that }
\mathcal{E}_{N+2}(t) \le \frac{\mathcal{E}_{2N}(0)}{(1+t)^{1/\theta}},
\end{equation}
which establishes that solutions decay in time at an algebraic rate depending on the interpolation power $\theta$ appearing in \eqref{tt_e_2}.  The two-tier energy method allows us to complete this program by coupling the boundedness of high-order norms to the decay of the low-order norms.  Indeed, we will first use the decay of $\mathcal{E}_{N+2}$ to establish the first inequality in \eqref{tt_e_1}, and then we use the boundedness of $\mathcal{E}_{2N}$ to establish the second inequality in \eqref{tt_e_1}.  Finally, we show that \eqref{tt_e_2} holds with $\theta$ depending on $N$ so that the algebraic decay rate in \eqref{tt_e_3} increases with $N$.

In order to close the two-tier energy method, we must also have control of $\mathcal{F}_{2N}$.  Unfortunately, the only way to estimate it is through the kinematic transport equation for $\eta$, and this yields estimates whose right-hand sides can a priori grow exponentially in time (see \eqref{l0}) unless $u$ decays rapidly.  Even if $u$ does decay rapidly (a fixed algebraic rate is sufficient), the estimates from the transport equation can still grow linearly in time (see \eqref{Fg}).  This growth is potentially disastrous in closing the high-order, global-in-time estimates.  To manage the growth, we must identify a special decaying term that always appears in products with the highest derivatives of $\eta$.  If the special term decays quickly enough, then we can hope to balance the growth and close the high-order estimates.  Due to the growth of $\mathcal{F}_{2N}$, we believe that it is not possible to construct global-in-time solutions without also deriving a decay result.

In addition to the difficulties implicit in the two-tier energy method, there are several new  difficulties that arise when we try to adapt the method to our two-phase problem.  We shall summarize these now.

First, in order to borrow many of the estimates from \cite{GT_per}, we must have that our problem \eqref{nosurface} has the same essential structure as the problem studied in \cite{GT_per}.  This requires using the special flattening coordinate transformation \eqref{cotr}.  For this to be useful, we must have that the transformation is sufficiently regular across the interface, which then requires us to develop an appropriate specialization of the Poisson extension of $\eta_-$.

Second, to prove the local well-posedness, we use the iteration scheme as in \cite{GT_lwp} after first solving the linearized problems, namely the time-dependent $\mathcal{A}$--Stokes equations and transport equations. The details of this procedure would be the same as in \cite{GT_lwp} if we already possessed the elliptic regularity theory for the two-phase Stokes and Poisson problem. However, up to our best knowledge, there is no published work on this problem, and as such we must develop it here. The essential point is to prove the elliptic regularity theory for the two-phase Stokes problem \eqref{cS}, i.e. Theorem \ref{cStheorem}. The key observation is that the interface $\Sigma_-$ between the domains $\Omega_\pm$ is flat, which means that the definition of Sobolev spaces on $\Sigma_-$ only involves  horizontal derivatives. Hence we can first control the regularity of $u$ on $\Sigma_-$ by localizing near $\Sigma_-$ (since the lower boundary is not flat) and deriving estimates for the horizontal derivatives of solutions.   Then the crucial idea is to apply the well-known elliptic regularity theory for the classical one-phase Stokes problems to the domain $\Omega_\pm$ respectively, which then leads to the elliptic regularity theory for the two-phase Stokes problem \eqref{cS}.

Third, to prove that the local solution is actually global we will derive a priori estimates as in \cite{GT_per}. It turns out that we can again follow the scheme used in \cite{GT_per} if we can show that the full energy $\mathcal{E}_{n}$ and dissipation $\mathcal{D}_{n}$ with $n=2N$ or $n=N+2$  are comparable to their ``horizontal'' counterparts $\bar{\mathcal{E}}_{n}$ and $\bar{\mathcal{D}}_{n}$ (see \eqref{hp}). It is easy to show that $\mathcal{E}_{n}$  is comparable to $\bar{\mathcal{E}}_{n}$ by applying the two-phase Stokes elliptic theory to the equations \eqref{nosurface2} in perturbation form.  However, due to the lack of  $\eta$ terms in $\bar{\mathcal{D}}_{n}$, we can not compare the dissipations by using this technique. To overcome this difficulty, Guo and Tice \cite{GT_inf,GT_per} instead directly use the structure of the equations as well as the equation for the vorticity (from which the pressure and $\eta$ can effectively be eliminated) to derive various estimates for $u$ and $\nabla p$ without reference to $\eta$. Then a bootstrapping procedure is employed to obtain estimates for $\eta$ and $p$ (not just its gradient).  However, if we tried to use such a procedure for our two-phase problem, the difference of the two viscosities $\mu_\pm$ would prevent us from obtaining a ``good'' (properly eliminating $\eta$) boundary condition for the vorticity, and then we would be unable to complete the argument above.  To get around this difficulty, we again employ the idea we use in proving Theorem \ref{cStheorem}.  More precisely, since
$\bar{\mathcal{D}}_{n}$ controls horizontal derivatives, we can use it to gain the regularity of $u$ on $\Sigma_\pm$.  The idea is then to apply the well-known elliptic regularity theory for the Dirichlet problems of one-phase Stokes equations to the domain $\Omega_\pm$ respectively in order to deduce the desired estimates of $u$.  Then we get the
desired estimates of $p$ and $\eta$  by using the equations directly. We should note that this procedure could also  be applied to the one-phase problem, and thus it provides an alternative to the dissipation comparison analysis used in \cite{GT_inf,GT_per}.

\subsection{Main theorem with surface tension}

Our use of the energy method to prove the well-posedness and decay of the system \eqref{surface} is motivated by the works of Bae \cite{B}, Coutand and Shkoller \cite{CS}, and Jin and Padula \cite{JP}. The presence of surface tension gives rise to a gain of regularity for $\eta$, which (roughly speaking) causes the highest norm of $\eta$, namely \eqref{F_2N}, to be included in the dissipation.  This suggest that with surface tension  we do not need to balance the decay of low-order energy and the growth of the highest norm of $\eta$.  Indeed, we expect it to not grow in time.  This in turn also allows us to avoid establishing the well-posedness in the high regularity context we used in the case without surface tension.  Rather, we use the $H^2$ framework as in \cite{B,CS,JP}, say, $u_0 \in \ddot{H}^2(\Omega)$. Given the initial data $(u_0,\eta_0)$, we only need to construct the initial data $\partial_t u(0)$, $\partial_t \eta(0)$ and $p(0)$, and then we only require a compatibility condition for $(u_0,\eta_0)$. Letting $g(0)=g(u_0,\eta_0)$ and $\Pi_\ast$ be the horizontal projection defined by $\Pi_\ast v = (v_1,v_2,0)$, we may state the compatibility condition as
\begin{equation}\label{compatibility1ge}
\Pi_\ast\left(g_+(0)+\mu_+ \mathbb{D}u_{0,+} e_3\right)=0,\ \Pi_\ast\left(g_-(0)-\llbracket \mu\mathbb{D}u_0\rrbracket  e_3\right) = 0.
\end{equation}

To state our result, we define the energies and dissipations, the definitions of which rely on the linear energy identity associated to the homogeneous version of \eqref{surface}:
\begin{multline}
\frac{d}{dt}\left(\frac{1}{2}\int_\Omega \rho |u|^2 +\frac{1}{2}\int_{\Sigma_+}
\rho_+g|\eta_+|^2+\sigma_+|\nabla_\ast\eta_+|^2+\frac{1}{2}\int_{\Sigma_-}
-\rj g|\eta_-|^2+\sigma_-|\nabla_\ast\eta_-|^2\right) \\
+\frac{1}{2}\int_{\Omega}\mu|\mathbb{D} u|^2=0.
\end{multline}
According to this energy identity and the structure of the equations, we define the instantaneous energy  as
 \begin{equation}\label{E}
\mathcal{E}(t)=\|u\|_{ 2 }^2+\|\partial_tu\|_{  0 }^2 + \|p\|_{ 1 }^2  +\|\eta\|_{  3 }^2
  + \|\partial_t\eta\|_{  3/2 }^2 + \|\partial_t^2 \eta\|_{  -1/2 }^2
\end{equation}
and the dissipation rate as
\begin{multline} \label{D}
\mathcal{D}(t) = \|u\|_{3 }^2+\|\partial_tu\|_{ 1 }^2+\|\nabla \partial_t u_+\|_{H^{-1/2}(\Sigma_+) }^2 +
\|\llbracket \mu \nabla \partial_t u\rrbracket \|_{ H^{-1/2}(\Sigma_-) }^2
+ \|p\|_{2}^2 + \|\partial_tp\|_{0}^2 \\
+ \|\partial_t p_+\|_{H^{-1/2}(\Sigma_+) }^2 + \|\llbracket \partial_t p \rrbracket \|_{H^{-1/2}(\Sigma_-) }^2 + \|\eta\|_{7/2 }^2 + \|\partial_t\eta\|_{ 5/2}^2 + \|\partial_t^2\eta\|_{1/2 }^2 .
\end{multline}

We define the critical surface tension value according to
\begin{equation}\label{crit_st_def}
 \sigma_c := \rj g \max\{L_1^2,L_2^2\}.
\end{equation}
The importance of this value will be seen in our main result for the case with  surface tension, which we state now.

\begin{theorem}\label{surth}
Assume that $\sigma_\pm >0$ and that either $\rj \le 0$ or $\rj > 0$ and  $\sigma_- > \sigma_c$, where $\sigma_c$ is the critical surface tension defined by \eqref{crit_st_def}.  Let $u_0\in  {}_0H_\sigma^1(\Omega)\cap \ddot{H}^{2}(\Omega)$ and $\eta_0\in  {H}^{3}(\Sigma)$ satisfy the  compatibility condition \eqref{compatibility1ge} and the zero-average condition \eqref{zero0}.   There exists a $\delta_0>0$ so that if $\|u_0\|_2^2 + \|\eta_0\|_3^2 \le \delta_0$, then there exists a unique strong solution $(u,p,\eta)$ to the problem \eqref{surface} on $[0,\infty)$ satisfying the estimate
\begin{equation} \label{gles}
  \sup_{t\ge0} \mathcal{E}(t)+ \int_0^\infty \mathcal{D}(t)dt \lesssim  \mathcal{E}(0).
\end{equation}
Moreover, there exists $\lambda>0$ so that
\begin{equation}\label{decayes}
  \mathcal{E}(t) \lesssim \mathcal{E}(0)\exp(-\lambda t)  \text{ for all } t\ge 0.
\end{equation}
\end{theorem}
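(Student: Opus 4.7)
The plan is to prove Theorem \ref{surth} by a standard local-to-global bootstrap: construct a local strong solution in the energy class dictated by $\mathcal{E}$, derive a nonlinear energy inequality of the form $\frac{d}{dt}\tilde{\mathcal{E}} + c_0 \mathcal{D} \le 0$ for solutions with $\mathcal{E}$ sufficiently small (where $\tilde{\mathcal{E}}\simeq\mathcal{E}$), observe that $\mathcal{E}\lesssim \mathcal{D}$ to upgrade this to the differential inequality $\frac{d}{dt}\tilde{\mathcal{E}} + \lambda\tilde{\mathcal{E}} \le 0$, and then run a continuity argument using the exponential decay to extend the local solution to all of $[0,\infty)$. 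For local existence, one linearizes around an iterate $(\bar u,\bar p,\bar\eta)$: each iteration solves a linear two-phase time-dependent $\mathcal{A}$-Stokes problem with forcing $f$, $g_\pm$ frozen at the iterate, coupled to the kinematic equation $\partial_t\eta = u_3$. Solvability of the stationary part is exactly Theorem \ref{cStheorem}, the compatibility condition \eqref{compatibility1ge} is what guarantees $\partial_t u(0)$ lies in the correct space, and a contraction estimate in a norm controlled by $\mathcal{E}$ produces the unique local strong solution.

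The heart of the argument is the a priori energy estimate. Starting from the linear energy identity displayed just before \eqref{E} and testing the full system \eqref{surface} against $u$ yields the physical energy at base level plus cubic and quartic corrections generated by the perturbation terms in \eqref{f}--\eqref{g_-^3}; the smallness of $\mathcal{E}$, trace inequalities on $\Sigma_\pm$, and the explicit structure of $f, g_\pm$ allow these corrections to be bounded by $\sqrt{\mathcal{E}}\,\mathcal{D}$. Applying $\partial_t$ to \eqref{surface} and testing against $\partial_t u$ produces the analogous identity at the $\partial_t$-level, and the identities $\partial_t\eta = u_3$ and $\partial_t^2\eta = \partial_t u_3$ on $\Sigma$ transfer time derivatives of $\eta$ to time derivatives of $u$, which yields the $\|\partial_t\eta\|_{3/2}$, $\|\partial_t^2\eta\|_{-1/2}$, $\|\partial_t\eta\|_{5/2}$, $\|\partial_t^2\eta\|_{1/2}$ pieces of $\mathcal{E}$ and $\mathcal{D}$. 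Finally, the spatial norms $\|u\|_2, \|u\|_3, \|p\|_1, \|p\|_2, \|\eta\|_3, \|\eta\|_{7/2}$ are recovered by reading \eqref{surface} as a stationary two-phase Stokes system at each time and invoking Theorem \ref{cStheorem}; the surface tension terms $-\sigma_+\Delta_\ast\eta_+$ and $\sigma_-\Delta_\ast\eta_-$ in the dynamic boundary conditions contribute the extra two derivatives on $\eta$ that promote $\|\eta\|_3$ to $\|\eta\|_{7/2}$ in $\mathcal{D}$. Summing everything produces
\[
\frac{d}{dt}\tilde{\mathcal{E}}(t) + c_0 \mathcal{D}(t) \le C\sqrt{\mathcal{E}(t)}\,\mathcal{D}(t)
\]
for some $\tilde{\mathcal{E}}\simeq\mathcal{E}$, valid as long as $\mathcal{E}$ stays small.

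The delicate point is the coercivity of $\tilde{\mathcal{E}}$ at the base level, specifically the contribution $\int_{\Sigma_-}(-\rj g|\eta_-|^2 + \sigma_-|\nabla_\ast\eta_-|^2)$. When $\rj\le 0$ this quadratic form is manifestly nonnegative, but when $\rj>0$ one must use the zero-average condition \eqref{zero0} on $\eta_-$ together with the sharp Poincar\'e inequality
\[
\|\eta_-\|_{L^2(\mathrm{T}^2)}^2 \le \max\{L_1^2,L_2^2\}\,\|\nabla_\ast\eta_-\|_{L^2(\mathrm{T}^2)}^2,
\]
whose constant is the reciprocal of the smallest nonzero eigenvalue of $-\Delta_\ast$ on $\mathrm{T}^2$. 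This yields
\[
\int_{\Sigma_-}\left(-\rj g|\eta_-|^2 + \sigma_-|\nabla_\ast\eta_-|^2\right) \ge \left(1 - \frac{\sigma_c}{\sigma_-}\right)\sigma_- \|\nabla_\ast\eta_-\|_{L^2}^2,
\]
which is strictly positive precisely when $\sigma_->\sigma_c$, both identifying the critical value \eqref{crit_st_def} and supplying the missing coercivity. Once $\tilde{\mathcal{E}}$ is coercive, the termwise comparisons $\|u\|_2\le\|u\|_3$, $\|p\|_1\le\|p\|_2$, $\|\eta\|_3\le\|\eta\|_{7/2}$, and their $\partial_t$-analogues give $\mathcal{E}\lesssim\mathcal{D}$ trivially; absorbing $C\sqrt{\mathcal{E}}\,\mathcal{D}$ into $c_0\mathcal{D}/2$ for $\mathcal{E}\le\delta_0$ yields $\frac{d}{dt}\tilde{\mathcal{E}} + \lambda\tilde{\mathcal{E}}\le 0$, and a standard continuity/bootstrap argument converts the local solution into a global one satisfying both \eqref{gles} and \eqref{decayes}. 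The main obstacle is this coercivity step: matching the sharp Poincar\'e constant on $\mathrm{T}^2$ with the definition of $\sigma_c$, and verifying that the cubic corrections produced by the flattening transformation \eqref{cotr} remain absorbable into the positive quadratic form even when the gap $1-\sigma_c/\sigma_-$ is small.
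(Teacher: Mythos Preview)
Your overall architecture and the coercivity argument are sound; the sharp Poincar\'e computation you give is exactly the content of Theorem~\ref{critical}, and the conclusion $\mathcal{E}\lesssim\mathcal{D}$ together with the differential inequality does yield \eqref{gles}--\eqref{decayes}. The paper, however, does not run a local-to-global continuation. Instead it proves a $T$-independent linear estimate (Theorem~\ref{surlinear}) for the problem \eqref{surfaceLP}, then sets up a contraction mapping directly on $[0,\infty)$ in the metric space $\mathfrak{X}$ determined by $\sup_{t\ge0}\mathcal{E}+\int_0^\infty\mathcal{D}$. This buys global existence and the energy bound in a single stroke, without a separate continuation step; your route is more classical and would also work, but requires you to verify that your local solution actually lands in the regularity class \eqref{regggg} and that the a~priori estimate closes on a short interval before you can continue.

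There is, however, a genuine gap in your recovery of the spatial norms. You propose to obtain $\|u\|_3+\|p\|_2$ by invoking Theorem~\ref{cStheorem} on the stationary two-phase Stokes system, and then to read off $\|\eta\|_{7/2}$ from the dynamic boundary conditions \eqref{ellip1}--\eqref{ellip2}. But Theorem~\ref{cStheorem} applied to \eqref{surfaceelliptic} requires the boundary datum $F^3\in H^{3/2}(\Sigma)$, and that datum contains $\sigma_\pm\Delta_\ast\eta_\pm$, which costs exactly $\|\eta\|_{7/2}$; conversely, solving the boundary elliptic equation for $\eta$ costs $\|p\|_{H^{3/2}(\Sigma)}\lesssim\|p\|_2$. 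So the two estimates are circular and Theorem~\ref{cStheorem} alone cannot close the loop. The paper breaks this circularity in the proof of Theorem~\ref{surlinear}: it localizes away from $\Sigma_b$, takes horizontal difference quotients in the weak formulation \eqref{localweakdef} to gain $\|\nabla_\ast^2\eta\|_{H^1(\Sigma)}$ and $\|u\|_{H^{5/2}(\Sigma)}$ directly (see \eqref{toes3}--\eqref{toesb2}), and then applies the \emph{one-phase} Dirichlet Stokes estimate (Lemma~\ref{cS1phaselemma2}) on $\Omega_\pm$ separately, which yields $\|u\|_3+\|\nabla p\|_1$ without any reference to $\eta$ on the boundary. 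Only after this does the paper return to \eqref{ellip1}--\eqref{ellip2} (applied to $\nabla_\ast\eta$, using Poincar\'e) and then finally to Theorem~\ref{cStheorem} for the $L^\infty_t$ energy bounds. Your sketch needs an analogous device---either horizontal-derivative energy identities or the one-phase Dirichlet trick---to decouple the $p$ and $\eta$ estimates in the dissipation.
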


\begin{Remark}
Unlike in the case without surface tension, in the case with surface tension we have that the dissipation is stronger than the energy, i.e. $\mathcal{E} \le \mathcal{D}$.  This leads to the improved exponential decay rate for solutions.  The boundary terms appearing in $\mathcal{D}$ are crucial in the problem with surface tension.  Without them, we would be unable to close the nonlinear estimates.
\end{Remark}


\begin{Remark}
We believe that our method could be adapted to prove a version of Theorem \ref{surth} for the one-phase problem with surface tension.  This would give a proof based on the energy method of the result of Nishida, Teramoto, and Yoshihara \cite{NTY}.  The result would be somewhat more general than that of \cite{NTY} since it would allow for non-flat fixed lower boundaries, $\Sigma_b$.
\end{Remark}

\begin{Remark}
In the case $\rj >0$, Theorem \ref{surth} shows that sufficiently large surface tension at the internal interface, namely $\sigma_- > \sigma_c$, stabilizes the Rayleigh-Taylor instability in the nonlinear setting.   If $\rj  >0$ but $0< \sigma_- \le \sigma_c$, then our proof can still produce a local-in-time solution to \eqref{surface}.

The number $\sigma_c$ is critical in the sense that a linear growing mode solution can be constructed when $\sigma_- <\sigma_c$ (see \cite{C,GT2}).  In a forthcoming paper, we will show the nonlinear Rayleigh-Taylor instability for the system \eqref{surface} when $\sigma_- <\sigma_c$.
\end{Remark}

\begin{Remark}
The surface $\eta$ is sufficiently small to guarantee that the mapping $\Theta(\cdot,t)$, defined in \eqref{cotr}, is a diffeomorphism for each $t\ge 0$ when restricted to $\Omega_\pm$.  As such, we may change coordinates to $y \in \Omega(t)$ to produce a global-in-time, decaying solution to \eqref{form1} in the case with surface tension.
\end{Remark}

We prove Theorem \ref{surth} in Section \ref{surfacetension}.  We now present a sketch of the main techniques we use in the proof.  The extra control of $\eta$ in the dissipation that is provided by the surface tension allows us to prove the theorem in a standard perturbative manner without a two-tier method.  That is, we combine analysis of the linearized problem with estimates of the nonlinear terms to produce a solution via a contraction mapping argument.

The majority of our effort is devoted to the linearized problem \eqref{surfaceLP}.  We make use of the condition $\partial_t\eta=u_3$ on $\Sigma$ to transform the problem for $(u,p,\eta)$ to the equivalent linear problem \eqref{sLP0} for only $(u,p)$. This allows us to use a Galerkin method to prove the existence of a unique weak solution.  Because of the surface tension terms on the interfaces we cannot directly apply the two-phase elliptic regularity theory to improve regularity in the dissipation as we did in our analysis of the energy in the problem without surface tension.  Instead, we use a variant of the trick we used in the proof of Theorem \ref{cStheorem}.  More precisely, we localize away from $\Sigma_b$ and then use horizontal difference quotients in the definition of weak solution to gain control of horizontal derivatives on the interfaces $\Sigma_\pm$.  Then we use classical elliptic regularity theory for the one-phase Stokes problem with Dirichlet boundary conditions to complete the estimate for $u$ and to subsequently obtain estimates for $p$ and $\eta$ by using the structure of the equations.  On the other hand, the energy provides control of $\eta$, which allows us to estimate the surface terms on the interfaces.  We then directly apply Theorem \ref{cStheorem} to obtain estimates for $u$ and $p$ in terms of the energy.

With the linear theory in hand, we turn to the details of the contraction mapping argument.  First we develop estimates for the nonlinear terms.  Then we define a function space $\mathfrak{X}$, determined by the energy, dissipation, and initial data, in which the contraction argument may be framed.  We show that the linear theory allows us to define a solution operator $\mathcal{L}$ that is contractive in $\mathfrak{X}$, which then yields a unique fixed point that is the strong solution of the problem  \eqref{surface}.

\subsection{Plan of paper}

Section \ref{section_elliptic} contains the two-phase elliptic estimates that will be used to gain regularity in the time-dependent problems.  We study both the constant coefficient problems and their ``geometric,'' $\mathcal{A}$-dependent analogues.  In Section \ref{0surface} we present our analysis of the problem without surface tension.  First we study the relevant linearized problem, and then we consider the issue of local well-posedness.  After that we develop the a priori estimates needed to extend to global well-posedness and to prove the decay of the solutions.  Section \ref{surfacetension} concerns the problem with surface tension.  We develop the well-posedness theory for the linear problem.  Then we prove the global  well-posedness of the nonlinear problem through the use of the contraction mapping principle in an appropriate function space.  The decay of these solutions follows from a differential inequality involving the  energy and dissipation.  At the end of the paper we present Appendix  \ref{section_appendix}, where we record various analytic tools that are useful throughout the paper.

\section{Two-phase elliptic problems}\label{section_elliptic}

\subsection{Two-phase Stokes problem}\label{stokesconst}

In Section \ref{stokesconst}, the domain under consideration can be a bit more general than we need for our analysis of \eqref{nosurface} and \eqref{surface}. Indeed, we assume that $G=G_+\cup G_-$ is a double-layer of horizontally periodic slabs with the upper boundary $\Gamma_+$,  the internal interface $\Gamma_-$ and the lower boundary  $\Gamma_b$. We only assume that $\Gamma_-$ is flat and that   $\Gamma_+,\ \Gamma_b$ are sufficiently regular but may not be flat.  We write $\Gamma=\Gamma_+ \cup \Gamma_-$.

Consider the stationary Stokes problem
\begin{equation}\label{cS}\left\{\begin{array}{ll}
-\mu \Delta u +\nabla {p} =F^1 \ &\text{ in }G
\\ \diverge{u} =F^2   \ &\text{ in}\ G\\
(p_+I-\mu_+\mathbb{D}(u_+))\nu=F^3_+  \ &\text{ on }\Gamma_+
\\
 \Lbrack u\Rbrack=0,\quad \Lbrack(pI-\mu\mathbb{D}(u))e_3\Rbrack=-F^3_-  \ &\text{ on }\Gamma_-
\\  u_-=0 &\text{ on }\Gamma_b.\end{array}\right.
\end{equation}
Here, as in the rest of the paper, $\nu$ denotes the outward pointing unit normal on the corresponding boundary. Our purpose is to show the existence and regularity of the solutions to \eqref{cS} and the result is stated as the following theorem.

\begin{theorem}\label{cStheorem}
Let $r\ge 2$.  If $F^1\in \ddot{H}^{r-2}(G),\ F^2\in \ddot{H}^{r-1}(G),\ F^3\in  {H}^{r-3/2}(\Gamma)$, then the problem \eqref{cS} admits a unique strong solution $(u,p)\in ({}_0H^1(G)\cap \ddot{H}^r(G))\times \ddot{H}^{r-1}(G)$.
Moreover,
\begin{equation} \label{cSresult}
\|u\|_{r}+\| p\|_{r-1} \lesssim \|F^1\|_{r-2}+\|F^2\|_{r-1}+\|F^3\|_{r-3/2}.
\end{equation}
\end{theorem}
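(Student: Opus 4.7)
The plan is to proceed in two stages: first establish existence of a unique weak solution, then upgrade it to the claimed higher regularity by exploiting the crucial hypothesis that $\Gamma_-$ is flat.

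For the weak existence step, I would reduce to the case $F^2 = 0$ by constructing a Bogovskii-type lift $w \in {}_0H^1(G)$ with $\diverge w = F^2$, which is possible because $F^2 \in \ddot{H}^{r-1}(G)$ and the geometry of $G$ imposes no compatibility obstruction (since $u$ is free on $\Gamma_+$). Setting $v := u - w$, the problem for $v$ is divergence-free with modified data, and I would formulate it weakly on $V := \{v \in {}_0H^1(G) : \diverge v = 0\}$ via the bilinear form $B(v,\varphi) := \tfrac{1}{2}\int_G \mu\, \mathbb{D} v : \mathbb{D} \varphi$. Coercivity of $B$ on $V$ follows from Korn's inequality, valid because elements of $V$ vanish on $\Gamma_b$. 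Lax--Milgram then produces a unique $v \in V$, and the pressure $p \in L^2(G)$ is recovered by de Rham's theorem, applied on each subdomain $G_\pm$ separately, with the normal component of the jump condition fixing the relative constants up to a single global additive constant (harmlessly chosen).

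The heart of the theorem is the estimate \eqref{cSresult}. The main obstacle is the interface $\Gamma_-$: the problem is not globally a one-phase Stokes problem, and since $\Gamma_+$ and $\Gamma_b$ are not assumed flat, one cannot globally translate or apply difference quotients across them. The key observation is that $\Gamma_-$ is flat and horizontal. Using a cutoff $\chi$ supported in a neighborhood of $\Gamma_-$ disjoint from $\Gamma_+ \cup \Gamma_b$, horizontal difference quotients $D_h^j$ ($j=1,2$) preserve the jump condition $\llbracket u\rrbracket=0$ and commute with the geometry of that neighborhood. Testing the weak formulation against $D_{-h}^j D_h^j(\chi^2 u)$ and using the standard integration-by-parts identity for difference quotients yields uniform-in-$h$ bounds on $\|D_h^j \nabla(\chi u)\|_{L^2(G)}$. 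Passing to the limit gives control of $\nabla_\ast u$ in $H^1$ near $\Gamma_-$ and hence, by trace, $u|_{\Gamma_-} \in H^{3/2}(\Gamma_-)$ with the appropriate estimate.

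With this trace control in hand, I would decouple the problem into two classical one-phase Stokes problems. On $G_+$, the system consists of the inhomogeneous Stokes equations, a Neumann-type stress condition on $\Gamma_+$ with data $F^3_+$, and Dirichlet data on $\Gamma_-$ given by the now-known trace of $u$; on $G_-$, one has Dirichlet data on both $\Gamma_-$ (from the trace) and $\Gamma_b$ (zero). For each of these, standard elliptic regularity for the one-phase Stokes system gives $u_\pm \in H^r(G_\pm)$ and $p_\pm \in H^{r-1}(G_\pm)$, bounded in terms of the data and the trace. Summing gives \eqref{cSresult} for $r = 2$, and the general case $r \ge 2$ follows by induction: applying further horizontal difference quotients to the localized weak formulation upgrades the trace regularity on $\Gamma_-$, and then higher-order one-phase Stokes theory in each subdomain finishes the step. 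The main technical delicacy will be tracking the jump and stress conditions carefully under horizontal differentiation, so that the right-hand sides of the decoupled one-phase problems inherit the correct Sobolev regularity from $F^1, F^2, F^3$ without spurious loss.
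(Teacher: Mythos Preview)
Your proposal is correct and follows essentially the same strategy as the paper: reduce to the divergence-free case via a lift, obtain a weak solution by Lax--Milgram and Korn, then localize near the flat interface $\Gamma_-$, apply horizontal difference quotients to gain trace regularity $u|_{\Gamma_-}\in H^{r-1/2}$, and finally decouple into two classical one-phase Stokes problems (mixed Neumann/Dirichlet on $G_+$, pure Dirichlet on $G_-$), iterating for higher $r$. One minor slip: the pressure here is uniquely determined, not merely up to a global additive constant, because the normal component of the stress condition on $\Gamma_+$ fixes it absolutely.
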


The rest of this subsection is devoted to proving Theorem \ref{cStheorem}. First, we adjust the divergence of $u$ to reduce the problem to be a divergence-free problem. Then we obtain the existence of weak solutions for this problem and then improve the regularity of the solutions to show that the weak solution is indeed strong. Finally, we combine the previous two steps to deduce Theorem \ref{cStheorem}. We should point out here that the flatness of the internal interface $\Gamma_-$ is crucial to our analysis.

The adjustment of the divergence of $u$ is achieved by the solvability of the problem $\diverge{v}=p$. It is well-known that for $p\in L^2(G)$, there exists a $v\in {}_0H^1(G)$ so that $\diverge{v}=p$ in $G$ and $\|v\|_{1} \lesssim \|p\|_{0}$ (see for instance \cite{B1}). However, we are interested in solutions to \eqref{cS} with higher but piecewise regularity, so we need an improved version of this result.
\begin{lemma}\label{div}
If $p\in \ddot{H}^{r-1}(G),\ r\ge 1$, then there exists $v\in {}_0H^1(G)\cap \ddot{H}^{r}(G)$ so that $\diverge{v}=p$ in $G$ and
\begin{equation}\label{dives}
\|v\|_{r}\lesssim\|p\|_{r-1}.
\end{equation}
\end{lemma}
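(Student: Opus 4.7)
The plan is to construct $v$ piecewise on $G_+$ and $G_-$, choosing Dirichlet data on each slab so that the traces match across the flat internal interface $\Gamma_-$ (giving $\llbracket v\rrbracket = 0$) and vanish on $\Gamma_b$. Once the pieces fit together continuously on $\Gamma_-$, the glued vector field lies in ${}_0H^1(G)\cap \ddot{H}^r(G)$, and summing the piecewise bounds yields \eqref{dives}.

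First I would solve on $G_-$. Set $c := |\Gamma_-|^{-1}\int_{G_-} p_-$, so that $|c| \lesssim \|p_-\|_0$. The Dirichlet data $v_-|_{\Gamma_b}=0$ and $v_-|_{\Gamma_-}= c\,e_3$ satisfy the flux compatibility
\[
\int_{\partial G_-} v_-\cdot\nu = \int_{G_-} p_-.
\]
By the classical one-phase higher-regularity solvability of the inhomogeneous divergence equation with inhomogeneous Dirichlet data (Bogovskii--Galdi: combine Bogovskii's explicit operator with an $H^r$ divergence-free lift of the trace), this produces $v_-\in H^r(G_-)$ with $\diverge v_- = p_-$ and $\|v_-\|_{H^r(G_-)}\lesssim \|p_-\|_{H^{r-1}(G_-)}$.

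Next I would solve on $G_+$ with matching trace $g := v_-|_{\Gamma_-}= c\,e_3$. Because the outward normal of $G_+$ on $\Gamma_-$ is $-e_3$, the flux compatibility on $G_+$ forces
\[
\int_{\Gamma_+} v_+\cdot\nu = \int_{G_+} p_+ + \int_{G_-} p_-.
\]
I would prescribe $v_+|_{\Gamma_+} = \alpha\,e_3$ with $\alpha$ the unique constant fixing this flux (well-defined since $\int_{\Gamma_+} e_3\cdot\nu$ equals the horizontal cross-section area, which is strictly positive). Then $\|\alpha e_3\|_{H^{r-1/2}(\Gamma_+)}\lesssim \|p\|_{r-1}$, and the same one-phase theory produces $v_+\in H^r(G_+)$ with $\diverge v_+ = p_+$, the prescribed traces, and $\|v_+\|_{H^r(G_+)}\lesssim \|p\|_{r-1}$. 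Gluing $v := v_+\chi_{G_+} + v_-\chi_{G_-}$, the matched traces yield $\llbracket v\rrbracket = 0$ on $\Gamma_-$ while $v|_{\Gamma_b}=0$, so $v\in {}_0H^1(G)\cap \ddot{H}^r(G)$, $\diverge v = p$ piecewise in $G$, and \eqref{dives} follows.

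The main obstacle is the one-phase higher-regularity divergence-equation solvability on each slab with inhomogeneous Dirichlet data. For $r=1$ this reduces to the result recalled in the paragraph preceding the lemma; for $r\ge 2$ it is established by a partition-of-unity localization, diffeomorphisms flattening the nonflat parts of $\partial G_\pm$ (namely $\Gamma_+$ and $\Gamma_b$), Bogovskii's explicit kernel construction on each chart to invert the divergence from $H^{r-1}$ into $H^r\cap H_0^1$, and a standard $H^r$ lifting of the prescribed Dirichlet trace. The flatness of $\Gamma_-$ is what makes the matching step trivial, since a trace of the form $c\,e_3$ lies in every $H^s(\Gamma_-)$ with norm controlled by $|c|$.
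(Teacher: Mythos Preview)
Your construction is correct but takes a genuinely different route from the paper's. The paper does not work piecewise on $G_\pm$; instead it sets $v^{(1)}=\nabla\phi$ where $\phi$ solves a two-phase scalar diffraction problem ($-\Delta\phi=p$ with Dirichlet on $\Gamma_+$, transmission conditions $\llbracket\phi\rrbracket=\llbracket\partial_3\phi\rrbracket=0$ on $\Gamma_-$, Neumann on $\Gamma_b$), so that $\diverge v^{(1)}=p$ globally and $v^{(1)}\cdot\nu=0$ on $\Gamma_b$. It then adds a curl correction $v^{(2)}=\nabla\times d$ supported near $\Gamma_b$ (via the Ladyzhenskaya vector potential) to kill the tangential trace on $\Gamma_b$ without disturbing the divergence. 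Continuity across $\Gamma_-$ comes for free from the transmission conditions on $\phi$.

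Your approach trades this scalar two-phase problem for two one-phase Bogovskii-type solves with matched constant Dirichlet data on the flat interface. This is arguably more self-contained: it uses only the classical one-phase $H^{r-1}\to H^r$ solvability of the divergence equation (plus an elementary divergence-free trace lift of a constant vector), whereas the paper's argument requires the $\ddot H^{r+1}$ regularity of the diffraction problem, which they admit ``is not standard'' and defer to a modification of their later Stokes argument. On the other hand, the paper's construction produces a $v^{(1)}$ that is actually in $H^r(G)$ (not merely $\ddot H^r$) before the boundary correction, and its curl step makes the role of the normal-trace constraint on $\Gamma_b$ explicit; your approach hides the corresponding mechanism inside the one-phase black box.
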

\begin{proof}
We construct $v$ as follows.  First, we let $v^{(1)}=\nabla\phi$ with $\phi$ is a solution to the problem
\begin{equation}\label{diffraction}
\left\{\begin{array}{lll}
-\Delta \phi =p    \ &\text{ in}\ G,
 \\\phi_+=0&\text{ on }\Gamma_+,
 \\\Lbrack \phi\Rbrack=0,\quad \Lbrack \partial_3\phi \Rbrack=0  \ &\text{ on }\Gamma_-,
 \\\nabla\phi_-\cdot\nu=0 &\text{ on }\Gamma_b.
\end{array}\right.
\end{equation}
The existence of the unique weak solution $\phi\in H^1(G)$ to \eqref{diffraction} is standard.  The fact that $\phi$ is a strong solution with  $\phi \in \cap \ddot{H}^{r+1}(\Omega)$ and
\begin{equation}\label{1es}
\|v^{(1)}\|_{r} =\|\nabla\phi\|_{r} \lesssim \|p\|_{r-1}
\end{equation}
is not standard, but may be deduced from a modification of the argument we will use in Lemma \ref{cS0l2} for the Stokes problem. For the sake of brevity we omit the details in the present case. Note that $\diverge v^{(1)}=p$ in $G$ and $v_-^{(1)}\cdot\nu=0$ on $\Gamma_b$.

Next, we find $v^{(2)}$ so that $\diverge v^{(2)}=0$ in $G$ and $v^{(2)}=-v^{(1)}$ on $\Gamma_b$. Such $v^{(2)}$ can be found since $v_-^{(1)}\cdot\nu=0$ on $\Gamma_b$. Indeed, we   define $v^{(2)}=\nabla\times  d $, where  $d$ is the vector found in \cite[pp. 24--26]{L}. Moreover, $d\in H^{r+1}(G)$ and $\|d\|_{r+1}\lesssim \|v^{(1)}\|_{H^{r-1/2}(\Gamma_b)}$. This implies $v^{(2)}\in H^r(G)$ and
 \begin{equation}\label{2es}
\|v ^{(2)}\|_{r}=\|\nabla\times d\|_{r} \lesssim \|v^{(1)}\|_{H^{r-1/2}(\Gamma_b)}.
\end{equation}

Consequently, setting $v =v^{(1)}+v^{(2)}$, we have $v\in {}_0H^1(G) \cap \ddot{H}^{r}(G)$ and $\diverge{v}=p$ in $G$. Moreover, by \eqref{1es}--\eqref{2es} and the trace theorem  we get \eqref{dives}.
\end{proof}

We can now  adjust the divergence of $u$ by using Lemma \ref{div}. Since $F^2\in \ddot{H}^{r-1}(G),\ r\ge 2$, by Lemma \ref{div} there is $\bar{u}\in {}_0H^1(G)\cap \ddot{H}^{r}(G)$ so that $\diverge{\bar{u}}=F^2$ in $G$ and
\begin{equation} \label{ubares}
\|\bar{u}\|_{r}\lesssim\|F^2\|_{r-1}.
\end{equation}
Hence,  defining $w=u-\bar{u}$, we may switch the problem \eqref{cS} to  the following problem for $w$
\begin{equation}\label{cS0}
\left\{\begin{array}{ll}
-\mu \Delta w +\nabla {p} =G^1 \ &\text{ in }G
\\ \diverge{w} =0   \ &\text{ in}\ G\\
(p_+I-\mu_+\mathbb{D}(w_+))\nu=G^3_+  \ &\text{ on }\Gamma_+
\\
 \Lbrack w\Rbrack=0,\quad \Lbrack(pI-\mu\mathbb{D}(w))e_3\Rbrack=-G^3_-  \ &\text{ on }\Gamma_-
\\  w_-=0 &\text{ on }\Gamma_b,
\end{array}\right.
\end{equation}
where $G^1,G^3$ are given by
\begin{equation}
G^1=F^1+\mu\Delta \bar{u}, \  G^3_+=F_+^3+  \mu_+\mathbb{
D}(\bar{u}_+)e_3 \text{ and  } G^3_-=F^3_--\Lbrack
\mu\mathbb{D}(\bar{u})e_3\Rbrack.
\end{equation}
By \eqref{ubares} and the usual trace theory, we have
\begin{equation} \label{Ges}
\|G^1\|_{r-2} + \|G^3\|_{r-3/2} \lesssim \|F^1\|_{r-2} + \|F^2\|_{r-1} + \|F^3\|_{r-3/2}.
\end{equation}

Next, we  turn to the strong solvability of the problem \eqref{cS0}. Before doing this, we shall first obtain a weak
solution.  Let ${}_0H_\sigma^1$ be the space defined in \eqref{0H}.  Supposing that $G^1\in  ({}_0H^1(G))^*$ and $G^3 \in H^{-1/2}(\Gamma)$, we shall say $(w,p) \in {}_0H_\sigma^1(G) \times L^2(G)$ is a weak solution to \eqref{cS0} provided that
\begin{equation}
\label{cS0d2}(\frac{\mu}{2}\mathbb{D}w, \mathbb{D}v) - (p,\diverge{v}) = \langle G^1, v\rangle_\ast -
\langle G^3, v\rangle_{-1/2} \text{ for all } v\in  {}_0H^1(G).
\end{equation}
Here $\langle \cdot, \cdot\rangle_\ast$ is the dual pairing between $({}_0H^1(G))^*$ and ${}_0H^1(G)$, $\langle \cdot, \cdot\rangle_{-1/2}$ is the dual pairing between $H^{-1/2}(\Gamma)$ and $ H^{1/2}(\Gamma)$ and $(\cdot, \cdot)$ is the $L^2$ inner product on $\Omega$.

\begin{lemma}\label{cS0l1}
If $G^1\in  ({}_0H^1(G))^*$ and $G^3\in H^{-1/2}(\Gamma)$, then there exists a unique weak solution $(w,p)\in {}_0H_\sigma^1(G)\times L^2(G)$ to \eqref{cS0}. Moreover,
\begin{equation}\label{cS0l1es}
\|w\|_{1} + \|p\|_{0} \lesssim \|G^1\|_{({}_0H^1(G))^*} + \|G^3\|_{-1/2}.
\end{equation}
\end{lemma}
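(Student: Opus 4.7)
The plan is to follow the standard Stokes-type existence argument: first construct the velocity $w$ by a Lax–Milgram argument on the subspace of divergence-free test fields, then recover the pressure via a De Rham / Bogovski{\u\i}-type duality argument using Lemma~\ref{div}.

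First I would restrict the test functions in \eqref{cS0d2} to the closed subspace ${}_0H_\sigma^1(G)\subset {}_0H^1(G)$ of divergence-free fields. Because $\diverge v=0$ kills the pressure term, we are left with the coercive bilinear form $a(w,v):=(\tfrac{\mu}{2}\mathbb{D}w,\mathbb{D}v)$ paired against the continuous linear functional $\ell(v):=\langle G^1,v\rangle_{\ast}-\langle G^3,v\rangle_{-1/2}$ on ${}_0H_\sigma^1(G)$. Elements of ${}_0H^1(G)$ are (globally) $H^1$ across $\Sigma_-$ because of the jump condition $\Lbrack v\Rbrack=0$ and vanish on $\Gamma_b$, so Korn's inequality combined with Poincar\'e yields $\|v\|_1^2\lesssim \|\mathbb{D}v\|_0^2$, hence $a$ is coercive on ${}_0H_\sigma^1(G)$. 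Lax–Milgram then produces a unique $w\in {}_0H_\sigma^1(G)$ with $\|w\|_1\lesssim \|G^1\|_{({}_0H^1(G))^\ast}+\|G^3\|_{-1/2}$.

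Next I would construct the pressure. Define
\begin{equation*}
\Lambda(v):=\ell(v)-\bigl(\tfrac{\mu}{2}\mathbb{D}w,\mathbb{D}v\bigr)\quad\text{for }v\in {}_0H^1(G),
\end{equation*}
which is bounded on ${}_0H^1(G)$ and vanishes on ${}_0H_\sigma^1(G)$ by the construction of $w$. By Lemma~\ref{div} with $r=1$, the divergence operator $\diverge\colon {}_0H^1(G)\to L^2(G)$ is bounded and surjective with a bounded right inverse, so the induced map from the quotient ${}_0H^1(G)/{}_0H_\sigma^1(G)$ to $L^2(G)$ is an isomorphism. Hence $\Lambda$ descends through $\diverge$ to a bounded linear functional on $L^2(G)$, and the Riesz representation theorem provides a unique $p\in L^2(G)$ with $\Lambda(v)=(p,\diverge v)$ for every $v\in {}_0H^1(G)$. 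This is exactly \eqref{cS0d2}, giving existence. Uniqueness of the pair $(w,p)$ follows by taking $G^1=G^3=0$: picking $v=w$ forces $w=0$ by coercivity, and then the surjectivity of divergence together with $(p,\diverge v)=0$ for all $v\in{}_0H^1(G)$ forces $p=0$.

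For the pressure estimate, given any $q\in L^2(G)$ Lemma~\ref{div} furnishes $v\in{}_0H^1(G)$ with $\diverge v=q$ and $\|v\|_1\lesssim \|q\|_0$; substituting into $(p,q)=\Lambda(v)$ and using the bound already proved for $w$ gives
\begin{equation*}
|(p,q)|\le \bigl(\|G^1\|_{({}_0H^1(G))^\ast}+\|G^3\|_{-1/2}+\|w\|_1\bigr)\|v\|_1 \lesssim \bigl(\|G^1\|_{({}_0H^1(G))^\ast}+\|G^3\|_{-1/2}\bigr)\|q\|_0,
\end{equation*}
and duality delivers \eqref{cS0l1es}. The only real obstacle is the pressure recovery step: everything hinges on having divergence surjective from ${}_0H^1(G)$ onto $L^2(G)$ with a bounded right inverse, but this is precisely the content of Lemma~\ref{div} in the case $r=1$, so the argument closes without further difficulty.
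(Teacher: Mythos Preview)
Your proof is correct and follows essentially the same two-step strategy as the paper: Lax--Milgram on ${}_0H_\sigma^1(G)$ using Korn's inequality for the velocity, then recovery of the pressure as a Lagrange multiplier. The only cosmetic difference is that the paper packages the pressure step into an appeal to Proposition~\ref{Pressure} (with $\eta=0$), whereas you unwind that proposition explicitly via Lemma~\ref{div} and Riesz representation; these are the same argument, and your version is arguably more self-contained.
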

\begin{proof}
The vanishing of $\diverge{w}$ allows us to restrict the test function to $v\in {}_0H_\sigma^1(G)$ so that the pressure term in \eqref{cS0d2} vanishes. Hence, we can first obtain a pressureless weak solution $w$ to \eqref{cS0} in the sense
\begin{equation} \label{cS0d1}
(\frac{\mu}{2}\mathbb{D}w, \mathbb{D}v)=\langle G^1, v\rangle_\ast-\langle G^3, v\rangle_{-1/2} \text{ for all}\ v\in  {}_0H_\sigma^1(G).
\end{equation}
The trace theorem implies that the right-hand side of \eqref{cS0d1} can be regarded as a linear functional on ${}_0H_\sigma^1(G)$, and Korn's inequality, Lemma \ref{korn}, shows that the form on the left-hand side   is bilinear, continuous and coercive on ${}_0H_\sigma^1(G)$. Hence the Lax-Milgram theorem provides a unique $w\in {}_0H_\sigma^1(\Omega)$ satisfying \eqref{cS0d1} and $\|w\|_1\lesssim \|G^1\|_{({}_0H^1(G))^*} + \|G^3\|_{-1/2}$.   With $w$ in hand, we then introduce the pressure $p$ as a Lagrange multiplier. For this, we define $\Lambda \in ({}_0H^1(G))^*$ so that $\Lambda (v)$ equals the difference between the left  and right hand sides of \eqref{cS0d1}. Then $\Lambda=0$ on ${}_0H_\sigma^1(G)$, and hence according to Proposition \ref{Pressure} with $\eta=0$  there exists a unique $p \in L^2(G)$ so that $(p ,\diverge{v})=\Lambda(v)$ for all $v\in{}_0H^1(G)$.  This yields \eqref{cS0d2} and \eqref{cS0l1es}.
\end{proof}

We now prove higher regularity for the weak solution.

\begin{lemma}\label{cS0l2}
Let  $(w,p)\in {}_0H_\sigma^1(G)\times L^2(G)$  be the weak solution to \eqref{cS0} obtained in Lemma \ref{cS0l1}. If $r \ge 1$ and $G^1\in \ddot{H}^{r-2}(\Omega)$, $G^3\in {H}^{r-3/2}(\Gamma)$, then $w\in \ddot{H}^r(G)$,  $p\in \ddot{H}^{r-1}(G)$ and they satisfy
\begin{equation}\label{cS0l2es}
\|w\|_{r}+\| p\|_{r-1} \lesssim\|G^1\|_{ r-2 }+\|G^3\|_{ r-3/2 }.
\end{equation}
\end{lemma}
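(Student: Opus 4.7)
The plan is to bootstrap from the weak solution produced by Lemma \ref{cS0l1} by combining a horizontal difference-quotient argument near the flat internal interface $\Gamma_-$ with the classical one-phase Stokes regularity theory on each subdomain $G_\pm$. I proceed by induction on $r$, with $r=1$ being precisely the conclusion of Lemma \ref{cS0l1} (modulo the abuse of notation identifying $\|\cdot\|_{-1}$ with the $({}_0H^1(G))^\ast$ norm).

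For the induction step, assume \eqref{cS0l2es} holds with index $r-1$. The first task is to gain horizontal regularity of $w$ in a neighborhood of $\Gamma_-$. Fix a cutoff $\zeta \in C_c^\infty$ supported near $\Gamma_-$ but disjoint from $\Gamma_+$ and $\Gamma_b$, continuous across $\Gamma_-$. For $k \in \{1,2\}$ and small $|h|>0$, form the test function $v = -D^{-h}_k(\zeta^2 D^h_k w)$; horizontal translations preserve both the jump condition on the flat interface $\Gamma_-$ and the no-slip condition on $\Gamma_b$, so $v \in {}_0H^1(G)$. Substituting into the weak identity \eqref{cS0d2}, transferring $D^h_k$ onto the opposite side via the usual integration-by-parts identity, absorbing commutator and cutoff error terms into the right-hand side using the induction hypothesis, and invoking Korn's inequality (Lemma \ref{korn}) yields a uniform-in-$h$ bound for $\zeta D^h_k \mathbb{D} w$ in $\ddot{H}^{r-2}(G)$. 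Letting $h \to 0$ gives $\zeta \bar{\nabla} w \in \ddot{H}^{r-1}(G)$; iterating with higher-order horizontal derivatives (which commute with the flat-interface conditions) controls all horizontal derivatives of $w$ up to order $r$ in a neighborhood of $\Gamma_-$, and hence by trace produces the bound $\|w|_{\Gamma_-}\|_{H^{r-1/2}(\Gamma_-)} \lesssim \|G^1\|_{r-2} + \|G^3\|_{r-3/2}$.

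With interface regularity in hand, the two-phase problem decouples into two one-phase Stokes problems: on $G_+$ the pair $(w_+,p_+)$ solves the Stokes system with body force $G^1_+$, zero divergence, stress condition $G^3_+$ on $\Gamma_+$, and Dirichlet data $w|_{\Gamma_-}$ on $\Gamma_-$; on $G_-$ the analogous problem has no-slip data on $\Gamma_b$ and the same Dirichlet data on $\Gamma_-$. Standard elliptic regularity for these one-phase Stokes problems (combined with interior estimates and standard boundary estimates near $\Gamma_+$ and $\Gamma_b$) promotes $(w_\pm,p_\pm)$ to $H^r(G_\pm) \times H^{r-1}(G_\pm)$ with estimates in terms of $G^1_\pm$, $G^3_\pm$, and the controlled Dirichlet trace; the constants in the pressure are pinned down by the jump relation $\llbracket p I - \mu\mathbb{D}(w)\rrbracket e_3 = -G^3_-$ encoded in the weak formulation. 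Summing the two subdomain bounds produces \eqref{cS0l2es}. I expect the main obstacle to be the interface step: one must verify that the horizontal difference-quotient test function genuinely lies in ${}_0H^1(G)$ (this is where the flatness of $\Gamma_-$ is indispensable) and then carefully track the commutator, cutoff, and lower-order error terms at each induction level so that they are absorbed using the previously established bound at index $r-1$ rather than contaminating the top-order estimate.
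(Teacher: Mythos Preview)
Your proposal is correct and follows essentially the same strategy as the paper: induction on $r$, horizontal difference quotients near the flat interface $\Gamma_-$ to gain $H^{r-1/2}(\Gamma_-)$ trace regularity for $w$, then decoupling into two one-phase Stokes problems (Lemmas \ref{cS1phaselemma1} and \ref{cS1phaselemma2}) with Dirichlet data on $\Gamma_-$. The only technical difference is in the localization step: the paper multiplies by a cutoff to move the problem onto a slab $\tilde G$ with \emph{all} boundaries flat, then corrects the resulting nonzero divergence via Lemma \ref{div} before applying $D_{-h}D_h D^{m-1}$ to a genuinely divergence-free $\tilde v$, so that the pressure term drops out cleanly; you instead test directly with $-D^{-h}_k(\zeta^2 D^h_k w)$ in the original weak formulation and absorb the pressure--divergence commutator (which is lower order since $\diverge w=0$ forces $\diverge(\zeta^2 D^h_k w)=2\zeta\nabla\zeta\cdot D^h_k w$) using the induction hypothesis on $p$. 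Both variants are standard and lead to the same trace bound on $\Gamma_-$.
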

\begin{proof}
We will prove our lemma by induction. For $r=1$, it is already proved in Lemma \ref{cS0l1}. Now assume that the lemma holds up to $r=m\ge 1$.  We will prove that it holds for $r=m+1$. Since our upper boundary $\Gamma_+$ and the lower boundary $\Gamma_b$ may not be flat, we are not free to take the horizontal difference quotient in \eqref{cS0}. The standard regularity-improving technique would be to resort to a flattening boundary argument in order to allow the local application of horizontal difference quotients (i.e. horizontal derivatives). Such an  argument would be lengthy, so we abandon it in favor of an alternate approach  that makes full use of the fact that the interface $\Gamma_-$ is flat.  We will localize the equations near $\Gamma_-$ and then apply difference quotients in the localization.

We may assume without loss of generality that $\Gamma_+\subset\{x_3\ge 3\}$, $\Gamma_-=\{x_3=0\}$ and $\Gamma_b\subset\{x_3\le -3\}$.  Indeed, if this is not so, then we can translate $G$ and then rescale so as to make these assumptions true; these operations can then be undone to arrive at the desired estimates in the original domain.  We define the cut-off function $\chi\in C_c^\infty(\mathbb{R})$ with property that
\begin{equation}
\chi=1\text{ on }\{-1\le x_3\le 1\}\text{ and }\chi=0\text{ on }\{|x_3|\ge 3/2 \}.
\end{equation}
Now we define $v=\chi w$, $q=\chi p$, $\tilde{G}:=\{-2\le x_3\le 2\}$, $\tilde{\Gamma}_+:=\{x_3=2\}$, $\tilde{\Gamma}_- :=\Gamma_-$, and $\tilde{\Gamma}_b:=\{x_3=-2\}$.  Then it is easy to see that $(v,  q)$ is the solution to the  problem
\begin{equation}\label{cutoffeq}
\left\{\begin{array}{ll}
-\mu \Delta v  +\nabla q  =\tilde{F}^1\ &\text{ in } \tilde{G}
\\   \diverge{v} =\tilde{F}^2  \ &\text{ in }\tilde{G}
\\(q_+I-\mu_+\mathbb{D}(v_+))e_3=0  \ &\text{ on } \tilde{\Gamma}_+
\\\Lbrack v\Rbrack=0,\quad \Lbrack(qI-\mu\mathbb{D}(v))e_3\Rbrack = -G^3_- \ &\text{ on } \tilde{\Gamma}_-
\\v_-=0 &\text{ on } \tilde{\Gamma}_b,
\end{array}\right.
\end{equation}
where $\tilde{F}^1,\tilde{F}^2$ are given by
\begin{equation}\label{cutofff}
\tilde{F}^1 = G^1-\mu (\partial_3^2\chi w + 2\partial_3 \chi\partial_3 w) +\partial_3\chi p e_3,
\quad \tilde{F}^2 = \partial_3\chi w^3.
\end{equation}

Since now  $G^1\in \ddot{H}^{m-1}(G),\ G^3\in H^{m-1/2}(\Gamma)$, by the induction assumption at the level $m$, we have $w\in \ddot{H}^m(G) ,\ p\in \ddot{H}^{m-1}(G)$ and satisfy
\begin{equation} \label{mestt}
\|w\|_{m}+\| p\|_{m-1} \lesssim \|G^1\|_{ m-2 } + \|G^3\|_{ m-3/2 }.
\end{equation}
This implies in particular that $v\in \ddot{H}^{m }(\tilde{G})$, $q\in \ddot{H}^{m-1}(\tilde{G})$ satisfy
\begin{equation} \label{vmes}
\|v\|_{m}+\| q\|_{m-1} \lesssim \|G^1\|_{ m-2 } + \|G^3\|_{ m-3/2 }
\end{equation}
and that
\begin{equation} \label{esF}
\|\tilde{F}^1\|_{m-1}+\|\tilde{F}^2\|_{m}+\|G^3_-\|_{m-1/2} \lesssim \|G^1\|_{ m-1 } + \|w\|_{ m } +
\|p\|_{m-1} + \|G^3_-\|_{m-1/2} \lesssim \mathcal{Z},
\end{equation}
where we have compactly written
\begin{equation}
\mathcal{Z}:=\|G^1\|_{m-1} +\|G^3 \|_{m-1/2}.
\end{equation}

Now we want to adjust the divergence of $v$. Since $\tilde{F}^2\in \ddot{H}^m(\tilde{G})$, by Lemma \ref{div} there exists $\bar{v}\in {}_0H^1(\tilde{G})\cap \ddot{H}^{m+1}(\tilde{G})$ so that $\diverge{\bar{v}}=\tilde{F}^2$ in $\tilde{G}$ and
\begin{equation}\label{esF2}
\|\bar{v}\|_{m+1} \lesssim \|\tilde{F}^2\|_{m} \lesssim \mathcal{Z}.
\end{equation}
Hence,  defining $\tilde{v}=v-\bar{v}$, we may switch the problem \eqref{cutoffeq}  to  the following problem for $\tilde{v}$:
\begin{equation}\label{cutoffeq0}
\left\{\begin{array}{ll}
-\mu \Delta \tilde{v} +\nabla q =\tilde{G}^1 \ &\text{ in }\tilde{G}
\\ \diverge{\tilde{v}} =0   \ &\text{ in}\ \tilde{G}\\
(q_+I-\mu_+\mathbb{D}(\tilde{v}_+))\nu=\tilde{G}^3_+  \ &\text{ on
}\tilde{\Gamma}_+
\\
 \Lbrack \tilde{v}\Rbrack=0,\quad \Lbrack(qI-\mu\mathbb{D}(\tilde{v}))e_3\Rbrack=-\tilde{G}^3_-  \ &\text{ on }\tilde{\Gamma}_-
\\ \tilde{v}_-=0 &\text{ on }\tilde{\Gamma}_b,
\end{array}\right.
\end{equation}
where $\tilde{G}^1,\tilde{G}^3$ are given by
\begin{eqnarray}
\tilde{G}^1=\tilde{F}^1+\mu\Delta \bar{v},  \tilde{G}^3_+= \mu_+\mathbb{D}(\bar{v}_+)e_3 \text{ and
}\tilde{G}^3_-=G^3_-- \Lbrack \mu\mathbb{D}(\bar{v})e_3\Rbrack.
\end{eqnarray}
By \eqref{esF}, \eqref{esF2} and the trace theorem, we have
\begin{equation}  \label{Ges222}
\|\tilde{G}^1\|_{m-1}+\|\tilde{G}^3\|_{m-1/2}  \lesssim \|\tilde{F}^1\|_{m-1} + \|\bar{v}\|_{m+1} + \|G_-^3\|_{m-1/2} \lesssim \mathcal{Z}.
\end{equation}

Notice that \eqref{vmes} and \eqref{esF2} imply that we already have  $\tilde{v}\in {}_0H_\sigma^1(\tilde{G}) \cap \ddot{H}^{m}(\tilde{G})$ and
\begin{equation} \label{tvmes}
\|\tilde{v}\|_{m}+\| q\|_{m-1} \lesssim \mathcal{Z}.
\end{equation}
Let us now write $D^{k}$ to mean any differential operator $\partial^\alpha$ with $\alpha \in \mathbb{N}^2$ and $| \alpha | = k$.  The estimates \eqref{Ges222} and \eqref{tvmes}, combined with the fact that all the boundaries $\tilde{\Gamma}_+$, $\tilde{\Gamma}_-,$ $\tilde{\Gamma}_b$ are flat allow  us to see that $(D^{m-1} \tilde{v}, D^{m-1}q) \in  {}_0H_\sigma^1(\tilde{G}) \times L^2(\tilde{G})$ is the unique weak solution of the problem resulting from applying $D^{m-1}$ to \eqref{cutoffeq0}, in the sense of \eqref{cS0d2}.  That is,
\begin{equation} \label{cS0l81}
(\frac{\mu}{2}\mathbb{D}D^{m-1}\tilde{v}, \mathbb{D}v)-(D^{m-1}q,\diverge{v}) =( D^{m-1}\tilde{G}^1, v)-(
D^{m-1}\tilde{G}^3, v) \text{ for all } v\in {}_0H^1(\tilde{G}).
\end{equation}
Note that  $D^{m-1}\tilde{G}^1\in L^2(\tilde{G})$ and $ D^{m-1}\tilde{G}^3\in {H}^{ {1}/{2}}(\tilde{\Gamma})$. Take the test function  $v=D_{-h}D_hD^{m-1}\tilde{v}$ in \eqref{cS0l81}, where $D_h$ is the standard difference  quotient in any horizontal direction $h\in \mathbb{R}^2$; then we have
\begin{equation} \label{Dmu}
(\frac{\mu}{2}\mathbb{D}D^{m-1}\tilde{v}, \mathbb{D}D_{-h}D_hD^{m-1}\tilde{v})  =( D^{m-1}\tilde{G}^1, D_{-h}D_h D^{m-1} \tilde{v}) - (D^{m-1}\tilde{G}^3, D_{-h} D_h D^{m-1}\tilde{v}).
\end{equation}
Here the pressure term vanishes since $ D^{m-1}\tilde{v}\in {}_0H_\sigma^1(\tilde{G})$.  We shall now estimate both sides of \eqref{Dmu}. First, by the properties  of difference  quotient and Korn's inequality, we obtain
\begin{equation}\label{cS0l82}
(\frac{\mu}{2}\mathbb{D}D^{m-1}\tilde{v}, \mathbb{D} D_{-h} D_h D^{m-1}\tilde{v}) = (\frac{\mu}{2}\mathbb{D}D_hD^{m-1}\tilde{v}, \mathbb{D} D_h D^{m-1} \tilde{v})\ge C\| D_h \nabla D^{m-1} u\|_{0}^2.
\end{equation}
We again combine the properties  of difference  quotients, the trace theorem and Poincar\'e's inequality to  obtain
\begin{multline}\label{cS0l83}
( D^{m-1}\tilde{G}^1, D_{-h}D_hD^{m-1}\tilde{v})+( D^{m-1}\tilde{G}^3, D_{-h}D_hD^{m-1}\tilde{v})
\\
= (D^{m-1}\tilde{G}^1, D_{-h}D_hD^{m-1}\tilde{v})+( D_h D^{m-1} \tilde{G}^3, D_h D^{m-1}\tilde{v})
\\
\lesssim \| D^{m-1}\tilde{G}^1\|_{L^2(\tilde{G})}\|D_h D^m \tilde{v}\|_{L^2( \tilde{G})} + \|D_h D^{m-1} \tilde{G}^3 \|_{H^{-1/2}(\tilde{\Gamma})} \|D_hD^{m-1}\tilde{v}\|_{H^{1/2}(\tilde{\Gamma})}
\\
\lesssim \left(\|D^{m-1}\tilde{G}^1\|_{0}+\|D^{m-1}\tilde{G}^3\|_{1/2}\right)\|D_h\nabla D^{m-1}u\|_{0}.
\end{multline}
By \eqref{cS0l82}--\eqref{cS0l83} and \eqref{Ges222}, we have
\begin{equation}\label{cS0l84}
\|D_h\nabla D^{m-1}\tilde{v}\|_{0} \lesssim \|D^{m-1}\tilde{G}^1\|_{0} + \|D^{m-1}\tilde{G}^3\|_{1/2}
\lesssim \mathcal{Z}.
\end{equation}
By the properties  of difference quotients and Poincar\'e's inequality, we  deduce from \eqref{cS0l84} that
\begin{equation} \label{tvmes2}
\| \na^{m}\tilde{v}\|_{1}\lesssim  \mathcal{Z}.
\end{equation}

It is crucial to observe that \eqref{tvmes} and \eqref{tvmes2} have yielded  higher regularity of  $\tilde{v}$ on $\Gamma_-$ due to its flatness. Indeed, by the definition of Sobolev norm $\|\cdot\|_{H^s(\Gamma_-)}:=\|\cdot\|_{H^s(  \mathrm{T}^2)}$, the Sobolev interpolation inequality and the trace theorem, we obtain from \eqref{tvmes} and \eqref{tvmes2} that
\begin{equation}\label{cS0bes}
\| \tilde{v} \|_{H^{m+1/2}(\Gamma_-)}^2  \lesssim \| \tilde{v}\|_{L^2(\Gamma_-)} + \|\na^{m} \tilde{v}\|_{H^{ 1/2} (\Gamma_-)} \lesssim \|\tilde{v}\|_{H^1(\Omega)} + \|\na^{m} \tilde{v}\|_{H^1(\Omega)} \lesssim \mathcal{Z}.
\end{equation}
The key point is to notice that  we can obtain the required regularity of $w$ on $\Gamma_-$ in order to allow us to to apply the classical regularity theory of the one-phase Stokes problems. More precisely, since $v=\tilde{v} +\bar{v}$ and $w=v$ on $\Gamma_-$, we deduce from \eqref{cS0bes}, \eqref{esF2} and the trace theorem that
\begin{equation}\label{bde}
\| w \|_{H^{m+1/2}(\Gamma_-)}^2 = \| v \|_{H^{m+1/2}(\Gamma_-)}^2\le \| \tilde{v} \|_{H^{m+1/2}(\Gamma_-)}^2 +
\|\bar{v} \|_{H^{m+1/2}(\Gamma_-)}^2  \lesssim \mathcal{Z}.
\end{equation}
With the estimate \eqref{bde} in hand, we first apply Lemma \ref{cS1phaselemma1} to the following Stokes problem
\begin{equation}\label{cS0111}
\left\{\begin{array}{ll}
-\mu_+ \Delta w_+ +\nabla p_+ =G^1_+ \ &\text{ in }G_+
\\ \diverge{w_+} =0   \ &\text{ in}\ G_+\\
(p_+I-\mu_+\mathbb{D}(w_+))\nu=G^3_+  \ &\text{ on }\Gamma_+
\\ w_+=w_+=v \ &\text{ on }\Gamma_-
\end{array}\right.
\end{equation}
to find that indeed $(w_+,p_+)\in H^{m+1}(G_+)\times H^{m}(G_+)$ and satisfy the   estimate
\begin{multline}\label{finales1}
\|w_+\|_{H^{m+1}(G_+)}+\|p_+\|_{H^m(G_+)}
\\
\lesssim \|G^1_+\|_{H^{m-1}(G_+)} + \|G_+^3\|_{H^{m-1/2}(\Gamma_+)} + \|w_+\|_{H^{m+1/2}(\Gamma_-)} \lesssim \mathcal{Z}.
\end{multline}
Similarly, we apply Lemma \ref{cS1phaselemma2} to the  Stokes problem
\begin{equation}\label{cS0112}
\left\{\begin{array}{ll}
-\mu_- \Delta w_- +\nabla p_- =G^1_- \ &\text{ in }G_-
\\ \diverge{w_-} =0   \ &\text{ in}\ G_-
\\ w_-=w_-=v \ &\text{ on }\Gamma_-
\\ w_-=0 \ &\text{ on }\Gamma_b\end{array}\right.
\end{equation}
to find that indeed $(w_-,p_-)\in H^{m+1}(G_-)\times H^{m}(G_-)$ and satisfy the estimate
\begin{equation}\label{finales2}
\|w_-\|_{H^{m+1}(G_-)} + \|\nabla p_-\|_{H^{m-1}(G_-)}
\lesssim \|G^1_-\|_{H^{m-1}(G_-)} + \|w_-\|_{H^{m+1/2}(\Gamma_-)} \lesssim \mathcal{Z}.
\end{equation}

Consequently, combining \eqref{finales1}--\eqref{finales2} with \eqref{mestt}, we conclude that $w\in \ddot{H}^{m+1}(G) ,\ p\in \ddot{H}^{m}(G)$ and satisfy
\begin{equation}
\|w\|_{m+1}+\| p\|_{m} \lesssim  \mathcal{Z}:=\|G^1\|_{m-1} +\|G^3\|_{m-1/2}.
\end{equation}
This implies that our lemma holds for $r=m+1$, and hence by  induction the proof is completed.
\end{proof}

We now present the
\begin{proof}[Proof of Theorem \ref{cStheorem}]
We set $u=\bar{u}+w$ with $\bar{u}$ found in \eqref{ubares} and $(w,p)$ found in Lemma \ref{cS0l2}.  Then $(u,p)$ is the unique strong solution to the problem \eqref{cS} and satisfies the estimate \eqref{cSresult}.
\end{proof}

\subsection{Two-phase $\mathcal{A}$--Stokes problem}\label{sec_two_A_stokes}

In Section \ref{sec_two_A_stokes}, in order to derive the regularity of the solutions to the time-dependent problem, we consider the following two-phase $\mathcal{A}$--Stokes problem:
 \begin{equation}\label{cSA}
\left\{\begin{array}{lll}
 -\mu\Delta_\mathcal{A} u+\nabla_\mathcal{A} {p}=F^1\ &\text{ in}\ \Omega
 \\ \diverge_\mathcal{A}{u} =F^2  \ &\text{ in}\ \Omega
\\S_{\mathcal{A}_+}(p_+,u_+)\mathcal{N}_+= F^3_+  \ &\text{ on }\Sigma_+,
\\ \Lbrack u\Rbrack=0,\quad \Lbrack S_\mathcal{A}(p,u)\Rbrack \mathcal{N}_-=-F^3_-  \ &\text{ on }\Sigma_-,
\\  u_-=0 &\text{ on }\Sigma_b.
\end{array}\right.
\end{equation}
Here we view $\eta$ as given and let $\mathcal{A},  \mathcal{N},$ etc, be determined in terms of $\eta$ as in \eqref{ABJ_def}.  We shall use the regularity of \eqref{cSA} for each fixed $t$ in the context of the time-dependent problem, and hence we will temporarily ignore the time dependence of $\eta,$ $\mathcal{A},$ $\mathcal{N}$, etc, to view \eqref{cSA} as a stationary problem.

Recall that the two-phase Stokes problem with constant coefficients differential operators is only solved in Section
\ref{stokesconst} in the case that the interface is flat. This prevents us from establishing the strong solvability of \eqref{cSA} as in Lemma 3.2 of \cite{GT_lwp} by transforming \eqref{cSA} back to a Stokes problem  with constant coefficients in a general domain. Instead, we will directly solve \eqref{cSA} by viewing it as a perturbation of the Stokes problem with flat interface (and flat upper boundary) that was considered in Section \ref{stokesconst}. For this technique to be viable, we must impose a smallness condition on $\eta$ in our results.   Rewriting \eqref{cSA} in this form yields
\begin{equation}\label{cSperturbeeq}
\left\{\begin{array}{lll}
-\mu \Delta u+\nabla {p} =F^1+G^1\ &\text{in}\ \Omega
\\  \diverge{u}  =F^2+G^2    \ &\text{in}\ \Omega
\\  (p_+I-\mu_+\mathbb{D}(u_+))e_3=F_+^3+G_+^3 \ &\text{on }\Sigma_+,
\\
 \Lbrack u\Rbrack=0,\quad \Lbrack(pI-\mu\mathbb{D}(u))e_3\Rbrack=-F_-^3-G_-^3  \ &\text{on }\Sigma_-,
\\  u_-=0 &\text{on }\Sigma_b,
\end{array}\right.
\end{equation}
where $G^1,G^2,G^3$ are given by
\begin{equation}\label{cSperturbedata00000}
\left\{\begin{array}{lll}
 G^1:=\mu(\Delta_\mathcal{A}-\Delta)u-(\nabla_\mathcal{A}-\nabla)p,\quad G^2:=-(\diverge_\mathcal{A}-\diverge)u,
 \\ G^3_+:=p_+(\mathcal{N}_+-e_3)-\mu_+(\mathbb{D}_{\mathcal{A}_+}(u_+)\mathcal{N}_+-\mathbb{D}(u_+)e_3),
 \\- G^3_-:=\Lbrack p\Rbrack(\mathcal{N}_--e_3)+\Lbrack\mu\left(\mathbb{D}_{\mathcal{A}}(u)\mathcal{N}_--\mathbb{D}(u)e_3\right)\Rbrack.
\end{array}  \right.
\end{equation}
Note that $G^1$, $G^2$, and $G^3$ are linear in $u$ and $p$.

We first give some estimates of these perturbation functions.

\begin{lemma}\label{cSperturbedata}
Let $k\ge 4$ be an integer and suppose that $\|\eta\|_{k+1/2}\le 1$, then for $r=2,\dots,k$,
\begin{equation}\label{cSperturbedataes1}
   \|G^1\|_{r-2}+\|G^2\|_{r-1}
  +\|G^3\|_{r-3/2}  \lesssim\|\eta\|_{k-1/2}  \left(\|u\|_{r}+\|p\|_{ r-1}\right),
\end{equation}
 and for $r=k+1$,
\begin{equation}\label{cSperturbedataes2}
\|G^1\|_{k-1}+\|G^2\|_{k} +\|G^3\|_{k-1/2}  \lesssim\|\eta\|_{k-1/2}  \left(\|u\|_{k+1}+\|p\|_{ k}\right)
+  \|\eta\|_{k+1/2}\|u\|_{7/2}.
\end{equation}
\end{lemma}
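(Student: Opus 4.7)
The plan is to read each of $G^1, G^2, G^3$ in \eqref{cSperturbedata00000} as a finite sum of bilinear products $\mathcal{F}(\bar\eta_\pm,\nabla\bar\eta_\pm)\cdot H$, where $\mathcal{F}$ is a smooth function vanishing at $\bar\eta_\pm=0$ (assembled from $\mathcal{A}-I$, $\mathcal{N}_\pm-e_3$, $J$, $K$, $\theta$) and $H$ is $u$, $p$, or one of their first derivatives, evaluated on $\Omega$ for $G^1,G^2$ and traced to $\Sigma_\pm$ for $G^3$. Because one derivative of $\bar\eta$ is already baked into $\mathcal{F}$, the Poisson extension bound $\|\bar\eta_\pm\|_{H^{s+1/2}(\Omega_\pm)}\ls\|\eta\|_{H^s(\Sigma)}$ yields $\|\mathcal{F}\|_{H^s(\Omega)}\ls\|\eta\|_{s+1/2}$. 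The proof then reduces to standard Sobolev product estimates in $\ddot{H}^s(\Omega)$ and $H^s(\Sigma)$, with the hypothesis $\|\eta\|_{k+1/2}\le 1$ used to expand any rational expressions in $J$.

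For $2\le r\le k$, I would apply the Moser estimate $\|fg\|_{H^{r-2}(\Omega)}\ls\|f\|_{H^{\max(r-2,2)}(\Omega)}\|g\|_{H^{r-2}(\Omega)}$, legitimate on $\Omega\subset\mathbb{R}^3$ since the algebra threshold is $3/2$. Placing $f=\mathcal{F}$ in the higher slot gives $\|\mathcal{F}\|_{H^{\max(r-2,2)}}\ls\|\eta\|_{\max(r-3/2,5/2)}\le\|\eta\|_{k-1/2}$, the last inequality using $k\ge 4$, while $H$ is absorbed into $\|u\|_r+\|p\|_{r-1}$. Boundary terms in $G^3$ are treated identically on the two-dimensional $\Sigma$ (where $H^s(\Sigma)$ is an algebra for $s>1$) combined with the trace inequality $\|\cdot\|_{H^{r-3/2}(\Sigma)}\ls\|\cdot\|_{H^{r-1}(\Omega)}$. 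This yields \eqref{cSperturbedataes1}.

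For the endpoint $r=k+1$, the same scheme forces an uncontrolled $\|\mathcal{F}\|_{H^{k-1}(\Omega)}\sim\|\eta\|_{k+1/2}$ whenever derivatives pile up on $\mathcal{F}$. I would instead invoke the tame bilinear bound
\begin{equation*}
\|fg\|_{H^{k-1}(\Omega)}\ls\|f\|_{H^{k-1}(\Omega)}\|g\|_{L^\infty(\Omega)}+\|f\|_{L^\infty(\Omega)}\|g\|_{H^{k-1}(\Omega)}
\end{equation*}
and its $\Sigma$-analogue. The second term requires only $\|\mathcal{F}\|_{L^\infty}\ls\|\mathcal{F}\|_{H^2}\ls\|\eta\|_{5/2}\le\|\eta\|_{k-1/2}$ and pairs against $\|u\|_{k+1}+\|p\|_k$, producing the first contribution on the right of \eqref{cSperturbedataes2}. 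The first term requires the full $\|\mathcal{F}\|_{H^{k-1}}\ls\|\eta\|_{k+1/2}$ but pairs against $\|H\|_{L^\infty}$; reading \eqref{cSperturbedata00000}, in each summand whose top-order Leibniz contribution forces this bound (notably the $(\mathcal{A}-I):\nabla u$ piece of $G^2$ and the $\mathcal{A}\cdot\partial\mathcal{A}\cdot\partial u$ piece of $G^1$, with analogous boundary pieces in $G^3$), the remaining factor $H$ is at most $\partial u$, and $\|\nabla u\|_{L^\infty(\Omega)}\ls\|u\|_{H^{5/2+\varepsilon}}\ls\|u\|_{7/2}$ by Sobolev embedding, giving the $\|\eta\|_{k+1/2}\|u\|_{7/2}$ contribution.

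The main obstacle is checking, term by term in the expansion of \eqref{cSperturbedata00000}, that no top-order Leibniz summand multiplies $\|\eta\|_{k+1/2}$ against a pressure norm beyond $\|p\|_k$. This is a structural point: in $G^1$ the pressure enters only as $(\mathcal{A}-I)\nabla p$, where the algebra estimate $\|\mathcal{A}-I\|_{H^{k-1}}\|\nabla p\|_{H^{k-1}}\ls\|\eta\|_{k-1/2}\|p\|_k$ already suffices, and in $G^3$ only as $p(\mathcal{N}-e_3)$, where $\|\mathcal{N}-e_3\|_{L^\infty(\Sigma)}\ls\|\eta\|_{5/2}$ keeps the dangerous Leibniz piece inside the $\|\eta\|_{k-1/2}\|p\|_k$ budget. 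Once this bookkeeping is complete, assembling the finitely many pieces of \eqref{cSperturbedata00000} delivers both \eqref{cSperturbedataes1} and \eqref{cSperturbedataes2}.
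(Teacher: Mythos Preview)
Your approach is essentially what the paper defers to when it cites \cite{GT_inf,GT_per}: write each $G^i$ as a finite sum of products of an $\eta$-dependent coefficient against $u$, $p$, or a derivative thereof, exploit the half-derivative gain from the Poisson extension in $\Omega$, and close with Sobolev product estimates (algebra for $r\le k$, tame/Kato--Ponce for the endpoint). The scheme is sound for \eqref{cSperturbedataes1} and for the bulk of \eqref{cSperturbedataes2}. One minor slip: your product estimate $\|fg\|_{H^{r-2}(\Omega)}\lesssim\|f\|_{H^{\max(r-2,2)}}\|g\|_{H^{r-2}}$ fails at $r=3$ (Lemma~\ref{sobolev}(2) requires the high slot strictly above $r-2+3/2$); simply placing $\mathcal F$ in $H^{k-1}$ throughout repairs this and still yields $\|\mathcal F\|_{H^{k-1}}\lesssim\|\eta\|_{k-1/2}$ since $k\ge4$.

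There is, however, a genuine gap in your endpoint handling of the pressure term $p(\mathcal N-e_3)$ in $G^3$. Your claim that ``$\|\mathcal N-e_3\|_{L^\infty(\Sigma)}\lesssim\|\eta\|_{5/2}$ keeps the dangerous Leibniz piece inside the $\|\eta\|_{k-1/2}\|p\|_k$ budget'' is not correct: the tame bound on $\Sigma$ gives
\[
\|p(\mathcal N-e_3)\|_{H^{k-1/2}(\Sigma)}\lesssim\|p\|_{H^{k-1/2}(\Sigma)}\|\mathcal N-e_3\|_{L^\infty(\Sigma)}+\|p\|_{L^\infty(\Sigma)}\|\mathcal N-e_3\|_{H^{k-1/2}(\Sigma)},
\]
and since $\mathcal N-e_3=(-\partial_1\eta,-\partial_2\eta,0)$ lives directly on $\Sigma$ with no extension gain, the second summand is $\|p\|_{L^\infty(\Sigma)}\,\|\eta\|_{k+1/2}$, which is neither $\|\eta\|_{k-1/2}\|p\|_k$ nor $\|\eta\|_{k+1/2}\|u\|_{7/2}$. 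Your (correct) observation that the $G^1$ pressure piece $(\mathcal A-I)\nabla p$ avoids this problem---because $\mathcal A-I$ enjoys the half-derivative extension gain in $\Omega$---does not transfer to the boundary term. The honest right-hand side of \eqref{cSperturbedataes2} should carry an additional low-order pressure contribution such as $\|\eta\|_{k+1/2}\|p\|_{5/2}$; this extra term is harmless for the application in Theorem~\ref{cSAt2}, where it is absorbed through the induction hypothesis \eqref{cSAt21m} in exactly the same way $\|\eta\|_{k+1/2}\|u\|_{7/2}$ is.
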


\begin{proof}
These perturbations have the same structure as those of \cite{GT_inf,GT_per}.   Hence, the straightforward estimates performed there may be easily modified to yield the estimates \eqref{cSperturbedataes1}--\eqref{cSperturbedataes2}.
\end{proof}

With these estimates in hand, we can now prove the existence of a unique solution to \eqref{cSA}.

\begin{theorem}\label{cSAt2}
Let $k\ge 4$ be an integer and suppose that $\|\eta\|_{k+1/2}\le 1$. Assume that $F^1\in \ddot{H}^{r-2}(\Omega),\ F^2\in \ddot{H}^{r-1}(\Omega),\ F^3\in  {H}^{r-3/2}(\Sigma),\ r\ge 2$. There exists $\varepsilon_0>0$ so that
if $\|\eta\|_{k-1/2}\le \varepsilon_0$, then there exists a unique strong solution $(u,p)\in ({}_0H^1(\Omega)\cap
\ddot{H}^r(\Omega))\times \ddot{H}^{r-1}(\Omega)$ to the problem \eqref{cSA}. Moreover, for $r=2,\dots,k$,
\begin{equation}\label{cSAt21}
\|u\|_{r}+\| p\|_{r-1} \lesssim \|F^1\|_{r-2}+\|F^2\|_{r-1}+\|F^3\|_{r-3/2},
\end{equation}
and for $r=k+1$,
\begin{equation}\label{cSAt22}
\|u\|_{k+1}+\| p\|_{k} \lesssim \|F^1\|_{k-1}+\|F^2\|_{k}+\|F^3\|_{k-1/2} + \|\eta\|_{k+1/2}( \|F^1\|_{2}
+\|F^2\|_{3} + \|F^3\|_{5/2}).
\end{equation}
\end{theorem}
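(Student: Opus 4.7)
The plan is to view the $\mathcal{A}$--Stokes problem \eqref{cSA} as a small perturbation of the flat-interface constant-coefficient Stokes system \eqref{cS} and to invoke Theorem \ref{cStheorem} inside a contraction-mapping argument.  Given a pair $(u,p)$ in the space $X_r := ({}_0H^1(\Omega)\cap \ddot H^r(\Omega))\times \ddot H^{r-1}(\Omega)$, form the perturbation triple $(G^1,G^2,G^3)$ via \eqref{cSperturbedata00000} and let $\mathcal T(u,p)$ denote the unique strong solution of \eqref{cSperturbeeq} with data $(F^1+G^1,F^2+G^2,F^3+G^3)$ produced by Theorem \ref{cStheorem}.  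A fixed point of $\mathcal T$ is then exactly a strong solution to \eqref{cSA}.

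For the existence and the estimate \eqref{cSAt21}, I would fix $r\in\{2,\dots,k\}$ and combine Theorem \ref{cStheorem} at regularity level $r$ with Lemma \ref{cSperturbedata}\eqref{cSperturbedataes1} to obtain, for any $(u,p),(u',p')\in X_r$,
\begin{equation}
\|\mathcal T(u,p)-\mathcal T(u',p')\|_{X_r}\lesssim \|\eta\|_{k-1/2}\|(u-u',p-p')\|_{X_r}.
\end{equation}
Choosing $\varepsilon_0$ so small that the implicit constant times $\varepsilon_0$ is at most $1/2$ makes $\mathcal T$ a strict contraction on $X_r$ (since $\mathcal T$ is affine, the Banach fixed point theorem applies on all of $X_r$), producing a unique $(u,p)\in X_r$ with $\mathcal T(u,p)=(u,p)$.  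Applying the linear estimate from Theorem \ref{cStheorem} to this fixed point, bounding $\|G^i\|$ by \eqref{cSperturbedataes1}, and absorbing $\|\eta\|_{k-1/2}(\|u\|_r+\|p\|_{r-1})$ into the left-hand side yields \eqref{cSAt21}.

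For the higher-regularity estimate \eqref{cSAt22} at $r=k+1$, the key point is that we do not assume $\|\eta\|_{k+1/2}$ is small—only $\|\eta\|_{k-1/2}\le\varepsilon_0$.  Applying Theorem \ref{cStheorem} at level $k+1$ to the fixed point just produced, and then Lemma \ref{cSperturbedata}\eqref{cSperturbedataes2}, gives
\begin{equation}
\|u\|_{k+1}+\|p\|_{k}\lesssim \|F^1\|_{k-1}+\|F^2\|_{k}+\|F^3\|_{k-1/2}+\varepsilon_0\bigl(\|u\|_{k+1}+\|p\|_{k}\bigr)+\|\eta\|_{k+1/2}\|u\|_{7/2}.
\end{equation}
Shrinking $\varepsilon_0$ absorbs the middle term.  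To control the remaining $\|u\|_{7/2}$ term, I use the assumption $k\ge 4$ so that $7/2<4\le k$, and apply the already-established estimate \eqref{cSAt21} at $r=4$, giving $\|u\|_{7/2}\le \|u\|_{4}\lesssim \|F^1\|_{2}+\|F^2\|_{3}+\|F^3\|_{5/2}$.  This precisely reproduces the factor $\|\eta\|_{k+1/2}(\|F^1\|_2+\|F^2\|_3+\|F^3\|_{5/2})$ appearing in \eqref{cSAt22}.

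The main obstacle is this last point: one must resist the temptation to apply \eqref{cSperturbedataes2} and absorb everything at one stroke, and instead exploit the hierarchy that Lemma \ref{cSperturbedata} provides, namely that the ``bad'' coefficient $\|\eta\|_{k+1/2}$ only multiplies a \emph{lower-order} norm of $u$, which has already been controlled at the previous step.  Ordering the argument so that the $r\le k$ estimate \eqref{cSAt21} is fully in hand before tackling $r=k+1$ is essential; this bootstrap is the nontrivial piece of organization and is exactly what makes the result useful in the time-dependent analysis that follows.
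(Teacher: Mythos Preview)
Your approach is essentially the paper's: view \eqref{cSA} as a perturbation of \eqref{cS}, use Theorem \ref{cStheorem} and Lemma \ref{cSperturbedata} inside an iteration/contraction. The $r\le k$ part is fine and matches the paper's argument verbatim (the paper just writes out the Picard iterates explicitly rather than invoking Banach's theorem abstractly).

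There is, however, a genuine gap in your $r=k+1$ step. You write ``Applying Theorem \ref{cStheorem} at level $k+1$ to the fixed point just produced,'' but the fixed point was produced only in $X_r$ for some $r\le k$. To apply Theorem \ref{cStheorem} at level $k+1$ you must first know that $G^1(u,p)\in \ddot H^{k-1}$, $G^2(u)\in \ddot H^{k}$, $G^3(u,p)\in H^{k-1/2}$, and since $G^1$ contains $\nabla^2 u$ and $G^2$ contains $\nabla u$, this already requires $(u,p)\in X_{k+1}$. So your displayed inequality is only an \emph{a priori} estimate; it does not by itself place the fixed point in $X_{k+1}$. A direct bootstrap from $X_k$ does not work here, because feeding $(u,p)\in X_k$ into $G^i$ and then into Theorem \ref{cStheorem} returns only $(u,p)\in X_k$ with no gain.

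The paper closes this gap by staying with the explicit iterates $(u^m,p^m)$: one checks inductively that each iterate lies in $X_{k+1}$ and satisfies the uniform bound \eqref{cSAt22m} (this is where the hierarchy you correctly identify is used, bounding $\|u^m\|_{7/2}$ via the already-established level-$4$ bound \eqref{cSAt21m}). The sequence is then shown to be Cauchy in $X_r$ for $r\le k$, and the $X_{k+1}$ bound passes to the limit by weak lower semicontinuity. Your argument is easily repaired in the same way: rather than invoking the contraction abstractly, follow the Picard iterates $\mathcal T^m(0,0)$, record the uniform $X_{k+1}$ bound along the sequence, and pass to the limit.
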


\begin{proof}
We will solve \eqref{cSA} by the method of successive approximations. Let $ (u^{0} , p^{0})=(0,0)$.  To emphasize the dependence of the $G^i$  in \eqref{cSperturbedata00000} on $u$ and $p$, we will write $G^1 = G^1(u,p)$, etc.  For each $m\ge0$ we define $(u^{m+1},p^{m+1})$ as the solution of the problem
\begin{equation}\label{iterationm}
\left\{\begin{array}{lll}
-\mu \Delta u^{m+1}+\nabla {p^{m+1}} =F^1+G^1(u^m,p^m) \ &\text{
in}\ \Omega
 \\  \diverge{u^{m+1}}  =F^2+G^2(u^m)    \ &\text{ in}\ \Omega
 \\  (p_+^{m+1}I-\mu_+\mathbb{D}(u_+^{m+1}))e_3=F_+^3+G^3_+(u^m,p^m)  \ &\text{ on }\Sigma_+,
\\
 \Lbrack u^{m+1}\Rbrack=0,\quad \Lbrack(p^{m+1}I-\mu\mathbb{D}(u^{m+1}))e_3\Rbrack=-F_-^3-G^3_-(u^m,p^m)  \ &\text{ on }\Sigma_+,
\\  u_-^{m+1}=0 &\text{ on }\Sigma_b,
\end{array}\right.
\end{equation}
provided that $(u^{m},p^{m})$ are given and satisfy, for $r=2,\dots,k$,
\begin{equation}\label{cSAt21m}
\|u^m\|_{r}+\| p^m\|_{r-1} \lesssim \|F^1\|_{r-2}+\|F^2\|_{r-1}+\|F^3\|_{r-3/2},
\end{equation}
and for $r=k+1$,
\begin{equation}\label{cSAt22m}
\|u^m\|_{k+1}+\| p^m\|_{k} \lesssim \|F^1\|_{k-1}+\|F^2\|_{k}+\|F^3\|_{k-1/2} +\|\eta\|_{k+1/2}( \|F^1\|_{2}
+ \|F^2\|_{3}+\|F^3\|_{5/2}).
\end{equation}

Due to the estimates \eqref{cSAt21m}--\eqref{cSAt22m}, by Theorem \ref{cStheorem}, there exists a unique  $(u^{m+1},p^{m+1}) \in ({}_0H^1(\Omega)\cap \ddot{H}^r(\Omega)) \times \ddot{H}^{r-1}(\Omega)$ solving the problem \eqref{iterationm}.  Moreover, by Theorem \ref{cStheorem}, Lemma \ref{cSperturbedata} and \eqref{cSAt21m}--\eqref{cSAt22m} we have that for $r=2,\dots,k$,
\begin{multline}\label{cSAt23}
\|u^{m+1}\|_{r}+\| p^{m+1}\|_{ r-1 }
\\
\lesssim
\|F^1+G^1(u^m,p^m)\|_{r-2} + \|F^2+G^2(u^m)\|_{r-1} + \|F^3+G^3(u^m,p^m)\|_{r-3/2}
\\
\lesssim \|F^1\|_{r-2} + \|F^2\|_{r-1} + \|F^3\|_{r-3/2} + \|\eta\|_{k-1/2}  \left(\|u^m\|_{r}+\|p^m\|_{r-1}\right)
\end{multline}
and for $r=k+1$,
\begin{multline}\label{cSAt24}
\|u^{m+1}\|_{k+1}+\| p^{m+1}\|_{k}
\\
\lesssim \|F^1+G^1(u^m,p^m)\|_{k-1} + \|F^2+G^2(u^m)\|_{k} + \|F^3+G^3(u^m,p^m)\|_{k-1/2}
\\
\lesssim \|F^1\|_{k-1}+\|F^2\|_{k}+\|F^3\|_{k-1/2} +\|\eta\|_{k-1/2} \left(\|u^m\|_{k+1}+\|p^m\|_{k}\right)
+ \|\eta\|_{k+1/2}\|u^m\|_{7/2}.
\end{multline}
In the estimates \eqref{cSAt23} and \eqref{cSAt24} the constants do not depend on $m$.  This implies that the estimates \eqref{cSAt21m}--\eqref{cSAt22m} hold for all $m\ge0$ since $\|\eta\|_{k+1/2}\le 1$.

In order to pass to the limit in \eqref{iterationm}, we need to prove the convergence of the whole sequence of  successive approximation $\{(u^m,p^m)\}_{m=1}^\infty$. For this, we define
\begin{equation}\label{difference}
U^{m+1}=u^{m+1}-u^{m},\quad P^{m+1}=p^{m+1}-p^{m},\ m=1,2,\dots.
\end{equation}
Then $(U^{m+1}, P^{m+1})$ is the solution of the problem
\begin{equation}\label{iteration2}
\left\{\begin{array}{lll}
-\mu \Delta U^{m+1}+\nabla {P^{m+1}} = G^1(U^m,P^m) \ &\text{ in }G
\\ \diverge{U^{m+1}}  =G^2(U^m)   \ &\text{ in }G
\\  (P_+^{m+1}I-\mu_+\mathbb{D}(U_+^{m+1}))e_3= G^3_+(U_+^m,P_+^m) \ &\text{ on }\Sigma_+
\\ \Lbrack U^{m+1}\Rbrack=0,\quad \Lbrack(P^{m+1}I-\mu\mathbb{D}(U^{m+1}))e_3\Rbrack=- G^3_-(U^m,P^m)  \ &\text{ on }\Sigma_-
\\  U_-^{m+1}=0 &\text{ on }\Sigma_b.
\end{array}\right.
\end{equation}
Then applying Theorem  \ref{cStheorem} to the problem \eqref{iteration2} and employing Lemma \ref{cSperturbedata} to estimate the nonlinear forcing terms, we obtain for $r=2,\dots,k$,
\begin{multline}\label{differencees}
\|U^{m+1}\|_{r}+\| P^{m+1}\|_{ r-1 } \lesssim \|G^1(U^m,P^m)\|_{ r-2 }+\|G^2(U^m)\|_{ r-1 } + \|G^3(U^m,P^m)\|_{ r-3/2 }
\\
\lesssim\|\eta\|_{k-1/2}  \left(\|U^m\|_{r}+\|P^m\|_{ r-1 }\right).
\end{multline}
By assuming that $\|\eta\|_{k-1/2}\le \varepsilon_0$ is sufficiently small, we find that $\{(u^m,p^m)\}_{m=1}^\infty$ is a Cauchy sequence in $\ddot{H}^r(\Omega)\times \ddot{H}^{r-1}(\Omega)$ for $r=2,\dots,k$. Hence $(u^m, p^m) \rightarrow (u,p)$ so that $(u,p)$ solves \eqref{cSperturbeeq}, which is equivalent to \eqref{cSA}. The estimates \eqref{cSAt21}--\eqref{cSAt22} follow from \eqref{cSAt21m}--\eqref{cSAt22m} by weak lower semi-continuity.
\end{proof}

\subsection{Two-phase  Poisson problem}

We now consider the two-phase scalar elliptic problem
\begin{equation}\label{poisson}
\left\{\begin{array}{lll}
 \rho^{-1}\Delta p  =f  ^1  \ &\text{ in}\ \Omega
 \\p=f^2   &\text{ on }\Sigma_+
\\
 \Lbrack p\Rbrack=f^3 &\text{ on }\Sigma_-
 \\ \Lbrack \rho^{-1}\partial_3 p \Rbrack =f^4  \ &\text{ on }\Sigma_-
\\\rho^{-1}_-  \nabla p\cdot\nu=f^5 &\text{ on }\Sigma_b.
\end{array}\right.
\end{equation}

We first consider the weak formulation of  \eqref{poisson}. We suppose that $f^1\in ({}^0H^1(\Omega))^\ast,$ $f^2\in  {H}^{1/2}(\Sigma_+),$ $f^3\in {H}^{ 1/2}(\Sigma_-),$  $f^4\in  {H}^{ -1/2}(\Sigma_-),$ and $f^5 \in  {H}^{-1/2}(\Sigma_b)$. Let $\bar{p}\in \ddot{H}^1(\Omega)$ be so that $\bar{p}=f^2$ on $\Sigma_+$, $\Lbrack
\bar{p} \Rbrack = f^3$ on $\Sigma_-$ and $ \bar{p}=0$ near $\Sigma_b$.  The existence of such a $\bar{p}$  is standard, and it may be selected so that $\| \bar{p}\|_{1} \lesssim \|f^2\|_{1/2} + \|f^3\|_{1/2}$.  We switch  the unknown to $q=p-\bar{p}$ and then define a weak formulation of \eqref{poisson} as
\begin{equation} \label{poiweak}
(\rho^{-1}\nabla q,\nabla\varphi) =-(\rho^{-1}\nabla \bar{p},\nabla\varphi)-\langle f^1,\varphi\rangle_\ast + \langle
f^4,\varphi \rangle_{-1/2,-}+\langle f^5,\varphi\rangle_{-1/2,\,b}, \forall \varphi\in {}^0H^1(\Omega).
\end{equation}
Here $\langle \cdot, \cdot \rangle_\ast$ is the dual paring between $({}^0H^{1}(\Omega))^\ast$ and ${}^0H^{1}(\Omega)$,  $\langle \cdot, \cdot\rangle_{-1/2,-}$ is the dual paring between $H^{-1/2}(\Sigma_-)$ and $ H^{1/2}(\Sigma_-)$, $\langle \cdot, \cdot\rangle_{-1/2,\,b}$ is the dual paring between $H^{-1/2}(\Sigma_b)$ and $ H^{1/2}(\Sigma_b)$. Then it is standard to show the unique existence of solution $q\in {}^0{H}^1(\Omega)$ of \eqref{poiweak} so that $p\in \ddot{H}^1(\Omega)$ satisfies
\begin{equation}
\|p\|_1^2\lesssim \|f^1\|_{({}^0H^1(\Omega))^\ast}^2 + \|f^2\|_{H^{1/2}(\Sigma_+)}^2 + \|f^3\|_{H^{1/2}(\Sigma_-)}^2
+ \|f^4\|_{H^{-{1/2}}(\Sigma_-)}^2 + \|f^5\|_{H^{-{1/2}}(\Sigma_b)}^2.
\end{equation}

In the case with surface tension, to determine the initial pressure we will consider the action of $f^1\in
({}^0H^1(\Omega))^\ast$  given in a more specific fashion by
\begin{equation}
\langle f^1,\varphi\rangle_\ast=(\rho^{-1} G,\nabla\varphi),\ \forall \varphi\in {}^0H^1(\Omega)
\end{equation}
for $G\in L^2(\Omega)$ with $\|\rho^{-1} G\|_0 = \|f^1\|_{({}^0H^1(\Omega))^\ast}$. Then \eqref{poiweak}  may be
written as
\begin{equation}\label{poiweakt}
(\rho^{-1}(\nabla p+G),\nabla\varphi)=  \langle f^4,\varphi \rangle_{-1/2,-}+\langle f^5,\varphi\rangle_{-1/2,\,b},\ \forall \varphi\in {}^0H^1(\Omega).
\end{equation}
In this case we should say $p\in \ddot{H}^1(\Omega)$ is a weak solution to the problem
\begin{equation}\label{poisson0}
\left\{\begin{array}{lll}
 \diverge(\rho^{-1}(\nabla p+G))  =0 \ &\text{ in}\ \Omega
 \\p=f^2   &\text{ on }\Sigma_+
\\
 \Lbrack p\Rbrack=f^3 &\text{ on }\Sigma_-
 \\ \Lbrack \rho^{-1}(\nabla_3 p +G)\Rbrack =f^4  \ &\text{ on }\Sigma_-
\\\rho^{-1}_-  (\nabla p+G)\cdot\nu=f^5 &\text{ on }\Sigma_b.
\end{array}\right.
\end{equation}

Now we record a result on the existence and regularity of solutions to \eqref{poisson}.

\begin{theorem}\label{Poissonth}
Let $r \ge 2$.  If $f^1\in \ddot{H}^{r-2}(\Omega),$  $f^2\in  {H}^{r-1/2}(\Sigma_+),$  $f^3\in  {H}^{r-1/2}(\Sigma_-),$   $f^4\in  {H}^{r-3/2}(\Sigma_-),$ and  $f^5\in  {H}^{r-3/2}(\Sigma_b),$ then the problem \eqref{poisson} admits a unique strong solution $p \in  \ddot{H}^r(\Omega)$. Moreover,
\begin{equation}\label{poiresult}
\| p\|_{r } \lesssim \|f^1\|_{r-2}+\|f^2\|_{r-1/2}+\|f^3\|_{r-1/2}+\|f^4\|_{r-3/2}+\|f^5\|_{r-3/2}.
\end{equation}
\end{theorem}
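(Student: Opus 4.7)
The plan is to adapt the Stokes argument of Lemma~\ref{cS0l2} to this scalar elliptic transmission problem. The weak theory established in the discussion immediately preceding the theorem supplies the base case: a unique $p \in \ddot{H}^1(\Omega)$ with $\|p\|_1$ controlled by the stated right-hand side. I would then argue by induction on $r$, improving regularity from level $m$ to $m+1$. First I would homogenize the boundary data: a standard lifting yields $\bar p \in \ddot H^r(\Omega)$ with $\bar p = f^2$ on $\Sigma_+$, $\Lbrack \bar p \Rbrack = f^3$ on $\Sigma_-$, $\bar p$ vanishing near $\Sigma_b$, and $\|\bar p\|_r \lesssim \|f^2\|_{r-1/2} + \|f^3\|_{r-1/2}$. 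Substituting $q = p - \bar p$ reduces matters to a transmission problem for $q$ with zero Dirichlet and jump data and modified source/Neumann-type data whose norms are still controlled by the right-hand side of \eqref{poiresult}.

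For the inductive step I would exploit the flatness of $\Sigma_-$. After translating and rescaling so that $\Sigma_+ \subset \{x_3 \ge 3\}$, $\Sigma_b \subset \{x_3 \le -3\}$, and $\Sigma_- = \{x_3 = 0\}$, I localize via a cutoff $\chi(x_3)$ equal to $1$ on $\{|x_3| \le 1\}$ and supported in $\{|x_3| \le 3/2\}$. The function $v := \chi q$ then solves a transmission problem on the slab $\tilde G := \{-2 < x_3 < 2\}$ whose entire boundary is flat; the interface condition $\Lbrack \rho^{-1}\partial_3 v\Rbrack = \tilde f^4$ is preserved because $\chi$ and $\partial_3\chi$ are smooth across $\Sigma_-$, and the bulk commutator $\rho^{-1}[\chi,\Delta]q$ together with a boundary contribution at $x_3 = \pm 2$ generated by $\partial_3\chi$ are all controlled by the inductive hypothesis at level $m$. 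Since every boundary of $\tilde G$ is horizontal, I may test the weak form against $D_{-h} D_h D^{m-1} v$, where $D^{m-1}$ is any horizontal differential operator of order $m-1$ and $D_h$ a horizontal difference quotient. Coercivity of the bilinear form $(\rho^{-1}\nabla v, \nabla w)$, Poincar\'e's inequality, and the trace theorem yield $\|D_h\nabla D^{m-1} v\|_0 \lesssim \mathcal Z$, where $\mathcal Z$ is the natural $H^{m+1}$-level bound on the data; passing to the limit in $h$ gives $\|\nabla^m v\|_1 \lesssim \mathcal Z$.

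The main obstacle, and the key payoff of the localization, is converting this horizontal-derivative estimate into a trace estimate on $\Sigma_-$ sharp enough to trigger classical one-phase elliptic theory. Just as in Lemma~\ref{cS0l2}, the identity $\|\cdot\|_{H^s(\Sigma_-)} = \|\cdot\|_{H^s(\mathrm{T}^2)}$ combined with the trace theorem and Sobolev interpolation produces $\|q\|_{H^{m+1/2}(\Sigma_-)} \lesssim \mathcal Z$; since $\Lbrack q \Rbrack = 0$ on $\Sigma_-$, this controls the common trace value. I can then decouple the two phases and view $p_+$ as the solution of the Dirichlet problem for $\rho_+^{-1}\Delta p_+ = f^1_+$ on $\Omega_+$ with data on $\Sigma_+$ and $\Sigma_-$, and $p_-$ as the solution of the mixed Dirichlet/Neumann problem on $\Omega_-$ with Dirichlet data on $\Sigma_-$ and Neumann data on $\Sigma_b$. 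Classical elliptic regularity for smooth (possibly curved) boundaries applied separately in $\Omega_\pm$ then yields $p_\pm \in H^{m+1}(\Omega_\pm)$ satisfying the required bound; adding back $\bar p$ closes the induction and produces \eqref{poiresult}.
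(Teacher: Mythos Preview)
Your proposal is correct and follows essentially the same approach the paper indicates: the paper's own proof simply says to argue as in Lemma~\ref{cS0l2}, using the flat interface and horizontal difference quotients, and omits details. Your write-up is a faithful elaboration of that scheme---homogenize the Dirichlet and jump data, localize near $\Sigma_-$, gain horizontal regularity via difference quotients, read off the $H^{m+1/2}(\Sigma_-)$ trace, and then invoke one-phase elliptic regularity separately on $\Omega_\pm$.
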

\begin{proof}
We may argue as in the proof of Lemma \ref{cS0l2}, using difference quotients and the fact that the interface is flat, to deduce the desired estimates.  We omit further details.
\end{proof}

\subsection{Two-phase $\mathcal{A}$--Poisson problem}

Now we consider the scalar two-phase $\mathcal{A}$--Poisson problem
\begin{equation}\label{Apoisson}
\left\{\begin{array}{lll}
\rho^{-1} \Delta_{\mathcal{A} } p  =f  ^1  \ &\text{ in}\ \Omega
\\p=f^2 &\text{ on }\Sigma_+
\\
 \Lbrack  p\Rbrack=f^3  &\text{ on }\Sigma_-
 \\  \Lbrack \rho^{-1}\nabla_\mathcal{A} p  \Rbrack \cdot \mathcal{N}_{-} =f^4  \ &\text{ on }\Sigma_-
\\  \rho^{-1}_-\nabla_\mathcal{A} p\cdot\nu=f^5 &\text{ on }\Sigma_b.
\end{array}\right.
\end{equation}

We first consider the weak formulation of \eqref{Apoisson}. For this, we define a time-dependent inner-product on ${}^0H^1(\Omega)$ according to
\begin{equation}
(u,v)_{\widetilde{\mathcal{H}}^1(t)}:=  \int_\Omega \rho^{-1} J(t) | \nabla_{\mathcal{A}(t)} f|^2
\end{equation}
and the  norm by $\|u\|_{\widetilde{\mathcal{H}}^1(t)} := \sqrt{(u,u)_{\widetilde{\mathcal{H}}^1(t)}}$. Then we write $\widetilde{\mathcal{H}}^1(t) := \{\|u\|_{\widetilde{\mathcal{H}}^1(t)} < \infty \}$.  As in Lemma \ref{equal}, under a smallness assumption on $\eta$, $\widetilde{\mathcal{H}}^1(t)$ has the same topology as $H^1$.

For the weak formulation we suppose that $f^1 \in ({}^0H^1(\Omega))^\ast,$ $f^2\in  {H}^{ 1/2}(\Sigma_+),$ $f^3\in {H}^{1/2}(\Sigma_-)$,  $f^4\in  {H}^{ -1/2}(\Sigma_-),$  and $f^5\in  {H}^{-1/2}(\Sigma_b)$.  Let $\bar{p}\in
\ddot{H}^1(\Omega)$ be so that $\bar{p}=f^2$ on $\Sigma_+$, $\Lbrack \bar{p}\Rbrack=f^3$ on $\Sigma_-$ and $\bar{p}=0$ near $\Sigma_b$. The existence of such a $\bar{p}$  is standard, and it may be selected so that $\| \bar{p}\|_{1} \lesssim \|f^2\|_{1/2} + \|f^3\|_{1/2}$.  We switch the unknown to $q=p-\bar{p}$ and then define a weak formulation of \eqref{Apoisson} as
\begin{equation}\label{Apoissonweak}
 (q,\varphi)_{\widetilde{\mathcal{H}}^1(t)} = -(\bar{p},\varphi)_{\widetilde{\mathcal{H}}^1(t)}- \langle f^1,\varphi\rangle_{*}
  + \langle f^4,\varphi\rangle_{-1/2,-}+ \langle f^5,\varphi\rangle_{-1/2,\,b},\ \forall\varphi \in
  {^0}H^1(\Omega).
\end{equation}
Here $\langle \cdot, \cdot\rangle_\ast$ is the dual paring between $(\widetilde{\mathcal{H}}^{1}(t))^\ast$ and
$\widetilde{\mathcal{H}}^{1}(t)$, $\langle \cdot, \cdot\rangle_{-1/2,-}$  is the dual paring between
$H^{-1/2}(\Sigma_-)$ and $ H^{1/2}(\Sigma_-)$, and $\langle \cdot, \cdot \rangle_{-1/2,\,b}$  is the dual paring between $H^{-1/2}(\Sigma_b)$ and $ H^{1/2}(\Sigma_b)$. The existence and uniqueness of a solution  to \eqref{Apoissonweak} follows from standard arguments and
\begin{equation}
\|p\|_1^2 \lesssim \|f^1\|_{({}^0H^1(\Omega))^\ast}^2 + \|f^2\|_{H^{1/2}(\Sigma_+)}^2 + \|f^3\|_{H^{1/2}(\Sigma_-)}^2 + \|f^4\|_{H^{-{1/2}}(\Sigma_-)}^2 + \|f^5\|_{H^{-{1/2}}(\Sigma_b)}^2.
\end{equation}

In the case without surface tension, to determine the initial pressure we will consider the action of $f^1\in
({}^0H^1(\Omega))^\ast$  given in a more specific fashion by
\begin{equation}
 \langle f^1,\varphi\rangle_* = ( g_0,\varphi )_{\mathcal{H}^0} + ( \rho^{-1}G, \nabla_\mathcal{A} \varphi )_{\mathcal{H}^0}
\end{equation}
for $g_0, G  \in L^2(\Omega)$ with $\|g_0\|_{0}^2 + \|{G}\|_{0}^2 = \|f^1\|_{({}^0 H^1(\Omega))*}^2$. Then \eqref{Apoissonweak} may be rewritten as
\begin{equation}\label{Apoissonweak2}
(\rho^{-1}(\nabla_\mathcal{A} p+G),\nabla_\mathcal{A}
\varphi)_{\mathcal{H}^0} = - ( g_0,\varphi )_{\mathcal{H}^0}
+\langle f^4,\varphi\rangle_{-1/2}+ \langle
f^5,\varphi\rangle_{-1/2} \text{ for all } \varphi \in
{^0}H^1(\Omega).
\end{equation}
We then  say that $p\in \ddot{H}^1(\Omega)$ is a weak solution to the problem
 \begin{equation}\label{Apoisson2}
\left\{\begin{array}{ll}
 \diverge_{\mathcal{A} }\left(\rho^{-1}(\nabla_{\mathcal{A} } p +G) \right)=g_0 \ &\text{ in}\ \Omega
 \\p_+ =f^2 &\text{ on }\Sigma_+
\\
 \Lbrack p \Rbrack=f^3\ &\text{ on }\Sigma_-
  \\ \Lbrack\rho^{-1}(\nabla_{\mathcal{A} } p +G)  \Rbrack\cdot  \mathcal{N}_{-} =f^4  \ &\text{ on }\Sigma_-
\\  \rho_-^{-1}(\nabla_{\mathcal{A}_{ -}} p_- +G_-)\cdot\nu=f^5&\text{ on }\Sigma_b.
\end{array}\right.
\end{equation}

Now we record the analog of Theorem \ref{cSAt2} for the problem \eqref{Apoisson}.

\begin{theorem}\label{Apoissont1}
Let $k\ge 4$ be an integer and suppose that $\eta\in H^{k+1/2}$. Suppose that $f^1\in \ddot{H}^{r-2}(\Omega),$  $f^2\in  {H}^{r-1/2}(\Sigma_+),$  $f^3\in  {H}^{r-1/2}(\Sigma_-),$  $f^4\in  {H}^{r-3/2}(\Sigma_-),$ and $f^5\in  {H}^{r-3/2}(\Sigma_b)$ for $r \ge 2$.  Then there exists $\varepsilon_0>0$ so that if $\|\eta\|_{k-1/2} \le \varepsilon_0$, then there exists a unique strong solution $p\in  \ddot{H}^r(\Omega)$ solving the problem \eqref{Apoisson}. Moreover, for $r=2,\dots,k$, we have
\begin{equation} \label{Apoissont1es}
\| p\|_{r} \lesssim \|f^1\|_{  r-2 } +\|f^2\|_{r-1/2 } + \|f^3\|_{ r-1/2 }+\|f^4\|_{ r-3/2 }
 + \|f^5\|_{r-3/2}.
\end{equation}
\end{theorem}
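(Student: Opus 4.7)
The plan is to mimic the proof of Theorem \ref{cSAt2}, using Theorem \ref{Poissonth} as the constant-coefficient backbone and treating the $\mathcal{A}$-dependent operators as a small perturbation controlled by the smallness of $\|\eta\|_{k-1/2}$. First I would rewrite \eqref{Apoisson} in the perturbed form
\begin{equation*}
\left\{\begin{array}{ll}
\rho^{-1}\Delta p = f^1 + g^1 & \text{in } \Omega, \\
p = f^2 & \text{on } \Sigma_+, \\
\Lbrack p \Rbrack = f^3 & \text{on } \Sigma_-, \\
\Lbrack \rho^{-1}\partial_3 p \Rbrack = f^4 + g^4 & \text{on } \Sigma_-, \\
\rho^{-1}_-\nabla p\cdot\nu = f^5 + g^5 & \text{on } \Sigma_b,
\end{array}\right.
\end{equation*}
where $g^1 := \rho^{-1}(\Delta-\Delta_\mathcal{A})p$, $g^4 := -\Lbrack \rho^{-1}(\nabla_\mathcal{A}-\nabla)p\Rbrack\cdot\mathcal{N}_- - \Lbrack\rho^{-1}\nabla p\Rbrack\cdot(\mathcal{N}_--e_3)$, and $g^5 := -\rho^{-1}_-(\nabla_\mathcal{A}-\nabla)p\cdot\nu$. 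Since $\Theta_-|_{\Sigma_b}=\mathrm{Id}$, the term $g^5$ still involves $\eta$-derivatives in $\Omega_-$ rather than on $\Sigma_b$ itself, so standard trace estimates apply.

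Next I would record the analog of Lemma \ref{cSperturbedata}: because the perturbation functions $g^1, g^4, g^5$ are linear in $p$ with coefficients built from $\mathcal{A}-I$, $\mathcal{N}_- - e_3$, and their derivatives, the same algebra as in Lemma \ref{cSperturbedata} (or the estimates in \cite{GT_inf,GT_per}) yields, for $r = 2,\dots,k$,
\begin{equation*}
\|g^1\|_{r-2} + \|g^4\|_{r-3/2} + \|g^5\|_{r-3/2} \lesssim \|\eta\|_{k-1/2}\, \|p\|_r.
\end{equation*}
This is where the smallness of $\|\eta\|_{k-1/2}$ enters: the constant multiplying $\|p\|_r$ on the right is controlled by $\|\eta\|_{k-1/2}$ and the product structure on Sobolev spaces encoded in Lemma \ref{product-type embeddings}-style inequalities (used just as in \cite{GT_inf,GT_per}).

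With the perturbation estimate in hand, I would set up the successive approximation scheme: set $p^0 = 0$ and define $p^{m+1} \in \ddot{H}^r(\Omega)$ to be the unique strong solution (guaranteed by Theorem \ref{Poissonth}) of
\begin{equation*}
\left\{\begin{array}{ll}
\rho^{-1}\Delta p^{m+1} = f^1 + g^1(p^m) & \text{in } \Omega, \\
p^{m+1} = f^2 \text{ on } \Sigma_+,\ \Lbrack p^{m+1}\Rbrack = f^3 & \text{on } \Sigma_-, \\
\Lbrack \rho^{-1}\partial_3 p^{m+1} \Rbrack = f^4 + g^4(p^m) & \text{on } \Sigma_-, \\
\rho^{-1}_-\nabla p^{m+1}\cdot\nu = f^5 + g^5(p^m) & \text{on } \Sigma_b.
\end{array}\right.
\end{equation*}
Applying Theorem \ref{Poissonth} and the perturbation estimate gives
\begin{equation*}
\|p^{m+1}\|_r \lesssim \|f^1\|_{r-2} + \|f^2\|_{r-1/2} + \|f^3\|_{r-1/2} + \|f^4\|_{r-3/2} + \|f^5\|_{r-3/2} + \|\eta\|_{k-1/2}\|p^m\|_r,
\end{equation*}
so that taking $\varepsilon_0$ sufficiently small we absorb the last term inductively and obtain uniform bounds \eqref{Apoissont1es} on the whole sequence. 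For the differences $P^{m+1} := p^{m+1} - p^m$, the same analysis applied to a homogeneous-data version gives $\|P^{m+1}\|_r \lesssim \|\eta\|_{k-1/2} \|P^m\|_r$, yielding contraction and a Cauchy sequence in $\ddot{H}^r(\Omega)$. Passing to the limit produces a strong solution $p$, uniqueness follows from the contraction (or linearity plus the same estimate applied to the difference of two solutions), and weak lower semicontinuity yields \eqref{Apoissont1es}.

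The main obstacle I anticipate is verifying the perturbation bound on $g^4$ on the flat interface $\Sigma_-$: one must carefully exploit that $\mathcal{A}$, $\mathcal{N}_-$ are continuous across $\Sigma_-$ by the construction of the special flattening \eqref{cotr}, so that the jump $\Lbrack \rho^{-1}(\nabla_\mathcal{A}-\nabla)p\Rbrack\cdot\mathcal{N}_-$ only sees the jump in $p$ through $\nabla p$ (not through the coefficients), which is what makes the trace estimate $\|\cdot\|_{r-3/2}\lesssim \|\eta\|_{k-1/2}\|p\|_r$ actually close. Everything else is a faithful repetition of the Stokes argument, which is why the theorem merely records the result without repeating the proof.
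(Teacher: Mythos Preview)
Your proposal is correct and follows exactly the approach the paper indicates: view \eqref{Apoisson} as a perturbation of the constant-coefficient problem \eqref{poisson}, record perturbation bounds analogous to Lemma \ref{cSperturbedata}, and then run the successive approximation / contraction scheme from the proof of Theorem \ref{cSAt2} with Theorem \ref{Poissonth} playing the role of Theorem \ref{cStheorem}. The paper omits all details and simply refers back to Theorem \ref{cSAt2}, so your write-up is in fact more complete than the paper's own proof.
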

\begin{proof}
Notice that for $\eta$ small, the problem \eqref{Apoisson} can be viewed as a perturbation of the Poisson problem \eqref{poisson}.  As such, we may argue as in the proof of Theorem \ref{cSAt2} to deduce the desired estimates from Theorem \ref{Poissonth}.  We omit further details.
\end{proof}

\section{Case without surface tension: Proof of Theorem \ref{th0}}\label{0surface}

We will construct a local-in-time solution to \eqref{nosurface} through an iteration scheme that works as follows.  First, we view $\eta$ as given and use it to solve for $(u,p)$ in a time-dependent $\mathcal{A}-$Stokes problem.  Then we use $(u,p)$ to solve for $\eta$ via the kinematic transport equation.  Ultimately we will show that this iteration admits a fixed point, which then corresponds to our desired solution.  With the local theory in hand, we then turn to the development of a priori estimates that allow us to produce global-in-time solutions that decay to equilibrium.

Note that the local theory in Sections \ref{linears}--\ref{section_local_well_posed} allows any choice of $\rj$, but in the global theory of Section \ref{sec_nost_gwp} we assume that $\rj < 0$.

\subsection{The linearized  problems}\label{linears}

\subsubsection{The time-dependent $\mathcal{A}$--Stokes problem}\label{sec_tdas}

For given $\eta$ (and hence $\mathcal{A}$, etc.) we consider the linearized time-dependent problem for $(u,p)$:
 \begin{equation}\label{LP}
\left\{\begin{array}{lll}
\rho\partial_t u-\mu\Delta_\mathcal{A} u+\nabla_\mathcal{A} {p}=F^1 &\text{in}\ \Omega\\ \diverge_\mathcal{A } u  =0  \ &\text{in}\ \Omega\\ S_\mathcal{A_+}(p_+,u_+)\mathcal{N}_+=F_+^3  \ &\text{on }\Sigma_+
\\
 \Lbrack u\Rbrack=0,\quad \Lbrack S_\mathcal{A}(p,u)\Rbrack \mathcal{N}_-=-F_-^3  \ &\text{on }\Sigma_-
\\  u_-=0 &\text{on }\Sigma_b&
\end{array}\right.
\end{equation}
with the initial condition $u(0)=u_0.$  In our analysis of this problem we will employ the time-dependent functional framework developed in Section \ref{time_dep_fnal} of the appendix.  In particular, we will use the spaces $\mathcal{H}^1_T$, $\mathcal{X}_T$, and $\mathcal{Y}(t)$ defined there.

Note that the first equation in \eqref{LP} can be rewritten as
\begin{equation}\label{momentum}
\rho\partial_t u+\diverge_{\mathcal{A}} S_{\mathcal{A}}({p},u) =F^1\quad  \text{in} \Omega.
\end{equation}
Motivated by the identity that results from formally multiplying the above by $Jv$ for a smooth vector $v$ with $v \vert_{\Sigma_b=0}$, integrating over $\Omega$ by parts and then in time from $0$ to $T$, we may define the weak solution of \eqref{LP} as follows. Suppose that
\begin{equation}
F^1\in (\mathcal{H}_T^1)^\ast, \, F^3\in L^2(0,T;H^{-1/2}(\Sigma)), \text{
and } u_0\in \mathcal{Y}(0).
\end{equation}
We shall say $(u,p)$ is a weak solution of \eqref{LP} if
\begin{equation}\label{Lpws}
\left\{\begin{array}{lll}
u\in \mathcal{X}_T,\ \rho\partial_t u\in (\mathcal{H}_T^1)^\ast,\ p\in \mathcal{H}_T^0;\\
\langle\rho\partial_t u,v\rangle_\ast+ \frac{1}{2}(  u,
v)_{\mathcal{H}^1_T}-(p,\diverge_\mathcal{A}{v})_{\mathcal{H}^0_T}=\langle F^1, v\rangle_{\ast}-\langle F^3,
v\rangle_{-1/2},\ \forall\,v\in  \mathcal{H}^1_T;
\\u(0)=u_0.
\end{array}\right.
\end{equation}
Here $\langle \cdot, \cdot\rangle_{\ast}$ is the dual paring between $(\mathcal{H}_T^1)^\ast$ and $\mathcal{H}_T^1$, and $\langle \cdot, \cdot\rangle_{-1/2} $ is the dual paring between $L^2(0,T;H^{-1/2}(\Sigma))$ and $L^2(0,T;H^{1/2}(\Sigma))$. It is easy to see that the weak solution of \eqref{LP} in the sense of \eqref{Lpws} is unique. However, our aim is to construct solutions to \eqref{LP} with high regularity, so we will not construct weak solutions.

First, to construct  strong solutions to \eqref{LP}, we make the stronger assumptions that
\begin{equation}\label{SSdata}
\left\{
\begin{array}{lll}F^1\in L^2(0,T;\ddot{H}^1(\Omega)),\ \partial_t F^1\in L^2(0,T;({}_0H^{1}(\Omega))^*),\\
F^3\in L^2(0,T; {H}^{3/2}(\Sigma)),\ \partial_tF^3\in L^2(0,T;
{H}^{-1/2}(\Sigma)),
\\\text{
and }u_0\in \mathcal{X}(0)\cap\ddot{H}^2(\Omega).
\end{array}\right.
\end{equation}
Recall that we abuse notation by writing $L^2 H^{-1}= L^2 ({}_0H^{1}(\Omega))^*$ for the space containing $\partial_t  F^1$ in \eqref{SSdata}.  Note that the inclusions in \eqref{SSdata} imply that (see, for instance Lemmas 2.4 and A.2 of \cite{GT_lwp}) $F^1\in C([0,T];L^2(\Omega)),\ F^3\in C([0,T];H^{1/2}(\Sigma))$; in particular
\begin{equation}\label{SSdata2}
F^1(0)\in L^2(\Omega),\ F^3(0)\in  H^{1/2}(\Sigma).
\end{equation}

\begin{theorem}\label{H2SS}
Suppose that $F^1,F^3,u_0$ satisfy \eqref{SSdata}--\eqref{SSdata2}, and that $u_0,F^3(0)$ satisfy the compatibility conditions
\begin{equation}\label{SScompatibility}
\Pi_{0,+}\left(F_+^3(0)+\mu_+\mathbb{D}_{\mathcal{A}_{0,+}}(u_{0,+})\mathcal{N}_{0,+}\right)=0, \Pi_{0,-}\left(F_-^3(0)-\Lbrack\mu \mathbb{D}_{\mathcal{A}_0}(u_{0})\Rbrack \mathcal{N}_{0,-}\right)=0,
\end{equation}
where $\Pi_{0,\pm}$ are the orthogonal projections onto the tangent space of the surface $\{x_3=\eta_{0,\pm}\}$ (and then $\Pi_{0,\pm}^\perp=I-\Pi_{0,\pm}$)  defined according to
\begin{equation}\label{projection}
\Pi_{0,\pm}\,v=v-(v\cdot \mathcal{N}_{0,\pm})\mathcal{N}_{0,\pm}|\mathcal{N}_{0,\pm}|^{-2}.
\end{equation}
Further assume that
\begin{equation}\label{Kdef}
\mathcal{K}(\eta) := \sup_{0\le t\le T} \left(\|\eta\|_{9/2}^2 + \|\partial_t\eta\|_{7/2}^2
 + \|\partial_t^2\eta\|_{5/2}^2\right)
\end{equation}
is sufficiently small. Then there exists a unique strong solution $(u,p)$ of \eqref{LP} so that
\begin{equation}\label{nnnn}
\begin{array}{lll}
u\in \mathcal{X}_T\cap C([0,T];  {}_0H^1(\Omega)\cap\ddot{H}^2(\Omega))\cap L^2(0,T; \ddot{H}^3(\Omega)),
\\  \partial_t u\in C([0,T]; L^2 (\Omega))\cap L^2(0,T;H^1(\Omega)),\ \rho\partial_t^2u\in (\mathcal{H}_T^1)^\ast,
\\    p\in  C([0,T]; \ddot{H}^1 (\Omega))\cap L^2(0,T; \ddot{H}^2(\Omega)),\  \partial_tp\in  L^2(0,T; L^2(\Omega)).
 \end{array}
\end{equation}
The solution satisfies the estimate
\begin{multline}\label{SSregularity}
 \|u \|_{L^\infty {H}^2}^2 + \|u\|_{L^2  {H}^3}^2  +\|\partial_tu \|_{L^\infty L^2 }^2
 +  \|\partial_t u\|_{ L^2 H^1}^2 + \|\rho\partial_t^2u\|_{ (\mathcal{H}_T^1)^\ast }^2
\\
+ \| p \|_{ L^\infty {H}^1  }^2  +\| p\|_{L^2  {H}^2 }^2
+\|\partial_tp\|_{ L^2 L^2 }^2
\\
\lesssim (1+\mathcal{K}(\eta))\exp(C(1+\mathcal{K}(\eta))T)\left(\|u_0\|_{2}^2 + \|F^1(0)\|_{ 0}^2
+ \| F^3(0)\|_{1/2}^2 \right.
\\
+\left.\|F^1\|_{ L^2  {H}^{1} }^2 +\|\partial_tF^1\|_{ L^2 H^{-1}}^2
+ \|F^3\|_{L^2  {H}^{3/2}}^2 + \|\partial_tF^3\|_{L^2 H^{-1/2} }^2\right).
\end{multline}
The initial pressure, $p(0)\in \ddot{H}^1(\Omega)$, is determined in terms of $u_0,$ $F^1(0),$ $F^3(0)$ as the weak solution to
\begin{equation}\label{pressure00}
\left\{\begin{array}{ll}
 \diverge_{\mathcal{A}_0}\left(\rho^{-1}(\nabla_{\mathcal{A}_0} p(0)-F^1(0)) \right)=- \diverge_{\mathcal{A}_0}(R(0)u_0) \ &\text{ in}\ \Omega,
 \\p_+(0)=( F^3_+(0)\cdot \mathcal{N}_{0,+}+\mu_+\mathbb{D}_{\mathcal{A}_{0,+}}(u_{0,+})\mathcal{N}_{0,+}\cdot \mathcal{N}_{0,+}) |\mathcal{N}_{0,+}|^{-2} &\text{ on }\Sigma_+,
\\
 \Lbrack p(0)\Rbrack=(-F^3_-(0)\cdot \mathcal{N}_{0,-}+\Lbrack \mu \mathbb{D}_{\mathcal{A}_{0 }}(u_{0 })\Rbrack \mathcal{N}_{0,-}\cdot \mathcal{N}_{0,-}) |\mathcal{N}_{0,-}|^{-2}\ &\text{ on }\Sigma_-,
  \\ \Lbrack\rho^{-1}(\nabla_{\mathcal{A}_0} p(0)-F^1(0))  \Rbrack\cdot  \mathcal{N}_{0,-} =\Lbrack \rho^{-1}\mu \Delta_ {\mathcal{A}_{0 }}u_{0 }\Rbrack\cdot \mathcal{N}_{0,-}  \ &\text{ on }\Sigma_-,
\\  \rho_-^{-1}(\nabla_{\mathcal{A}_{0,-}} p_-(0)-F^1_-(0))\cdot\nu=\rho_-^{-1}\mu_-\Delta_ {\mathcal{A}_{0,-}}u_{0,-}\cdot\nu&\text{ on }\Sigma_b,
\end{array}\right.
\end{equation}
in the sense of \eqref{Apoisson2}, where $R:=\partial_t M M^{-1}$ with the matrix $M$ is defined by \eqref{M_def}.
Define the differential operator $D_t$ according to $D_tu=\partial_tu-Ru$. Then $D_tu(0)$ satisfies
\begin{equation}\label{partialtu0}
D_tu(0)=\rho^{-1}(\Delta_{\mathcal{A}_0}u_0-\nabla_{\mathcal{A}_0}p(0)+F^1(0))-R(0)u_0\in \mathcal{Y}(0).
\end{equation}

 Moreover, the pair $(D_tu,\partial_tp)$ satisfy
 \begin{equation}\label{LPt}\left\{\begin{array}{lll}
\rho\partial_t (D_tu)-\mu\Delta_\mathcal{A} (D_tu)+\nabla_\mathcal{A} (\partial_t{p})=D_tF^1 +G^1\ &\text{in}\ \Omega\\ \diverge_\mathcal{A}(D_tu) =0  \ &\text{in}\ \Omega\\ S_\mathcal{A_+}(\partial_tp_+,D_tu_+ )\mathcal{N}_+=\partial_tF_+^3 +G_+^3 \ &\text{on }\Sigma_+
\\
 \Lbrack D_t u \Rbrack=0 ,\quad \Lbrack S_\mathcal{A}(\partial_t p,D_tu)\Rbrack \mathcal{N}_-=-\partial_tF_-^3 +G_-^3 \ &\text{on }\Sigma_-
\\  D_t u_-=0 &\text{on }\Sigma_b&
\end{array}\right.
\end{equation}
in the weak sense of \eqref{Lpws}, where $G^1,$ $G^3$ are defined by
\begin{eqnarray}
&&G^1=-\mu(R+\partial_tJK)\Delta_\mathcal{A}u-\rho\partial_tRu+(\partial_tJK+R-R^T)\nabla_\mathcal{A}p
\nonumber\\&&\qquad\quad+ \mu \diverge_{\mathcal{A}}(\mathbb{D}_\mathcal{A}(Ru)+R\mathbb{D}_\mathcal{A}u+\mathbb{D}_{\partial_t\mathcal{A}}u),\nonumber
\\  &&G^3_+=\mu_+\mathbb{D}_{\mathcal{A}_+}(R_+u_+)\mathcal{N}_++(\mu_+\mathbb{D}_{\mathcal{A}_+} -p_+I) \partial_t\mathcal{N}_++\mu_+ \mathbb{D}_{\partial_t\mathcal{A}_+}u_+\mathcal{N}_+,
 \\&& G_-^3= \Lbrack\mu \mathbb{D}_{\mathcal{A} }(R u )\Rbrack \mathcal{N}_- +\Lbrack\mu\mathbb{D}_{\mathcal{A}}u-pI\Rbrack\partial_t\mathcal{N}_-+\Lbrack\mu \mathbb{D}_{\partial_t\mathcal{A}}u\Rbrack \mathcal{N}_-.\nonumber
\end{eqnarray}
Here the inclusions \eqref{nnnn} guarantee that $G^1,G^3$ satisfy the same conclusions as $F^1,F^3$ listed in \eqref{SSdata}, and \eqref{pressure00} guarantees that the initial data $D_tu(0) \in \mathcal{Y}(0)$.
\end{theorem}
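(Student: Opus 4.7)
The plan is to construct the solution in three layers of regularity: first a base weak solution, then improved spatial regularity via the stationary two-phase $\mathcal{A}$--Stokes theory of Theorem \ref{cSAt2}, and finally regularity of the time derivative. The main technical obstacle is that the operator $\partial_t$ does not commute with $\diverge_\mathcal{A}$, so $\partial_t u$ fails to be divergence-free in the $\mathcal{A}(t)$--sense; this is exactly what forces the introduction of the corrected time derivative $D_t u = \partial_t u - Ru$, built from the transformation matrix $M$ so that $D_tu \in \mathcal{H}^1(t)$. Throughout, the smallness of $\mathcal{K}(\eta)$ will be used to ensure that the perturbation analysis of Theorem \ref{cSAt2} applies and that $J$, $\mathcal{A}$, $R$, $\partial_t\mathcal{A}$, etc., are uniformly bounded in the appropriate spaces.

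First I would produce a weak solution $u \in \mathcal{X}_T$ with $\rho\partial_t u \in (\mathcal{H}^1_T)^\ast$ and $p \in \mathcal{H}^0_T$ by a Galerkin scheme in the space $\mathcal{H}^1(t)$ of divergence-free vector fields, using Korn's inequality and the smallness of $\partial_t J$ to get coercivity of the bilinear form $\tfrac12(u,v)_{\mathcal{H}^1_T}$ and to absorb the lower-order term coming from differentiating $J$ in the energy identity. The pressure $p$ is then recovered via Proposition \ref{Pressure} adapted to the time-dependent setting. The standard integration-by-parts identity, after multiplying \eqref{momentum} by $Jv$, produces the weak formulation \eqref{Lpws}. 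Uniqueness and the basic energy estimate $\|u\|_{L^\infty L^2} + \|u\|_{L^2 H^1}$ in terms of $\|u_0\|_0$ and the dual norms of $F^1,F^3$ are then routine.

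Next I would differentiate the equation formally in $t$ to obtain the system \eqref{LPt} for $(D_tu,\partial_tp)$. The key computation is that $\rho\partial_t(D_tu) - \mu\Delta_\mathcal{A}(D_tu) + \nabla_\mathcal{A}(\partial_tp)$ differs from $\partial_t$ applied to the first equation of \eqref{LP} by terms involving $R$, $\partial_tR$, $\partial_t\mathcal{A}$, $\partial_t J$, which give precisely $G^1$; analogous computations on the boundary and in the divergence equation give $G^3$ and preserve $\diverge_\mathcal{A}(D_tu) = 0$. To make this rigorous and avoid appealing to $\partial_t^2 u$, I would run the Galerkin scheme for the differentiated system after giving a meaning to $D_tu(0)$: one evaluates \eqref{LP} at $t=0$ and solves for $\partial_tu(0)$, which leads to the elliptic problem for $p(0)$ displayed in \eqref{pressure00}. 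Here Theorem \ref{Apoissont1} applies once the compatibility conditions \eqref{SScompatibility} are in force, since those conditions guarantee that the boundary data for $p(0)$ are the normal components of $F^3(0)\pm\mu\mathbb{D}_{\mathcal{A}_0}u_0\mathcal{N}_0$; this in turn yields $D_tu(0)\in \mathcal{Y}(0)$ by \eqref{partialtu0}.

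With $D_tu \in \mathcal{X}_T$ controlled by the differentiated energy estimate and the initial pressure identified, I would freeze time and, for almost every $t \in [0,T]$, view \eqref{LP} as the stationary problem with forcing $F^1 - \rho\partial_tu$ on the interior and $F^3$ on the boundary. Theorem \ref{cSAt2} then upgrades $u(t)$ to $\ddot{H}^2$ and $p(t)$ to $\ddot{H}^1$, with the estimate controlled by $\|\partial_tu\|_0$; applying it with $r=3$ together with the improved boundedness of $\partial_tu$ in $H^1$ gives $u \in L^2\ddot{H}^3$ and $p \in L^2\ddot{H}^2$, and $\partial_tp \in L^2L^2$ follows from the momentum equation. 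Continuity in time of $u$ into $\ddot{H}^2$ and of $\partial_tu$ into $L^2$ follows from $u \in L^2\ddot{H}^3$, $\partial_tu \in L^2H^1$, and $\rho\partial_t^2u \in (\mathcal{H}^1_T)^\ast$ via standard interpolation lemmas. Summing the estimates and applying Gr\"onwall to absorb $(1+\mathcal{K}(\eta))$ on the right-hand side produces \eqref{SSregularity}.

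The hard part will be the careful bookkeeping of the commutators $\mu(\Delta_\mathcal{A}-\mu\Delta)$, $(\nabla_\mathcal{A}-\nabla)p$, and the boundary terms $\mathbb{D}_{\partial_t\mathcal{A}}u$ in defining $G^1,G^3$ and verifying that they belong to the same spaces as $F^1,F^3$; this requires combining the regularity \eqref{nnnn} with the smallness of $\mathcal{K}(\eta)$ through trace and product estimates. A closely related difficulty is proving that the compatibility conditions \eqref{SScompatibility} are exactly what is needed to make $D_tu(0) \in \mathcal{Y}(0)$ compatible with the tangential boundary conditions induced by the stress jump in \eqref{LPt}, so that the differentiated Galerkin scheme has admissible initial data.
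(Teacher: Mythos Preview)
Your outline captures all the essential ingredients and matches the paper's strategy: Galerkin construction, the corrected derivative $D_t=\partial_t-R$ to preserve $\diverge_\mathcal{A}$-freeness, the two-phase $\mathcal{A}$--Poisson problem \eqref{pressure00} for $p(0)$, and Theorem \ref{cSAt2} applied at fixed time with forcing $F^1-\rho\partial_t u$ for the spatial regularity. The organization differs, however, in a way that creates a circularity you do not address. You propose to first extract a weak $(u,p)$, then solve for $p(0)$ and $D_tu(0)$, and only then ``run the Galerkin scheme for the differentiated system'' \eqref{LPt}. But the forcings $G^1,G^3$ of that system involve $\Delta_\mathcal{A} u$, $\nabla_\mathcal{A} p$, and boundary traces of $\mathbb{D}_\mathcal{A} u$, none of which are under control when $(u,p)$ is merely a weak solution; even if one makes sense of a weak solution $v$ of \eqref{LPt} with such rough forcing, you would still have to identify $v$ with $D_tu$, which again requires regularity of $u$ you have not yet established. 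The paper bypasses this by running a \emph{single} Galerkin scheme for \eqref{LP} using a time-dependent basis $\psi^j(t)=M(t)w^j$ of $\ddot{H}^2\cap\mathcal{X}(t)$, and then differentiating the resulting finite-dimensional ODE in $t$; the approximations $u^m$ are automatically in $C^{1,1}([0,T];\ddot{H}^2)$, so testing both the original and the differentiated Galerkin equations against $D_tu^m$ gives uniform-in-$m$ control of $\|\partial_tu^m\|_{L^\infty L^2}$ and $\|\partial_tu^m\|_{L^2H^1}$ \emph{before} any limit is taken. Only afterward are $p(0)$, $D_tu(0)$, and the weak form \eqref{LPt} extracted from the limiting solution.

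There is also a technical device you do not mention but which is essential to close the differentiated energy estimate at the Galerkin level: since the projected initial datum $\mathcal{P}_0^m u_0$ need not satisfy \eqref{SScompatibility} for finite $m$, the paper inserts into the Galerkin boundary forcing the correction terms obtained by replacing $u_0$ with $\mathcal{P}_0^m u_0$ in the left-hand sides of \eqref{SScompatibility}. These vanish as $m\to\infty$ precisely because of \eqref{SScompatibility}, but at the approximate level they are exactly what allows one to evaluate the Galerkin equation at $t=0$, test with $\partial_tu^m(0)$, and bound $\|\partial_tu^m(0)\|_0$ independently of $m$.
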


\begin{proof}
Since the variational formulation \eqref{Lpws} of the weak solution to \eqref{LP} has the same structure as that of the one-phase problem in  \cite{GT_lwp}, we can employ the method used in the proof of Theorem 4.3 of \cite{GT_lwp} with some minor modifications for our two-phase case.

More precisely, we first solve a pressureless problem by the Galerkin method. To this end we first construct $\{\psi^j(t)\}_{j=1}^\infty$ that is a countable basis of $\ddot{H}^2(\Omega)\cap \mathcal{X}(t)$ for each $t\in [0,T]$.  Let $\{w^j\}_{j=1}^\infty$ be a basis of $\ddot{H}^2(\Omega)\cap  {}_0H^1_\sigma(\Omega)$ (this space is clearly separable, so such a basis exists).  Then  we set $\psi^j(t):=M(t)w^j$, where $M(t)$ is the matrix defined by \eqref{M_def}.  Then Proposition \ref{M} implies that $\{\psi^j(t)\}_{j=1}^\infty$ is our desired time-dependent basis of $\ddot{H}^2(\Omega)\cap \mathcal{X}(t)$ for each $t\in [0,T]$.  Moreover, $\psi^j(t)$ is differentiable in time and $\partial_t\psi^j(t)=R(t)\psi^j(t)$.

For any integer $m\ge 1$ we define the finite dimensional space $\mathcal{X}_m(t):={\rm span}\{\psi^1(t),\dots,\psi^m(t)\}$ and write the orthogonal projection of $ \ddot{H}^2(\Omega)\cap \mathcal{X}(t)$ onto $\mathcal{X}_m(t)$ by $\mathcal{P}_t^m$. For each $m\ge 1$, we look for an approximate solution of the form
\begin{equation}\label{Galerkin11}
u^m(t):=\sum_{j=1}^m a^m_j(t) \psi^j(t),
\end{equation}
where the coefficients $a^m_j$ will be chosen so that
\begin{multline}\label{Galerkin12}
(\rho\partial_tu^m , \psi)_{\mathcal{H}^0}+\frac{1}{2}(u^m, \psi)_{\mathcal{H}^1} = ( F^1, \psi)_{\mathcal{H}^0}
- (F_+^3- \Pi_{0,+} (F_+^3(0)+\mu_+\mathbb{D}_{\mathcal{A}_{0,+}} {(\mathcal{P}_0^mu_0)}_+ \mathcal{N}_{0,+}),
\psi)_{\Sigma_+} \\
-(F_-^3-\Pi_{0,-}(F_-^3(0)-\Lbrack \mu \mathbb{D}_{\mathcal{A}_{0}} {(\mathcal{P}_0^mu_0)} \Rbrack
\mathcal{N}_{0,-}), \psi)_{\Sigma_-},
\end{multline}
for any $\psi\in \mathcal{X}_m(t)$, and
\begin{equation} \label{Galerkin21}
u^m(0)= \mathcal{P}_0^m u_0 \in \mathcal{X}_m(0).
\end{equation}
Note that in the last term in \eqref{Galerkin12} we have introduced the projection $\Pi_{0,-}$ to compensate for the fact that $u^m(0)$ need not satisfy the compatibility conditions \eqref{SScompatibility}.  One can readily deduce from \eqref{Galerkin12}--\eqref{Galerkin21} an equivalent system of ODEs for $a_j^m$, and the classical theory of ODEs guarantees the existence of its unique solution, which in turn provides the solution $u^m$ to \eqref{Galerkin12}--\eqref{Galerkin21}. Since $F^1,$ $F^3$ satisfy \eqref{SSdata}, we have $a_j^m\in C^{1,1}([0,T])$.

If we restrict the test functions in \eqref{Galerkin12} to be $\psi(t)=b^m_j(t)\psi^j$ for $b^m_j\in C^{0,1}([0,T])$, then we may differentiate the resulting equation with respect to $t$  to arrive at an identity involving $\partial_t\psi$. Subtracting  from this the equation \eqref{Galerkin12} with test function $\partial_t\psi-R\psi\in \mathcal{X}_m(t)$, we then have
\begin{multline}\label{Galerkin3}
  \langle \rho\partial_t^2u^m , \psi \rangle_{\ast}+\frac{1}{2}(\partial_tu^m, \psi)_{\mathcal{H}^1}
= \langle \partial_tF^1, \psi\rangle_{\ast}-\langle \partial_t F^3,\psi\rangle_{\Sigma}
\\
 + (F^1,(\partial_tJK+R)\psi)_{\mathcal{H}^0}-( F^3,  R\psi)_{\Sigma}
 - (\rho\partial_tu^m,(\partial_tJK+R)\psi)_{\mathcal{H}^0}-\frac{1}{2}(u^m,R\psi)_{\mathcal{H}^1}
\\
-\frac{1}{2}\int_\Omega \mu(\mathbb{D}_{\partial_t\mathcal{A}}u^m : \mathbb{D}_\mathcal{A}\psi
  + \mathbb{D}_\mathcal{A} u^m : \mathbb{D}_{\partial_t\mathcal{A}}\psi + \partial_t J K \mathbb{D}_\mathcal{A} u^m \mathbb{D}_\mathcal{A}\psi)J.
\end{multline}

By taking the test function $\psi=u^m$ in \eqref{Galerkin12}, and then using $\psi=D_t u^m$ in both \eqref{Galerkin3} and \eqref{Galerkin12}, and then applying the same computational arguments used in Theorem 4.3 of \cite{GT_lwp}, we obtain  the estimate
\begin{equation}\label{esu}
\sup_{0\le t\le T}\left\{\|u^m\|_{\mathcal{H}^1}^2+\|\partial_tu^m\|_{\mathcal{H}^0}^2\right\}
   + \|\partial_tu^m\|_{\mathcal{H}^1_T}^2 \lesssim \mathcal{Z},
\end{equation}
where here we have written $\mathcal{Z}$ for the right-hand side of \eqref{SSregularity}. By the uniform estimates \eqref{esu}, we have, up to the extraction of a subsequence,
\begin{equation}\label{climit}
u^m \wstar u\text{ weakly-}\ast\text{ in }L^\infty H^1,
\partial_t u^m \weak \partial_t u \text{ weakly-}\ast \text{ in } L^\infty L^2\text{ and weakly in }L^2H^1,
\end{equation}
and
\begin{equation}\label{es100}
\|u\|_{L^\infty H^1}^2+\|\partial_tu\|_{L^\infty L^2}^2 + \|\partial_tu\|_{L^2H^1}^2\lesssim \mathcal{Z}.
\end{equation}
On the other hand, since $u^m(0)\rightarrow u_0$ in $\ddot{H}^2(\Omega)\cap \mathcal{X}(0)$ and $u_0,F^3(0)$ satisfy the compatibility condition \eqref{SScompatibility}, we have
\begin{equation}\label{climit0}
\begin{array}{lll}
\|\Pi_{0,+}(F^3_+(0) + \mu_+\mathbb{D}_{\mathcal{A}_{0,+}} u_+^m(0) \mathcal{N}_{0,+}) \|_{H^{1/2}(\Sigma_+)} \rightarrow 0,
 \\ \|\Pi_{0,-}(F^3_-(0)-\Lbrack \mu\mathbb{D}_{\mathcal{A}_{0}}{u^m(0)}\Rbrack \mathcal{N}_{0,-})\|_{H^{1/2}(\Sigma_-)} \rightarrow 0.
\end{array}
\end{equation}
Hence, passing to the limit in \eqref{Galerkin12}, by \eqref{climit}, \eqref{climit0}, we deduce that for a.e. $t$,
\begin{equation}\label{pless}
( \rho\partial_tu  , \psi)_{\mathcal{H}^0}+\frac{1}{2}(u , \psi)_{\mathcal{H}^1} =( F^1, \psi)_{\mathcal{H}^0}
-( F^3, \psi)_{\Sigma} \text{ for any }\psi\in \mathcal{X}(t).
\end{equation}
Here $(\cdot,\cdot)_\Sigma$ is the $L^2$ inner product on $\Sigma$.

Now that we have obtained a pressureless solution $u$, we introduce the pressure. Define the functional $\Lambda_t\in
(\mathcal{H}^1(t))^\ast$ so that $\Lambda_t(v)$ equals the difference between the left and right hand sides of  \eqref{pless}, with $\psi$ replaced by $v\in \mathcal{H}^1(t)$. Then $\Lambda_t=0$ on $\mathcal{X}(t)$, so by Proposition \ref{Pressure} there exists a unique $p(t)\in \mathcal{H}^0(t)$ so that $(p(t),\diverge_\mathcal{A }{v})_{\mathcal{H}^0} =\Lambda_t(v)$ for all $v\in \mathcal{H}^1(t)$. This is equivalent to  \begin{equation}\label{weaksolution}
( \rho\partial_tu  , v)_{\mathcal{H}^0}+\frac{1}{2}(u , v)_{\mathcal{H}^1}-(p,\diverge_\mathcal{A}{v})_{\mathcal{H}^0} =( F^1, v)_{\mathcal{H}^0} - ( F^3 , v)_{\Sigma} \text{ for any }v\in
\mathcal{H}^1(t),
\end{equation}
which implies in particular that $(u,p)$ is  the unique weak solution to \eqref{LP} in the sense of \eqref{Lpws}.

Observe that for a.e. $t\in[0,T]$, $(u(t),p(t))$ is the unique weak solution to the elliptic problem \eqref{cSA}, with $F^1$ replaced by $F^1(t)-\rho\partial_t u(t)$, $F^2=0$, and $F^3$ replaced by $F^3(t)$. We then apply Theorem \ref{cSAt2} to find for $r=2,3$,
\begin{equation}\label{esofu}
\|u(t)\|_{r}^2+\|p(t)\|_{ r-1 }^2 \lesssim \|\partial_tu(t)\|_{  r-2 }^2+\|F^1(t)\|_{ r-2
}^2+\|F^3(t)\|_{  r-3/2 }^2.
\end{equation}
When $r=2$ we take the essential supremum of \eqref{esofu} over $t\in[0,T]$, and when $r=3$ we integrate over $[0,T]$; we find that $(u,p)$ is a strong solution to \eqref{LP} and
\begin{equation}\label{es11}
\|u\|_{L^\infty  {H}^2}^2
   + \|u\|_{L^2 {H}^3}^2+\|p\|_{L^\infty  {H}^1}^2
   + \|p\|_{L^2 {H}^2}^2\lesssim \mathcal{Z}.
\end{equation}

Now we compute $p(0)$ and $\partial_tu(0)$.  By the estimates that we already have, we find that $u\in C([0,T];{}_0{H}^1(\Omega) \cap \ddot{H}^2(\Omega))$. On the other hand, integrating \eqref{Galerkin3} in time from $0$ to $T$ and passing to the limit as $m\rightarrow\infty$, we know that $\rho \partial_t^2u^m\rightarrow \rho\partial_t^2u$ in
$(\mathcal{H}^1_T)^\ast$ and that $\|\rho\partial_t^2u\|_{(\mathcal{H}^1_T)^\ast}\lesssim \mathcal{Z}$. It is more natural to regard $\rho\partial_t^2u \in (\mathcal{X}_T)^\ast$ since the action of $\rho\partial_t^2 u$ is defined with test functions in $ \mathcal{X}_T$. However, since $\mathcal{X}_T \subset  \mathcal{H}^1_T$, the usual theory of Hilbert spaces provides a unique operator $E: ( \mathcal{X}_T)^\ast\to (\mathcal{H}^1_T)^\ast$ with the property that $Ef\vert_{\mathcal{X}_T} = f$ and $\|Ef\|_{( \mathcal{H}^1_T)^\ast}= \|f\|_{(\mathcal{X}_T)^\ast}$ for all $f \in ( \mathcal{X}_T)^*$.  Using this $E$, we regard $\rho\partial_t^2u \in ( \mathcal{X}_T)^\ast$ as an element of $( \mathcal{H}^1_T)^\ast$ in a natural way. This also implies that  $\partial_tu\in C([0,T];L^2(\Omega))$. Then using the first equation in \eqref{LP} we have $\nabla_{\mathcal{A}}p\in C([0,T];L^2(\Omega))$; using the third equation in \eqref{LP} and the trace theorem, we have $ p_+\in C([0,T]; H^{1/2}(\Sigma_+))$ and then Poincar\'e's inequality (Lemma \ref{poincare}) implies that $p_+\in C([0,T]; H^1(\Omega_+))$. Hence, by the trace theorem  we have $ p_+\in C([0,T]; H^{1/2}(\Sigma_-))$. Then using the fourth equation in \eqref{LP}, we have $ p_-\in C([0,T];
H^{1/2}(\Sigma_-))$, which implies $ p_-\in C([0,T]; H^1(\Omega_-))$. Hence $p\in C([0,T]; \ddot{H}^1(\Omega))$. These time continuity results allow us to evaluate \eqref{LP} at $t=0$. We first derive the equation for $p(0)$, i.e. \eqref{pressure00}. First, the Dirichlet condition for $p_+(0)$ on $\Sigma_+$ and the Dirichlet jump condition for $\Lbrack p(0)\Rbrack$ on $\Sigma_-$ are easily deduced from  $S_{\mathcal{A}_{0,+}}(p_+(0),u_{0,+}) \mathcal{N}_{0,+} = F_+^3(0)$ on $\Sigma_+$  and $\Lbrack S_{\mathcal{A}_0}(p_0,u_0)\Rbrack \mathcal{N}_{0,-}=-F_-^3(0)$  on $\Sigma_-$, respectively. Next, to deduce the PDE for $p(0)$ in $\Omega$, the Neumann jump condition  on $\Sigma_-$ and the Neumann condition on $\Sigma_b$, we divide \eqref{LP} by $\rho$ and then multiply the resulting identity
by $\nabla_{\mathcal{A}}\varphi$ for any $\varphi \in C^\infty(\Omega)$ with $\varphi =0$ on $\Sigma_+$; since $\partial_t u - Ru\in \mathcal{X}(t)$, we obtain
\begin{equation}\label{lll}
(Ru+\rho^{-1}(\nabla_{\mathcal{A}}p-\mu\Delta_{\mathcal{A}}u-F^1),\nabla_{\mathcal{A}}\varphi)_{\mathcal{H}^0}=0
\text{ for all such }\varphi.
\end{equation}
Evaluating \eqref{lll} at $t=0$, integrating by parts over $\Omega$ and then employing a density argument, we deduce that
\begin{multline}
(\rho^{-1}(\nabla_{\mathcal{A}_0}p(0)-F^1(0)) \nabla_{\mathcal{A}_0}\varphi)_{\mathcal{H}^0} = (\diverge_{\mathcal{A}_0}(R(0)u_0),\varphi) \\
+ \langle\Lbrack\rho^{-1}\mu \Delta_{\mathcal{A}_{0}}u_{0}\Rbrack \cdot \mathcal{N}_{0,-}|\mathcal{N}_{0,-}|^{-1},\varphi\rangle
+\langle \rho_-^{-1}\mu_-\Delta_{\mathcal{A}_{0,-}}u_{0,-}\cdot\nu,\varphi\rangle.
\end{multline}
This implies that $p(0)$ is the unique weak solution to \eqref{pressure00} in the sense of \eqref{Apoisson2} and then
$p(0)\in \ddot{H}^1(\Omega)$. This allows us to define $\partial_tu(0)$ as in \eqref{partialtu0} so that $D_tu(0)\in
\mathcal{Y}(0)$.

It remains to derive \eqref{LPt}, which is the PDE satisfied by $D_t u$. Integrating \eqref{Galerkin3} in time from $0$ to $T$,  sending  $m\rightarrow\infty$ and  subtracting the resulting identity from the equation \eqref{weaksolution} with  test function $v=R\psi$ for any $\psi\in \mathcal{X}_T$, we find
\begin{multline} \label{weakt}
\langle \rho\partial_t^2u , \psi \rangle_{\ast}+\frac{1}{2}(\partial_tu, \psi)_{\mathcal{H}^1_T}
= \langle \partial_tF^1, \psi\rangle_{\ast}-\langle \partial_t F^3,\psi\rangle_{\Sigma}
\\
+(\partial_tJK(F^1-\rho\partial_tu),\psi)_{\mathcal{H}^0_T} -(p,\diverge_{\mathcal{A}}(R\psi))_{\mathcal{H}^0_T}
\\
-\frac{1}{2}\int_\Omega \mu(\mathbb{D}_{\partial_t\mathcal{A}}u:\mathbb{D}_\mathcal{A}\psi
+\mathbb{D}_\mathcal{A}u : \mathbb{D}_{\partial_t\mathcal{A}} \psi + \partial_tJK\mathbb{D}_\mathcal{A}u :\mathbb{D}_\mathcal{A}\psi)J, \text{ for any }\psi\in \mathcal{X}_T.
\end{multline}
We define the functional $\Lambda\in (\mathcal{H}^1_T)^\ast$ so that $\Lambda(v)$ equals the difference between the left and right hand sides of  \eqref{weakt}, with $\psi$ replaced by $v\in \mathcal{H}^1_T$. Then $\Lambda=0$ on $\mathcal{X}_T$, by Proposition \ref{Pressure} there exists a unique $q \in \mathcal{H}^0_T$ so that $(q ,\diverge_\mathcal{A }{v})_{\mathcal{H}^0_T} =\Lambda(v)$ for all $v\in \mathcal{H}^1_T$. A straightforward computation  shows that $q=\partial_tp$ and then
\begin{multline}\label{weakt1}
 \langle \rho\partial_t^2u , v \rangle_{\ast}+\frac{1}{2}(\partial_tu, v)_{\mathcal{H}^1_T}
 - (\partial_tp,\diverge_{\mathcal{A}}{v})_{\mathcal{H}^0_T}
 = \langle \partial_tF^1, v\rangle_{\ast}-\langle \partial_t F^3,v\rangle_{\Sigma}
 \\
+ (\partial_tJK(F^1-\rho\partial_tu),v)_{\mathcal{H}^0_T} -(p,\diverge_{\mathcal{A}}(Rv))_{\mathcal{H}^0_T}
\\
-\frac{1}{2}\int_\Omega\mu(\mathbb{D}_{\partial_t\mathcal{A}}u:\mathbb{D}_\mathcal{A}v
  +\mathbb{D}_\mathcal{A}u:\mathbb{D}_{\partial_t\mathcal{A}}v+\partial_tJK\mathbb{D}_\mathcal{A}u:\mathbb{D}_\mathcal{A}v)J,\text{ for any }v\in \mathcal{H}_T^1,
\end{multline}
and the bound for $\partial_tp$ in \eqref{SSregularity} holds.  We replace the last two terms in \eqref{weakt1} by
\begin{multline} \label{i1}
 -(p,\diverge_{\mathcal{A}}(Rv))_{\mathcal{H}^0_T}
 -\frac{1}{2}\int_\Omega\mu(\mathbb{D}_{\partial_t\mathcal{A}}u:\mathbb{D}_\mathcal{A}v
  +\mathbb{D}_\mathcal{A}u:\mathbb{D}_{\partial_t\mathcal{A}}v+\partial_tJK\mathbb{D}_\mathcal{A}u:\mathbb{D}_\mathcal{A}v)J
  \\
 =(R^T\nabla_\mathcal{A}p+\mu\diverge_{\mathcal{A}}(R\mathbb{D}_\mathcal{A}u+\mathbb{D}_{\partial_t\mathcal{A}}u),v)_{\mathcal{H}_T^0}
 \\
 -\langle p_+\partial_t\mathcal{N}_++\mu_+\mathbb{D}_{\mathcal{A}_+}u_+\partial_t\mathcal{N}_++\mu_+ \mathbb{D}_{\partial_t\mathcal{A}_+}u_+\mathcal{N}_+,v_+\rangle
\\
-\langle \Lbrack p\Rbrack \partial_t \mathcal{N}_- + \Lbrack \mu \mathbb{D}_{\mathcal{A}} u \Rbrack \partial_t \mathcal{N}_- + \Lbrack\mu \mathbb{D}_{\partial_t\mathcal{A}}u\Rbrack \mathcal{N}_-,v_-\rangle.
\end{multline}
We also replace the first two terms in \eqref{weakt1} via
\begin{equation}\label{i2}
\langle \rho\partial_t^2u , v \rangle_{\ast}=\langle \rho\partial_tD_tu , v \rangle_{\ast}
 +\langle \rho R\partial_t u , v \rangle_{ \mathcal{H}_T^0}+\langle \rho \partial_tR u , v \rangle_{ \mathcal{H}_T^0},
\end{equation}
\begin{multline}\label{i3}
\frac{1}{2}(\partial_tu, v)_{\mathcal{H}^1_T} = \frac{1}{2}(D_tu, v)_{\mathcal{H}^1_T} - (\mu\diverge_{\mathcal{A}}(\mathbb{D}_{\mathcal{A}}(Ru)),v)_{\mathcal{H}^0_T}
 \\
-(\mu_+  \mathbb{D}_{\mathcal{A}_+}(R_+u_+)\mathcal{N}_+,v_+)_{\Sigma_+}
  -(\Lbrack \mu \mathbb{D}_{\mathcal{A} }(R u )\Rbrack \mathcal{N}_-,v_-)_{\Sigma_-}.
\end{multline}
We now plug  $\eqref{i1}$--$\eqref{i3}$ into \eqref{weakt1} and then replace the $\rho\partial_tu$ term by using the first equation in \eqref{LP}; since $D_tu=\partial_tu-Ru\in \mathcal{X}_T$, we deduce that $(D_tu,\partial_tp)$ is the weak solution of \eqref{LPt} with the initial condition $D_tu(0)\in \mathcal{Y}(0)$ given by \eqref{partialtu0}.
\end{proof}

Now we investigate the higher regularity of the strong solution obtained in Theorem \ref{H2SS}. First, we need to require the  stronger assumptions on   $\eta$. To this end, we define
\begin{equation} \label{etanorm}
\begin{split}
&\mathfrak{D}(\eta):
=\|\eta\|_{L^2H^{4N+1/2}}^2 +\|\partial_t\eta\|_{L^2H^{4N-1/2}}^2 + \sum_{j=2}^{2N+1}\|\partial_t^j\eta\|_{L^2
H^{4N-2j+5/2}}^2 ,
\\
&\mathfrak{E}(\eta): =\sum_{j=0}^{2N}\|\partial_t^j\eta\|_{L^\infty H^{4N-2j}}^2, \quad
\mathfrak{R}(\eta) := \mathfrak{E}(\eta)+\mathfrak{D}(\eta),
\\
& \mathfrak{E}_0(\eta) :=\|\eta(0)\|_{H^{4N }}^2+\|\partial_t \eta (0)\|_{H^{4N-1}}^2
+\sum_{j=2}^{2N} \|\partial_t^j\eta(0)\|_{H^{4N-2j+3/2}}^2.
\end{split}
\end{equation}
Note that in all these norms, the temporal interval is assumed to be $[0,T]$.  Throughout the rest of Section \ref{sec_tdas} we will assume that $\mathfrak{R}(\eta),\mathfrak{E}_0(\eta)\le 1$, which implies that $\mathcal{Q}(\mathfrak{R}(\eta))\lesssim 1+\mathfrak{R}(\eta)$ and $\mathcal{Q}(\mathfrak{E}_0(\eta))\lesssim 1+\mathfrak{E}_0(\eta)$ for any polynomial $\mathcal{Q}$. Note that $\mathcal{K}(\eta)\le \mathfrak{E} (\eta)\le \mathfrak{R}(\eta)$, where $\mathcal{K}(\eta)$ is defined by \eqref{Kdef}, and also that $\|\eta(0)\|_{H^{4N-1/2}}^2\le \mathfrak{E}_0(\eta)$.

In order to define the forcing terms and initial data for the problems that result  from temporally differentiating \eqref{LP} several times, we define some mappings. Given $F^1,F^3,v,q$ we define the mappings for forcing terms
\begin{equation}\label{gdef}
\begin{array}{ll}
\mathfrak{G}^1(v,q)=-\mu(R+\partial_tJK)\Delta_\mathcal{A}v-\rho\partial_tRv+(\partial_tJK+R-R^T)\nabla_\mathcal{A}q
 \\ \qquad\qquad\ \ + \mu\diverge_{\mathcal{A}}(\mathbb{D}_\mathcal{A}(Rv)+R\mathbb{D}_\mathcal{A}v+\mathbb{D}_{\partial_t\mathcal{A}}v) &\text{ on }\Omega,
\\  \mathfrak{G}^3_+(v,q)=\mu_+\mathbb{D}_{\mathcal{A}_+}(R_+v_+)\mathcal{N}_++(\mu_+\mathbb{D}_{\mathcal{A}_+}v_+ -q_+I) \partial_t\mathcal{N}_++\mu_+ \mathbb{D}_{\partial_t\mathcal{A}_+}v_+\mathcal{N}_+ &\text{ on }\Sigma_+,
 \\  \mathfrak{G}_-^3(v,q)= \Lbrack\mu \mathbb{D}_{\mathcal{A} }(R v )\Rbrack \mathcal{N}_- +\Lbrack\mu\mathbb{D}_{\mathcal{A}}v-qI\Rbrack\partial_t\mathcal{N}_-+\Lbrack\mu \mathbb{D}_{\partial_t\mathcal{A}}v\Rbrack \mathcal{N}_- &\text{ on }\Sigma_-,
\end{array}
\end{equation}
and the mappings for initial data
\begin{equation}\label{fdef}
\begin{array}{ll}
\mathfrak{G}^0(F^1,v,q)=\rho^{-1} (\mu\Delta_{\mathcal{A}}v-\nabla_{\mathcal{A}}q +F^1) -Rv &\text{ on }\Omega,
\\\mathfrak{f}^1(F^1,v)= \diverge_{\mathcal{A}} (\rho^{-1}F^1-R v)&\text{ on }\Omega,
\\\mathfrak{f}^2(F^3,v)=(F^3_+\cdot \mathcal{N}_{ +}+\mu_+\mathbb{D}_{\mathcal{A}_{ +}}v_+\mathcal{N}_{ +}\cdot \mathcal{N}_{+})\mathcal{N}_{ +}|\mathcal{N}_{ +}|^{-2}&\text{ on }\Sigma_+,
\\\mathfrak{f}^3(F^3,v)=(-F^3_-\cdot \mathcal{N}_{ -}+\Lbrack \mu \mathbb{D}_{\mathcal{A} }v \Rbrack \mathcal{N}_{ -}\cdot \mathcal{N}_{ -})\mathcal{N}_{ -}|\mathcal{N}_{ -}|^{-2}&\text{ on }\Sigma_-,
\\\mathfrak{f}^4(F^3,v)=\Lbrack\rho^{-1}( F^1+\mu \Delta_ {\mathcal{A} }v)\Rbrack\cdot \mathcal{N}_{-}&\text{ on }\Sigma_-,
\\ \mathfrak{f}^5(F^1,v)=\rho_-^{-1}(F^1_-+\mu_- \Delta_ {\mathcal{A}_- }v_-)\cdot\nu&\text{ on }\Sigma_b.
\end{array}
\end{equation}
In the above definitions we assume that $\mathcal{A}, R, \mathcal{N}$, etc. are evaluated at the same time $t$ as $F^1,F^3,v,q$. We first define  the forcing terms, assuming that $F^1,F^3,u,p$ are sufficiently regular. We write $F^{1,0}=F^1,\ F^{3,0}=F^3$ and define the forcing terms iteratively for $j=1,\dots,2N$,
\begin{equation}\label{Fjdef}
\begin{array}{lll}
F^{1,j}&:=D_t F^{1,j-1}+\mathfrak{G}^1(D_t^{j-1}u,\partial_t^{j-1}p)&\text{ on }\Omega, \\
F^{3,j}&:=D_t F^{3,j-1}+\mathfrak{G}^3(D_t^{j-1}u,\partial_t^{j-1}p)&\text{ on
}\Sigma.
\end{array}
\end{equation}
In order to estimate these forcing terms, we  define the quantities
\begin{multline}\label{Fjnorm}
\mathfrak{F}(F^1,F^3) :=\sum_{j=0}^{2N}\|\partial_t^jF^1\|_{L^2H^{4N-2j-1}}^2+\|\partial_t^jF^3\|_{L^2H^{4N-2j-1/2}}^2
\\
+\sum_{j=0}^{2N-1}\|\partial_t^jF^1\|_{L^\infty H^{4N-2j-2}}^2 + \|\partial_t^jF^3\|_{L^\infty H^{4N-2j-3/2}}^2,
\end{multline}
\begin{equation}\label{F0norm}
\mathfrak{F}_0(F^1,F^3) :=\sum_{j=0}^{2N-1}\|\partial_t^jF^1(0)\|_{H^{4N-2j-2}}^2 + \|\partial_t^jF^3(0)\|_{ H^{4N-2j-3/2}}^2.
\end{equation}

We now turn to the construction of the initial data.  To begin, we assume that $D_t^0 u(0):=u_0\in \ddot{H}^{4N}(\Omega)$, $\eta_0\in H^{4N+1/2},$ $\mathfrak{F}_0(F^1,F^3)<\infty$ and that $\|\eta_0\|_{4N-1/2}^2 \le
\mathfrak{E}_0(\eta)\le 1$ is sufficiently small for the hypotheses of Theorems \ref{cSAt2} and \ref{Apoissont1} to hold when $k=4N$.  We will iteratively construct $D_t^j u(0)$ for $j=0,\dotsc,2N$ and $\partial_t^j p(0)$ for $j=0,\dotsc,2N-1$ by solving various PDEs with forcing terms given by the terms in \eqref{fdef}.  In order to estimate the resulting data, we need estimates for the forcing terms \eqref{fdef} in terms of $F^i$,$v$ and $q$.  Such estimates may be found in  Lemmas 4.4--4.6 of \cite{GT_lwp}  since the terms in \eqref{Fjdef} have the same structure as those estimated in \cite{GT_lwp}.  For  the sake of brevity we will not record versions of these results here, and we will take them for granted in the following discussion.

To begin the iterative construction, we first solve for all but the highest order data.  For $j=0$ we write $F^{1,0}(0) = F^{1}(0) \in \ddot{H}^{4N-2}$, $F^{3,0}(0) = F^3(0) \in \ddot{H}^{4N-3/2}$, and $D_t^0 u(0) = u_0 \in \ddot{H}^{4N}.$  Suppose now that $F^{1,\ell} \in \ddot{H}^{4N - 2\ell-2}$, $F^{3,\ell} \in \ddot{H}^{4N-2\ell-3/2}$, and $D_t^\ell u(0) \in \ddot{H}^{4N-2\ell}$ are given for $0\le \ell \le j \in [0,2N-2]$; we will define $\dt^j p(0) \in \ddot{H}^{4N-2j-1}$ as well as $D_t^{j+1}u(0) \in \ddot{H}^{4N-2j-2}$, $F^{1,j+1}(0) \in \ddot{H}^{4N-2j-4}$, and $F^{3,j+1}(0) \in \ddot{H}^{4N-2j-7/2}$, which allows us to define all but the highest order data via iteration.  We define $ \partial_t^{j}p(0)$ as the strong solution to \eqref{Apoisson} with
\begin{equation}
 \begin{split}
 f^1&= \mathfrak{f}^1(F^{1,j}(0),D_t^{j}u(0)), \, f^2=\mathfrak{f}^2(F^{3,j}(0),D_t^{j}u(0)), \,  f^3=\mathfrak{f}^3(F^{3,j}(0),D_t^{j}u(0)), \\
 f^4&= \mathfrak{f}^4(F^{3,j}(0),D_t^{j}u(0)) \text{ and } f^5=\mathfrak{f}^5(F^{1,j}(0),D_t^{j}u(0)).
\end{split}
\end{equation}
Then we define $D_t^{j+1}u(0)=\mathfrak{G}^0(F^{1,j}(0),D_t^{j}u(0),\partial_t^{j}p(0))$.

By construction, the initial data $D_t^j u(0)$ and $\dt^j p(0)$ are determined in terms of $u_0$ as well as  $\dt^\ell F^1(0)$ and $\dt^\ell F^3(0)$ for $\ell = 0,\dotsc,2N-1$.  In order to use these in Theorem \ref{H2SS} and to construct $D_t^{2N} u(0)$ and $\dt^{2N-1}p(0)$, we must enforce compatibility conditions for $j=0,\dotsc,2N-1$.  For such $j$, we say that the $j^{th}$ compatibility condition is satisfied if
\begin{equation}\label{jcompatibility}
 \begin{cases}
 D_t^j u(0) \in \ddot{H}^2(\Omega) \cap \mathcal{X}(0)\\
 \Pi_{0,+}(F^{3,j}_+(0)+\mu_+\mathbb{D}_{\mathcal{A}_{0,+}}D_t^ju_+(0)\mathcal{N}_{0,+})=0 &\text{on }\Sigma_+ \\
 \Pi_{0,-}(F^{3,j}_-(0)-\Lbrack\mu\mathbb{D}_{\mathcal{A}_{0}}D_t^ju(0)\Rbrack \mathcal{N}_{0,-})=0 &\text{on }\Sigma_-.
\end{cases}
\end{equation}
Note that the construction of $D_t^j u(0)$ and $\dt^j p(0)$ ensures that $D_t^j u(0) \in \ddot{H}^2(\Omega)$ and also  that $\diverge_{\mathcal{A}_0}(D_t^j u(0))=0$, so the condition $D_t^j u(0) \in \mathcal{X}(0) \cap \ddot{H}^2(\Omega)$ may be reduced to the conditions
\begin{equation}
D_t^j u(0) \vert_{\Sigma_b} =0 \text{ and } \left. \Lbrack D_t^j u(0)\Rbrack \right\vert_{\Sigma_-} =0.
\end{equation}

It remains only to define $\dt^{2N-1} p(0)\in H^1$ and $D_t^{2N} u(0) \in H^0$.   According to the $j=2N-1$ compatibility condition \eqref{jcompatibility}, $\diverge_{\mathcal{A}_0} D_t^{2N-1} u(0)=0$, which allows us to  define $\dt^{2N-1} p(0) \in \ddot{H}^1$ as a solution to \eqref{Apoisson} in the weak sense of \eqref{Apoisson2}.  Then we define
\begin{equation}
D_t^{2N} u(0) = \mathfrak{G}^0( F^{1,2N-1}(0), D_t^{2N-1} u(0), \dt^{2N-1} p(0)) \in L^2(\Omega).
\end{equation}
In fact, the construction of $\dt^{2N-1} p(0)$ guarantees that $D_t^{2N} u(0) \in \mathcal{Y}(0)$.  This construction, together with the estimates from \cite{GT_lwp} mentioned above, ensure that
\begin{multline}\label{initial}
\sum_{j=0}^{2N}\left(\|\partial_t^ju(0)\|_{4N-2j}^2+\|D_t^ju(0)\|_{4N-2j}^2\right)
+ \sum_{j=0}^{2N-1} \|\partial_t^jp(0)\|_{4N-2j-1}^2
\\
\lesssim(1+\mathfrak{E}_0(\eta)) (\|u_0\|_{4N}^2 + \mathfrak{F}_0(F^1,F^3)),
\end{multline}
where $\mathfrak{F}_0$ is defined by \eqref{F0norm}.

To state our result on higher regularity of solutions to \eqref{LP},
we also define the quantities
\begin{equation}
 \begin{split}
  &\mathfrak{D}(u,p):=\sum_{j=0}^{2N+1}\|\partial_t^j u\|_{L^2H^{4N-2j+1}}^2 +\sum_{j=0}^{2N} \|\partial_t^j p\|_{L^2H^{4N-2j}}^2, \\
  &\mathfrak{E}(u,p) :=\sum_{j=0}^{2N }\|\partial_t^j u\|_{L^\infty H^{4N-2j}}^2 + \sum_{j=0}^{2N-1 }\|\partial_t^j p\|_{L^\infty H^{4N-2j-1}}^2, \\
  &\mathfrak{R}(u,p) :=\mathfrak{E}(u,p)+\mathfrak{D}(u,p) \\
  &\mathfrak{E}_0(u,p) :=\sum_{j=0}^{2N }\|\partial_t^j u(0)\|_{  4N-2j }^2 + \sum_{j=0}^{2N-1}\|\partial_t^j p(0)\|_{ 4N-2j-1 }^2.
 \end{split}
\end{equation}

We now present our higher regularity result.

\begin{theorem}\label{HkSS}
Suppose that $u_0\in \ddot{H}^{4N}(\Omega)$, $\eta_0\in  {H}^{4N+1/2}(\Sigma),$ $\mathfrak{F}(F^1,F^3)<\infty$, and that $\mathfrak{R}(\eta)\le 1$ is sufficiently small so that $\mathcal{K}(\eta)$, defined by \eqref{Kdef}, satisfies the hypotheses of Theorem \ref{H2SS} and Theorem \ref{Apoissont1}. Let $D_t^j u(0)\in \ddot{H}^{4N-2j}(\Omega)$ and $\partial_t^jp(0) \in \ddot{H}^{4N-2j-1}(\Omega)$ for $j=0,\dots,2N-1$ along with $D_t^{2N} u(0)\in \mathcal{Y}(0)$ all be determined as above in terms of $u_0$ and $\partial_t^j F^1(0),\partial_t^jF^3(0)$ for $j=0,\dots,2N-1$. Suppose that for $j=0,\dots,2N-1$, the initial data satisfy the  $j^{th}$ compatibility condition \eqref{jcompatibility}. Then there exists a unique strong solution $(u,p)$ to \eqref{LP} so that
\begin{equation}\label{Hkreg}
\begin{array}{lll}
\partial_t^ju\in C([0,T];   {}_0H^1(\Omega)\cap\ddot{H}^{4N-2j}(\Omega))\cap L^2(0,T; \ddot{H}^{4N-2j+1}(\Omega))\text{ for }j=0,\dots,2N,
\\  \partial_t^jp \in C([0,T];    \ddot{H}^{4N-2j-1}(\Omega))\cap L^2(0,T;  \ddot{H}^{4N-2j}(\Omega))\text{ for }j=0,\dots,2N-1,
\\ \rho\partial_t^{2N+1}u\in (\mathcal{H}_T^1)^\ast, \text{ and } \partial_t^{2N}p\in  L^2(0,T; L^2(\Omega)).
 \end{array}
\end{equation}
The solution satisfies the estimate
\begin{equation}\label{Hkest}
 \begin{split}
  \mathfrak{E}(u,p)+\mathfrak{D}(u,p) &\lesssim \left(1+\mathfrak{E}_0(\eta)+\mathfrak{R}(\eta)\right)\exp(C(1+\mathfrak{E}(\eta))T) \\
&  \quad\times\left(\|u_0\|_{4N}^2
+\mathfrak{F}_0(F^1,F^3)+\mathfrak{F}(F^1,F^3)\right),
 \end{split}
\end{equation}
where $\mathfrak{F}_0$ is defined by \eqref{F0norm}.  Moreover, the pair $(D_t^ju,\partial_t^jp)$ satisfy
 \begin{equation}\label{LPkt}
\left\{\begin{array}{lll}
\rho \partial_t (D_t^ju )-\mu \Delta_\mathcal{A } (D_t^ju )+\nabla_\mathcal{A } (\partial_t^j{p} )=F ^{1,j}  &\text{in}\ \Omega
 \\ \diverge_\mathcal{A }(D_t^ju ) =0  \ &\text{in}\ \Omega
 \\ S_\mathcal{A_+}(\partial_t^jp_+,D_t^ju_+ )\mathcal{N}_+= F_+^{3,j} \ &\text{on }\Sigma_+
\\
 \Lbrack D_t u \Rbrack=0 ,\quad \Lbrack S_\mathcal{A}(\partial_t^j p,D_t^ju)\Rbrack \mathcal{N}_-= -F_-^{3,j} \ &\text{on }\Sigma_-
\\  D_t^j u_-=0 &\text{on }\Sigma_b
\end{array}\right.
\end{equation}
in the strong sense with initial data $(D_t^j u(0), \partial_t^j p(0))$ for $j = 0,\dots, 2N -1$, and in the weak sense of \eqref{Lpws} with initial data $D_t^{2N} u(0) \in \mathcal{Y}(0)$ for $j = 2N$.
\end{theorem}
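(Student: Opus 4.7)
The plan is to prove Theorem \ref{HkSS} by finite induction on the differentiation index $j = 0, 1, \ldots, 2N$, at each stage bootstrapping from a weak solution at level $j$ to a strong solution at level $j$ while simultaneously producing a weak solution at level $j+1$. The engine of the induction is Theorem \ref{H2SS} applied to the differentiated system \eqref{LPkt} (with $F^1, F^3$ replaced by $F^{1,j}, F^{3,j}$ and initial data $(D_t^j u(0), \partial_t^j p(0))$), combined with the elliptic regularity of Theorem \ref{cSAt2} invoked pointwise in $t$ to trade control of time derivatives for spatial regularity: solving \eqref{LPkt} as a stationary two-phase $\mathcal{A}$--Stokes problem with right-hand side $F^{1,j} - \rho\partial_t(D_t^j u)$ allows one to control $\|D_t^j u\|_{4N-2j+1} + \|\partial_t^j p\|_{4N-2j}$ by the corresponding norms of lower-order derivatives already obtained.

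For the base case $j=0$, Theorem \ref{H2SS} gives the regularity stated in \eqref{nnnn} and identifies $(D_t u, \partial_t p)$ as the weak solution of \eqref{LPt}, which is \eqref{LPkt} at level $j=1$ with forcing $F^{1,1} = D_t F^1 + \mathfrak{G}^1(u,p)$ and $F^{3,1} = D_t F^3 + \mathfrak{G}^3(u,p)$. To iterate, one checks: (i) $F^{1,j}, F^{3,j}$ belong to the function spaces required by the hypotheses \eqref{SSdata} of Theorem \ref{H2SS}, with quantitative estimates in terms of $\mathfrak{R}(\eta)$ and the previously constructed $(D_t^\ell u, \partial_t^\ell p)$ for $\ell \le j$; (ii) the initial data $D_t^{j}u(0) \in \mathcal{X}(0) \cap \ddot{H}^2$ by virtue of the $j^{th}$ compatibility condition \eqref{jcompatibility} together with the definitions in \eqref{fdef}; (iii) $F^{1,j}(0), F^{3,j}(0)$ are traces in the proper spaces by \eqref{initial}. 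Step (i) uses exactly the product/commutator estimates of Lemmas 4.4--4.6 in \cite{GT_lwp}, since the terms defining $\mathfrak{G}^1, \mathfrak{G}^3$ have the same multilinear structure. With these in hand, Theorem \ref{H2SS} delivers a strong solution at level $j$ and a weak solution at level $j+1$, and the elliptic estimate from Theorem \ref{cSAt2} upgrades the $H^2 \cap L^2 H^3$ control to $\ddot{H}^{4N-2j} \cap L^2 \ddot{H}^{4N-2j+1}$ for $D_t^j u$ (and correspondingly for $\partial_t^j p$).

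The main obstacle is the top level $j=2N$, where no further spatial regularity is available: here the forcing $F^{1,2N}, F^{3,2N}$ is only controlled in $L^2 H^{-1}$ respectively $L^2 H^{-1/2}$, and one can only produce a weak solution in the sense of \eqref{Lpws}, yielding $\rho \partial_t^{2N+1} u \in (\mathcal{H}^1_T)^*$ and $\partial_t^{2N} p \in L^2 L^2$. A secondary obstacle is the control of constants: the bound \eqref{SSregularity} contains a factor $(1+\mathcal{K}(\eta))\exp(C(1+\mathcal{K}(\eta))T)$, and iterating $2N$ times could in principle produce an uncontrollably large constant; here the smallness hypothesis $\mathfrak{R}(\eta) \le 1$ (which dominates $\mathcal{K}(\eta)$) ensures that each application contributes a bounded factor, and the product estimates for $\mathfrak{G}^1, \mathfrak{G}^3$ can absorb the $\eta$-dependent contributions without creating feedback into the top-order norm of $(u,p)$. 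Collecting the estimates at every level $j = 0, \ldots, 2N$, writing $\partial_t^j u$ in terms of $D_t^\ell u$ for $\ell \le j$ via the algebraic identity implicit in $D_t = \partial_t - R$, and absorbing lower-order terms yields the global bound \eqref{Hkest}, while the time-continuity assertions in \eqref{Hkreg} follow from the standard embedding of parabolic function spaces as in Theorem \ref{H2SS}.
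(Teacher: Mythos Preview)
Your proposal is correct and follows essentially the same approach as the paper: iterative application of Theorem \ref{H2SS} to the differentiated systems \eqref{LPkt}, combined with the elliptic $\mathcal{A}$--Stokes regularity of Theorem \ref{cSAt2} to upgrade spatial regularity, with the forcing terms $F^{1,j}, F^{3,j}$ and initial data controlled via the estimates of Lemmas 4.4--4.6 of \cite{GT_lwp} and \eqref{initial}. The paper's own proof is in fact just a brief sketch referring to Theorem 4.7 of \cite{GT_lwp}, and your outline supplies more of the mechanics (the induction structure, the handling of the top level $j=2N$, and the bookkeeping for constants) than the paper does.
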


\begin{proof}
The proof of Theorem 4.7 of \cite{GT_lwp} works in our present case as well.  We will only provide a brief sketch of the idea of the proof.  For full details we refer to \cite{GT_lwp}.   The estimates in Lemma 4.4 of \cite{GT_lwp} provide control of the forcing terms $F^{1,j},F^{3,j}$ in terms of $F^1,F^3,\eta,u,p$.  The estimate \eqref{initial} gives control of the initial data.  These and the  $j^{th}$ compatibility condition \eqref{jcompatibility} for $j=0,\dotsc,2N-1$ then allow us to iteratively apply Theorem \ref{H2SS} and Theorem \ref{cSAt2} to find that $(D_t^ju,\partial_t^jp)$ solve \eqref{LPkt} and satisfy the estimates \eqref{Hkest}.
\end{proof}

\subsubsection{Transport problem}

We  now need to solve for $\eta$, given $u$.  We do so via the the transport problem
\begin{equation}\label{teq}
\left\{\begin{array}{lll}
\partial_t\eta+u_1\partial_1\eta+u_2\partial_2\eta=u_3\ \text{ in } \mathrm{T}^2
 \\\eta(0)=\eta_0.
\end{array}\right.
\end{equation}
We solve for $\eta_\pm$ on $\Sigma_\pm$  using \eqref{teq} at the same time since they are the same type of equation.

To state the result, we define
\begin{equation}
\mathfrak{R}_{2N}(u) = \sum_{j=0}^{2N}(\|\partial_t^j u\|_{L^2H^{4N-2j+1}}^2
+ \|\partial_t^j u\|_{L^\infty H^{4N-2j }}^2).
\end{equation}
We assume that $u$ satisfies $\mathfrak{R}_{2N}(u)\le 1$ and achieves the initial data $\partial_t^ju(0)$ for $j=0,\dots, 2N$. Given $\eta_0$, we define the initial data $\partial_t^j\eta(0)$ iteratively by using \eqref{teq}:
\begin{equation}\label{eta_data}
 \partial_t^{j+1}\eta(0):=\sum_{l=0}^{j}C_j^l\partial_t^{j-l}u(0)\cdot\partial_t^l
\mathcal{N}(0) \text{ for } j=0,\dots,2N-1,
\end{equation}
where $C_j^l>0$ are constants from applying the Leibniz rule.  We also define the quantities
\begin{equation}
\mathcal{F}(\eta):=\|\eta\|_{L^\infty H^{4N+1/2}}^2\text{ and }\mathcal{F}_0(\eta):=\|\eta_0\|_{4N+1/2}^2,
\end{equation}
\begin{equation}\label{E0def}
 \mathcal{E}_0(u,\eta)=\|u_0\|_{4N}^2+\|\eta_0\|_{4N}^2.
\end{equation}

\begin{theorem}\label{transport}
Let $u$ be as above and suppose that $\eta_0$ satisfies $\mathfrak{E}_0(\eta)\le 1$ and $\mathcal{F}_0(\eta)<\infty$.  Then the problem \eqref{teq} admits a unique solution $\eta$ that satisfies $\mathfrak{R}(\eta) + \mathcal{F}(\eta) < \infty$ and achieves the initial data $\partial_t^j\eta(0)$ for $j=0,\dots,2N$. Moreover, there exists $0<T_0\le 1$, depending on $N$, so that if $0<T\le \min\{T_0,1/\mathcal{F}_0\}$, then we have the estimates
\begin{eqnarray}
&&\mathcal{F}(\eta)\lesssim \mathcal{F}_0(\eta)+T\mathfrak{R}_{2N}(u),
\\&& \mathfrak{E}(\eta)\lesssim  \mathcal{E}_0(u,\eta) + T\mathfrak{R}_{2N}(u),
\\&& \mathfrak{D}(\eta)\lesssim  \mathcal{E}_0(u,\eta) + T\mathcal{F}_0(\eta)+\mathfrak{R}_{2N}(u).
\end{eqnarray}
\end{theorem}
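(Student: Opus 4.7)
The strategy is to treat \eqref{teq} as a linear transport equation on $\mathrm{T}^2=\Sigma_\pm$ with horizontal drift $(u_1,u_2)|_\Sigma$ and source $u_3|_\Sigma$, both controlled through the trace theorem by the bounds encoded in $\mathfrak{R}_{2N}(u)\le 1$. Existence, uniqueness, and regularity of $\eta$ follow from the classical method of characteristics, since $\mathfrak{R}_{2N}(u)\le 1$ together with Sobolev embedding (using $4N\ge 12$) provides uniform Lipschitz regularity of the drift on $\Sigma$. That the constructed $\eta$ achieves the prescribed data $\partial_t^j\eta(0)$ in \eqref{eta_data} is immediate by induction on $j$ from \eqref{teq}.

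My plan for the quantitative estimates is, for any $s\ge 0$, to apply $(1-\Delta_\ast)^{s/2}$ to \eqref{teq}, pair against $(1-\Delta_\ast)^{s/2}\eta$ in $L^2(\Sigma)$, integrate by parts on the drift term (absorbing only a factor of $\|\diverge_\ast u\|_{L^\infty(\Sigma)}$), and control the commutator $[(1-\Delta_\ast)^{s/2},(u_1,u_2)\cdot\nabla_\ast]\eta$ via a Kato--Ponce inequality on $\mathrm{T}^2$. This produces
\begin{equation*}
\frac{d}{dt}\|\eta\|_{H^s(\Sigma)}\lesssim \|u_3\|_{H^s(\Sigma)}+C(\|u\|_{W^{1,\infty}(\Sigma)})\,\|\eta\|_{H^s(\Sigma)}.
\end{equation*}
Taking $s=4N+1/2$ and using the trace inequality $\|u_3\|_{L^2H^{4N+1/2}(\Sigma)}^2\lesssim\|u\|_{L^2H^{4N+1}(\Omega)}^2\le\mathfrak{R}_{2N}(u)$, followed by Gr\"onwall on $[0,T]$ with $T\le T_0\min\{1,1/\mathcal{F}_0\}$ (which keeps the exponential factor universally bounded), yields $\mathcal{F}(\eta)\lesssim\mathcal{F}_0(\eta)+T\mathfrak{R}_{2N}(u)$. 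The $j=0$ component of $\mathfrak{E}(\eta)$ follows identically with $s=4N$, using $\|\eta_0\|_{4N}^2\le \mathcal{E}_0(u,\eta)$ as the initial contribution.

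For the remaining pieces I would differentiate \eqref{teq} in time and use the identity
\begin{equation*}
\partial_t^j\eta=\partial_t^{j-1}u_3-\sum_{\ell=0}^{j-1}\binom{j-1}{\ell}\partial_t^\ell(u_1,u_2)\cdot\nabla_\ast\partial_t^{j-1-\ell}\eta,
\end{equation*}
estimated in $H^{4N-2j}(\Sigma)$ for $\mathfrak{E}(\eta)$ and in $H^{4N-2j+5/2}(\Sigma)$ for $\mathfrak{D}(\eta)$ via the Sobolev product rule and the trace theorem applied to each $\partial_t^\ell u$. Induction on $j$ provides the lower-order factors $\partial_t^{j-1-\ell}\eta$ from previously established $L^\infty_t$ bounds. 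The top-order dissipation $\|\eta\|_{L^2H^{4N+1/2}}^2\le T\|\eta\|_{L^\infty H^{4N+1/2}}^2$ is absorbed into $T\mathcal{F}_0(\eta)+T^2\mathfrak{R}_{2N}(u)\le T\mathcal{F}_0(\eta)+\mathfrak{R}_{2N}(u)$ using $T\le 1$.

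The main obstacle is bookkeeping needed to recover precisely the stated time factors: a $T$ (not $T^{1/2}$) in front of $\mathfrak{R}_{2N}(u)$ in the bounds for $\mathcal{F}$ and $\mathfrak{E}$, and a plain $\mathfrak{R}_{2N}(u)$ in $\mathfrak{D}$. Achieving this requires consistently combining the $L^\infty_t$ portion of $\mathfrak{R}_{2N}(u)$ with products that have already been integrated once through Gr\"onwall, and using the $L^2_t$ portion where a Cauchy--Schwarz in time is available. The half-integer Sobolev exponents in $\mathcal{F}$ and $\mathfrak{D}$ require a uniform use of Kato--Ponce commutator estimates across the entire hierarchy of time-differentiated equations, with the restriction on $T$ ensuring that no Gr\"onwall exponential grows beyond a universal constant.
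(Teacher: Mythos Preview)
The paper does not give a self-contained proof of this theorem; it simply records ``This is Theorem 5.4 of \cite{GT_lwp}.'' Your proposal is precisely the standard argument one expects that reference to contain: existence and uniqueness via characteristics (the drift $(u_1,u_2)|_\Sigma$ is uniformly Lipschitz since $4N\ge 12$ and $\mathfrak{R}_{2N}(u)\le 1$), the $H^s$ energy estimate via Kato--Ponce commutators and Gr\"onwall, and control of time-differentiated norms by expanding $\partial_t^j\eta$ through the Leibniz rule and inducting on $j$. There is no meaningful divergence between your route and the paper's.

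Two small comments on the bookkeeping you flag as the main obstacle. First, for the $j\ge 1$ pieces of $\mathfrak{E}(\eta)$ you should \emph{not} estimate $\partial_t^j\eta$ directly from the Leibniz identity (that would put $\|\partial_t^{j-1}u_3\|_{L^\infty H^{4N-2j}}$ on the right with no $T$), but rather write $\partial_t^j\eta(t)=\partial_t^j\eta(0)+\int_0^t\partial_t^{j+1}\eta$, then bound $\|\partial_t^{j+1}\eta\|_{H^{4N-2j}}$ via the Leibniz identity at level $j+1$ and pick up the $T^{1/2}$ from Cauchy--Schwarz against $\|\partial_t^j u\|_{L^2 H^{4N-2j+1}}\le\mathfrak{R}_{2N}(u)^{1/2}$; squaring gives the $T\mathfrak{R}_{2N}(u)$. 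Second, the role of the hypothesis $T\le 1/\mathcal{F}_0$ is exactly to absorb the quadratic term $\|u\|_{L^\infty}\|\eta\|_{H^{4N+1/2}}$ that arises in the product estimate for $\|\partial_t\eta\|_{L^2 H^{4N-1/2}}$ (and similar terms): one gets $T\mathcal{F}(\eta)\lesssim T\mathcal{F}_0+T^2\mathfrak{R}_{2N}(u)$ and then $T\mathcal{F}_0\le 1$ together with $T\le 1$, $\mathfrak{R}_{2N}(u)\le 1$ closes it. With these two points in mind your outline is complete and correct.
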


\begin{proof}
This is Theorem 5.4 of \cite{GT_lwp}.
\end{proof}

\subsection{Local well-posedness}\label{section_local_well_posed}

In Section \ref{section_local_well_posed} we will  apply the linear theory for the problems \eqref{LP}  and \eqref{teq} established in Section \ref{linears} to solve the nonlinear problem \eqref{nosurface}. In this situation, the forcing terms $F^1,F^3$ in \eqref{LP}  are given in terms of $u,\eta$ by
\begin{equation}
F^1(u,\eta)=  \rho W\partial_3 u - \rho u \cdot \nabla_{\mathcal{A}} u, \,
 F^3_+(\eta)= \rho_+g\eta_+ \mathcal{N}_+, \text{ and } F^3_-(\eta)=-\rj g\eta_- \mathcal{N}_-.
\end{equation}

Due to this dependence, we must estimate $F^1,F^3$ in terms of $u,\eta$ in the following lemma.

\begin{lemma}
Let $\mathfrak{F}$ be given by \eqref{Fjnorm}.  Suppose that $\mathfrak{R}(\eta)\le 1$ and $\mathfrak{R}_{2N} (u)<\infty$, then we have
\begin{equation}
  \mathfrak{F}(F^1(u,\eta),F^3(\eta)) \lesssim [1+T+\mathfrak{R}(\eta)] \mathfrak{E}(\eta) + \mathfrak{R}(\eta)
  \mathfrak{R}_{2N}(u) + (\mathfrak{R}_{2N}(u))^2.
\end{equation}
\end{lemma}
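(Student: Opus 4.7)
The plan is to expand each temporal derivative in $\dt^j F^1$ and $\dt^j F^3$ via the Leibniz rule and then bound the resulting multilinear expressions by Moser-type product estimates in Sobolev spaces together with the Poisson-extension bound $\|\bar\eta\|_{\ddot H^{s+1/2}(\Omega)} \lesssim \|\eta\|_{H^s(\Sigma)}$, following the pattern of the analogous one-phase estimates in \cite{GT_lwp}. Since $F^1$ is a sum of terms each bilinear in $(\text{coeff}(\bar\eta, \partial\bar\eta, \dt\bar\eta), u\text{ or }\nabla u)$ and $F^3_\pm$ is a product of $\eta_\pm$ with the (polynomially nonlinear in $\nabla_\ast \eta$) vector $\mathcal{N}_\pm$, the task reduces to an index-bookkeeping exercise; the hypothesis $\mathfrak{R}(\eta)\le 1$ lets me freely absorb higher powers of $\eta$.

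First I would treat $F^3_\pm = \pm c_\pm g\, \eta_\pm \mathcal{N}_\pm$. Split $\mathcal{N}_\pm = e_3 - (\partial_1\eta_\pm, \partial_2\eta_\pm, 0)$ to decompose $F^3_\pm$ into a linear-in-$\eta$ piece and a quadratic-in-$\eta$ piece. The linear piece contributes $\sum_{j\le 2N-1} \|\dt^j \eta\|_{L^\infty H^{4N-2j-3/2}}^2 \lesssim \mathfrak{E}(\eta)$ directly, while the $L^2_t H^{4N-2j-1/2}$ norms are absorbed via Cauchy--Schwarz in time into $T \mathfrak{E}(\eta)$ for $j\le 2N-1$, and into $\mathfrak{D}(\eta) \subset \mathfrak{R}(\eta)$ for $j = 2N$ (which requires a half-derivative of regularity beyond the $L^\infty_t$ control). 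The quadratic piece $\eta \nabla_\ast \eta$ is handled by a standard product rule on $\Sigma$, placing one factor in $L^\infty$ via $H^2 \hookrightarrow L^\infty$; the product contributes at most $\mathfrak{R}(\eta) \mathfrak{E}(\eta)$, which fits inside the claimed bound since $\mathfrak{R}(\eta)\le 1$ allows the absorption into $[1+T+\mathfrak{R}(\eta)]\mathfrak{E}(\eta)$.

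Next I would treat $F^1 = \rho W \partial_3 u - \rho u\cdot \nabla_\mathcal{A} u$. For each temporal order $j$, apply Leibniz so that each term has the schematic form $(\dt^{j_1} \Psi(\bar\eta, \partial\bar\eta, \dt\bar\eta))(\dt^{j_2} u)(\dt^{j_3} \partial u)$ with $j_1 + j_2 + j_3 = j$; place the factor with the lowest temporal order in $L^\infty_{t,x}$ (using $H^2 \hookrightarrow L^\infty$ and the Poisson-extension bound to move $\bar\eta$ estimates from the bulk to $\Sigma$), and the remaining factors in the natural $L^2_t H^s$ norms drawn from $\mathfrak{R}(\eta)$ or $\mathfrak{R}_{2N}(u)$. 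Because $W$ and $\nabla_\mathcal{A} - \nabla$ each vanish when $\eta = 0$, every term from $\rho W \partial_3 u$ and from the $(\mathcal{A} - I)$ part of $\rho u\cdot\nabla_\mathcal{A} u$ contains a factor of $\bar\eta$ or its derivatives, producing contributions controlled by $\mathfrak{R}(\eta) \mathfrak{R}_{2N}(u)$. The remaining constant-coefficient piece $\rho u \cdot \nabla u$ is bilinear in $u$ alone and yields $(\mathfrak{R}_{2N}(u))^2$.

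The main technical nuance is the top-order case $j=2N$ applied to a product: the $L^2_t H^{-1}$ norm of $\dt^{2N} F^1$ must be bounded even though $\dt^{2N}$ acting on a bilinear expression lands all the derivative weight on a single factor. This is handled by the same Sobolev interpolation/index accounting that underlies Lemmas 5.1--5.3 of \cite{GT_lwp}: the choice of which norm ($L^\infty_t$ versus $L^2_t$) to put each factor in is driven by the constraint $4N - 2j - 1 \ge 0$ for the target regularity of the product, and the $\sqrt{T}$ gained from converting an $L^\infty_t$ bound to an $L^2_t$ bound produces precisely the $T$ appearing in the coefficient of $\mathfrak{E}(\eta)$. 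Summing all contributions yields the claimed estimate.
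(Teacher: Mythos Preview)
Your sketch is correct and follows the same route as the paper, which simply cites Lemma 5.1 of \cite{GT_lwp}; the Leibniz expansion combined with Sobolev product estimates and the Poisson-extension bound is exactly the content of that lemma. One minor bookkeeping slip: for the $j=2N$ linear piece of $F^3$ you route the estimate into $\mathfrak{D}(\eta)\subset\mathfrak{R}(\eta)$, but $\mathfrak{R}(\eta)$ alone does not appear on the right-hand side---instead bound $\|\partial_t^{2N}\eta\|_{L^2 H^{-1/2}}^2 \le T\|\partial_t^{2N}\eta\|_{L^\infty H^0}^2 \le T\mathfrak{E}(\eta)$ (the target norm is a half-derivative \emph{below} the available $L^\infty_t$ control, not above it).
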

\begin{proof}
 This is Lemma 5.1 of \cite{GT_lwp}.
\end{proof}

Now we turn to the issue of initial data. Since for the full nonlinear problem the functions $\eta,u,p$ are unknown  and their evolutions are coupled to each other, we must revise the construction of initial data presented in Section \ref{linears} to include this coupling, assuming only that $u_0,\eta_0$ are given. This will also reveal the compatibility conditions that must be satisfied by $u_0$ and $\eta_0$ in order to solve the nonlinear problem \eqref{nosurface}.

Assume that $u_0,\eta_0$ satisfy $\mathcal{F}_0(\eta) <\infty$ and that $\|\eta_0\|_{4N-1/2}\le \mathcal{E}_0(u,\eta)\le 1$ is sufficiently small for the hypothesis of Theorem \ref{Apoissont1} to hold when $k=4N$. As in Section \ref{linears}, we may iteratively construct the high-order initial data starting from the low-order data, but now we must also define $\partial_t^j \eta(0)$ in each step as in \eqref{eta_data}.  The details of the construction may be carried out exactly as in Section 5.2 of \cite{GT_lwp} because the structure of the forcing terms is the same as there, and hence we may use the estimates in Lemma 5.2 of \cite{GT_lwp} in the present case.  We record the construction of the data, along with corresponding estimates in the following Lemma.


\begin{lemma}\label{datale}
Suppose that $u_0,\eta_0$ satisfy $\mathcal{F}_0(\eta)<\infty$  and that $\mathcal{E}_0(u,\eta)\le 1$ is sufficiently small for the hypothesis of Theorem \ref{Apoissont1} to hold when $k=4N$. Then there exist initial data $\partial_t^ju(0),\partial_t^j\eta(0)$ for $j=0,\dots,2N$ and $\partial_t^jp(0)$ for $j=0,\dots,2N-1$ that solve the appropriate PDEs at time  $t=0$ and that obey the estimates
\begin{equation}\label{data0}
\mathcal{E}_0(u,\eta)\le \mathfrak{E}_0(u,p)+\mathfrak{E}_0(\eta)\lesssim\mathcal{E}_0(u,\eta).
\end{equation}
\end{lemma}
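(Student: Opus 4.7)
The plan is to construct the data iteratively by order of time derivative, exactly mirroring the scheme used for the forcing-driven case in Section \ref{linears} but now coupling the evolution of $\eta$ to that of $(u,p)$ through the kinematic and momentum equations. The base data $u(0)=u_0$ and $\eta(0)=\eta_0$ are given, and the inductive step at level $j$ produces $\partial_t^{j+1}\eta(0)$, $\partial_t^j p(0)$ and $\partial_t^{j+1}u(0)$ in that order, so after $2N$ steps we obtain all the desired data.

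For the inductive step, assume $\partial_t^\ell u(0),\partial_t^\ell\eta(0)$ have been defined for $0\le \ell\le j$ and $\partial_t^\ell p(0)$ for $0\le\ell\le j-1$. First, define $\partial_t^{j+1}\eta(0)$ by the Leibniz formula \eqref{eta_data}, which is purely algebraic in previously constructed quantities. Second, formally apply $\partial_t^j$ to the system \eqref{nosurface} and evaluate at $t=0$; this yields an $\mathcal{A}$--Poisson problem for $\partial_t^j p(0)$ with the structure of \eqref{Apoisson}, whose forcing data $f^1,\dots,f^5$ are prescribed by the mappings $\mathfrak{f}^1,\dots,\mathfrak{f}^5$ from \eqref{fdef} evaluated on $u_0,\eta_0$ and the already-constructed time derivatives. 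Invoke Theorem \ref{Apoissont1} (with $k=4N$, whose hypothesis on $\|\eta_0\|_{4N-1/2}$ is guaranteed by the assumed smallness of $\mathcal{E}_0(u,\eta)$) to obtain $\partial_t^j p(0)\in \ddot{H}^{4N-2j-1}(\Omega)$ for $j\le 2N-2$, or a weak solution in $\ddot{H}^1(\Omega)$ when $j=2N-1$. Finally, define $\partial_t^{j+1}u(0)=\mathfrak{G}^0(F^{1,j}(0),D_t^j u(0),\partial_t^j p(0))$ via the mapping in \eqref{fdef}, which amounts to solving the momentum equation algebraically for the top time derivative.

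The estimates $\mathfrak{E}_0(u,p)+\mathfrak{E}_0(\eta)\lesssim \mathcal{E}_0(u,\eta)$ are then obtained by a straightforward induction on $j$: at each stage the elliptic estimate \eqref{Apoissont1es} controls $\|\partial_t^j p(0)\|_{4N-2j-1}$ by Sobolev norms of the data $f^1,\dots,f^5$, which in turn are controlled by products of Sobolev norms of $\eta_0$ and the lower-order derivatives of $u$ and $p$ at time zero; the algebraic formulas for $\partial_t^{j+1}\eta(0)$ and $\partial_t^{j+1}u(0)$ are then estimated by the product and trace estimates of Appendix \ref{section_appendix}. Since the forcing terms in our mappings $\mathfrak{f}^i$ and $\mathfrak{G}^0$ have exactly the same polynomial structure in $u,p,\eta$ as in \cite{GT_lwp}, the quantitative bounds of Lemma 5.2 of \cite{GT_lwp} apply verbatim, and the smallness of $\mathcal{E}_0(u,\eta)$ absorbs all nonlinear factors so that each inductive bound closes linearly. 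The reverse inequality $\mathcal{E}_0(u,\eta)\le \mathfrak{E}_0(u,p)+\mathfrak{E}_0(\eta)$ is immediate from the definition \eqref{E0def}.

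The main obstacle is organizing the bookkeeping of the order-of-regularity losses: at level $j$ the data $f^1,\dots,f^5$ for the pressure Poisson problem involve products of up to $2N$ factors of differentiated $\eta$ and $u,p$ at time zero, and one must verify that each such product lies in the Sobolev space demanded by Theorem \ref{Apoissont1} at that level so that the induction does not run out of regularity before $j=2N-1$. This is precisely the balancing of indices carried out in Section 5.2 of \cite{GT_lwp}, which transfers to our two-phase setting without essential change because the jump condition on $\Sigma_-$ only introduces terms of the same type (with coefficients $\Lbrack\mu\Rbrack$) that already appear in the one-phase analysis.
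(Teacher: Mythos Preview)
Your proposal is correct and follows essentially the same route as the paper, which simply defers to Proposition~5.3 of \cite{GT_lwp}; your write-up in fact spells out more of the iterative mechanism (kinematic formula for $\partial_t^{j+1}\eta(0)$, $\mathcal{A}$--Poisson solve for $\partial_t^j p(0)$ via Theorem~\ref{Apoissont1}, then the algebraic momentum relation) than the paper's one-line proof does. One small notational slip: the formula $\mathfrak{G}^0(F^{1,j}(0),D_t^j u(0),\partial_t^j p(0))$ actually defines $D_t^{j+1}u(0)$ rather than $\partial_t^{j+1}u(0)$; the two differ by lower-order $R$-terms and are controlled together as in \eqref{initial}, so this does not affect the argument.
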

\begin{proof}
 This may be proved as in Proposition 5.3 of \cite{GT_lwp}.
\end{proof}

Now we describe the compatibility conditions between $\eta_0$ and $u_0$ that are required to solve the problem \eqref{nosurface}.   Let $\partial_t^jp(0),F^{1,j}(0),F^{3,j}(0)$ for $j=0,\dots,2N-1$ and $\partial_t^ju(0),\partial_t^j\eta(0)$  for $j=0,\dots,2N$   be constructed in terms of $u_0,\eta_0$ as above. We say that $u_0,\eta_0$ satisfy the  $(2N)^{th}$ order compatibility conditions if for $j=0,\dots,2N-1$
 \begin{equation}\label{2Ncompatibility}
\left\{\begin{array}{ll}
 D_t^ju(0)\in \mathcal{X}(0)\cap \ddot{H}^2(\Omega)
 \\ \Pi_{0,+}(F^{3,j}_+(0)+\mu_+\mathbb{D}_{\mathcal{A}_{0,+}}D_t^ju_+(0)\mathcal{N}_{0,+})=0&\text{ on }\Sigma_+
 \\\Pi_{0,-}(F^{3,j}_-(0)-\llbracket\mu\mathbb{D}_{\mathcal{A}_{0}}D_t^ju(0)\rrbracket \mathcal{N}_{0,-})=0&\text{ on }\Sigma_-.
 \end{array}\right.
\end{equation}

 \begin{Remark}
If $u_0,\eta_0$ satisfy \eqref{2Ncompatibility}, then the $j^{th}$ order compatibility condition \eqref{jcompatibility} is satisfied for $j=0,\dotsc,2N-1$.
\end{Remark}

The local well-posedness of the problem \eqref{nosurface}, stated in Theorem \ref{th0}, follows directly from the following theorem by changing notations.

\begin{theorem}\label{Localth}
Assume that $u_0,\eta_0$ satisfy $\mathcal{E}_0(u,\eta),\mathcal{F}_0(\eta)<\infty$ and satisfy the $(2N)^{th}$ order compatibility conditions \eqref{2Ncompatibility}. Then there exists $T_0>0$ and $\delta_0$ such that if $\mathcal{E}_0(u,\eta)\le\delta_0$ and $0 < T \le T_0 \min\{1,1/\mathcal{F}_0(\eta)\}$, then there is a unique solution $(u,p,\eta)$ to the problem \eqref{nosurface} on $[0,T]$ so that
\begin{equation} \label{Localbound}
\mathfrak{R}(\eta)+\mathfrak{R}(u,p) \le C(\mathcal{E}_0(u,\eta) + T \mathcal{F}_0(\eta)) \text{ and }\mathcal{F}(\eta)\le C(\mathcal{F}_0(\eta) +\mathcal{E}_0(u,\eta) + T \mathcal{F}_0(\eta))
\end{equation}
for a universal constant $C>0$.  Moreover, $\eta$ is such that the mapping $\Theta(\cdot,t)$, defined by \eqref{cotr}, is a $C^{4N-2}$ diffeomorphism for each $t\in [0,T]$ when restricted to $\Omega_\pm$.
\end{theorem}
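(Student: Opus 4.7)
The plan is to construct the solution through a Picard-style iteration that alternates between the two linear subproblems already solved: for a given $\eta$ we use Theorem \ref{HkSS} to produce $(u,p)$ from the $\mathcal{A}$--Stokes system \eqref{LP} with forcing $F^1(u,\eta),F^3(\eta)$, and for a given $u$ we use Theorem \ref{transport} to advance $\eta$ via \eqref{teq}. First, I would invoke Lemma \ref{datale} to produce all of $\partial_t^j u(0)$, $\partial_t^j p(0)$, $\partial_t^j\eta(0)$ from $u_0,\eta_0$, using the $(2N)^{\rm th}$ order compatibility conditions \eqref{2Ncompatibility} to guarantee that the hypotheses of Theorem \ref{HkSS} are satisfied at each temporal level (in particular, that the $j^{\rm th}$ compatibility conditions \eqref{jcompatibility} hold for $j=0,\dots,2N-1$). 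I would then define the zeroth iterate $(u^0,p^0,\eta^0)$ to be any fixed extension of the initial data that attains the prescribed $\partial_t^j$-values at $t=0$ and satisfies the a priori bounds listed below.

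Given $(u^m,p^m,\eta^m)$, I would first solve the transport equation \eqref{teq} with velocity $u^m$ via Theorem \ref{transport} to obtain $\eta^{m+1}$, arranging that $\eta^{m+1}$ attains the prescribed initial data, and then solve \eqref{LP} with $\eta^{m+1}$ (which determines $\mathcal{A},\mathcal{N},W$, etc.) and forcing terms $F^1(u^m,\eta^{m+1}),F^3(\eta^{m+1})$ via Theorem \ref{HkSS} to obtain $(u^{m+1},p^{m+1})$ carrying the correct prescribed data at $t=0$. The uniform bound is obtained by combining the nonlinear forcing estimate above (which controls $\mathfrak{F}(F^1(u^m,\eta^{m+1}),F^3(\eta^{m+1}))$ in terms of $\mathfrak{R}(\eta^{m+1})$ and $\mathfrak{R}_{2N}(u^m)$) with \eqref{Hkest} and the transport estimates, together with \eqref{data0}. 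Choosing $\mathcal{E}_0(u,\eta)\le \delta_0$ small and $T\le T_0\min\{1,1/\mathcal{F}_0(\eta)\}$ with $T_0$ small allows the bootstrap argument to close: the exponential factor $\exp(C(1+\mathfrak{E}(\eta^{m+1}))T)$ in \eqref{Hkest} stays bounded, and the factor $T\mathcal{F}_0(\eta)$ coming from Theorem \ref{transport} remains small, so we recover \eqref{Localbound} for $(u^{m+1},p^{m+1},\eta^{m+1})$ by induction.

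To pass to the limit I would prove that the iteration is contractive in a lower regularity norm. Subtracting successive iterates and writing the equations satisfied by the differences $U^{m+1}=u^{m+1}-u^m$, $P^{m+1}=p^{m+1}-p^m$, $\xi^{m+1}=\eta^{m+1}-\eta^m$, the linearized $\mathcal{A}$--Stokes and transport estimates applied at a regularity level strictly below $4N$ (to absorb commutator and geometry-difference terms, e.g. $\mathcal{A}(\eta^{m+1})-\mathcal{A}(\eta^m)$) show that the map $(u^m,\eta^m)\mapsto(u^{m+1},\eta^{m+1})$ is a strict contraction once $\delta_0$ and $T_0$ are further reduced. The contraction yields a unique limit $(u,p,\eta)$ in the lower-regularity space; weak-$\ast$ lower semi-continuity applied to the uniform bounds promotes the limit to the full regularity \eqref{Localbound}. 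Uniqueness at the high-regularity level follows by the same difference argument. Finally, since $\mathcal{F}(\eta)$ controls $\eta\in L^\infty H^{4N+1/2}$ and $\partial_t\eta\in L^\infty H^{4N-1}$, Sobolev embedding gives $\eta\in C^{4N-2}$ uniformly in $t$, and smallness of $\mathfrak{E}(\eta)$ together with the explicit formula \eqref{cotr} makes $\Theta(\cdot,t)$ a $C^{4N-2}$ diffeomorphism on each slab $\Omega_\pm$.

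The main obstacle I anticipate is the careful management of the $\mathcal{F}$ norm, which lives one half-derivative above what the energy theory controls: because $\mathcal{F}$ can only be propagated by the transport equation and the right-hand side of \eqref{Hkest} carries $\mathfrak{F}_0$ rather than $\mathcal{F}_0$, one must verify that the forcing produced at the highest level never reads off the top derivative of $\eta$ in a way that is not already absorbed by $T\mathcal{F}_0(\eta)$ — this is precisely why the lifespan is forced to scale like $T_0/\mathcal{F}_0(\eta)$. A secondary subtlety is ensuring that every $\eta^{m+1}$ produced by the transport step is small enough in $H^{k-1/2}$ (with $k=4N$) for Theorem \ref{HkSS} and Theorem \ref{Apoissont1} to apply at the next stage; this is handled by the same smallness of $\delta_0$ and $T_0$ that drives the bootstrap.
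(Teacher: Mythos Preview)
Your proposal is correct and follows essentially the same approach as the paper: a Picard iteration alternating between Theorem \ref{HkSS} (the $\mathcal{A}$--Stokes problem) and Theorem \ref{transport} (the transport equation), with uniform bounds closed by a bootstrap under smallness of $\delta_0$ and $T_0$, contraction established in a lower-regularity norm, and passage to the limit via weak-$\ast$ compactness and lower semicontinuity. The only cosmetic differences are the ordering within each iteration step (you solve transport first and then Stokes with the freshly updated $\eta^{m+1}$, whereas the paper solves Stokes with $\eta^m$ and then transport with $u^{m+1}$) and that the paper explicitly invokes an interpolation argument between the low-norm strong convergence and the high-norm weak bounds to justify passing to the limit in the nonlinear equations; neither affects the substance of the argument.
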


\begin{proof}
This is Theorem 6.3 of \cite{GT_lwp} with minor modifications, so we only sketch the proof. To begin with, suppose that $\mathcal{E}_0(u,\eta)\le \delta_0\le 1$ is sufficiently small so that the hypothesis of Lemma \ref{datale} is satisfied, and hence
 \begin{equation}
\mathfrak{E}_0(u,p)+\mathfrak{E}_0(\eta)\lesssim\mathcal{E}_0(u,\eta)\lesssim\delta_0.
\end{equation}
Then by Lemma 5.2 of \cite{GT_lwp}, we have
\begin{eqnarray}
\mathfrak{F}_0(F^1,F^3)  \lesssim \mathfrak{E}_0(u,p) +  \mathfrak{E}_0(\eta) \lesssim \mathcal{E}_0(u,\eta) \lesssim \delta_0.
\end{eqnarray}

We will use an iteration method to construct a sequence of approximate solutions as follows. First, we use Lemma 5.5 of \cite{GT_lwp} to extend the initial data $\partial_t^ju(0)$ to be a time-dependent function $u^0$ satisfying $\partial_t^ju^0(0)=\partial_t^ju(0)$ and the following estimate
\begin{equation}
\mathfrak{R}_{2N}(u^0)\lesssim\mathcal{E}_0(u,0)\lesssim\delta_0.
\end{equation}
Then we use Theorem \ref{transport} to  define $\eta^0$ as the solution to \eqref{teq} with this $u^0$ replacing $u$, which satisfies $\partial_t^j\eta^0(0)=\partial_t^j\eta(0)$ as well as for some small $T$ the following estimates
\begin{eqnarray}
 &&\mathcal{F}(\eta^0)\lesssim \mathcal{F}_0(\eta)+T\mathfrak{R}_{2N}(u^0),
 \\&& \mathfrak{E}(\eta^0)\lesssim  \mathcal{E}_0(u,\eta) + T\mathfrak{R}_{2N}(u^0),
 \\&& \mathfrak{D}(\eta^0)\lesssim  \mathcal{E}_0(u,\eta) + T\mathcal{F}_0(\eta)+\mathfrak{R}_{2N}(u^0).
\end{eqnarray}
Next, as in Theorem 6.1 of \cite{GT_lwp}, we may use Theorem  \ref{HkSS} and Theorem \ref{transport} to iteratively construct an infinite sequence $\{(u^m,p^m,\eta^m)\}_{m=1}^\infty$ satisfying
\begin{equation}\label{LPm}
\left\{\begin{array}{lll} \rho\partial_t u^{m+1}-\mu \Delta_{\mathcal{A}^m}u^{m+1}+\nabla_{\mathcal{A}^m} {p}^{m+1}  =\rho W^m\partial_3 u^m - \rho u^m \cdot \nabla_{\mathcal{A}^m} u^m &\text{in}\ \Omega \\
\diverge_{\mathcal{A}^m}{u^{m+1}} =0  \ &\text{in}\ \Omega \\
S_{\mathcal{A}_+^m}(p_+^{m+1},u_+^{m+1})\mathcal{N}_+^m=\rho_+g\eta_+
\mathcal{N}_+^m  \ &\text{on }\Sigma_+
\\
 \Lbrack u^{m+1}\Rbrack=0,\quad \Lbrack S_{\mathcal{A}^m}(p^{m+1},u^{m+1})\Rbrack \mathcal{N}_-^m=\rj g\eta_-^m \mathcal{N}_-^m   \ &\text{on }\Sigma_-
\\  u_-^{m+1}=0 &\text{on }\Sigma_b
\end{array}\right.
\end{equation}
where $W^m=W(\eta^m), \mathcal{A}^m=\mathcal{A}(\eta^m), \mathcal{N}^m=\mathcal{N}(\eta^m)$, and
\begin{equation}\label{teqm}
\partial_t\eta^{m+1}=u^{m+1}\cdot \mathcal{N}^{m+1} \text{ on }\Sigma,
\end{equation}
with $\partial_t^j u^m(0)=\partial_t^ju(0)$ and $\partial_t^j \eta^m(0) = \partial_t^j \eta(0)$ for $j=0,\dots, 2N$, while $\partial_t^j p^m(0) = \partial_t^j p(0)$ for $j=0, \dots, 2N-1$. Moreover, the following uniform estimates hold
\begin{equation} \label{mest}
\mathfrak{R}(\eta^m) + \mathfrak{R}(u^m,p^m) \le C(\mathcal{E}_0(u,\eta) + T \mathcal{F}_0(\eta)) \text{ and } \mathcal{F} (\eta^m) \le C(\mathcal{F}_0(\eta) + \mathcal{E}_0(u,\eta) + T \mathcal{F}_0(\eta)).
\end{equation}

Now in order to pass to the limit in \eqref{LPm} as well as \eqref{teqm}, we argue as in Theorems 6.2 and 6.3 of \cite{GT_lwp} to show that the sequence $\{(u^m,p^m,\eta^m)\}_{m=1}^\infty$ is contractive in the following sense
\begin{equation}\label{contract}
\begin{array}{ll}
\displaystyle\mathfrak{N}(u^{m+2}-u^{m+1},p^{m+2}-p^{m+1},T)\le \frac{1}{2}\mathfrak{N}(u^{m+1}-u^{m},p^{m+1}-p^{m},T)
\\\mathfrak{M}(\eta^{m+1}-\eta^{m},T)\le 2\mathfrak{N}(u^{m+1}-u^{m},p^{m+1}-p^{m},T)
\end{array}
\end{equation}
for some small $T$, where the quantities $\mathfrak{N},\mathfrak{M}$ are defined by
\begin{equation}\begin{array}{ll}
\mathfrak{N}(v,q,T)=\|v\|_{L^\infty H^2}^2
+\|v\|_{L^2H^3}^2+\|\partial_tv\|_{L^2H^0}^2+\|\partial_tv\|_{L^2H^1}^2+\|q\|_{L^\infty
H^1}^2+\|q\|_{L^2H^2}^2,
\\\mathfrak{M}(\zeta,T)=\|\zeta\|_{L^\infty H^{5/2}}^2+\|\partial_t\zeta\|_{L^2H^{3/2}}^2+\|\partial_t^2\zeta\|_{L^2H^{1/2}}^2.
\end{array}
\end{equation}
The estimates \eqref{contract} imply that the sequence $\{(u^m,p^m)\}_{m=1}^\infty$ contracts in the space whose square-norm is given by by $\mathfrak{N}$.  Then $\{\eta^m\}_{m=1}^\infty$ is Cauchy in the space defined by $\mathfrak{M}$.  On the other hand, the uniform estimates \eqref{mest} give weak convergence of the sequence in higher regularity spaces.  We may then combine the strong and weak convergence results with an interpolation argument (for details see Theorem 6.3 of \cite{GT_lwp}) to deduce that the sequence converges in a high enough regularity space (but not as high as in the bounds \eqref{mest}) for us  to pass to the limit in the equations \eqref{LPm}, \eqref{teqm}  for each $t\in[0,T]$, to find that the limit $(u,p,\eta)$ is the unique strong solution to the problem \eqref{nosurface} on $[0,T]$. Moreover, the bound \eqref{Localbound} follows from \eqref{mest} by weak lower semicontinuity. The fact that the mapping $\Theta(\cdot,t)$ is a $C^{4N-2}$ diffeomorphism for each $t \in [0,T]$ when restricted to $\Omega_\pm$ follows from directly from  the smallness of $\mathfrak{R}(\eta)$ and the Sobolev embedding theorem.
\end{proof}

\subsection{Global well-posedness and decay}\label{sec_nost_gwp}

We now turn to the global-in-time and decay results of Theorem \ref{th0}.  As such, we assume throughout Section \ref{sec_nost_gwp} that $\rj <0$ and that $\eta_0$ satisfies the zero-average condition \eqref{zero0}.

\subsubsection{Notation and definitions}\label{cutt}

We shall first borrow from \cite{GT_inf,GT_per} for our notational convention for derivatives.  When using space-time differential multi-indices, we will write $\mathbb{N}^{1+m} = \{ \alpha = (\alpha_0,\alpha_1,\dotsc,\alpha_m) \}$ to emphasize that the $0-$index term is related to temporal derivatives.  For just spatial derivatives we write $\mathbb{N}^m$.  For $\alpha \in \mathbb{N}^{1+m}$ we write $\partial^\alpha = \partial_t^{\alpha_0} \partial_1^{\alpha_1}\cdots \partial_m^{\alpha_m}.$ We define the parabolic counting of such multi-indices by writing $|\alpha| = 2 \alpha_0 + \alpha_1 + \cdots + \alpha_m.$  We will also write $\na f$ for the horizontal gradient of $f$, i.e. $\na f = \partial_1 f e_1 + \partial_2 f e_2$, while $\nabla f$ will denote the usual full gradient.

For a given norm $\|\cdot\|$ and an integer $k\ge 0$, we introduce the following notation for sums of spatial
derivatives:
\begin{equation}
 \|\na^k f\|^2 := \sum_{\substack{\alpha \in \mathbb{N}^2 \\  |\alpha|\le k} } \|\partial^\alpha  f\|^2 \text{ and } \|\nabla^k f\|^2 := \sum_{\substack{\alpha \in \mathbb{N}^{3} \\
 |\alpha|\le k} } \|\partial^\alpha  f\|^2.
\end{equation}
The convention we adopt in this notation is that $\na$ refers to only ``horizontal'' spatial derivatives, while $\nabla$ refers to full spatial derivatives.   For space-time derivatives we add bars to our notation:
\begin{equation}
 \| \bna^k f\|^2 := \sum_{\substack{\alpha \in \mathbb{N}^{1+2} \\  |\alpha|\le k} } \|\partial^\alpha  f\|^2 \text{ and }
\| \bar{\nabla}^k f\|^2 := \sum_{\substack{\alpha \in
\mathbb{N}^{1+3}
\\   |\alpha|\le k} } \|\partial^\alpha  f\|^2.
\end{equation}
We allow for composition of derivatives in this counting scheme in a natural way; for example, $\norm{\na \na^k f} = \norm{\na^k \na f} = \norm{\na^{k+1} f}$.

We will consider energies and dissipates at both the $N+2$ and $2N$ levels.  To define both at once we consider a generic integer $n\ge 3$.  We define the energy as
\begin{equation}
 \mathcal{E}_{n} =
 \sum_{j=0}^{n} \left( \|\partial_t^j u\|_{2n-2j}^2 + \|\partial_t^j \eta\|_{2n-2j}^2 \right)
 + \sum_{j=0}^{n-1} \|\partial_t^j p\|_{2n-2j-1}^2
\end{equation}
and the corresponding dissipation as
\begin{equation}
  \mathcal{D}_{n} = \sum_{j=0}^{n} \|\partial_t^j u\|_{2n-2j+1}^2 + \sum_{j=0}^{n-1} \|\partial_t^j p\|_{2n-2j}^2 +  \| \eta\|_{2n-1/2}^2 + \|\partial_t \eta\|_{2n-1/2}^2 +
\sum_{j=2}^{n+1} \|\partial_t^j \eta\|_{2n-2j+5/2}^2.
\end{equation}

Now we define the ``horizontal'' energies and dissipations with localization.  To compactify notation, we define
\begin{equation}
 \tilde{\rho} = 
\begin{cases}
 \rho_+ &\text{on } \Sigma_+ \\
 -\Lbrack \rho \Rbrack &\text{on } \Sigma_-
\end{cases}
\end{equation}
and note that since $\rj <0$, we have that  $\tilde{\rho}>0$ on $\Sigma_\pm$.  We define the energy involving only temporal derivatives by
\begin{equation}
 \bar{\mathcal{E}}^0_{n} =
 \sum_{j=0}^{n} \left( \|  \sqrt{\rho J} \partial_t^j u\|_{0}^2 + g \| \sqrt{\tilde{\rho}} \partial_t^j \eta\|_{0}^2
 \right) \text{ and }  \bar{\mathcal{D}}^0 =\sum_{j=0}^{n} \|\sqrt{\mu} \mathbb{D}\partial_t^j
 u\|_{0}^2.
\end{equation}
\begin{Remark}\label{temporal_remark}
Using Lemma \ref{Poi} and the usual Sobolev embeddings, one may readily show that if $\|\eta\|_{5/2}$ is small enough, then $\|J-1\|_{L^\infty} \le 1/2$ and
\begin{equation}
\frac{1}{2} \sum_{j=0}^{n} \| \sqrt{\rho} \dt^j u\|_{0}^2 \le \sum_{j=0}^{n} \| \sqrt{\rho J} \dt^j u\|_{0}^2 \le \frac{3}{2} \sum_{j=0}^{n} \| \sqrt{\rho} \dt^j u\|_{0}^2.
\end{equation}
\end{Remark}

To define the energies and dissipations localized away from lower boundary $\Sigma_b$, we first construct a cut-off function $\chi$.  Let $b_- = \inf_{\mathrm{T}^2} b >0$ and then choose $\chi \in C^\infty_c(\mathbb{R})$ so that
\begin{equation}\label{chi_properties}
\supp(\chi) \subset [-3b_-/4, 2] \text{ and }\chi(x_3) = 1 \text{ for }x_3 \in [-b_-/2, 3/2].
\end{equation}
We then define
\begin{equation}
 \bar{\mathcal{E}}^+_{n} =   \| \sqrt{\rho} \bna^{2n-1}  (\chi  u)\|_{0}^2 +\| \sqrt{\rho} \na \bna^{2n-1}  (\chi  u)\|_{0}^2 + g \|\sqrt{\tilde{\rho}} \bna^{2n-1}  \eta\|_{0}^2 + g\| \sqrt{\tilde{\rho}}\na \bna^{2n-1}  \eta\|_{0}^2
\end{equation}
and
\begin{equation}
 \bar{\mathcal{D}}^+_{n} =  \| \sqrt{\mu} \bna^{2n-1} \mathbb{D} (\chi  u)\|_{0}^2
+ \| \sqrt{\mu} \na \bna^{2n-1} \mathbb{D} (\chi  u)\|_{0}^2.
\end{equation}
We also define
\begin{equation}\label{hp}
\bar{\mathcal{E}}_{n} =  \bar{\mathcal{E}}_{n}^0 + \bar{\mathcal{E}}^+_{n} \text{ and } \bar{\mathcal{D}}_{n} =
\bar{\mathcal{D}}_{n}^0  + \bar{\mathcal{D}}^+_{n}.
\end{equation}

\begin{Remark}
Note that we only consider the energy evolution of localized terms (and their horizontal space-time derivatives) away from $\Sigma_b$.  The method employed in \cite{GT_per} involves another energy and dissipation pair for terms localized near $\Sigma_b$.   Here, our modification of the method of \cite{GT_per} frees us from the need to introduce such a lower localization.
\end{Remark}

We also define a specialized energy norm by
\begin{equation}
  \mathcal{K} := \|\nabla u\|_{L^\infty}^2 + \|\nabla^2 u\|_{L^\infty}^2
+ \sum_{i=1}^2 \| D u_i \|_{H^2(\Sigma)}^2.
\end{equation}
Note that $ \mathcal{K}\lesssim \mathcal{E}_{N+2}$.

Finally, to do the a priori estimates, we shall assume that a solution is  given on the interval $[0,T]$, that $\mathcal{G}_{2N}(T)\le \delta$ for some sufficiently small $\delta$ (in particular, small enough for the estimate of Remark \ref{temporal_remark} to hold), and that $\eta$ satisfies the zero-average condition \eqref{zerot}, which allows us to use Poincar\'e's inequality on $\mathrm{T}^2$.

\subsubsection{Energy evolution in the geometric form}

To derive the energy evolution of the pure temporal derivatives we shall use the following linear geometric formulation. Suppose that $\eta, u$ (and hence $\mathcal{A}$, etc) are known, satisfying \eqref{nosurface}.  We then consider the linear equation  for the unknowns $\zeta, v, q$ given by
\begin{equation}\label{geoform}
\left\{\begin{array}{lll}
\rho\partial_t v-\rho W\partial_3v
+ \rho u\cdot \nabla_{\mathcal{A}}v + \diverge_{\mathcal{A}} S_{\mathcal{A}}(q,v) = F^1\quad&\text{in }\Omega
\\ \diverge_{\mathcal{A}}{v}=F^2&\text{in }\Omega
\\  S_{\mathcal{A}}(q_+,v_+) \mathcal{ N}_+=\rho_+g\zeta_+ \mathcal{N}_++F^3_+&\text{on }\Sigma_+
\\\llbracket v\rrbracket=0,\quad\llbracket S_{\mathcal{A}}(q,v)\rrbracket \mathcal{N}_-=\rj g\zeta_- \mathcal{N}_--F^3_-&\text{on }\Sigma_-
\\ \partial_t\zeta=v\cdot \mathcal{N}+F^4&\text{on }\Sigma
\\ v=0 &\text{on }\Sigma_b.
\end{array}\right.
\end{equation}

We have the following natural energy identity for the system \eqref{geoform}.

\begin{lemma}\label{geoid}
Suppose that $u,\eta$ are given solutions to \eqref{nosurface} and suppose that $\zeta, v, q$ solve \eqref{geoform}. Then
\begin{multline}\label{iden0}
\frac{d}{dt}\left(\frac{1}{2}\int_\Omega \rho J|v|^2 +\frac{1}{2}\int_{\Sigma_+}
\rho_+g|\zeta_+|^2+\frac{1}{2}\int_{\Sigma_-} -\rj g|\zeta_-|^2\right)+\frac{1}{2}\int_{\Omega}\mu
J|\mathbb{D}_\mathcal{A}v|^2 \\
=\int_\Omega J(v\cdot F^1+qF^2)+\int_{\Sigma_+}(-v_+\cdot F^3_+ +\rho_+g\zeta_+ F_+^4) +\int_{\Sigma_-}(-v\cdot F_-^3-\rj g\zeta_- F_-^4 ).
\end{multline}
\end{lemma}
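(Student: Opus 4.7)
The plan is to test the momentum equation in \eqref{geoform} against $Jv$, integrate over $\Omega = \Omega_+ \cup \Omega_-$, and exploit the fact that $(u,\eta)$ solves \eqref{nosurface} together with the boundary conditions in \eqref{geoform}. The key geometric ingredients are the Piola identity $\partial_k(J\mathcal{A}_{jk})=0$, the evolution identity $\partial_tJ=\partial_3(JW)$ for the coordinate Jacobian, and the fact that $J\mathcal{A}^T\nu$ coincides (up to orientation) with $\mathcal{N}_\pm$ on $\Sigma_\pm$ while $JW\vert_{\Sigma_\pm}=\partial_t\eta_\pm$.

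For the convective block $\int_\Omega \rho J(v\cdot\partial_tv-Wv\cdot\partial_3v+v\cdot(u\cdot\nabla_\mathcal{A})v)$, I would first write $v\cdot\partial_tv=\tfrac12\partial_t|v|^2$ to extract $\tfrac{d}{dt}\tfrac12\int\rho J|v|^2-\tfrac12\int\rho\,\partial_tJ|v|^2$. The remaining two terms are rewritten using $v\cdot\partial_k v=\tfrac12\partial_k|v|^2$ and integrated by parts on each $\Omega_\pm$ separately; in the bulk the identity $\partial_tJ=\partial_3(JW)$ together with $\diverge_\mathcal{A}u=0$ (via Piola) cancels the $\rho\,\partial_tJ|v|^2$ term exactly. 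The resulting boundary pieces collapse because $JW=\partial_t\eta$ and $u\cdot\mathcal{N}=\partial_t\eta$ (the latter from \eqref{nosurface}, \emph{not} \eqref{geoform}) on each $\Sigma_\pm$, and because $v$, $J$, $W$, $\mathcal{A}$, $u$ are continuous across $\Sigma_-$ so that the jump in $\rho$ multiplies the same vanishing combination from both sides. Consequently the convective block reduces cleanly to $\tfrac{d}{dt}\tfrac12\int\rho J|v|^2$.

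For the viscous/pressure block, I would integrate $\int_\Omega Jv\cdot\diverge_\mathcal{A}S_\mathcal{A}(q,v)$ by parts using Piola and the symmetry of $S_\mathcal{A}$. Since $(\mathbb{D}_\mathcal{A}v)_{ii}=2\diverge_\mathcal{A}v=2F^2$, the interior contribution becomes $\tfrac{\mu}{2}\int J|\mathbb{D}_\mathcal{A}v|^2-\int JqF^2$, while the surface contribution is $\int_{\partial\Omega_\pm}v\cdot S_\mathcal{A}(q,v)(J\mathcal{A}^T\nu)$. On $\Sigma_+$ this becomes $\int_{\Sigma_+}v\cdot S_{\mathcal{A}_+}(q_+,v_+)\mathcal{N}_+$, and on $\Sigma_-$ the two orientations combine into $-\int_{\Sigma_-}v\cdot\llbracket S_\mathcal{A}(q,v)\rrbracket\mathcal{N}_-$, using $\llbracket v\rrbracket=0$. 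Inserting the stress boundary conditions from \eqref{geoform} produces $\rho_+g\int_{\Sigma_+}\zeta_+v\cdot\mathcal{N}_++\int v_+\cdot F_+^3$ and $-\rj g\int_{\Sigma_-}\zeta_-v\cdot\mathcal{N}_-+\int v_-\cdot F_-^3$. Finally I would apply the kinematic equation $\partial_t\zeta=v\cdot\mathcal{N}+F^4$ from \eqref{geoform} to trade $\zeta v\cdot\mathcal{N}$ for $\tfrac12\partial_t\zeta^2-\zeta F^4$, producing exactly the surface energies in \eqref{iden0}.

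The main obstacle is the accounting at $\Sigma_-$ in the convective block: there, jumps of $\rho$ generate $\llbracket\rho\rrbracket$-weighted surface integrals that do not appear in the target identity, and one must verify that they vanish by matching $JW\vert_{\Sigma_-}=\partial_t\eta_-=u\cdot\mathcal{N}_-$ against $J\mathcal{A}^Te_3\vert_{\Sigma_-}=\mathcal{N}_-$ and using the continuity of $v$, $u$, $J$, $W$, $\mathcal{A}$ across the interface—ingredients that are built into the special coordinate transformation \eqref{cotr}. Once these cancellations are confirmed, combining the convective and viscous computations with the momentum equation yields \eqref{iden0} directly.
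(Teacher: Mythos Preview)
Your proposal is correct and follows exactly the approach the paper indicates: take the dot product of \eqref{geoform} with $Jv$, integrate by parts via the Piola identity, and use the kinematic relations $JW\vert_\Sigma=\partial_t\eta$ and $u\cdot\mathcal{N}=\partial_t\eta$ (from \eqref{nosurface}) to kill the convective boundary terms, including the $\llbracket\rho\rrbracket$-weighted pieces at $\Sigma_-$. The only slip is notational: the boundary normal arising in the integration by parts is $J\mathcal{A}e_3=\mathcal{N}$ (i.e.\ $(J\mathcal{A}_{j3})_j$), not $J\mathcal{A}^Te_3$.
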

\begin{proof}
The equality \eqref{iden0} may be derived as in Lemma 2.1 of \cite{GT_per} by taking the dot product of \eqref{geoform} with $J v$ and integrating by parts.
\end{proof}

In order to utilize \eqref{iden0} we apply the temporal differential operator $\partial_t^l$ to \eqref{nosurface}, the resulting equations are \eqref{geoform} for $\zeta = \partial_t^l \eta,$ $v = \partial_t^l u$ and $q = \partial_t^l p$, where
\begin{equation}\label{formF1}
\begin{split}
F^1_i &= \sum_{0<m< l}C_l^m \rho\partial_t^m W\partial_t^{l-m}\partial_3u_i + \sum_{0<m\le l} C_l^m\partial_t^{l-m}\partial_t \Theta^3\partial_t^mK\partial_3u_i \\
&-\sum_{0<m\le l}C_l^m(\rho\partial_t^m(u_j\mathcal{A}_{jk})\partial_t^{l-m} \partial_k u_i + \partial_t^m \mathcal{A}_{ik} \partial_t^{l-m} \partial_k p) \\
&+ \sum_{0<m\le l}C_l^m \mu \partial_t^m \mathcal{A}_{jk} \partial_t^{l-m} \partial_k (\mathcal{A}_{is} \partial_s u_j + \mathcal{A}_{js} \partial_su_i) \\
&+ \sum_{0<m< l} C_l^m \mathcal{A}_{jk} \partial_k (\partial_t^{m}\mathcal{A}_{is} \partial_t^{l-m} \partial_s u_j + \partial_t^{m} \mathcal{A}_{js} \partial_t^{l-m} \partial_s u_i) \\
&+ \partial_t^l \partial_t \Theta^3 K\partial_3 u_i + \mathcal{A}_{jk} \partial_k(\partial_t^{l} \mathcal{A}_{is} \partial_s u_j + \partial_t^{l} \mathcal{A}_{js} \partial_su_i),
\end{split}
\end{equation}
\begin{equation} \label{formF2}
F^2 = -\sum_{0<m<l}C_l^m\partial_t^m \mathcal{A}_{ij}\partial_t^{l-m}\partial_ju_i-\partial_t^l\mathcal{A}_{ij} \partial_ju_i,
\end{equation}
\begin{equation}\label{formF31}
 \begin{split}
  F^{3}_+ &= \sum_{0<m\le l}C_l^m\partial_t^m D\eta_+(\partial_t^{l-m}\eta_+-\partial_t^{l-m}  p_+) \\
  &-\sum_{0<m\le l}C_l^m \mu_+ \left(\partial_t^m(\mathcal{N}_{j,+} \mathcal{A}_{is,+}) \partial_t^{l-m}\partial_su_{j,+}+\partial_t^m(\mathcal{N}_{j,+} \mathcal{A}_{js,+})  \partial_t^{l-m}\partial_su_{i,+}\right),
 \end{split}
\end{equation}
\begin{equation}\label{formF32}
 \begin{split}
 F^{3}_- &= - \sum_{0<m\le l} C_l^m \partial_t^m D\eta_-(\partial_t^{l-m}\eta_- -\partial_t^{l-m} \llbracket p\rrbracket) \\
 &-\sum_{0<m\le l} C_l^m\left(\partial_t^m(\mathcal{N}_{j,-} \mathcal{A}_{is}) \llbracket \mu \partial_t^{l-m} \partial_s u_j \rrbracket + \partial_t^m(\mathcal{N}_{j,-} \mathcal{A}_{js})\llbracket \mu \partial_t^{l-m} \partial_s u_i \rrbracket\right),
 \end{split}
\end{equation}
\begin{equation}\label{formF4}
 F^4=\sum_{0<m\le l}C_l^m\partial_t^m D\eta\cdot \partial_t^{l-m} u,
\end{equation}
where in each of these equalities, the terms $C_l^m$ are positive constants that result from applying the Leibniz rule.

We present the estimates of these nonlinear perturbation terms $F^1,F^2,F^3$ and $F^4$ in the following lemma, at both $2N$ and $N+2$ levels.

\begin{lemma}
Let the $F^i$ be given as above.  Then the following estimates hold.
\begin{enumerate}
 \item For $0\le l\le 2N$, we have
\begin{equation} \label{Fes1}
\|F^1\|_0^2+\|\partial_t(JF^2)\|_0^2+\|F^3\|_0^2+\|F^4\|_0^2\lesssim \mathcal{E}_{2N} \mathcal{D}_{2N}.
\end{equation}

\item  For $0\le l\le N+2$, we have
\begin{equation} \label{Fes2}
\|F^1\|_0^2+\|\partial_t(JF^2)\|_0^2+\|F^3\|_0^2+\|F^4\|_0^2\lesssim \mathcal{E}_{2N} \mathcal{D}_{N+2},
\end{equation}
and also if $N\ge 3$, then there exists a $\theta>0$ so that
\begin{equation} \label{Fes3}
\|F^2\|_0^2 \lesssim \mathcal{E}_{2N}^\theta \mathcal{E}_{N+2}.
\end{equation}

\end{enumerate}
\end{lemma}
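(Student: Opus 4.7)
The plan is to estimate each term appearing in the explicit expressions \eqref{formF1}--\eqref{formF4} by direct product estimates in Sobolev spaces, exploiting the fact that every summand is a product of a factor depending only on the geometric data (derivatives of $\bar{\eta}_\pm$ through $\mathcal{A},\, J,\, K,\, W,\, \mathcal{N}$) and a factor involving $u$ or $p$. Because the Poisson extensions satisfy $\|\bar{\eta}_\pm\|_{H^k(\Omega)} \lesssim \|\eta_\pm\|_{H^{k-1/2}(\Sigma)}$, all geometric quantities are controlled by norms of $\eta$ and $\partial_t\eta,\ldots$ appearing in the definitions of $\mathcal{E}_{2N},\,\mathcal{E}_{N+2},\,\mathcal{D}_{2N},\,\mathcal{D}_{N+2}$. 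In each product the total number of space-time derivatives (in parabolic counting) is at most $2l$, which gives enough flexibility to put one factor in $L^\infty$ or $L^4$ via Sobolev embedding and absorb it into an energy, while the remaining factor carries the highest derivatives and is absorbed into a dissipation.

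To prove \eqref{Fes1} I would first write out a schematic ``high--low'' splitting lemma: for indices $\alpha+\beta \le 4N$ one has $\|\partial^\alpha f\,\partial^\beta g\|_0 \lesssim \|f\|_{H^{a}}\|g\|_{H^{b}}$ for appropriate $a,b$ with at least one index large enough (at least $2$ in the interior, $1$ on the boundary) to allow embedding. Applying this term by term to $F^1,\ \partial_t(JF^2),\ F^3,\ F^4$, and invoking the trace theorem on the boundary pieces $F^3_\pm,\ F^4$, places the ``$\eta$-factor'' into $\mathcal{E}_{2N}$ (say $\|\bar{\eta}\|_{H^s(\Omega)}\lesssim \mathcal{E}_{2N}^{1/2}$ for moderate $s$) and the ``$u$- or $p$-factor'' into $\mathcal{D}_{2N}^{1/2}$ once one checks that the top-order derivatives fall on $u$ or $p$ with enough spatial derivatives to sit in the definition of $\mathcal{D}_{2N}$. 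The estimate \eqref{Fes2} is entirely analogous: one repeats the bookkeeping with $n=N+2$, using the fact that factors of low order may still be estimated by $\mathcal{E}_{2N}$ (which is stronger), and only the single high-order factor needs to sit in $\mathcal{D}_{N+2}$. Since the structure of these nonlinearities is essentially the same as in \cite{GT_per}, I would not redo every product bound from scratch but rather verify that the two-phase geometry (jumps across $\Sigma_-$, the non-flat bottom entering $\bar{\eta}_-$) does not disrupt the counting.

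The main obstacle is the interpolation estimate \eqref{Fes3}, because here there is no dissipation on the right-hand side: we must extract a strictly positive power $\theta$ of $\mathcal{E}_{2N}$ to multiply a single copy of $\mathcal{E}_{N+2}$. My approach is to exploit the explicit form $F^2 = -\sum_{0<m\le l}C_l^m \partial_t^m \mathcal{A}_{ij}\,\partial_t^{l-m}\partial_j u_i$ with $l \le N+2$. For each $m$, bound the product by $\|\partial_t^m \mathcal{A}\|_{H^{s_1}}\|\partial_t^{l-m}\partial u\|_{H^{s_2}}$ with $s_1+s_2$ chosen so that one factor may be embedded into $L^\infty$; the constraint $2m+s_1 \le 2(N+2)$ and $2(l-m)+1+s_2 \le 2(N+2)$ is compatible with $s_1+s_2 \ge 3$ precisely because $l\le N+2$, leaving genuine slack in the Sobolev indices. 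In each factor, interpolate between the norm available from $\mathcal{E}_{N+2}$ and the strictly higher norm available from $\mathcal{E}_{2N}$; the interpolation power on the $\mathcal{E}_{2N}$ side is strictly positive provided $N\ge 3$ (so that $4N-2j$ exceeds $2(N+2)-2j$ by at least $2N-4\ge 2$ for all relevant $j$). Summing over $m$ and rearranging yields $\|F^2\|_0^2 \lesssim \mathcal{E}_{2N}^\theta \mathcal{E}_{N+2}$ for an explicit $\theta=\theta(N)>0$. The numerical verification that the interpolation exponents are uniformly positive across all the summands in $F^2$ is the only delicate point and must be carried out case by case.
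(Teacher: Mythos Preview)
Your proposal is correct and matches the paper's approach. The paper's own proof is simply a one-line citation: since the perturbations $F^1,\dots,F^4$ have the same structure as in \cite{GT_per}, the estimates \eqref{Fes1}--\eqref{Fes3} are taken directly from Theorems~4.1--4.2 there. Your sketch of high--low product splitting for \eqref{Fes1}--\eqref{Fes2} and Sobolev interpolation between $\mathcal{E}_{N+2}$ and $\mathcal{E}_{2N}$ for \eqref{Fes3} is exactly the content of those cited theorems, and you correctly identify that the only additional check needed in the two-phase setting is that the jump terms and the non-flat bottom (through the specialized extension $\bar{\eta}_-$) do not alter the derivative counting.
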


\begin{proof}
Since our perturbations $F^1,F^2,F^3,F^4$ have the same structure as those of \cite{GT_per}, these estimates
\eqref{Fes1}--\eqref{Fes3} are recorded in Theorems 4.1--4.2 of \cite{GT_per}.
\end{proof}

We now use these estimates to estimate the evolution of $\bar{\mathcal{E}}_{2N}^0$.

\begin{Proposition}\label{temp}
It holds that
\begin{equation}\label{geoes}
\bar{\mathcal{E}}^{0}_{2N}(t)+\int_0^t \bar{\mathcal{D}}^{0}_{2N} \lesssim \mathcal{E}_{2N}(0)
 + (\mathcal{E}_{2N}(t))^{ {3}/{2}}+\int_0^t \sqrt{\mathcal{E}_{2N}}  \mathcal{D}_{2N}.
\end{equation}
\end{Proposition}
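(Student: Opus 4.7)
My strategy is to apply $\partial_t^l$ for $l = 0, 1, \dots, 2N$ to the nonlinear system \eqref{nosurface}; by direct computation the triple $(v, q, \zeta) = (\partial_t^l u, \partial_t^l p, \partial_t^l \eta)$ then satisfies the linear geometric system \eqref{geoform} with commutator forcings $F^1, F^2, F^3, F^4$ exactly as in \eqref{formF1}--\eqref{formF4} (for $l = 0$, all of these forcings vanish). Applying the energy identity of Lemma \ref{geoid} for each such $l$, integrating in time from $0$ to $t$, and summing over $l$ produces on the left-hand side, after using the smallness of $\mathcal{G}_{2N}(T) \le \delta$ to pass from $\int_\Omega \mu J |\mathbb{D}_\mathcal{A} \partial_t^l u|^2$ to $\int_\Omega \mu |\mathbb{D} \partial_t^l u|^2$ modulo a pointwise error bounded by $\|\eta\|_{L^\infty} \|\nabla \partial_t^l u\|_0^2 \lesssim \sqrt{\mathcal{E}_{2N}}\,\mathcal{D}_{2N}$, exactly the quantity $\bar{\mathcal{E}}^0_{2N}(t) + \int_0^t \bar{\mathcal{D}}^0_{2N}$ (up to harmless constants), while the initial contribution satisfies $\bar{\mathcal{E}}^0_{2N}(0) \lesssim \mathcal{E}_{2N}(0)$.

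On the right-hand side of \eqref{iden0} there are four integrals to estimate. For the $F^1$, $F^3$, and $F^4$ contributions, Cauchy--Schwarz combined with the trace theorem for the surface integrals and the forcing bound \eqref{Fes1} yields directly a total contribution controlled by $\int_0^t \sqrt{\mathcal{E}_{2N} \mathcal{D}_{2N}} \cdot \sqrt{\mathcal{D}_{2N}} = \int_0^t \sqrt{\mathcal{E}_{2N}}\,\mathcal{D}_{2N}$, exactly the structure appearing on the right-hand side of \eqref{geoes}.

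The principal obstacle is the pressure-divergence term $\int_\Omega J\, \partial_t^l p\, F^2$: when $l = 2N$ the top-order $\partial_t^{2N} p$ does not appear in $\mathcal{D}_{2N}$, and in any case $\|F^2\|_0$ is not directly available in \eqref{Fes1}. The remedy, signaled by the fact that \eqref{Fes1} controls $\|\partial_t(JF^2)\|_0$ rather than $\|F^2\|_0$, is to integrate by parts in time:
\[
 \int_0^t \!\!\int_\Omega J\, \partial_t^l p\, F^2 \,=\, \Bigl[\int_\Omega \partial_t^{l-1} p \cdot JF^2\Bigr]_0^t \,-\, \int_0^t \!\!\int_\Omega \partial_t^{l-1} p \cdot \partial_t(JF^2).
\]
The remaining time integral is bounded by $\int_0^t \sqrt{\mathcal{D}_{2N}} \cdot \sqrt{\mathcal{E}_{2N} \mathcal{D}_{2N}} = \int_0^t \sqrt{\mathcal{E}_{2N}}\,\mathcal{D}_{2N}$; the endpoint at $t$ is bounded by $\|\partial_t^{l-1} p(t)\|_0\, \|JF^2(t)\|_0 \lesssim \sqrt{\mathcal{E}_{2N}(t)} \cdot \mathcal{E}_{2N}(t) = (\mathcal{E}_{2N}(t))^{3/2}$, using the quadratic structure of \eqref{formF2} together with Sobolev embedding to extract one factor in $L^\infty$ and conclude $\|F^2\|_0 \lesssim \mathcal{E}_{2N}$; and the endpoint at $t = 0$ yields $(\mathcal{E}_{2N}(0))^{3/2} \lesssim \mathcal{E}_{2N}(0)$ by smallness of the initial data. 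Summing all contributions over $l = 0, 1, \dots, 2N$ then assembles the stated estimate \eqref{geoes}.
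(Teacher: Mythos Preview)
Your proposal is correct and follows essentially the same route as the paper's proof: apply $\partial_t^l$ to \eqref{nosurface}, invoke the geometric energy identity of Lemma~\ref{geoid}, estimate the $F^1,F^3,F^4$ terms directly via \eqref{Fes1} together with trace theory, handle the $F^2$ pressure term by integrating by parts in time (producing the $(\mathcal{E}_{2N}(t))^{3/2}$ endpoint term), and finally replace $J|\mathbb{D}_\mathcal{A}\partial_t^l u|^2$ by $|\mathbb{D}\partial_t^l u|^2$ up to an error of size $\sqrt{\mathcal{E}_{2N}}\,\mathcal{D}_{2N}$. The paper carries out exactly these steps in the same order with the same bookkeeping.
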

\begin{proof}
The proof follows that of Proposition 4.3 of \cite{GT_per}.  We apply $\partial_t^l,\ l=0,1,\dots,2N$ to \eqref{nosurface}, then $v=\partial_t^lu,q=\partial_t^lp$ and $\zeta = \partial_t^l\eta$ solve \eqref{geoform}  where $F^1,F^2,F^3,F^4$ are given by \eqref{formF1}--\eqref{formF4}. Applying Lemma \ref{geoid}   to these functions  and then integrating in time for $0$ to $T$, we have
\begin{multline}\label{fiden0}
 \frac{1}{2}\int_\Omega \rho J|\partial_t^lu(t)|^2+\frac{1}{2}\int_{\Sigma_+}  \rho_+g|\partial_t^l\eta_+(t)|^2+\frac{1}{2}\int_{\Sigma_-} -\rj g|\partial_t^l\eta_-(t)|^2 +\frac{1}{2}\int_0^t\int_{\Omega}\mu J|\mathbb{D}_\mathcal{A}\partial_t^lu|^2
\\
=\frac{1}{2}\int_\Omega \rho J|\partial_t^lu(0)|^2+\frac{1}{2}\int_{\Sigma_+}  \rho_+g|\partial_t^l\eta_+(0)|^2+\frac{1}{2}\int_{\Sigma_-} -\rj g|\partial_t^l\eta_-(0)|^2
\\
+ \int_0^t\int_\Omega J(\partial_t^lu\cdot F^1+\partial_t^lpF^2)
\\
+ \int_0^t\int_{\Sigma_+}(-\partial_t^lu_+\cdot F^3_+ + \rho_+ g \partial_t^l \eta_+ F_+^4) + \int_0^t \int_{\Sigma_-} (-\partial_t^l u \cdot F_-^3-\rj g\partial_t^l\eta_- F_-^4 ).
\end{multline}

We now estimate the right hand side of \eqref{fiden0}. We first estimate the  three terms involves $F^1,F^3,F^4$. For the $F^1$ term, by \eqref{Fes1}, we have
\begin{equation}\label{tempor1}
\int_0^t \int_\Omega J \partial_t^l u \cdot F^1  \le  \int_0^t \|\partial_t^l u\|_0 \|J\|_{L^\infty} \|F^1\|_0  \lesssim \int_0^t \sqrt{{\mathcal{E}_{2N}}} \mathcal{D}_{2N}.
\end{equation}
For the $F^3$ and $F^4$ terms, by \eqref{Fes1} and the trace theorem,  we have
\begin{multline}\label{tempor2}
\int_0^t \int_{\Sigma_+} (-\partial_t^l u_+ \cdot F^3_+ + \rho_+ g \partial_t^l \eta_+ F_+^4) + \int_0^t \int_{\Sigma_-} (-\partial_t^l u \cdot F_-^3 - \rj  g \partial_t^l \eta_- F_-^4 )
\\
\le \int_0^t \|\partial_t^lu\|_{L^2(\Sigma)} \|F^3\|_{0} + \|\partial_t^l \eta \|_{0} \| F^4\|_{0}
\\
\lesssim \int_0^t (\|\partial_t^l u \|_{1} + \|\partial_t^l \eta \|_{0}) \sqrt{\mathcal{E}_{2N} \mathcal{D}_{2N}} \lesssim \int_0^t  \sqrt{\mathcal{E}_{2N} } \mathcal{D}_{2N}.
\end{multline}
Next, we estimate for the $F^2$ term. Notice that we can not control $\partial_t^{2N} p$ by $\mathcal{D}_{2N}$, hence we need to integrate by parts in time to find
\begin{equation}
\int_0^t\int_\Omega J \partial_t^lpF^2 = - \int_0^t\int_\Omega \partial_t^{l-1}p \partial_t ({JF^2})+
\int_\Omega (\partial_t^{l-1}p JF^2)(t) -\int_\Omega (\partial_t^{l-1}p JF^2)(0).
\end{equation}
Then by \eqref{Fes1}, we may estimate
\begin{equation}
-\int_0^t\int_\Omega \partial_t^{l-1}p\partial_t({JF^2}) \le \int_0^t  \|\partial_t^{l-1}p\|_0\|\partial_t({JF^2})\|_0 \lesssim \int_0^t\sqrt{{\mathcal{E}_{2N}}} \mathcal{D}_{2N}.
\end{equation}
Also, it is easy to deduce that
\begin{equation} \label{tempor3}
\int_\Omega (\partial_t^{l-1}p JF^2)(t) -\int_\Omega (\partial_t^{l-1}p JF^2)(0) \lesssim \mathcal{E}_{2N}(0)
 + (\mathcal{E}_{2N}(t))^{ {3}/{2}}.
\end{equation}
Hence, we have
\begin{equation}
\int_0^t\int_\Omega J \partial_t^l p F^2 \lesssim \mathcal{E}_{2N}(0)  + (\mathcal{E}_{2N}(t))^{ {3}/{2}} + \int_0^t \sqrt{{\mathcal{E}_{2N}}} \mathcal{D}_{2N}.
\end{equation}
Now by \eqref{tempor1}, \eqref{tempor2} and \eqref{tempor3},  we deduce from \eqref{fiden0} that
\begin{multline}\label{fiden11}
\frac{1}{2} \int_\Omega \rho J|\partial_t^lu(t)|^2 + \frac{1}{2} \int_{\Sigma_+}  \rho_+ g | \partial_t^l \eta_+(t) |^2 + \frac{1}{2} \int_{\Sigma_-} - \rj g |\partial_t^l \eta_-(t) |^2 + \frac{1}{2} \int_0^t \int_{\Omega} \mu J |\mathbb{D}_\mathcal{A} \partial_t^l u|^2
\\
\lesssim \mathcal{E}_{2N}(0) + (\mathcal{E}_{2N}(t))^{ {3}/{2}}+ \int_0^t\sqrt{{\mathcal{E}_{2N}}} \mathcal{D}_{2N}.
\end{multline}

Finally, it remains to estimate the left hand side of \eqref{fiden11}. For this we write
\begin{equation}\label{j1}
J|\mathbb{D}_\mathcal{A}\partial_t^l u |^2 = |\mathbb{D} \partial_t^lu|^2+(J-1)| \mathbb{D}
\partial_t^l u|^2+J(\mathbb{D}_\mathcal{A}  \partial_t^l u + \mathbb{D}  \partial_t^lu) :(\mathbb{D}_\mathcal{A} \partial_t^l u - \mathbb{D} \partial_t^lu).
\end{equation}
For the last term in the above, since
\begin{equation}
\mathbb{D}_\mathcal{A} \partial_t^l u\pm \mathbb{D} \partial_t^l u = (\mathcal{A}_{ik} \pm \delta_{ik} ) \partial_k \partial_t^l u^j + (\mathcal{A}_{jk} \pm \delta_{jk} ) \partial_k \partial_t^l u^i,
\end{equation}
we have
\begin{equation}\label{j2}
\int_\Omega J(\mathbb{D}_\mathcal{A} \partial_t^l u + \mathbb{D} \partial_t^l u) :(\mathbb{D}_\mathcal{A} \partial_t^l u - \mathbb{D} \partial_t^l u) \lesssim \sqrt{{\mathcal{E}_{2N}}} \mathcal{D}_{2N}.
\end{equation}
On the other hand, we easily have
\begin{equation} \label{j3}
\int_{\Omega}(J-1) | \mathbb{D} \partial_t^l u|^2 \lesssim \sqrt{{\mathcal{E}_{2N}}} \mathcal{D}_{2N}.
\end{equation}
Hence, by \eqref{j1}, \eqref{j2} and \eqref{j3}, we obtain from \eqref{fiden11} that
\begin{multline}
\frac{1}{2}\int_\Omega \rho  J |\partial_t^lu(t)|^2+\frac{1}{2}\int_{\Sigma_+}  \rho_+g|\partial_t^l\eta_+(t)|^2 + \frac{1}{2}\int_{\Sigma_-} -\rj g|\partial_t^l\eta_-(t)|^2
+ \frac{1}{2}\int_0^t \int_{\Omega} \mu |\mathbb{D} \partial_t^l u|^2 \\
\lesssim\mathcal{E}_{2N}(0) + (\mathcal{E}_{2N}(t))^{{3}/{2}}+ \int_0^t\sqrt{{\mathcal{E}_{2N}}} \mathcal{D}_{2N}
\end{multline}
for $0\le l\le 2n$. Summing  the above over $0\le l\le 2N$ gives \eqref{geoes}.
\end{proof}

We now record a similar result at the $N+2$ level.

\begin{Proposition}\label{temp2}
Let $F^2$ be given by \eqref{formF2} with $l=N+2$. Then
 \begin{equation}\label{geoes2}
 \frac{d}{dt}\left(\bar{\mathcal{E}}^{0}_{N+2} - \int_\Omega J\partial_t^{N+1}p  F^2\right)
 + \bar{\mathcal{D}}^{0}_{ N+2} \lesssim  \sqrt{\mathcal{E}_{2N}}  \mathcal{D}_{N+2}.
\end{equation}
\end{Proposition}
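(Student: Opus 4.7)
The plan is to mimic the argument for Proposition \ref{temp}, but in its \emph{differential} (un-integrated-in-time) form and at the low-regularity level $N+2$, using \eqref{Fes2} in place of \eqref{Fes1}. The special correction $-\int_\Omega J\partial_t^{N+1}p\,F^2$ appearing inside $d/dt$ on the left-hand side will be forced upon us by one integration by parts in time.

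First, I would apply $\partial_t^l$ for $l=0,1,\dots,N+2$ to \eqref{nosurface}, so that $(\zeta,v,q)=(\partial_t^l\eta,\partial_t^l u,\partial_t^l p)$ solves \eqref{geoform} with forcings $F^1,F^2,F^3,F^4$ given by \eqref{formF1}--\eqref{formF4}; then invoke Lemma \ref{geoid} without integrating in time. On the resulting right-hand side, the contributions from $F^1,F^3,F^4$ are controlled directly via \eqref{Fes2}, the trace theorem, and Cauchy--Schwarz: the ``outer'' factor, namely $\|\partial_t^l u\|_1$ or $\|\partial_t^l\eta\|_0$, contributes $\sqrt{\mathcal{D}_{N+2}}$, while the forcing norm contributes $\sqrt{\mathcal{E}_{2N}\mathcal{D}_{N+2}}$, together yielding $\sqrt{\mathcal{E}_{2N}}\mathcal{D}_{N+2}$.

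For the pressure contribution $\int_\Omega J\partial_t^l p\,F^2$ the cases $0\le l\le N+1$ pose no trouble, since $\|\partial_t^l p\|_0\le\|\partial_t^l p\|_{2(N+2)-2l}$ is one of the terms in $\mathcal{D}_{N+2}$, and this pairs with $\|F^2\|_0$ via \eqref{Fes2} to yield the required bound. The genuine obstacle is $l=N+2$, because $\partial_t^{N+2}p$ is not present in $\mathcal{D}_{N+2}$. The remedy is to integrate by parts in time,
\[
\int_\Omega J\partial_t^{N+2}p\,F^2=\frac{d}{dt}\int_\Omega JF^2\,\partial_t^{N+1}p-\int_\Omega\partial_t(JF^2)\,\partial_t^{N+1}p,
\]
and transfer the $d/dt$ term to the left-hand side, which produces exactly the corrector $-\int_\Omega J\partial_t^{N+1}p\,F^2$. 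The residual integral is estimated by $\|\partial_t(JF^2)\|_0\|\partial_t^{N+1}p\|_0\lesssim\sqrt{\mathcal{E}_{2N}\mathcal{D}_{N+2}}\cdot\sqrt{\mathcal{D}_{N+2}}$ via \eqref{Fes2}, which again gives $\sqrt{\mathcal{E}_{2N}}\mathcal{D}_{N+2}$.

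Finally, I would convert $\int_\Omega\mu J|\mathbb{D}_\mathcal{A}\partial_t^l u|^2$ to $\int_\Omega\mu|\mathbb{D}\partial_t^l u|^2$ modulo an error of size $\sqrt{\mathcal{E}_{2N}}\mathcal{D}_{N+2}$, following the algebraic decomposition \eqref{j1}--\eqref{j3} used in Proposition \ref{temp}. Summing over $l=0,\dots,N+2$ recovers $\bar{\mathcal{D}}^0_{N+2}$ (and, via Remark \ref{temporal_remark}, $\bar{\mathcal{E}}^0_{N+2}$), completing the proof. The main technical obstacle is verifying the bound \eqref{Fes2} for $\partial_t(JF^2)$ when $l=N+2$: differentiating $F^2$ again in time brings in $\partial_t^{N+3}\eta$, which sits only at the lowest Sobolev scale controlled by $\mathcal{D}_{N+2}$ and so must be paired against the smoothest factor in the product---this is precisely the bookkeeping that has already been encoded in \eqref{Fes2}.
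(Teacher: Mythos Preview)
Your proposal is correct and follows essentially the same route as the paper: apply Lemma \ref{geoid} in differential form at each level $l=0,\dots,N+2$, estimate the $F^1,F^3,F^4$ contributions directly via \eqref{Fes2} and the trace theorem, handle the top-level pressure--$F^2$ term by a temporal integration by parts (which produces exactly the corrector $-\int_\Omega J\partial_t^{N+1}p\,F^2$ in the statement), and replace $\mathbb{D}_\mathcal{A}$ by $\mathbb{D}$ as in Proposition \ref{temp}. The only minor inaccuracy is that \eqref{Fes2} does not literally contain a bound on $\|F^2\|_0$ (only on $\|\partial_t(JF^2)\|_0$), but for $l\le N+1$ the required estimate is immediate from the structure of $F^2$, and the paper is equally terse on this point.
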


\begin{proof}
Now we follow the proof of Proposition 4.4 of \cite{GT_per}.  We again apply $\partial_t^l,\ l=0,1,\dots,N+2$ to \eqref{nosurface}, then $v=\partial_t^lu,q=\partial_t^lp$ and $\zeta=\partial_t^l\eta$ solve \eqref{geoform}  with $F^1,F^2,F^3,F^4$ given by \eqref{formF1}--\eqref{formF4}. Applying  Lemma \ref{geoid} directly to these functions, we have
\begin{multline}\label{fiden10}
\frac{d}{dt} \left( \frac{1}{2} \int_\Omega \rho J| \partial_t^l u |^2 + \frac{1}{2} \int_{\Sigma_+} \rho_+ g |\partial_t^l \eta_+|^2 + \frac{1}{2} \int_{\Sigma_-} - \rj  g | \partial_t^l\eta_- |^2\right) + \frac{1}{2} \int_{\Omega} \mu J|\mathbb{D}_\mathcal{A}\partial_t^lu|^2 \\
=
\int_\Omega J(\partial_t^lu\cdot F^1+\partial_t^lpF^2)
 \\
+ \int_{\Sigma_+}(-\partial_t^lu_+\cdot F^3_+ +\rho_+g\partial_t^l\eta_+ F_+^4)
+ \int_{\Sigma_-}(-\partial_t^lu\cdot F_-^3-\rj g\partial_t^l\eta_- F_-^4 ).
\end{multline}

We now estimate the right hand side of \eqref{fiden10}. We first estimate the  three terms involves $F^1,F^3,F^4$. For the $F^1$ term, by \eqref{Fes2}, we have
\begin{equation}\label{tempor11}
 \int_\Omega J\partial_t^lu\cdot F^1 \le   \|\partial_t^lu\|_0\|J\|_{L^\infty}\|F^1\|_0  \lesssim
 \sqrt{{\mathcal{E}_{2N}}} \mathcal{D}_{N+2}.
\end{equation}
For the $F^3$ and $F^4$ terms, by \eqref{Fes2} and the trace theorem,  we have
\begin{multline}\label{tempor12}
 \int_{\Sigma_+}(-\partial_t^lu_+\cdot F^3_+ +\rho_+g\partial_t^l\eta_+ F_+^4) + \int_{\Sigma_-}(-\partial_t^lu\cdot F_-^3-\rj g\partial_t^l\eta_- F_-^4 )
\\
\le \|\partial_t^lu\|_{L^2(\Sigma)} \|F^3\|_{0}+\|\partial_t^l\eta\|_{0} \|  F^4\|_{0}
\\
\lesssim  (\|\partial_t^lu\|_{1}+ \|\partial_t^l\eta\|_{0})  \sqrt{\mathcal{E}_{2N}  \mathcal{D}_{ N+2} }
\lesssim    \sqrt{\mathcal{E}_{2N} }\mathcal{D}_{ N+2}.
\end{multline}
Next, we estimate for the $F^2$ term. Notice again that we can not control $\partial_t^{N+2} p$ by $\mathcal{D}_{N+2}$, hence we need to integrate by parts in time to find
\begin{equation}
 \int_\Omega J \partial_t^{N+2}pF^2 = - \int_\Omega \partial_t^{N+1}p\partial_t({JF^2})+
\frac{d}{dt}\int_\Omega \partial_t^{N+1}p JF^2 .
\end{equation}
Then by \eqref{Fes2},  we may estimate
\begin{equation}
 - \int_\Omega \partial_t^{N+1}p\partial_t({JF^2}) \le    \|\partial_t^{N+1}p\|_0\|\partial_t{JF^2}\|_0 \lesssim  \sqrt{{\mathcal{E}_{2N}}} \mathcal{D}_{ N+2}.
\end{equation}
Hence, we have
\begin{equation}\label{tempor13}
\int_\Omega J \partial_t^{N+2}pF^2\le   \frac{d}{dt}\int_\Omega \partial_t^{N+1}p JF^2 + C \sqrt{ {\mathcal{E}_{2N}} }  \mathcal{D}_{N+2}.
\end{equation}
Note that when $l\le N+1$, we do not need the integration by parts to know this term can be bounded by the right hand side of \eqref{geoes2}.  Now by \eqref{tempor11}, \eqref{tempor12} and \eqref{tempor13},  we deduce from \eqref{fiden10} that
\begin{multline}
 \frac{d}{dt}\left(\frac{1}{2}\int_\Omega \rho J|\partial_t^lu |^2-\int_\Omega \partial_t^{l-1}p JF^2+\frac{1}{2}\int_{\Sigma_+}  \rho_+g|\partial_t^l\eta_+ |^2+\frac{1}{2}\int_{\Sigma_-} -\rj g|\partial_t^l\eta_- |^2\right)
\\
+\frac{1}{2} \int_{\Omega}\mu J|\mathbb{D}_\mathcal{A}\partial_t^lu|^2\lesssim \sqrt{{\mathcal{E}_{2N}}} \mathcal{D}_{N+2}
\end{multline}
for $0\le l\le N+2$. Furthermore, we may argue as in Proposition \ref{temp} to replace $\frac{1}{2} \int_{\Omega}\mu
J|\mathbb{D}_\mathcal{A}\partial_t^lu|^2$ by $\frac{1}{2} \int_{\Omega}\mu  |\mathbb{D} \partial_t^l u|^2$ in the last equation.  Summing  the above over $0\le l\le N+2$ gives \eqref{geoes2}.
\end{proof}

\subsubsection{Energy evolution in the perturbed form}\label{sec_EEP}

In Section \ref{sec_EEP} we shall derive the localized energy evolution of the horizontal derivatives by using the following linear perturbed form for $\zeta,v,q$.
\begin{equation}\label{perform}
\left\{\begin{array}{lll}\rho\partial_t v-\mu\Delta v+\nabla q=\Phi^1\quad&\text{in }&\Omega
\\ \diverge{v}=\Phi^2&\text{in }&\Omega
\\\llbracket q_+I-\mu_+\mathbb{D}(v_+)\rrbracket e_3=\rho_+g\zeta_+ e_3 +\Phi_+^3 &\text{on }&\Sigma_+
\\\llbracket v\rrbracket=0,\quad \llbracket qI-\mu\mathbb{D}(v)\rrbracket e_3=\rj g\zeta_- e_3-\Phi_-^3 &\text{on }&\Sigma_-
\\ \partial_t\zeta-v_3=\Phi^4&\text{on }&\Sigma
\\ v=0 &\text{on }&\Sigma_b.
\end{array}\right.
\end{equation}

We have the following natural energy identity for the system \eqref{perform}.
\begin{lemma}\label{perle}
Suppose that $\zeta, v, q$ solve \eqref{perform}, then
\begin{multline}\label{perid}
\frac{d}{dt}\left(\frac{1}{2}\int_\Omega \rho |v|^2+\frac{1}{2}\int_{\Sigma_+}  \rho_+g|\zeta_+|^2+\frac{1}{2}\int_{\Sigma_-} -\rj g|\zeta_-|^2\right)+\frac{1}{2}\int_{\Omega}\mu|\mathbb{D} v|^2 \\
 =\int_\Omega v\cdot \Phi^1+q\Phi^2+\int_{\Sigma_+}-v_+\cdot \Phi_+^3+ \rho_+g\zeta_+  \Phi_+^4
+\int_{\Sigma_-}-v\cdot \Phi_-^3-\rj g\zeta_- \Phi_-^4.
\end{multline}
\end{lemma}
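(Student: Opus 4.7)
The plan is to multiply the momentum equation \eqref{perform}$_1$ by $v$ and integrate over $\Omega$ to get
$\int_\Omega \rho v\cdot\partial_t v + \int_\Omega v\cdot(-\mu\Delta v + \nabla q) = \int_\Omega v\cdot\Phi^1$.
The first integral is $\tfrac{1}{2}\frac{d}{dt}\int_\Omega \rho|v|^2$ since $\rho$ is $t$-independent. For the Stokes terms I would rewrite $-\mu\Delta v+\nabla q = \diverge S(q,v)+\mu\nabla\diverge v$ with $S(q,v)=qI-\mu\mathbb{D}v$, and integrate by parts; using the symmetry of $S$ together with $\nabla v : S(q,v) = \tfrac{1}{2}\mathbb{D}v : S(q,v) = q\,\diverge v - \tfrac{\mu}{2}|\mathbb{D}v|^2$, one obtains
\begin{equation*}
\int_\Omega v\cdot(-\mu\Delta v+\nabla q) = \int_{\partial\Omega} v\cdot S(q,v)\nu + \tfrac{\mu}{2}\int_\Omega|\mathbb{D}v|^2 - \int_\Omega q\,\diverge v,
\end{equation*}
so that the constraint $\diverge v = \Phi^2$ converts the last term into the $+\int_\Omega q\Phi^2$ contribution on the right of \eqref{perid} after transfer across the equality.

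Next I would unpack the boundary integral $\int_{\partial\Omega}v\cdot S(q,v)\nu$ piece by piece. Summing the contributions from $\Omega_\pm$ produces $\int_{\Sigma_+}v_+\cdot S(q_+,v_+)e_3$ (outward normal $+e_3$), zero from $\Sigma_b$ since $v_-=0$ there, and $-\int_{\Sigma_-}v\cdot\llbracket S(q,v)\rrbracket e_3$ from the interface (the outward normals from $\Omega_\pm$ are $\mp e_3$, and $\llbracket v\rrbracket=0$ allows me to pull $v$ out of the jump). Applying the boundary condition \eqref{perform}$_3$ on $\Sigma_+$ and the jump condition \eqref{perform}$_4$ on $\Sigma_-$ then replaces $S(q_+,v_+)e_3$ by $\rho_+g\zeta_+e_3+\Phi^3_+$ and $\llbracket S(q,v)\rrbracket e_3$ by $\rj g\zeta_-e_3-\Phi^3_-$; dotting with $v$ produces the $-\int_{\Sigma_\pm}v\cdot\Phi^3_\pm$ pieces of \eqref{perid} together with the remaining scalar factors $\rho_+g\zeta_+v_{+,3}$ on $\Sigma_+$ and $-\rj g\zeta_- v_3$ on $\Sigma_-$.

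Finally I would invoke the kinematic equation \eqref{perform}$_5$, in the form $v_3=\partial_t\zeta-\Phi^4$ on $\Sigma_\pm$, to convert those remaining surface factors into potential-energy time derivatives: pointwise $\rho_+g\zeta_+v_{+,3} = \tfrac{d}{dt}\tfrac{1}{2}\rho_+g|\zeta_+|^2 - \rho_+g\zeta_+\Phi^4_+$, and similarly $-\rj g\zeta_- v_3 = \tfrac{d}{dt}\tfrac{1}{2}(-\rj g)|\zeta_-|^2 + \rj g\zeta_-\Phi^4_-$. Gathering time-derivative terms on the left and all $\Phi^i$-terms on the right then assembles \eqref{perid} exactly. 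The one piece of bookkeeping that deserves care is the sign/orientation threading on $\Sigma_-$: the two outward normals from $\Omega_\pm$ are $\mp e_3$, and the explicit minus sign in front of $\Phi^3_-$ in \eqref{perform}$_4$ together with the convention $\llbracket f\rrbracket = f_+-f_-$ must be combined so that each $\Phi^3_-$ and $\Phi^4_-$ lands with the correct sign in the final identity; no further estimates are needed since the whole derivation is an exact integration by parts.
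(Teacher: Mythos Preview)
Your approach is exactly the one the paper intends: dot the first equation of \eqref{perform} with $v$, integrate by parts to produce the stress boundary integral $\int_{\partial\Omega} v\cdot S(q,v)\nu$, insert the dynamic conditions on $\Sigma_\pm$, and then use $v_3=\partial_t\zeta-\Phi^4$ to convert the remaining surface integrals into the time derivatives of $\tfrac12\int_{\Sigma_\pm}\tilde\rho\,|\zeta|^2$. Your handling of the orientation on $\Sigma_-$ and of the jump convention is correct, and the boundary contribution from $\Sigma_b$ vanishes since $v_-=0$ there. This matches the paper's proof, which simply says ``take the dot product with $v$ and integrate by parts'' and cites the analogous computation in \cite{GT_per}.

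There is one bookkeeping slip. You correctly write $-\mu\Delta v+\nabla q=\diverge S(q,v)+\mu\nabla\diverge v$, but your displayed formula
\[
\int_\Omega v\cdot(-\mu\Delta v+\nabla q)=\int_{\partial\Omega} v\cdot S(q,v)\nu+\tfrac{\mu}{2}\int_\Omega|\mathbb{D}v|^2-\int_\Omega q\,\diverge v
\]
accounts only for the $\diverge S$ piece and silently drops $\mu\int_\Omega v\cdot\nabla\diverge v=\mu\int_\Omega v\cdot\nabla\Phi^2$. Since here $\diverge v=\Phi^2$ is not assumed to vanish, this term does not disappear, so strictly speaking the exact identity carries an additional contribution $-\mu\int_\Omega v\cdot\nabla\Phi^2$ on the right of \eqref{perid}. (Compare with Lemma~\ref{geoid}, where the equation is already written as $\diverge_\mathcal{A}S_\mathcal{A}$ and no such residual arises.) This extra term is of the same nature as the other $\Phi^2$ contributions and is harmless in the subsequent estimates (Propositions~\ref{hori}--\ref{hori2}), but you should not let it vanish without comment.
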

\begin{proof}
The identity \eqref{perle} may be derived as in Lemma 2.2 of \cite{GT_per} by taking the dot product with $v$ and integrating by parts.
\end{proof}

In order to utilize \eqref{perid}, we want to apply the mixed time-horizontal differential operator $\partial^\alpha$ to \eqref{nosurface2}, with $\alpha\in \mathbb{N}^{1+2}$ so that $\alpha_0\le 2N-1$ and $|\alpha|\le 4N$. However, the lower boundary $\Sigma_b$ may not be flat, so we are not free to apply such derivatives to \eqref{nosurface2}. The idea then is to localize away from the lower boundary. For this, we will use the cutoff function $\chi$ defined at the beginning of the section, satisfying \eqref{chi_properties}.

Multiplying the equations \eqref{nosurface2} by $\chi$, which satisfies \eqref{chi_properties}, we find that $(\chi u,\chi p,\eta)$ solve  the problem
\begin{equation}\label{local1}
\left\{\begin{array}{lll}
\rho\partial_t (\chi u)-\mu\Delta (\chi u)+\nabla (\chi p)=\chi G^1+H^1\quad&\text{in }&\Omega
\\ \diverge(\chi u)=\chi G^2+H^2&\text{in }&\Omega
\\ (\chi p_+I-\mu_+\mathbb{D}(\chi u_+)) e_3= \rho_+g\eta_+ e_3+ G_+^3 &\text{on }&\Sigma_+
\\\llbracket \chi u\rrbracket=0,\quad \llbracket \chi pI-\mu\mathbb{D}(\chi u)\rrbracket e_3=\rj g\eta_- e_3-G_-^3 &\text{on }&\Sigma_-
\\ \partial_t\eta-\chi u_3=G^4&\text{on }&\Sigma
\\ \chi u=0 &\text{on }&\Sigma_b,
\end{array}\right.
\end{equation}
where
\begin{equation}\label{H1H2}
H^1 = \partial_3\chi(p e_3-2 \mu \partial_3 \chi \partial_3 u)-\mu \partial_3^2 \chi u ,\quad H^2 = \partial_3 \chi u_3.
\end{equation}
Since now $\chi u$ and $\chi p$ have the support  away from $\Sigma_b$, we are free to apply $\partial^\alpha$ ($\alpha \in \mathbb{N}^{1+2}$) to \eqref{local1} to find that $v = \chi\partial^\alpha u,q=\chi\partial^\alpha p,\zeta= \partial^\alpha\eta$ solve \eqref{perform} with $\Phi^1=\chi\partial^\alpha G^1+\partial^\alpha H^1,\Phi^2=\partial^\alpha G^2+\partial^\alpha H^2,\Phi^3=\partial^\alpha G^3,$ and $\Phi^4=\partial^\alpha G^4$.  To proceed, we present the estimates of $G^1,G^2,G^3,G^4$ in
the following lemma, at both $2N$ and $N+2$ levels.

\begin{lemma}\label{Gesle}
The following hold.
\begin{enumerate}
 \item There exists a $\theta>0$ so that
\begin{equation} \label{Ges1}
\|\bar{\nabla}^{4N-2}G^1\|_0^2+ \|\bar{\nabla}^{4N-2}G^2\|_0^2 +\| \bna^{4N-2}G^3\|_{1/2}^2
+ \| \bna^{4N-2} G^4 \|_{1/2}^2
\lesssim \mathcal{E}_{2N}^{1+\theta},
\end{equation}
\begin{multline} \label{Ges2}
\|\bar{\nabla}^{4N-2} G^1\|_0^2 + \|\bar{\nabla}^{4N-2} G^2\|_0^2 + \| \bna^{4N-2}G^3\|_{1/2}^2
+ \|\bna^{4N-2} G^4\|_{1/2}^2 \\
+ \|\bar{\nabla}^{4N-3} \partial_tG^1\|_0^2 + \|\bar{\nabla}^{4N-3} \partial_t G^2\|_0^2
+ \|\bna^{4N-3} \partial_t G^3\|_{1/2}^2 + \|\bna^{4N-3} \partial_tG^4\|_{1/2}^2
\\
\lesssim \mathcal{E}_{2N}^{\theta} \mathcal{D}_{2N},
\end{multline}
\begin{equation}\label{Ges3}
\| \nabla^{4N-1} G^1\|_0^2 + \|\nabla^{4N-1} G^2\|_0^2 +\| \na^{4N-1} G^3\|_{1/2}^2
+ \| \na^{4N-1} G^4\|_{1/2}^2
 \lesssim \mathcal{E}_{2N}^{\theta} \mathcal{D}_{2N} + \mathcal{K} \mathcal{F}_{2N}.
\end{equation}

\item There exists a $\theta>0$ so that
\begin{multline}\label{Ges4}
\|\bar{\nabla}^{2(N+2)-2} G^1\|_0^2 + \|\bar{\nabla}^{2(N+2)-2} G^2\|_0^2 + \|\bna^{2(N+2)-2} G^3\|_{1/2}^2
+ \|\bna^{2(N+2)-2} G^4\|_{1/2}^2
\\
\lesssim \mathcal{E}_{2N}^{ \theta}\mathcal{E}_{N+2},
\end{multline}
\begin{multline}\label{Ges5}
\|\bar{\nabla}^{2(N+2)-1} G^1\|_0^2+ \|\bar{\nabla}^{2(N+2)-1}G^2\|_0^2 + \|\bna^{2(N+2)-1} G^3\|_{1/2}^2
+ \|\bna^{2(N+2)-1}G^4\|_{1/2}^2
\\
+ \|\bna^{2(N+2)-2}\partial_t G^4\|_{1/2}^2 \lesssim \mathcal{E}_{2N}^{ \theta}\mathcal{D}_{N+2}.
\end{multline}

\end{enumerate}
\end{lemma}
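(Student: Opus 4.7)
The plan is to prove \eqref{Ges1}--\eqref{Ges5} by directly expanding the explicit formulas for $G^1,G^2,G^3,G^4$ via the Leibniz rule and then applying piecewise Sobolev product estimates (together with the trace theorem for the boundary terms). The $G^i$ depend nonlinearly on $(u,p,\eta)$, with the $\eta$--dependence entering through $\mathcal{A}$, $\mathcal{N}$, $W$, $J$, $K$, all of which are smooth functions of $\bar\eta_\pm$ and $\nabla\bar\eta_\pm$; by the Poisson extension estimates, control of $\eta$ in $H^s(\Sigma)$ translates into control of these geometric quantities in $\ddot H^s(\Omega)$. Crucially, each $G^i$ vanishes when $(u,p,\eta)=0$, so every term in the expansion is at least bilinear in $(u,p,\eta)$; this is what will produce the extra factor of $\mathcal{E}_{2N}^\theta$ in every estimate.

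For \eqref{Ges1}--\eqref{Ges2} and \eqref{Ges4}--\eqref{Ges5}, I would proceed in a standard low/high splitting: after Leibniz expansion, each summand is a product of factors of the form $\partial^\beta(\text{geometric quantity})$ and $\partial^\gamma u$ or $\partial^\gamma p$; I place the highest-derivative factor in a plain $L^2$-based Sobolev norm bounded by $\mathcal{E}_{2N}$, $\mathcal{D}_{2N}$, $\mathcal{E}_{N+2}$, or $\mathcal{D}_{N+2}$ as appropriate, and the lower-derivative factors in an $L^\infty$-type Sobolev space via Sobolev embedding and the assumed smallness of $\mathcal{G}_{2N}$. Since $N\ge 3$, the gap between $2N$ and $N+2$ is at least one, giving enough room for Sobolev interpolation to produce the improvement $\mathcal{E}_{2N}^\theta$ for some $\theta>0$ (coming from interpolating, for instance, $\|\cdot\|_{H^k}\lesssim \|\cdot\|_{H^{k-1}}^{1/2}\|\cdot\|_{H^{k+1}}^{1/2}$ on the highest-derivative factor). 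The time-derivative versions (with $\dt G^i$ in \eqref{Ges2} and \eqref{Ges5}) are handled identically once the extra $\dt$ is distributed over the factors, as the dissipation norms are designed so that $\partial_t$ counts as ``$2$'' derivatives in the parabolic sense. Trace inequalities take care of the boundary norms of $G^3$ and $G^4$ starting from $\ddot H^1(\Omega)$ control of the corresponding bulk quantities.

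The main obstacle is \eqref{Ges3}, which is the only estimate going up to $4N-1$ spatial derivatives of $G^i$. After Leibniz expansion of, say, $\na^{4N-1}(u_j \mathcal{A}_{jk}\partial_k u_i)$ in $G^1$ or of $\na^{4N-1}(\mathcal{N}_- \partial_s u_i)$ in the boundary pieces of $G^3$ and $G^4$, there is a single bad term in which all $4N-1$ horizontal derivatives fall on the $\eta$-dependent factor. On the boundary this produces something of order $\|\na^{4N-1}\eta\|_{H^{1/2}}\sim \|\eta\|_{4N+1/2}$, which is $\sqrt{\mathcal{F}_{2N}}$ and is \emph{not} controlled by $\mathcal{D}_{2N}$. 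Here I would follow the strategy of \cite{GT_per}: observe that whenever this maximal $\eta$ factor appears, the remaining factor is a low-order derivative of $u$ that sits either in $\nabla u$, $\nabla^2 u$, or in $Du_i$ on the boundary (the latter via trace after at most two horizontal derivatives), all of which are captured precisely by $\mathcal{K}$; the other Leibniz terms still fall within the $\mathcal{E}_{2N}^\theta\mathcal{D}_{2N}$ bound of \eqref{Ges2}. This bookkeeping yields the $\mathcal{K}\mathcal{F}_{2N}$ contribution in \eqref{Ges3}, while everything else is absorbed into $\mathcal{E}_{2N}^\theta\mathcal{D}_{2N}$.

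In summary, the proof is essentially an elaborate Leibniz-and-Sobolev-product computation, parallel to the treatment of the $F^i$ perturbations given in Theorems 4.1--4.2 of \cite{GT_per}. The algebraic structure of $G^1,G^2,G^3,G^4$ here is identical to that setting except for the fact that $G^3$ and $G^4$ now split into $\Sigma_+$ and $\Sigma_-$ pieces; since both interfaces carry the same Sobolev topology and the jump notation $\llbracket\cdot\rrbracket$ obeys the same product and trace rules as a single-sided trace, the $\Sigma_-$ computations are carried out identically to those on $\Sigma_+$. The only genuinely subtle mechanism, and the one to highlight carefully, is the isolation in \eqref{Ges3} of the single maximal-$\eta$ term and its pairing with the $\mathcal{K}$-controlled factor.
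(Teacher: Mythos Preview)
Your proposal is correct and matches the paper's approach: the paper's proof is a one-line citation, observing that the $G^i$ have the same structure as in \cite{GT_per} and invoking Theorems 3.1--3.2 there, and your outline is precisely the content of those theorems (Leibniz expansion, Sobolev product estimates, trace theory, and the isolation of the single $\mathcal{K}\mathcal{F}_{2N}$ term at top order in \eqref{Ges3}). One small correction: you refer to Theorems 4.1--4.2 of \cite{GT_per}, which in that paper handle the geometric-form $F^i$ terms; the $G^i$ perturbation-form estimates you need here are Theorems 3.1--3.2.
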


\begin{proof}
Since our perturbations $G^1,G^2,G^3,G^4$ have the same structure as those of \cite{GT_per}, these estimates
\eqref{Ges1}--\eqref{Ges5} follow from Theorems 3.1--3.2 of \cite{GT_per}.
\end{proof}

With these estimates in hand, we may now derive an estimate for the evolution of $\bar{\mathcal{E}}^{+}_{2N}$.

\begin{Proposition}\label{hori}
For any $\varepsilon\in (0,1)$ there exists a constant $C(\varepsilon)>0$ so that
 \begin{equation} \label{peres}
\bar{\mathcal{E}}^{+}_{2N}(t)+\int_0^t \bar{\mathcal{D}}^{+}_{2N} \lesssim\bar{\mathcal{E}}^{+}_{2N}(0)+\int_0^t \mathcal{E}_{2N}^\theta  \mathcal{D}_{2N}+\sqrt{\mathcal{D}_{2N}\mathcal{K}\mathcal{F}_{2N}}
 + \varepsilon\mathcal{D}_{2N}+C(\varepsilon) \bar{\mathcal{D}}^{0}_{2N}.
\end{equation}
\end{Proposition}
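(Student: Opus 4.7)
Following closely the template of Proposition 4.5 in \cite{GT_per} and adapting it to the two-phase setting, the plan is to apply horizontal and time derivatives $\partial^\alpha$ (and $\na\partial^\alpha$) to the localized perturbed equations \eqref{local1}, invoke the energy identity in Lemma \ref{perle}, and then carefully dissect the resulting forcing terms.

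For each multi-index $\alpha \in \mathbb{N}^{1+2}$ with $|\alpha| \le 4N-1$ (and hence $\alpha_0 \le 2N-1$), I exploit the fact that $\chi = \chi(x_3)$ commutes with $\partial_t, \partial_1, \partial_2$ and that $\chi \equiv 1$ on a neighborhood of $\Sigma_\pm$. Applying $\partial^\alpha$ to \eqref{local1} then shows that the triple $(v,q,\zeta) = (\chi\partial^\alpha u,\, \chi \partial^\alpha p,\, \partial^\alpha \eta)$ solves the linear perturbed system \eqref{perform} with forcing
\begin{equation*}
\Phi^1 = \chi\partial^\alpha G^1 + \partial^\alpha H^1,\quad \Phi^2 = \chi\partial^\alpha G^2 + \partial^\alpha H^2,\quad \Phi^3 = \partial^\alpha G^3,\quad \Phi^4 = \partial^\alpha G^4,
\end{equation*}
and similarly for $\na\partial^\alpha$ in place of $\partial^\alpha$. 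I then apply Lemma \ref{perle} to this data and integrate in time over $[0,t]$.

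On the left-hand side, Korn's inequality (Lemma \ref{korn}), valid because $\chi u$ vanishes in a neighborhood of $\Sigma_b$ and satisfies $\Lbrack \chi u\Rbrack = 0$ on $\Sigma_-$, upgrades the time integral of $\|\sqrt{\mu}\mathbb{D}\bna^{2N-1}(\chi u)\|_0^2$ and its $\na$-counterpart into control of $\int_0^t \bar{\mathcal{D}}^+_{2N}$. The right-hand side splits into four groups. The bulk pairings involving $\chi\partial^\alpha G^1$ and $\chi\partial^\alpha G^2$ are bounded by $\int_0^t \mathcal{E}_{2N}^\theta \mathcal{D}_{2N}$ together with an $\mathcal{K}\mathcal{F}_{2N}$ remainder, using \eqref{Ges2} for multi-indices involving at least one time derivative (via the factorization $\partial^\alpha = \partial_t \partial^{\alpha'}$ with $|\alpha'|\le 4N-3$) and \eqref{Ges3} for the purely spatial case $\alpha_0 = 0$; the pressure pairing is tractable because $\mathcal{D}_{2N}$ directly controls $\|\partial_t^{\alpha_0} p\|_{4N-2\alpha_0}$ for the admissible range of $\alpha_0$. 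The commutator pairings involving $\partial^\alpha H^1,\partial^\alpha H^2$ are the source of the $\varepsilon\mathcal{D}_{2N}+C(\varepsilon)\bar{\mathcal{D}}^0_{2N}$ term: since $H^1, H^2$ contain only first-order derivatives of $(u,p)$ multiplied by $\partial_3\chi$ or $\partial_3^2 \chi$, both supported strictly inside $\Omega$, horizontal integration by parts followed by Cauchy--Schwarz and Young's inequality produces exactly this splitting (the lower-order, strictly interior pieces being absorbed by $\bar{\mathcal{D}}^0_{2N}$, and the unavoidable top-order pieces by $\varepsilon\mathcal{D}_{2N}$). The boundary pairings $\int_\Sigma v\cdot \partial^\alpha G^3$ are handled through the trace inequality together with \eqref{Ges2} and \eqref{Ges3}.

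The delicate contribution is the boundary term $\int_\Sigma g\tilde{\rho}\,(\partial^\alpha \eta)\,\partial^\alpha G^4$ at the critical order $\alpha_0=0$, $|\alpha|=4N-1$. Expanding $G^4 = -u_j\partial_j\eta$ by Leibniz, the dangerous summand is $u_j\,\partial_j\partial^\alpha\eta$, which requires $\eta\in H^{4N+1/2}$, i.e., $\mathcal{F}_{2N}$. To avoid losing regularity, I integrate by parts on $\Sigma$ in the horizontal direction $y_j$, shifting one derivative off $\eta$ and converting the term into either $\partial_j u_j\,(\partial^\alpha\eta)^2$---absorbed by $\mathcal{K}\cdot\mathcal{E}_{2N}$ since $\|\partial^\alpha\eta\|_{L^2(\Sigma)}$ is controlled by $\mathcal{E}_{2N}$ and $\|\partial_j u_j\|_{L^\infty(\Sigma)}$ by $\mathcal{K}$---or $u_j\,(\partial_j\partial^{\alpha'}\eta)\,(\partial^\alpha\eta)$ with $|\alpha'|<|\alpha|$, which by Cauchy--Schwarz produces the advertised $\sqrt{\mathcal{D}_{2N}\mathcal{K}\mathcal{F}_{2N}}$. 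All other redistributions of derivatives in $\partial^\alpha G^4$ land harmlessly in $\mathcal{E}_{2N}^\theta\mathcal{D}_{2N}$ via \eqref{Ges2}--\eqref{Ges3}. Summing over all admissible $\alpha$ for both the $\bna^{2N-1}$ and $\na\bna^{2N-1}$ families yields \eqref{peres}. The principal obstacle is precisely this last step: one must verify that $\mathcal{F}_{2N}$ enters \emph{only} through the balanced geometric-mean combination $\sqrt{\mathcal{D}_{2N}\mathcal{K}\mathcal{F}_{2N}}$, because any uncontrolled $\mathcal{F}_{2N}$ contribution would destroy the two-tier closure used in the global-in-time estimates, where $\mathcal{F}_{2N}$ is permitted only to grow linearly in $t$.
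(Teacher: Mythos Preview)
Your overall strategy coincides with the paper's proof (which follows Proposition 5.5 of \cite{GT_per}): localize with $\chi$, apply $\partial^\alpha$, use the energy identity of Lemma \ref{perle}, and estimate the forcing terms via Lemma \ref{Gesle}. However, there is a genuine gap in your treatment of the delicate $G^4$ contribution. First, the truly critical case is at the top spatial order $4N$ (in your scheme, the $\na\partial^\alpha$ family with $|\alpha|=4N-1$ and $\alpha_0=0$), not at order $4N-1$; at order $4N-1$ the direct Cauchy--Schwarz estimate via \eqref{Ges3} already gives $\sqrt{\mathcal{D}_{2N}}\sqrt{\mathcal{E}_{2N}^\theta\mathcal{D}_{2N}+\mathcal{K}\mathcal{F}_{2N}}$, so no integration by parts is needed there. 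Second, and more importantly, after integrating by parts at the top order you correctly arrive at $\int_{\Sigma}(\partial_j u_j)(\partial^\alpha\eta)^2$, but your claimed bound ``$\mathcal{K}\cdot\mathcal{E}_{2N}$'' does not fit into the right-hand side of \eqref{peres}: none of the terms $\mathcal{E}_{2N}^\theta\mathcal{D}_{2N}$, $\sqrt{\mathcal{D}_{2N}\mathcal{K}\mathcal{F}_{2N}}$, $\varepsilon\mathcal{D}_{2N}$, or $C(\varepsilon)\bar{\mathcal{D}}^0_{2N}$ dominates a quantity of size $\sqrt{\mathcal{K}}\,\mathcal{E}_{2N}$ (which is what that expression actually yields). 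The missing ingredient, supplied in the paper through Lemma 5.1 of \cite{GT_per}, is the interpolation
\[
\|\partial^\alpha\eta\|_{L^2(\Sigma)}^2 \le \|\eta\|_{4N}^2 \lesssim \|\eta\|_{4N-1/2}\,\|\eta\|_{4N+1/2} \lesssim \sqrt{\mathcal{D}_{2N}}\sqrt{\mathcal{F}_{2N}},
\]
which gives the correct form $\sqrt{\mathcal{K}}\sqrt{\mathcal{D}_{2N}\mathcal{F}_{2N}}=\sqrt{\mathcal{D}_{2N}\mathcal{K}\mathcal{F}_{2N}}$. Your own closing remark about $\mathcal{F}_{2N}$ entering \emph{only} through the balanced geometric mean is exactly right, and this interpolation is precisely what enforces it.

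A secondary point: for the $H^1$, $H^2$ pairings the paper does not use horizontal integration by parts. Instead it bounds $\int_\Omega \chi\partial^\alpha u\cdot\partial^\alpha H^1 + \chi\partial^\alpha p\,\partial^\alpha H^2 \lesssim \|\partial_t^{\alpha_0}u\|_{4N-2\alpha_0}\sqrt{\mathcal{D}_{2N}}$ by Cauchy--Schwarz, then interpolates $\|\partial_t^{\alpha_0}u\|_{4N-2\alpha_0}$ between $\|\partial_t^{\alpha_0}u\|_0 \lesssim (\bar{\mathcal{D}}^0_{2N})^{1/2}$ and $\|\partial_t^{\alpha_0}u\|_{4N-2\alpha_0+1} \lesssim \mathcal{D}_{2N}^{1/2}$, and applies Young's inequality; this is the mechanism producing the $\varepsilon\mathcal{D}_{2N}+C(\varepsilon)\bar{\mathcal{D}}^0_{2N}$ splitting. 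Also note that at the top order $|\alpha|=4N$ the paper integrates by parts one horizontal derivative in the $G^1$ and $G^3$ pairings (writing $\partial^\alpha=\partial^\beta\partial^\gamma$ with $|\gamma|=4N-1$) before invoking \eqref{Ges2}--\eqref{Ges3}; your ``similarly for $\na\partial^\alpha$'' glosses over this step.
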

\begin{proof}
Our proof is inspired by that of Proposition 5.5 of \cite{GT_per}.  Let $\alpha\in \mathbb{N}^{1+2}$ so that $\alpha_0\le 2N-1$ and $|\alpha|\le 4N$, applying Lemma \ref{perle}, we find
\begin{multline}\label{peres1}
\frac{d}{dt} \left(\frac{1}{2} \int_\Omega \rho |\partial^\alpha (\chi u)|^2+\frac{1}{2}\int_{\Sigma_+} \rho_+g|\partial^\alpha\eta_+|^2 + \frac{1}{2}\int_{\Sigma_-} - \rj g|\partial^\alpha\eta_- |^2 \right) + \frac{1}{2} \int_{\Omega} \mu |\mathbb{D} \partial^\alpha (\chi u)|^2 \\
= \int_\Omega \chi \partial^\alpha u\cdot(\chi\partial^\alpha G^1 + \partial^\alpha H^1)
+ \chi \partial^\alpha p(\partial^\alpha G^2+\partial^\alpha H^2)
\\
+\int_{\Sigma_+}-\partial^\alpha u_+\cdot \partial^\alpha G^3_++ \rho_+g\partial^\alpha\eta_+  \partial^\alpha
G^4_+ + \int_{\Sigma_-}-\partial^\alpha u\cdot \partial^\alpha G^3_- - \rj g\partial^\alpha\eta_-  \partial^\alpha G^4_-.
\end{multline}

We will estimate the right hand side of \eqref{peres1}. First we estimate the $G^1,G^2,G^3,G^4$ terms. Assume that $|\alpha|\le 4N-1$, then by the estimates \eqref{Ges2}--\eqref{Ges3} in Lemma \ref{Gesle} we have
\begin{multline}\label{m1}
\displaystyle\int_\Omega \chi \partial^\alpha u \cdot \chi\partial^\alpha G^1 + \chi \partial^\alpha p \partial^\alpha G^2  \lesssim \|\partial^\alpha u\|_0\|\partial^\alpha G^1\|_0+\|\partial^\alpha p\|_0\|\partial^\alpha G^2\|_0
\\
\lesssim \sqrt{\mathcal{D}_{2N}}\sqrt{\mathcal{E}_{2N}^{\theta}\mathcal{D}_{2N} + \mathcal{K}\mathcal{F}_{2N}}.
\end{multline}
Again by  \eqref{Ges2}--\eqref{Ges3}, together with the trace theorem, we have
\begin{multline} \label{m2}
\int_{\Sigma_+}- \partial^\alpha u_+\cdot \partial^\alpha G^3_++ \rho_+g\partial^\alpha\eta_+  \partial^\alpha G^4_+ + \int_{\Sigma_-}  - \partial^\alpha u\cdot \partial^\alpha G^3_- - \rj g\partial^\alpha\eta_-  \partial^\alpha G^4_- \\
\lesssim \|\partial^\alpha u\|_{L^2(\Sigma)} \|\partial^\alpha G^3 \|_0 + \|\partial^\alpha\eta\|_0\|\partial^\alpha G^4\|_0 \\
\lesssim \sqrt{\mathcal{D}_{2N}} \sqrt{\mathcal{E}_{2N}^{\theta}\mathcal{D}_{2N}+\mathcal{K} \mathcal{F}_{2N}}.
\end{multline}
Now we assume that $|\alpha|=4N$. Since $\alpha_0\le 2N-1,$ we have $\alpha_1+\alpha_2\ge 2$, then we can integrating by parts on the horizontal directions. We write $\partial^\alpha=\partial^\beta\partial^\gamma$ so that $|\gamma|=4N-1$. So by integrating by parts and using the estimates \eqref{Ges2}--\eqref{Ges3}, we obtain
\begin{multline}\label{m3}
\int_\Omega \chi \partial^\alpha u\cdot \chi\partial^\alpha G^1  = - \int_\Omega \chi \partial^{\alpha+\beta} u\cdot \chi\partial^\gamma G^1 \lesssim \|\partial^{\alpha+\beta} u\|_0\|\partial^\gamma G^1\|_0
\\
\lesssim \sqrt{\mathcal{D}_{2N}}\sqrt{\mathcal{E}_{2N}^{\theta}\mathcal{D}_{2N}+\mathcal{K}\mathcal{F}_{2N}}.\end{multline}
For the $G^2$ term, we do not need to integrate by parts:
\begin{equation}\label{m4}
\int_\Omega \chi \partial^\alpha p \partial^\alpha G^2 \lesssim \|\partial^\alpha p\|_0\| \partial^\gamma G^2\|_1  \lesssim \sqrt{\mathcal{D}_{2N}} \sqrt{\mathcal{E}_{2N}^{\theta} \mathcal{D}_{2N} + \mathcal{K} \mathcal{F}_{2N}}.
\end{equation}
For the $G^3$ term, we use the trace theorem to see that
\begin{multline} \label{m5}
\int_{\Sigma_+} - \partial^\alpha u_+\cdot \partial^\alpha G^3_+ + \int_{\Sigma_-}-\partial^\alpha u\cdot \partial^\alpha G^3_- \lesssim \left| \int_{\Sigma}\partial^{\alpha+\beta} u \cdot
\partial^\gamma G^3 \right|  \lesssim\|\partial^{\alpha+\beta} u \|_{H^{-1/2}(\Sigma)}\|\partial^\gamma G^3\|_{1/2}
\\
\lesssim \|\partial^{\alpha} u \|_{H^{1/2}(\Sigma)}\|\partial^\gamma G^3\|_{1/2}
\lesssim\|\partial^{\alpha} u \|_{1}\|\partial^\gamma G^3\|_{1/2}
\\
\lesssim \sqrt{\mathcal{D}_{2N}}\sqrt{\mathcal{E}_{2N}^{\theta}\mathcal{D}_{2N}+\mathcal{K}\mathcal{F}_{2N}}.
\end{multline}
For the $G^4$ term we split into two cases: $\alpha_0\ge 1$ and $\alpha_0=0$. In the former case, we have $\|\partial^{\alpha}\eta\|_{1/2}\le \sqrt{\mathcal{D}_{2N}}$, and hence
\begin{multline}\label{m6}
\int_{\Sigma_+} \rho_+g\partial^\alpha\eta_+  \partial^\alpha G^4_+
+\int_{\Sigma_-}  -\rj g\partial^\alpha\eta_-  \partial^\alpha
G^4_-
\\
\lesssim  \left|\int_{\Sigma} \partial^{\alpha+\beta}\eta  \partial^{\alpha-\beta} G^4\right|
\lesssim\| \partial^{\alpha+\beta}\eta\|_{-1/2} \|\partial^{\alpha-\beta} G^4\|_{1/2} \lesssim\|
\partial^{\alpha}\eta\|_{1/2}  \|\partial^{\alpha-\beta} G^4\|_{1/2}
\\
\lesssim \sqrt{\mathcal{D}_{2N}}\sqrt{\mathcal{E}_{2N}^{\theta}\mathcal{D}_{2N}+\mathcal{K}\mathcal{F}_{2N}}.
\end{multline}
In   the latter case, $\partial^\alpha$ only involves spatial derivatives, we may use Lemma 5.1 of \cite{GT_per} to bound
\begin{equation} \label{m7}
\int_{\Sigma_+} \rho_+g\partial^\alpha\eta_+  \partial^\alpha G^4_+ + \int_{\Sigma_-}  -\rj g\partial^\alpha\eta_-  \partial^\alpha G^4_-  \lesssim \sqrt{\mathcal{E}_{2N}}  \mathcal{D}_{2N}+\sqrt{\mathcal{D}_{2N}\mathcal{K}\mathcal{F}_{2N}}.
\end{equation}

Now we turn to estimate the $H^1,H^2$ terms. By the expression \eqref{H1H2} of $H^1,H^2$, we have
\begin{multline}
\int_\Omega \chi \partial^\alpha u\cdot \partial^\alpha H^1 + \chi \partial^\alpha p \partial^\alpha H^2 \lesssim \|\partial^\alpha u\|_0(\|\partial^\alpha p\|_0+\|\partial^\alpha u\|_1) + \|\partial^\alpha p\|_0\|\partial^\alpha u\|_0 \\
\lesssim \|\partial^\alpha u\|_0(\|\partial^\alpha p\|_0 + \|\partial^\alpha u\|_1)\lesssim \|\partial^{\alpha_0} u\|_{4N-2\alpha_0}\sqrt{\mathcal{D}_{2N}}.
\end{multline}
We use the standard Sobolev interpolation to obtain
\begin{equation}
\|\partial^{\alpha_0} u\|_{4N-2\alpha_0} \lesssim \|\partial^{\alpha_0} u\|_{0}^{\theta_1}\| \partial^{\alpha_0} u\|_{4N-2\alpha_0+1}^{1-\theta_1} \lesssim \bar{\mathcal{D}}_{2N}^{\theta_1/2} \mathcal{D}_{2N}^{(1-\theta_1)/2}.
\end{equation}
Hence, this together with Young's inequality implies
\begin{equation}\label{m8}
\int_\Omega \chi \partial^\alpha u\cdot \partial^\alpha H^1 + \chi \partial^\alpha p \partial^\alpha H^2 \lesssim (\bar{\mathcal{D}}_{2N}^0)^{\theta_1/2} \mathcal{D}_{2N}^{(1-\theta_1)/2}
\lesssim \varepsilon \mathcal{D}_{2N} + C(\varepsilon) \bar{\mathcal{D}}_{2N}^0.
\end{equation}

Consequently, in light of \eqref{m1}--\eqref{m7} and \eqref{m8}, we may integrate \eqref{peres1} from $0$ to $t$ and sum over such $\alpha$ to conclude \eqref{peres}.
\end{proof}

Now we record a similar estimate for the evolution of $\bar{\mathcal{E}}^{+}_{N+2}$.

\begin{Proposition}\label{hori2}
For any $\varepsilon\in (0,1)$ there exists a constant $C(\varepsilon)>0$ so that
\begin{equation}\label{peres2}
\frac{d}{dt} \bar{\mathcal{E}}^{+}_{N+2} + \bar{\mathcal{D}}^{+}_{N+2} \lesssim  \mathcal{E}_{2N}^\theta \mathcal{D}_{N+2} + \varepsilon \mathcal{D}_{N+2} + C(\varepsilon) \bar{\mathcal{D}}^{0}_{N+2}.
\end{equation}
\end{Proposition}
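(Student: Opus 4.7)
The plan is to mirror the argument of Proposition \ref{hori}, adapted to the lower-regularity $N+2$ level, but now keeping everything in differential form (rather than integrating in time) since we want a pointwise-in-time evolution inequality that can later be combined with \eqref{geoes2} to drive exponential-type decay. Concretely, I will apply $\partial^\alpha$ with $\alpha \in \mathbb{N}^{1+2}$, $\alpha_0 \le N+1$, $|\alpha| \le 2(N+2)$, to the localized perturbed equations \eqref{local1}, obtaining that $(\chi \partial^\alpha u, \chi \partial^\alpha p, \partial^\alpha \eta)$ solve \eqref{perform} with forcing $\Phi^1 = \chi \partial^\alpha G^1 + \partial^\alpha H^1$, $\Phi^2 = \partial^\alpha G^2 + \partial^\alpha H^2$, $\Phi^3 = \partial^\alpha G^3$, $\Phi^4 = \partial^\alpha G^4$. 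Applying the identity \eqref{perid} from Lemma \ref{perle} then yields, after summing over such $\alpha$, an identity whose left-hand side reproduces $\tfrac{d}{dt}\bar{\mathcal{E}}^+_{N+2} + \bar{\mathcal{D}}^+_{N+2}$ (modulo the replacement of $|\mathbb{D}\partial^\alpha(\chi u)|^2$ by the full norm, which is standard via Korn's inequality), and whose right-hand side consists of bulk and boundary integrals of $\partial^\alpha u$, $\partial^\alpha p$, $\partial^\alpha \eta$ against $\partial^\alpha G^i$, $\partial^\alpha H^i$.

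Next, I will bound the $G^i$-contributions using \eqref{Ges4}--\eqref{Ges5}. For multi-indices with $|\alpha| \le 2(N+2) - 1$, Cauchy--Schwarz on $\Omega$ and the trace theorem on $\Sigma$ give directly
\[
\int_\Omega \chi\partial^\alpha u \cdot \chi \partial^\alpha G^1 + \chi\partial^\alpha p \partial^\alpha G^2 + \int_\Sigma(\text{boundary terms involving }G^3,G^4) \lesssim \sqrt{\mathcal{D}_{N+2}} \sqrt{\mathcal{E}_{2N}^\theta \mathcal{D}_{N+2}},
\]
which is controlled by $\mathcal{E}_{2N}^\theta \mathcal{D}_{N+2}$. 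For the top case $|\alpha| = 2(N+2)$, since $\alpha_0 \le N+1$, we necessarily have $\alpha_1 + \alpha_2 \ge 2$, so we can split $\partial^\alpha = \partial^\beta \partial^\gamma$ with $|\beta| = 1$ horizontal and $|\gamma| = 2(N+2)-1$, integrate by parts in the horizontal direction in the $G^1$, $G^3$ integrals (moving one horizontal derivative off $G^i$ onto $\partial^\alpha u$), and bound as before using \eqref{Ges5}; the $G^4$ term with $\alpha_0 = 0$ can be handled as in the proof of Proposition \ref{hori} by the analog of Lemma 5.1 of \cite{GT_per} at the $N+2$ level, noting that at this level there is no analog of the $\mathcal{K}\mathcal{F}_{2N}$ contribution thanks to the better scaling in \eqref{Ges5}.

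For the $H^1, H^2$ pieces coming from commuting with the cutoff, I use the explicit expressions \eqref{H1H2}, which are supported in $\{3/2 \le x_3 \le 2\} \cup \{-3b_-/4 \le x_3 \le -b_-/2\}$, to bound
\[
\Bigl| \int_\Omega \chi \partial^\alpha u \cdot \partial^\alpha H^1 + \chi \partial^\alpha p \,\partial^\alpha H^2 \Bigr| \lesssim \|\partial^{\alpha_0} u\|_{2(N+2) - 2\alpha_0} \sqrt{\mathcal{D}_{N+2}},
\]
and then interpolate between the purely temporal norm controlled by $\bar{\mathcal{D}}^0_{N+2}$ and the full norm controlled by $\mathcal{D}_{N+2}$: writing $\|\dt^{\alpha_0} u\|_{2(N+2)-2\alpha_0} \lesssim (\bar{\mathcal{D}}^0_{N+2})^{\theta_1/2}\mathcal{D}_{N+2}^{(1-\theta_1)/2}$ and applying Young's inequality absorbs the high-order dissipation into $\varepsilon \mathcal{D}_{N+2}$, leaving $C(\varepsilon) \bar{\mathcal{D}}^0_{N+2}$.

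The main obstacle I anticipate is ensuring that the $G^4$ integrals at $\alpha_0 = 0$ and $|\alpha| = 2(N+2)$ admit a bound of the form $\mathcal{E}_{2N}^\theta \mathcal{D}_{N+2}$ without spawning a term of the $\mathcal{K}\mathcal{F}_{2N}$ type that we had to tolerate at the $2N$ level in Proposition \ref{hori}; this requires carefully exploiting that the purely spatial derivative counts on $\eta$ in $\bar{\mathcal{E}}^+_{N+2}$ are low enough (at most $2(N+2)$) for \eqref{Ges5} to provide a clean product-type estimate, so that the extra high-regularity $\mathcal{F}_{2N}$ information is unnecessary. Once this is settled, collecting all contributions yields \eqref{peres2}.
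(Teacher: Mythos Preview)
Your proposal is correct and follows essentially the same approach as the paper, whose proof consists of a single sentence: ``similar to Proposition \ref{hori}, except that we use \eqref{Ges4} and \eqref{Ges5} in place of \eqref{Ges1}--\eqref{Ges3}.'' You have correctly identified the one genuine point of divergence from the $2N$-level argument, namely that the $\sqrt{\mathcal{D}_{2N}\mathcal{K}\mathcal{F}_{2N}}$ contribution from the $\alpha_0=0$, top-order $G^4$ term does not arise here; the reason is exactly as you say---at the $N+2$ level the highest $\eta$ derivative count produced is $2(N+2)+1\le 4N$ for $N\ge 3$, so the role played by $\mathcal{F}_{2N}$ at the high level is absorbed by $\mathcal{E}_{2N}$, and since $\mathcal{K}\lesssim\mathcal{D}_{N+2}$ the resulting term is of the form $\mathcal{E}_{2N}^\theta\mathcal{D}_{N+2}$.
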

\begin{proof}
The proof is similar to Proposition \ref{hori}, except that we use  \eqref{Ges4} and \eqref{Ges5} in place of \eqref{Ges1}--\eqref{Ges3}.
\end{proof}

\subsubsection{Comparison results}

We now want to  show  that ${\mathcal{E}}_{n}$ is comparable to $ \bar{\mathcal{E}}_{n}$ and that ${\mathcal{D}} _{n}$ is comparable to $ \bar{\mathcal{D}}_{n}$, for both $n=2N$ and $n=N+2$.  We begin with the energy estimate.

\begin{theorem}\label{eth}
There exists a $\theta>0$ so that
\begin{equation}\label{e2n}
{\mathcal{E}}_{2N}\lesssim  \bar{\mathcal{E}}_{2N} + \mathcal{E}_{2N}^{1+\theta}
\end{equation}
and
\begin{equation}\label{en+2}
{\mathcal{E}}_{N+2}\lesssim  \bar{\mathcal{E}}_{N+2} + \mathcal{E}_{2N}^\theta \mathcal{E}_{N+2}.
\end{equation}
\end{theorem}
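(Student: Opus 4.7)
The plan is to apply the two-phase stationary Stokes elliptic regularity (Theorem \ref{cStheorem}) to the perturbation form \eqref{nosurface2}, after differentiating in time, so as to trade the time derivatives controlled by $\bar{\mathcal{E}}_{2N}$ for the full spatial derivatives appearing in $\mathcal{E}_{2N}$. The $\eta$-part of the energy is nearly immediate, and the $u,p$-parts are handled by a downward induction on the number of time derivatives.

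I would first dispose of the $\eta$ contribution. Since $\eta=\eta(t,x_1,x_2)$, every spatial derivative of $\eta$ is horizontal, so for each $0\le j\le 2N$ the norm $\|\partial_t^j\eta\|_{4N-2j}^2$ is built out of $\partial_t^j\partial^\beta\eta$ with $|\beta|\le 4N-2j$, which has parabolic weight $2j+|\beta|\le 4N$. All such terms are controlled by $\|\bar{\nabla}^{4N-1}\eta\|_0^2+\|\na\bar{\nabla}^{4N-1}\eta\|_0^2+\|\partial_t^{2N}\eta\|_0^2$, which lies inside $\bar{\mathcal{E}}_{2N}$. Hence $\sum_{j=0}^{2N}\|\partial_t^j\eta\|_{4N-2j}^2\lesssim \bar{\mathcal{E}}_{2N}$, with no nonlinear remainder.

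Next, I would handle $u$ and $p$ by downward induction on $j\in\{2N,2N-1,\dots,0\}$. The base case $j=2N$ is immediate: $\|\partial_t^{2N}u\|_0^2\lesssim \bar{\mathcal{E}}^0_{2N}$ by Remark \ref{temporal_remark} and no pressure term appears at this level. For the inductive step at $0\le j\le 2N-1$, apply $\partial_t^j$ to \eqref{nosurface2}; then $(\partial_t^j u,\partial_t^j p)$ solves the stationary two-phase Stokes problem \eqref{cS} with
\begin{equation*}
F^1=-\rho\,\partial_t^{j+1}u+\partial_t^j G^1,\quad F^2=\partial_t^j G^2,
\end{equation*}
\begin{equation*}
F^3_+=\rho_+ g\,\partial_t^j\eta_+\,e_3+\partial_t^j G^3_+,\qquad F^3_-=-\rj\, g\,\partial_t^j\eta_-\,e_3+\partial_t^j G^3_-.
\end{equation*}
Invoking Theorem \ref{cStheorem} at order $r=4N-2j\ge 2$ yields
\begin{equation*}
\|\partial_t^j u\|_{4N-2j}^2+\|\partial_t^j p\|_{4N-2j-1}^2\lesssim \|\partial_t^{j+1}u\|_{4N-2(j+1)}^2+\|\partial_t^j\eta\|_{4N-2j-3/2}^2+\mathcal{M}_j,
\end{equation*}
where $\mathcal{M}_j:=\|\partial_t^j G^1\|_{4N-2j-2}^2+\|\partial_t^j G^2\|_{4N-2j-1}^2+\|\partial_t^j G^3\|_{4N-2j-3/2}^2$. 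The first summand on the right is handled by the inductive hypothesis, the $\eta$-trace is weaker than $\|\partial_t^j\eta\|_{4N-2j}$ and is therefore controlled by the previous paragraph, and the nonlinear piece satisfies $\sum_j\mathcal{M}_j\lesssim\mathcal{E}_{2N}^{1+\theta}$ by \eqref{Ges1} of Lemma \ref{Gesle}, augmented by standard Sobolev product bounds for the borderline spatial order (the only place requiring care: e.g.\ the top-order term $(\mathcal{A}-I)\nabla^{4N}u$ in $G^2$ at $j=0$ is bounded by $\|\mathcal{A}-I\|_{L^\infty}\|u\|_{4N}\lesssim\mathcal{E}_{2N}$, contributing $\mathcal{E}_{2N}^2$ after squaring). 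Summing over $j$ yields \eqref{e2n}.

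The estimate \eqref{en+2} is proved by the same downward induction at the level $r=2(N+2)-2j$, with the nonlinear forcing bounded by $\mathcal{E}_{2N}^\theta\mathcal{E}_{N+2}$ via \eqref{Ges4} in place of \eqref{Ges1}. Because all of the ingredients---the two-phase Stokes regularity theory, the $G^i$ estimates, and the parabolic-weight control of $\eta$---are already assembled, I do not anticipate any real obstacle; the only step that demands attention is ensuring that every nonlinear remainder can be absorbed into a term of the form $\mathcal{E}_{2N}^{1+\theta}$ (resp.\ $\mathcal{E}_{2N}^\theta\mathcal{E}_{N+2}$), which follows from the standing smallness of $\mathcal{E}_{2N}$ together with Sobolev product inequalities.
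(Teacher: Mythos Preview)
Your proposal is correct and follows essentially the same approach as the paper's proof: both control the $\eta$-contribution directly from $\bar{\mathcal{E}}_n$, then apply the two-phase Stokes elliptic regularity (Theorem \ref{cStheorem}) to the time-differentiated perturbation form \eqref{nosurface2} via a finite downward induction in $j$, and close with the nonlinear bounds \eqref{Ges1} and \eqref{Ges4} of Lemma \ref{Gesle}. Your explicit remark on the borderline spatial order for $G^2$ is in fact slightly more careful than the paper, which simply cites \eqref{Ges1} without singling out that term.
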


\begin{proof}
Here we modify the proof of Theorem 6.1 of \cite{GT_per}.   We first let $n$ denote either $2N$ or $N+2$ throughout the proof, and we compactly write
\begin{equation}\label{n1}
\mathcal{Z}_n=\sum_{j=0}^{n-1}\|\partial_t^{j} G^1\|_{4n-2j-2}^2  +\|\partial_t^{j} G^2\|_{4n-2j-1}^2+\|\partial_t^{j} G^3\|_{4n-2j-3/2}^2.
\end{equation}
Note that by the definitions of $\bar{\mathcal{E}}_{n}^0$ and $\bar{\mathcal{E}}_{n}^+$ as well as the estimate stated in Remark \ref{temporal_remark}, we have
\begin{equation} \label{n2}
\|\partial_t^nu\|_{0}^2+\sum_{j=0}^{n}\|\partial_t^j\eta\|_{2n-2j}^2\lesssim  \bar{\mathcal{E}}_{n}.
\end{equation}

Now we let $j=0,\dots,n-1$ and then apply $\partial_t^j$ to the equations in \eqref{nosurface2} to find
\begin{equation}\label{jellip}
\left\{
\begin{array}{lll}
-\mu\Delta \partial_t^j u+\nabla\partial_t^j p=-\rho\partial_t^{j+1} u+\partial_t^j G^1
\quad&\text{in }\Omega
\\ \diverge\partial_t^j u=\partial_t^j G^2&\text{in }\Omega
\\ \llbracket \partial_t^j p_+I-\mu_+\mathbb{D}(\partial_t^j u_+)\rrbracket e_3=\rho_+g\partial_t^j\eta_+ e_3-\partial_t^jG^3_+&\text{on }\Sigma_+
\\ \llbracket \partial_t^j u\rrbracket=0,
\ \llbracket\partial_t^j pI-\mu\mathbb{D}(\partial_t^j u)\rrbracket
e_3=\rj g\partial_t^j\eta_- e_3-\partial_t^jG_-^3&\text{on
}\Sigma_-
\\ \partial_t^j u_-=0 &\text{on }\Sigma_b.
\end{array}
\right.
\end{equation}
Applying the elliptic estimates of Theorem \ref{cStheorem} with $r=2n-2j\ge 2$ to the problem \eqref{jellip} and using \eqref{n1}--\eqref{n2}, we obtain
\begin{multline}\label{n3}
\|\partial_t^j u\|_{2n-2j}^2 + \|\partial_t^j  p\|_{2n-2j-1}^2
\\
\lesssim \|\partial_t^{j+1} u\|_{2n-2 j-2}^2 + \|\partial_t^{j} G^1\|_{2n-2j-2}^2 + \|\partial_t^{j} G^2 \|_{2n-2j-1}^2 + \|\partial_t^{j} \eta\|_{2n-2j-3/2}^2 + \|\partial_t^{j} G^3\|_{2n-2j-3/2}^2
\\
\lesssim \|\partial_t^{j+1} u\|_{2n-2(j+1)}^2+\bar{\mathcal{E}}_{n}+\mathcal{Z}_n.
\end{multline}

We claim that
\begin{equation}\label{claim}
\mathcal{E}_{n} \lesssim \bar{\mathcal{E}}_{n} + \mathcal{Z}_n.
\end{equation}
We prove the claim \eqref{claim} by a finite induction based on the estimate \eqref{n3}. For $j=n-1$, we obtain from \eqref{n3} and \eqref{n2} that
\begin{equation}
\|\partial_t^{n-1} u\|_{2}^2+\|\partial_t^{n-1} p\|_{1}^2 \lesssim\|\partial_t^{n} u\|_{0}^2
+\bar{\mathcal{E}}_{n}+\mathcal{Z}_n \lesssim\bar{\mathcal{E}}_{n} + \mathcal{Z}_n.
\end{equation}
Now suppose that the following holds for $1\le l<n$
\begin{equation}\label{n4}
\|\partial_t^{n-l} u\|_{2l}^2+\|\partial_t^{n-l}  p\|_{2l-1}^2 \lesssim\bar{\mathcal{E}}_{n}+\mathcal{Z}_n.
\end{equation}
We apply \eqref{n3} with $j=n-(l+1)$ and use the induction hypothesis \eqref{n4} to find
\begin{equation}
\|\partial_t^{n-(l+1)} u\|_{2(l+1)}^2+\|\partial_t^{2n-(l+1)}  p\|_{2(l+1)-1}^2
 \lesssim\|\partial_t^{n-l} u\|_{2l}^2+\bar{\mathcal{E}}_{n}+\mathcal{Z}_n
\lesssim\bar{\mathcal{E}}_{n}+\mathcal{Z}_n.
\end{equation}
Hence by  finite induction, the bound \eqref{n4} holds for all $l=1,\dots,n.$ Summing \eqref{n4} over  $l=1,\dots,n$ and changing the index, we then have
\begin{equation}\label{n5}
\sum_{j=0}^{n-1}\|\partial_t^j u\|_{2n-2j}^2  +\|\partial_t^j  p\|_{2n-2j-1}^2 \lesssim \bar{\mathcal{E}}_{n}+\mathcal{Z}_n.
\end{equation}
We then conclude the claim \eqref{claim} by summing \eqref{n2} and \eqref{n5}.

Finally, setting $n=2N$ in \eqref{claim}, and using \eqref{Ges1} of Lemma \ref{Gesle} to bound $\mathcal{Z}_{2N} \lesssim ({\mathcal{E}}_{2N})^{1+\theta}$, we obtain \eqref{e2n}; setting $n=N+2$ in \eqref{claim}, and using \eqref{Ges4} of Lemma \ref{Gesle} to bound $\mathcal{Z}_{N+2}\lesssim  {\mathcal{E}} _{2N}^ \theta {\mathcal{E}}_{N+2}$, we obtain \eqref{en+2}.
\end{proof}

Now we consider a similar estimate for the dissipation.

\begin{theorem}\label{dth}
There exists a $\theta>0$ so that
\begin{equation}\label{d2n}
{\mathcal{D}}_{2N}\lesssim  \bar{\mathcal{D}}_{2N} + \mathcal{K}{\mathcal{F}}_{2N} + \mathcal{E}_{2N}^\theta \mathcal{D}_{2N}
\end{equation}
and
\begin{equation}\label{dn+2}
{\mathcal{D}}_{N+2}\lesssim  \bar{\mathcal{D}}_{N+2}
+\mathcal{E}_{2N}^\theta \mathcal{D}_{N+2}.
\end{equation}
\end{theorem}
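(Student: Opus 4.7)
The plan is to adapt the finite induction argument used in Theorem \ref{eth}, but to replace the two-phase Stokes elliptic theory (which would require $\eta$-data that is not present in $\bar{\mathcal{D}}_n$) by Dirichlet Stokes regularity on each slab $\Omega_\pm$ separately, exploiting the flatness of $\Sigma_-$ in exactly the way that was crucial in the proof of Theorem \ref{cStheorem}. Throughout the proof we let $n$ denote either $2N$ or $N+2$, apply $\dt^j$ to \eqref{nosurface2} for $j=0,\dots,n-1$, and write
\begin{equation*}
\mathcal{Z}_n := \sum_{j=0}^{n-1}\left(\|\dt^j G^1\|_{2n-2j-1}^2 + \|\dt^j G^2\|_{2n-2j}^2 + \|\dt^j G^3\|_{2n-2j-1/2}^2\right),
\end{equation*}
which by Lemma \ref{Gesle} is controlled by $\mathcal{E}_{2N}^\theta\mathcal{D}_n + \mathcal{K}\mathcal{F}_{2N}$ (with the $\mathcal{K}\mathcal{F}_{2N}$ term only needed at level $n=2N$).

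First I would extract the horizontal boundary regularity of $\dt^j u$. Since $\bar{\mathcal{D}}_n$ contains $\|\sqrt{\mu}\,\bna^{2n-1}\mathbb{D}(\chi u)\|_0^2 + \|\sqrt{\mu}\,\na\bna^{2n-1}\mathbb{D}(\chi u)\|_0^2$ together with full temporal norms $\|\sqrt{\mu}\,\mathbb{D}\dt^j u\|_0^2$ for $j\le n$, Korn's inequality and the flatness of $\Sigma_-$ (recall $\|\cdot\|_{H^s(\Sigma_-)}=\|\cdot\|_{H^s(\mathrm{T}^2)}$ involves only horizontal derivatives) give, after a standard trace/interpolation argument exactly analogous to the derivation of \eqref{cS0bes}--\eqref{bde},
\begin{equation*}
\sum_{j=0}^{n-1}\|\dt^j u\|_{H^{2n-2j-1/2}(\Sigma_-)}^2 \lesssim \bar{\mathcal{D}}_n.
\end{equation*}
A similar bound holds on $\Sigma_+$ after localizing with the cutoff $\chi$ (the part of $\bar{\mathcal{D}}_n^+$ involving the full vertical direction near $\Sigma_+$).

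Next I would implement the finite induction. At step $j$, given control of $\|\dt^{j+1} u\|_{2n-2j-1}$ and of $\|\dt^j u\|_{H^{2n-2j-1/2}(\Sigma_-)}$, I would treat the $\dt^j$-differentiated version of \eqref{nosurface2} as two uncoupled one-phase Stokes systems: on $\Omega_+$ with Dirichlet data coming from the horizontal trace on $\Sigma_-$ together with the stress condition on $\Sigma_+$ handled via the standard mixed boundary regularity as in Lemma \ref{cS1phaselemma1}, and on $\Omega_-$ with Dirichlet data on $\Sigma_-$ and the no-slip condition on $\Sigma_b$, using Lemma \ref{cS1phaselemma2}. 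This yields
\begin{equation*}
\|\dt^j u\|_{2n-2j+1}^2 + \|\dt^j p\|_{2n-2j}^2 \lesssim \|\dt^{j+1}u\|_{2n-2j-1}^2 + \bar{\mathcal{D}}_n + \mathcal{Z}_n.
\end{equation*}
Running the induction from $j=n-1$ (where the base term $\|\dt^n u\|_1$ is already controlled by $\bar{\mathcal{D}}_n^0$ via Korn) down to $j=0$, and summing, we obtain control of $\sum_{j=0}^n\|\dt^j u\|_{2n-2j+1}^2 + \sum_{j=0}^{n-1}\|\dt^j p\|_{2n-2j}^2$ by $\bar{\mathcal{D}}_n + \mathcal{Z}_n$.

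Finally, the $\eta$-pieces of $\mathcal{D}_n$ must be recovered from the equations themselves. The terms $\|\dt^j\eta\|_{2n-2j+5/2}$ for $j\ge 2$ and $\|\dt\eta\|_{2n-1/2}$ come from the kinematic condition $\dt\eta - u_3 = G^4$ on $\Sigma$, estimated via the trace of the $u$-estimate just obtained together with the $G^4$ bounds in Lemma \ref{Gesle}. The top term $\|\eta\|_{2n-1/2}$ is extracted from the normal components of the jump/stress boundary conditions in \eqref{nosurface2}: on $\Sigma_\pm$, taking $e_3\cdot(\cdot)e_3$ of those conditions expresses $\rho_+ g\eta_+$ and $\rj g\eta_-$ as $2n-1/2$ traces of $p$ and $\mathbb{D}u$, plus $G^3$, which is precisely where the $\mathcal{K}\mathcal{F}_{2N}$ term enters from $\|G^3\|_{2n-1/2}$ via \eqref{Ges3}. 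Combining everything with the estimates for $\mathcal{Z}_n$ from \eqref{Ges2}--\eqref{Ges3} at $n=2N$ produces \eqref{d2n}, while at $n=N+2$ using \eqref{Ges5} produces \eqref{dn+2} (no $\mathcal{K}\mathcal{F}_{2N}$ term since the derivative count is lower and \eqref{Ges5} is quadratic in the low energy). The main obstacle is the top-order step $j=0$: it is only there that $\eta$-data on $\Sigma_\pm$ fails to come for free from an intermediate inductive step, and the correct book-keeping to push $\|\nabla^{4N-1}G^3\|_{1/2}$ through the boundary stress identity is precisely what forces the irreducible $\mathcal{K}\mathcal{F}_{2N}$ term into \eqref{d2n}.
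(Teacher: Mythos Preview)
Your overall strategy---finite induction via one-phase Stokes regularity on each slab, recovering $\eta$ afterward from the boundary conditions---is the paper's strategy, and most of the outline is right. But there is a genuine gap in how you handle the upper slab $\Omega_+$, and it propagates into two places.

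You propose to treat $\Omega_+$ with Dirichlet data on $\Sigma_-$ and the \emph{stress} condition on $\Sigma_+$ (Lemma~\ref{cS1phaselemma1}). That stress data is $\rho_+ g\,\dt^j\eta_+ e_3 + \dt^j G^3_+$, so at $j=0$ you would need $\|\eta_+\|_{2n-1/2}$ as an input---exactly the quantity you are trying to output. Your closing remark acknowledges that ``$\eta$-data fails to come for free'' at $j=0$, but attributing the fix to ``book-keeping'' on $\|\nabla^{4N-1}G^3\|_{1/2}$ does not break the circularity: the $\mathcal{K}\mathcal{F}_{2N}$ contribution comes from the $G^i$ estimates, not from $\eta$ itself. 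The paper avoids this by using Dirichlet data on \emph{both} $\Sigma_+$ and $\Sigma_-$ (Lemma~\ref{cS1phaselemma2} on $\Omega_+$ as well as $\Omega_-$); you already note that the trace of $u$ on $\Sigma_+$ is controlled by $\bar{\mathcal{D}}_n$, so you have what you need---just use it.

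This correction forces a second adjustment. Dirichlet--Dirichlet Stokes determines $p$ only up to constants, so the induction step yields $\|\nabla\dt^j p\|_{2n-2j-1}$, not the full $\|\dt^j p\|_{2n-2j}$ you wrote. Consequently, to recover $\|\eta\|_{2n-1/2}$ at $j=0$ you cannot simply read off $\eta$ from the normal stress identity (that would need $p$ on $\Sigma$, which you do not yet have). The paper instead applies $\na$ to the normal stress identities \eqref{pb1}--\eqref{pb2}, estimates $\|\na\eta\|_{2n-3/2}$ using only $\nabla p$ and $u$, and then invokes the zero-average Poincar\'e inequality on $\mathrm{T}^2$ to get $\|\eta\|_{2n-1/2}$. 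With $\eta$ in hand, the same stress identities give $p_+|_{\Sigma_+}$ and $\Lbrack p\Rbrack|_{\Sigma_-}$, and a bulk Poincar\'e on each slab (Lemma~\ref{poincare}) upgrades $\nabla p$ to $\|\dt^j p\|_{2n-2j}$. Two minor points: your trace exponent should be $2n-2j+1/2$ (not $-1/2$), since $\bar{\mathcal{D}}_n^+$ carries the extra $\na$; and $\mathcal{Z}_n$ must include the $G^4$ terms needed for the kinematic-equation estimates of $\dt^j\eta$.
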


\begin{proof}
We again let $n$ denote either $2N$ or $N+2$ and compactly write
\begin{equation}\label{n11}
\mathcal{Z}_n= \| \bar{\nabla}^{2n-1} G^1\|_{0}^2+\| \bar{\nabla}^{2n-1} G^2\|_{1}^2
 +\| \bna^{2n-1} G^3\|_{1/2}^2+\| \bna^{2n-1} G^4\|_{1/2}^2+\| \bna^{2n-2} \partial_t G^4\|_{1/2}^2.
\end{equation}

First, we recall from the definition of  $\chi$ in \eqref{chi_properties} that $\chi=1$ on $\Omega_1:=\{-b_-/2 \le x_3 \le 3/2\}$, which contains $\Omega_+$ and $\Sigma_\pm$. Hence by the definition of $\bar{{\mathcal{D}}}_{n}^0$, $\bar{{\mathcal{D}}}_{n}^+$ and Korn's inequality, we obtain
\begin{equation} \label{n12}
\| \bna^{2n-1} u\|_{H^1(\Omega_1)}^2 + \| \na \bna^{2n-1}u\|_{H^1(\Omega_1)}^2
\le \| \bna^{2n-1}(\chi u)\|_{1}^2 + \| \na \bna^{2n-1}(\chi u)\|_{1}^2 \lesssim \bar{\mathcal{D}}_{n}^+
\end{equation}
and
\begin{equation} \label{n13}
\sum_{j=0}^n\|\partial_t^ju\|_{1}^2 \lesssim \bar{\mathcal{D}}_{n}^0.
\end{equation}
Summing \eqref{n12}--\eqref{n13},  we find that
\begin{equation} \label{n14}
\|\bna^{2n} u\|_{H^1(\Omega_1)}^2 \lesssim \bar{\mathcal{D}}_{n}.
\end{equation}

Notice that we have not yet derived an estimate of $\eta$ in terms of the dissipation, so we can not apply the two-phase elliptic estimates of Theorem \ref{cStheorem} as in Theorem \ref{eth}.  It is crucial to observe that  from \eqref{n14} we can get higher regularity estimates of $u$ on the boundaries $\Sigma=\Sigma_+\cup\Sigma_-$. Indeed, since $\Sigma_\pm$ are flat, we may use the definition of Sobolev norm on $\mathrm{T}^2$  and the trace
theorem to see from \eqref{n14} that
\begin{multline}\label{n21}
\|\partial_t^{j} u\|_{H^{2n-2j+1/2}(\Sigma)}^2 \lesssim \|\partial_t^{j}   u \|_{L^2(\Sigma)}^2
+\|\na^{2n-2j}\partial_t^{j} u \|_{H^{1/2}(\Sigma )}^2
\\
\lesssim \| \partial_t^{j}   u \|_{H^1(\Omega_1)}^2+\|\na^{2n-2j}\partial_t^{j}   u \|_{H^1(\Omega_1)}^2
 \lesssim \bar{\mathcal{D}}_{n}.
\end{multline}
This motivates us to use the one-phase elliptic estimates of Theorem \ref{cS1phaselemma2}.

Let $j=0,\dots,n-1$, and observe that $(\partial_t^j u_+,\partial_t^j p_+) $ solve the  problem
\begin{equation} \label{pro+1}
\left\{\begin{array}{lll}
-\mu_+\Delta \partial_t^j u_++\nabla\partial_t^j p_+=-\rho_+\partial_t^{j+1} u_++\partial_t^j G^1_+\quad&\text{in }&\Omega_+
\\ \diverge\partial_t^j u_+=\partial_t^j G^2_+&\text{in }&\Omega_+
\\ \partial_t^j u_+=\partial_t^j u_+&\text{on }&\Sigma,
\end{array}\right.
\end{equation}
and  $(\partial_t^j u_-,\partial_t^j p_-) $ solve the  problem
\begin{equation} \label{pro-1}
\left\{\begin{array}{lll}
-\mu_-\Delta \partial_t^j u_-+\nabla\partial_t^j p_-=-\rho_-\partial_t^{j+1} u_-+\partial_t^j G^1_-\quad&\text{in }&\Omega_-
\\ \diverge\partial_t^j u_-=\partial_t^j G^2_-&\text{in }&\Omega_-
\\ \partial_t^j u_-=\partial_t^j u_-&\text{on }&\Sigma_-,
\\ \partial_t^j u_-=0 &\text{on }&\Sigma_b.
\end{array}\right.
\end{equation}
We apply Theorem \ref{cS1phaselemma2} with $r=2n-2j+1$ to the problem \eqref{pro+1} for $u_+,\ p_+$ and to the  problem \eqref{pro-1} for $u_-,\ p_-$, respectively; using \eqref{n11}, \eqref{n14}, \eqref{n21} and summing up,  we find
\begin{multline}\label{n31}
\|\partial_t^j u\|_{2n-2j+1}^2+\| \nabla\partial_t^j p\|_{2n-2j-1}^2
\\
\lesssim\|\partial_t^{j+1} u\|_{2n-2j-1}^2+ \|\partial_t^j G^1\|_{2n-2j-1}^2+\|\partial_t^{j}
G^2\|_{2n-2j}^2+\|\partial_t^{j} u\|_{H^{2n-2j+1/2}(\Sigma)}^2
\\
\lesssim\|\partial_t^{j+1} u\|_{2n-2j-1}^2+\mathcal{Z}_n+  \bar{\mathcal{D}}_{n}.
\end{multline}

We now claim that
\begin{equation}\label{claim2}
\sum_{j=0}^{ n}\|\partial_t^ju\|_{2n-2j+1}^2+\sum_{j=0}^{ n-1}\|\partial_t^j\nabla p \|_{2n-2j-1} \lesssim \mathcal{Z}_n +  \bar{\mathcal{D}}_{n}.
\end{equation}
We prove the claim \eqref{claim2} by a finite induction  as in Theorem \ref{eth}. For $j=n-1$, by \eqref{n11} and \eqref{n31}, we obtain
\begin{equation}
\|\partial_t^{n-1} u\|_{3}^2+\| \nabla\partial_t^{n-1} p\|_{1}^2 \lesssim\|\partial_t^{n} u\|_{1}^2 +\mathcal{Z}_n + \bar{\mathcal{D}}_{n} \lesssim \mathcal{Z}_n+  \bar{{\mathcal{D}}}_{ n}.
\end{equation}
Now suppose that the following holds for $1\le l<n$:
\begin{equation}\label{n41}
\|\partial_t^{n-l} u\|_{ 2l+1}^2+\| \nabla\partial_t^{n-l} p\|_{2l-1}^2 \lesssim\mathcal{Z}_n+  \bar{{\mathcal{D}}}_{ n}.
\end{equation}
We apply \eqref{n31} with $j=n-(l+1)$ and use the induction hypothesis \eqref{n41} to find
\begin{equation}
\|\partial_t^{n-(l+1)} u\|_{2(l+1)+1}^2+\| \nabla\partial_t^{n-(l+1) }p\|_{2(l+1)-1}^2
\lesssim\|\partial_t^{n-l} u\|_{2l+1}^2+\mathcal{Z}_n + \bar{\mathcal{D}}_{n}
\lesssim\mathcal{Z}_n+  \bar{\mathcal{D}}_{n}.
\end{equation}
Hence the bound \eqref{n41} holds for all $l=1,\dots,n.$ We then conclude the claim \eqref{claim2} by summing this over $l=1,\dots, n$, adding \eqref{n13} and  changing the index.

Now that we have obtained \eqref{claim2}, we estimate the remaining parts in $\mathcal{D}_{n}$. We will turn to the boundary conditions in \eqref{nosurface2}. First we derive estimates for $\eta$. For the term $\dt^j \eta$ for $j\ge 2$ we use the boundary condition
\begin{equation}\label{n61}
\partial_t\eta=u_3+G^4\text{ on }\Sigma.
\end{equation}
Indeed, for $j=2,\dots,n+1$ we apply $\partial_t^{j-1}$ to \eqref{n61} to see, by \eqref{claim2} and \eqref{n11}, that
\begin{multline}\label{eta1}
\|\partial_t^j\eta\|_{2n-2j+5/2}^2 \lesssim \|\partial_t^{j-1}u_3\|_{H^{2n-2j+5/2}(\Sigma)}^2+\|\partial_t^{j-1}G^4\|_{2n-2j+5/2}^2
\\
\lesssim \|\partial_t^{j-1}u_3\|_{{2n-2(j-1)+1}}^2+\|\partial_t^{j-1}G^4\|_{2n-2(j-1)+1/2}^2
 \lesssim \mathcal{Z}_n+  \bar{\mathcal{D}}_{n}.
\end{multline}
For the term $\partial_t\eta$, we again use \eqref{n61}, \eqref{claim2} and \eqref{n11} to find
\begin{equation}\label{eta2}
\|\partial_t \eta\|_{2n-1/2}^2 \lesssim \| u_3\|_{H^{2n-1/2}(\Sigma)}^2+\|\partial_t^{j-1}G^4\|_{2n-1/2}^2
\lesssim \| u_3\|_{{2n }}^2+\| G^4\|_{2n-1/2}^2 \lesssim\mathcal{Z}_n+  \bar{\mathcal{D}}_{n}.
\end{equation}
For the remaining $\eta$ term, i.e. those without temporal derivatives, we use the boundary conditions
\begin{equation}\label{pb1}
 \rho g\eta_+= p_+- \mu_+\partial_3u_{3,+} -G_{3,+}^3\text{ on }\Sigma_+
\end{equation}
 and
\begin{equation} \label{pb2}
\rj g\eta_-=\llbracket p\rrbracket-\llbracket\mu\partial_3u_3\rrbracket +G_{3,-}^3 \text{ on } \Sigma_-.
\end{equation}
Notice that at this point we do not have any bound on $p$ on the boundary $\Sigma$, but we have bounded  $\nabla p$ in $\Omega$.  Applying $\partial_1$, $\partial_2$  to \eqref{pb1} and \eqref{pb2}, respectively,  by \eqref{claim2} and \eqref{n11}, we obtain
\begin{equation}\label{n51}
\begin{split}
\| \na \eta\|_{2n-3/2}^2 &\lesssim \|  \na p \|_{H^{2n-3/2}(\Sigma)}^2 + \|  \na \partial_3u_3 \|_{H^{2n-3/2}(\Sigma)}^2 +\|\na G^3\|_{ 2n-3/2 }^2  \\
& \lesssim \|\nabla p\|_{2n-1}^2 + \|u_3\|_{2n+1}^2 + \|G^3\|_{2n-1/2}^2 \lesssim \mathcal{Z}_n+  \bar{\mathcal{D}}_{n}.
\end{split}
\end{equation}
Since $\int_{\mathbb{T}^2}\eta=0$, we may then use Poincar\'e's inequality on $\Sigma_\pm$   to obtain from \eqref{n51} that
\begin{equation} \label{eta3}
\|\eta\|_{2n-1/2}^2\lesssim \|\eta\|_{0}^2 + \|\na \eta\|_{2n-3/2}^2
\lesssim \|\na \eta\|_{2n-3/2}^2 \lesssim \mathcal{Z}_n+  \bar{{\mathcal{D}}}_{ n}.
\end{equation}
Summing  \eqref{eta1}, \eqref{eta2}  and \eqref{eta3}, we complete the estimates for $\eta$:
\begin{equation} \label{eta0}
\|\eta\|_{2n-1/2}^2+\|\partial_t\eta\|_{2n-1/2}^2 + \sum_{j=2}^{n+1} \|\partial_t^j\eta\|_{2n-2j+5/2}\lesssim\mathcal{Z}_n+ \bar{\mathcal{D}}_{n}.
\end{equation}

It remains to bound $\|\partial_t^jp\|_0$. Applying $\partial_t^j,\ j=0,\dots,n-1$ to \eqref{pb1}--\eqref{pb2} and employing \eqref{claim2}, \eqref{eta0} and \eqref{n11}, we find
\begin{multline}\label{pp1}
\|\partial_t^j p_+\|_{L^2(\Sigma_+)}^2 + \|\llbracket \partial_t^j p\rrbracket\|_{L^2(\Sigma_-)}^2
\lesssim \|\partial_t^j \eta\|_{0}^2 + \|\partial_3\partial_t^j u_3 \|_{L^2(\Sigma)}^2 + \|\partial_t^j G^3\|_{0}^2
\\
\lesssim \|\partial_t^j \eta\|_{0}^2+\| \partial_t^j u_3 \|_{2}^2+\|\partial_t^j G^3\|_{0}^2 \lesssim \mathcal{Z}_n + \bar{\mathcal{D}}_{n}.
\end{multline}
By Poincar\'e's inequality on $\Omega_+$ (Lemma \ref{poincare}) and \eqref{claim2} and \eqref{pp1}, we have
\begin{multline}\label{pp2}
\|\partial_t^j p_+\|_{H^1(\Omega_+)}^2 = \|\partial_t^j p_+\|_{L^2(\Omega_+)}^2
+ \|\nabla \partial_t^j p_+\|_{L^2(\Omega_+)}^2
\lesssim \|\partial_t^j p_+ \|_{L^2(\Sigma_+)}^2
+  \|\nabla\partial_t^j p_+ \|_{L^2(\Omega_+)}^2
 \\
\lesssim \mathcal{Z}_n + \bar{\mathcal{D}}_{n}.
\end{multline}
On the other hand, by the trace theorem and \eqref{pp1}--\eqref{pp2}, we have
\begin{multline}\label{pp3}
\|\partial_t^j p_-\|_{L^2(\Sigma_-)}^2\le \|\partial_t^j p_+\|_{L^2(\Sigma_-)}^2+\|\llbracket \partial_t^j p\rrbracket\|_{L^2(\Sigma_-)}^2  \lesssim \|\partial_t^j p_+\|_{H^1(\Omega_+)}^2+\|\llbracket \partial_t^j p\rrbracket\|_{L^2(\Sigma_-)}^2
\\
\lesssim\mathcal{Z}_n+  \bar{{\mathcal{D}}}_{n},
\end{multline}
so again by Poincar\'e's inequality on $\Omega_-$ as well as \eqref{claim2} and \eqref{pp1}, we have
\begin{multline}\label{pp4}
\|\partial_t^j p_-\|_{H^1(\Omega_-)}^2 = \|\partial_t^j p_-\|_{L^2(\Omega_-)}^2 + \|\nabla \partial_t^j p_-\|_{L^2(\Omega_-)}^2 \lesssim \|\partial_t^j p_-\|_{L^2(\Sigma_-)}^2 + \|\nabla\partial_t^j p_-\|_{L^2(\Omega_-)}^2
\\
\lesssim \mathcal{Z}_n + \bar{\mathcal{D}}_{n}.
\end{multline}
In light of \eqref{pp2} and \eqref{pp4}, we may improve the estimate \eqref{claim2} to be
\begin{equation}\label{up0}
\sum_{j=0}^{ n} \|\partial_t^j u\|_{2n-2j+1}^2 + \sum_{j=0}^{ n-1} \|\partial_t^j p\|_{2n-2j} \lesssim \mathcal{Z}_n +  \bar{\mathcal{D}}_{n}.
\end{equation}

Consequently, summing \eqref{eta0} and \eqref{up0}, we obtain
\begin{equation}\label{upeta0}
{\mathcal{D}}_{n} \lesssim \mathcal{Z}_n + \bar{\mathcal{D}}_{n}.
\end{equation}
Setting $n=2N$ in \eqref{upeta0} and using \eqref{Ges2}--\eqref{Ges3} of Lemma \ref{Gesle} to estimate
$\mathcal{Z}_{2N}\lesssim ({\mathcal{E}}_{2N})^{\theta} {\mathcal{D}}_{2N} + \mathcal{K} {\mathcal{F}}_{2N}$,
we obtain \eqref{d2n}.  On the other hand, we may set $n=N+2$ in \eqref{upeta0} and use \eqref{Ges5} of Lemma \ref{Gesle} to bound $\mathcal{Z}_{N+2}\lesssim  {\mathcal{E}}_{2N}^ \theta {\mathcal{D}}_{N+2}$, from which we then obtain \eqref{dn+2}.
\end{proof}

\subsubsection{A priori estimates and proof of Theorem \ref{th0}}

Now that  Propositions \ref{temp}--\ref{temp2}, Propositions \ref{hori}--\ref{hori2} and Theorems \ref{eth}--\ref{dth} have been established, the rest  of the proof of Theorem \ref{th0} follows in the same way as in the proof of Theorem 1.3 in \cite{GT_per}. For completeness we shall sketch the main steps of the arguments but omit some details.

We first need to control ${\mathcal{F}}_{2N}$. This is achieved by the following proposition.

\begin{Proposition}\label{fglm}
There exists $ 0<\delta<1$ so that if $\mathcal{G}_{2N}(T)\le\delta$, then
\begin{equation}\label{Fg}
\sup_{0\le r\le t}\mathcal{F}_{2N}(r)\lesssim \mathcal{F}_{2N}(0)+ t\int_0^t\mathcal{D}_{2N}, \text{ for all }0\le t\le T.
\end{equation}
\end{Proposition}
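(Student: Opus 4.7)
The plan is to derive the estimate by applying a fractional spatial derivative of order $4N+1/2$ (realized as $\Lambda^{4N+1/2}$ with $\Lambda = \sqrt{-\Delta_\ast}$ on $\mathrm{T}^2$) to the kinematic boundary condition $\partial_t \eta = u\cdot \mathcal{N} = u_3 - u_1 \partial_1 \eta - u_2 \partial_2 \eta$ on each of $\Sigma_\pm$, then pairing against $\Lambda^{4N+1/2}\eta$ in $L^2(\Sigma)$. This yields
\begin{equation*}
\frac{1}{2}\frac{d}{dt}\mathcal{F}_{2N} = (\Lambda^{4N+1/2}u_3,\Lambda^{4N+1/2}\eta)_{L^2(\Sigma)} - (\Lambda^{4N+1/2}(u_i\partial_i\eta),\Lambda^{4N+1/2}\eta)_{L^2(\Sigma)}.
\end{equation*}
The linear term is bounded via Cauchy--Schwarz and the trace theorem by $\|u_3\|_{H^{4N+1/2}(\Sigma)}\sqrt{\mathcal{F}_{2N}} \lesssim \|u\|_{4N+1}\sqrt{\mathcal{F}_{2N}} \lesssim \sqrt{\mathcal{D}_{2N}\,\mathcal{F}_{2N}}$.

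For the transport term, I would split $\Lambda^{4N+1/2}(u_i\partial_i\eta) = u_i\partial_i\Lambda^{4N+1/2}\eta + [\Lambda^{4N+1/2},u_i]\partial_i\eta$. The first piece, after integration by parts on $\mathrm{T}^2$, becomes $\tfrac{1}{2}(\diverge_\ast u_\ast\,\Lambda^{4N+1/2}\eta,\Lambda^{4N+1/2}\eta)$, which is controlled by $\|\nabla u\|_{L^\infty(\Sigma)}\mathcal{F}_{2N} \lesssim \sqrt{\mathcal{K}}\,\mathcal{F}_{2N} \lesssim \sqrt{\mathcal{E}_{N+2}}\,\mathcal{F}_{2N} \lesssim \sqrt{\mathcal{G}_{2N}}\,\mathcal{F}_{2N}$. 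The commutator is estimated by a Kato--Ponce type inequality on $\mathrm{T}^2$,
\begin{equation*}
\|[\Lambda^{4N+1/2},u_i]\partial_i\eta\|_{L^2(\Sigma)} \lesssim \|\na u\|_{L^\infty(\Sigma)}\|\eta\|_{4N+1/2} + \|u\|_{H^{4N+1/2}(\Sigma)}\|\na \eta\|_{L^\infty(\Sigma)},
\end{equation*}
where the first factor of each summand lives in the low-order energy (via $\mathcal{K}$ or Sobolev embedding from $\mathcal{E}_{N+2}$) and the second is either $\sqrt{\mathcal{F}_{2N}}$ or bounded, via trace and $\|u\|_{4N+1/2}\lesssim \|u\|_{4N+1}\lesssim \sqrt{\mathcal{D}_{2N}}$, by $\sqrt{\mathcal{D}_{2N}}\sqrt{\mathcal{E}_{N+2}}$. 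After pairing with $\Lambda^{4N+1/2}\eta$ this produces a bound of the form $\sqrt{\mathcal{G}_{2N}}\mathcal{F}_{2N}+\sqrt{\mathcal{D}_{2N}\mathcal{F}_{2N}}\sqrt{\mathcal{G}_{2N}}$.

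Combining these estimates gives
\begin{equation*}
\frac{d}{dt}\mathcal{F}_{2N} \le C\sqrt{\mathcal{D}_{2N}\,\mathcal{F}_{2N}} + C\sqrt{\mathcal{G}_{2N}(T)}\,\mathcal{F}_{2N}.
\end{equation*}
Provided $\mathcal{G}_{2N}(T)\le \delta$ is small, the second term is absorbed by a standard Gronwall argument (multiplying by $\exp(-C\sqrt{\delta}\,t)$ and using $t\le T\lesssim 1$ in the relevant regime, or equivalently setting $\tilde{\mathcal{F}} = e^{-C\sqrt{\delta}t}\mathcal{F}_{2N}$), leaving $\frac{d}{dt}\sqrt{\mathcal{F}_{2N}} \lesssim \sqrt{\mathcal{D}_{2N}}$, so that integration yields $\sqrt{\mathcal{F}_{2N}(t)} \le \sqrt{\mathcal{F}_{2N}(0)} + C\int_0^t\sqrt{\mathcal{D}_{2N}(s)}\,ds$. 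Squaring and applying Cauchy--Schwarz to the time integral produces the claimed bound $\sup_{0\le r\le t}\mathcal{F}_{2N}(r)\lesssim \mathcal{F}_{2N}(0) + t\int_0^t\mathcal{D}_{2N}$.

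The main obstacle is the top-order transport nonlinearity: a naive application of $\partial^\alpha$ with $|\alpha|=4N$ to $u_i\partial_i\eta$ would force an estimate involving $\|u\|_{H^{4N+1/2}(\Sigma)}\|\na \eta\|_{L^\infty}$, and we must verify carefully that $\|\na\eta\|_{L^\infty}$ lies inside the low-order energy $\mathcal{E}_{N+2}$ (via Sobolev embedding, since $N\ge 3$ makes $2(N+2)-1 > 2+1$) so that it can be absorbed by the smallness of $\mathcal{G}_{2N}$ rather than paired against the uncontrolled $\mathcal{F}_{2N}$. Once that observation is in place the Kato--Ponce commutator estimate closes the argument.
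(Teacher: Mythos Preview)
Your overall strategy matches the paper's: both derive a transport-type inequality for $\mathcal{F}_{2N}$ from the kinematic boundary condition and control the top-order nonlinearity through a commutator bounded by the low-order quantity $\mathcal{K}\lesssim\mathcal{E}_{N+2}$. The gap is in your absorption step. You reach
\[
\frac{d}{dt}\mathcal{F}_{2N} \le C\sqrt{\mathcal{D}_{2N}\,\mathcal{F}_{2N}} + C\sqrt{\mathcal{G}_{2N}(T)}\,\mathcal{F}_{2N}
\]
and then claim the second term can be handled by Gronwall ``using $t\le T\lesssim 1$ in the relevant regime.'' No such bound on $T$ is available: this proposition is exactly what allows the local solution to be continued globally, so $T$ must be arbitrary. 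Gronwall with the constant coefficient $C\sqrt{\delta}$ produces the factor $e^{C\sqrt{\delta}\,t}$, which is unbounded in $t$ regardless of how small $\delta$ is.

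The repair---and this is what the paper does---is to \emph{not} pass from $\sqrt{\mathcal{K}(r)}$ to the constant $\sqrt{\mathcal{G}_{2N}(T)}$ before integrating. Keep the time-dependent coefficient $\sqrt{\mathcal{K}(r)}\lesssim\sqrt{\mathcal{E}_{N+2}(r)}$ and use the decay already encoded in $\mathcal{G}_{2N}$, namely $\mathcal{E}_{N+2}(r)\le\delta(1+r)^{-(4N-8)}$. Since $N\ge 3$ gives $2N-4\ge 2>1$, one obtains $\int_0^\infty\sqrt{\mathcal{K}(r)}\,dr\lesssim\sqrt{\delta}$, so the Gronwall factor is $\exp\bigl(C\int_0^t\sqrt{\mathcal{K}}\bigr)\le\exp(C\sqrt{\delta})\lesssim 1$ uniformly in $t$. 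With this correction your differential inequality integrates to the paper's estimate \eqref{l0}, and pulling $\sup_r\mathcal{F}_{2N}(r)$ out of $\bigl(\int_0^t\sqrt{\mathcal{K}\mathcal{F}_{2N}}\bigr)^2$ and absorbing via smallness of $\delta$ finishes the proof.
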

\begin{proof}
Based on the transport estimate on the kinematic boundary condition, we may show as in Lemma 7.1 of \cite{GT_per} that
\begin{multline}\label{l0}
\sup_{0\le r\le t}\mathcal{F}_{2N}(r) \lesssim \exp\left(C\int_0^t\sqrt{\mathcal{K}(r)}dr\right)
\\
\times\left[ \mathcal{F}_{2N}(0)+t\int_0^t(1+\mathcal{E}_{2N}(r))\mathcal{D}_{2N}(r)dr  + \left(\int_0^t \sqrt{\mathcal{K}(r) \mathcal{F}_{2N}(r)} dr\right)^2 \right].
\end{multline}
It is easy to see that $\mathcal{K}\lesssim \mathcal{E}_{ N+2}$, and hence
\begin{equation}\label{l1}
\int_0^t\sqrt{\mathcal{K}(r)}dr\lesssim \int_0^t\sqrt{\mathcal{E}_{ N+2}(r)}dr
  \lesssim  \sqrt{\delta}\int_0^\infty \frac{1}{(1+r)^{2N-4}}dr\lesssim\sqrt{\delta}.
\end{equation}
Then by \eqref{l1}, we deduce from \eqref{l0} that
\begin{multline}
\sup_{0\le r\le t}\mathcal{F}_{2N}(r)  \lesssim \mathcal{F}_{2N}(0)+t\int_0^t\mathcal{D}_{2N}+\sup_{0\le r\le t}{{\mathcal{F}}_{2N}}(r)\left(
  \int_0^t\sqrt{ \mathcal{K}(r)}dr\right)^2
  \\
\lesssim \mathcal{F}_{2N}(0) + t\int_0^t\mathcal{D}_{2N}+\delta\sup_{0\le r\le t}{{\mathcal{F}}_{2N}}(r).
\end{multline}
By taking $\delta$ small enough, we get \eqref{Fg}.
\end{proof}

Now we show the boundedness of the high-order terms.

\begin{Proposition} \label{Dgle}
There exists $\delta>0$ so that if $\mathcal{G}_{2N}(T)\le\delta$, then
\begin{equation}\label{Dg}
\sup_{0\le r\le t}\mathcal{E}_{2N}(r)+\int_0^t\mathcal{D}_{2N}+\sup_{0\le r\le t}\frac{\mathcal{F}_{2N}(r)}{(1+r)}\lesssim \mathcal{E}_{2N}(0)+ \mathcal{F}_{2N}(0) \text{ for all }0\le t\le T.
\end{equation}
\end{Proposition}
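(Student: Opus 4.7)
My plan is to implement the two-tier energy method at the high-regularity level, combining the energy-evolution propositions from the geometric and perturbed formulations with the comparison theorems of Section \ref{sec_nost_gwp}.  Throughout, I use the smallness hypothesis $\mathcal{G}_{2N}(T)\le\delta$ freely in order to absorb error terms.

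First I would sum the estimate \eqref{geoes} of Proposition \ref{temp} with the estimate \eqref{peres} of Proposition \ref{hori}.  After choosing $\varepsilon$ in \eqref{peres} sufficiently small, the term $\varepsilon\int_0^t\mathcal{D}_{2N}$ can be absorbed later, and the term $C(\varepsilon)\int_0^t\bar{\mathcal D}_{2N}^0$ is absorbed on the left side using $\bar{\mathcal D}_{2N}^0\le\bar{\mathcal D}_{2N}$.  This yields
\begin{equation}\label{step1}
\bar{\mathcal E}_{2N}(t)+\int_0^t\bar{\mathcal D}_{2N}\lesssim \mathcal E_{2N}(0)+\bigl(\mathcal E_{2N}(t)\bigr)^{3/2}+\int_0^t\mathcal E_{2N}^{\theta}\mathcal D_{2N}+\int_0^t\sqrt{\mathcal D_{2N}\mathcal K\mathcal F_{2N}}+\varepsilon\int_0^t\mathcal D_{2N}.
\end{equation}
Then I invoke Theorem \ref{eth} to replace $\bar{\mathcal E}_{2N}$ by $\mathcal E_{2N}$ modulo $\mathcal E_{2N}^{1+\theta}$, and Theorem \ref{dth} to replace $\int_0^t\bar{\mathcal D}_{2N}$ by $\int_0^t\mathcal D_{2N}$ modulo the error terms $\int_0^t\mathcal E_{2N}^{\theta}\mathcal D_{2N}+\int_0^t\mathcal K\mathcal F_{2N}$.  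Using $\mathcal E_{2N}\le\delta$ and choosing $\varepsilon,\delta$ small, every error of the form $\mathcal E_{2N}^{\theta}\mathcal D_{2N}$ or $\varepsilon\mathcal D_{2N}$ is absorbed into the left-hand dissipation integral.

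The main obstacle is the interfacial growth term $\int_0^t\sqrt{\mathcal D_{2N}\mathcal K\mathcal F_{2N}}$ and its cousin $\int_0^t\mathcal K\mathcal F_{2N}$ from Theorem \ref{dth}, which cannot be controlled by dissipation alone and demand a delicate use of the hypothesis $\mathcal G_{2N}(T)\le\delta$.  I would first apply Cauchy--Schwarz/Young to reduce to the bilinear term $\int_0^t\mathcal K\mathcal F_{2N}$.  Since $\mathcal K\lesssim\mathcal E_{N+2}$, the hypothesis $\mathcal G_{2N}(T)\le\delta$ provides the algebraic decay $\mathcal E_{N+2}(r)\lesssim \mathcal G_{2N}(T)(1+r)^{-(4N-8)}$ and the linear growth $\mathcal F_{2N}(r)\lesssim\mathcal G_{2N}(T)(1+r)$, so that
\begin{equation}\label{step2}
\int_0^t\mathcal K(r)\mathcal F_{2N}(r)\,dr\lesssim \Bigl(\sup_{0\le r\le t}(1+r)^{4N-8}\mathcal E_{N+2}(r)\Bigr)\Bigl(\sup_{0\le r\le t}\tfrac{\mathcal F_{2N}(r)}{1+r}\Bigr)\int_0^t(1+r)^{9-4N}\,dr.
\end{equation}
For $N\ge3$ the integral on the right is finite, so \eqref{step2} is bounded by $C\mathcal G_{2N}(t)^2\le C\delta\,\mathcal G_{2N}(t)$.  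The three parts of $\mathcal G_{2N}(t)$ that match the left-hand side of \eqref{Dg}, namely $\sup\mathcal E_{2N}$, $\int\mathcal D_{2N}$ and $\sup\mathcal F_{2N}/(1+r)$, are absorbed after taking $\delta$ sufficiently small; the remaining $\delta\sup(1+r)^{4N-8}\mathcal E_{N+2}$ is itself bounded by $\delta^{2}\le\delta(\mathcal E_{2N}(0)+\mathcal F_{2N}(0))$ up to adjusting $\delta$, so it contributes to the right-hand side of \eqref{Dg}.

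Finally I would incorporate Proposition \ref{fglm} to take care of the $\mathcal F_{2N}/(1+r)$ piece.  Dividing \eqref{Fg} by $(1+t)$ gives $\sup_{0\le r\le t}\mathcal F_{2N}(r)/(1+r)\lesssim\mathcal F_{2N}(0)+\int_0^t\mathcal D_{2N}$, so this term can be added to the left side at the cost of a term already controlled.  Taking the supremum of the resulting inequality over $0\le r\le t$ and summing with the $\mathcal F_{2N}/(1+r)$ estimate, together with the absorption argument above, yields exactly \eqref{Dg}.  The trickiest point, as indicated, is ensuring that the non-left-hand part of $\mathcal G_{2N}$ appearing on the right is strictly smaller than the left-hand side after applying the smallness of $\mathcal G_{2N}(T)$; the integrability condition $9-4N<-1$ (i.e.\ $N\ge3$) is precisely what makes this possible, and is the reason the two-tier regularity gap $2N-(N+2)=N-2$ must be taken at least as large as $1$.
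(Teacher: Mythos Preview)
Your overall strategy is correct, but there is a genuine gap in the way you handle the $\mathcal K\mathcal F_{2N}$ terms.  Bounding $\mathcal F_{2N}(r)\lesssim\mathcal G_{2N}(T)(1+r)$ and $\mathcal K(r)\lesssim\mathcal G_{2N}(T)(1+r)^{-(4N-8)}$, as in your \eqref{step2}, gives only $\int_0^t\mathcal K\mathcal F_{2N}\lesssim\mathcal G_{2N}(T)^2\le\delta\,\mathcal G_{2N}(T)$.  The component $\sup_r(1+r)^{4N-8}\mathcal E_{N+2}(r)$ of $\mathcal G_{2N}(T)$ is not part of the left side of \eqref{Dg}, so it cannot be absorbed; and your claim that the residual ``$\delta^2\le\delta(\mathcal E_{2N}(0)+\mathcal F_{2N}(0))$ up to adjusting $\delta$'' is false, since $\delta$ is a fixed universal threshold and the initial data may be arbitrarily small compared to it.  Any additive $\delta^2$ on the right of \eqref{Dg} would wreck the later use of this proposition in Proposition~\ref{decaylm}, where one needs $\sup_r\mathcal E_{2N}(r)\lesssim\mathcal E_{2N}(0)+\mathcal F_{2N}(0)$ without an additive constant.

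The paper avoids this by using Proposition~\ref{fglm} \emph{inside} the integral: with $\mathcal K(r)\lesssim\delta(1+r)^{-(4N-8)}$ and $\mathcal F_{2N}(r)\lesssim\mathcal F_{2N}(0)+r\int_0^t\mathcal D_{2N}$, one gets
\[
\int_0^t\mathcal K\mathcal F_{2N}\lesssim\delta\,\mathcal F_{2N}(0)+\delta\int_0^t\mathcal D_{2N},\qquad
\int_0^t\sqrt{\mathcal D_{2N}\mathcal K\mathcal F_{2N}}\lesssim\mathcal F_{2N}(0)+\sqrt\delta\int_0^t\mathcal D_{2N},
\]
both of whose terms either sit on the right of \eqref{Dg} or are absorbable on the left.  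This is the missing idea.  A secondary point: you cannot absorb $C(\varepsilon)\int_0^t\bar{\mathcal D}_{2N}^0$ into the unweighted left side of \eqref{step1}, since $C(\varepsilon)$ is large when $\varepsilon$ is small; this is why the paper multiplies \eqref{geoes} by $(1+\beta)$ before adding \eqref{peres}, and then chooses $\varepsilon$ small first and $\beta$ large second.
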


\begin{proof}
Note first that since ${\mathcal{E}}_{2N}(t)\le {\mathcal{G}}_{2N}(T)\le \delta$, by taking $\delta$ small, we obtain from \eqref{e2n} of Theorem \ref{eth} and \eqref{d2n} of Theorem \ref{dth} that
\begin{equation}\label{q0}
{\mathcal{E}}_{2N}\lesssim\bar{\mathcal{E}}_{2N} \lesssim{\mathcal{E}}_{N+2},
\text{ and }
\bar{\mathcal{D}}_{2N} \lesssim {\mathcal{D}}_{2N} \lesssim \bar{\mathcal{D}}_{2N} + \mathcal{K} \mathcal{F}_{2N}.
\end{equation}

Now we multiply  \eqref{geoes} by a constant $1+\beta\ge 2$ and add this to \eqref{peres} to find
\begin{multline} \label{q1}
\bar{\mathcal{E}}^{0}_{2N}(t) + \bar{\mathcal{E}}^{+}_{2N}(t) + \int_0^t (1+\beta)\bar{\mathcal{D}}^{0}_{2N}+ \bar{\mathcal{D}}^{+}_{2N}
\\
\lesssim  (1+\beta) \mathcal{E}_{2N}(0)+\bar{\mathcal{E}}^{+}_{2N}(0) + (1+\beta) (\mathcal{E}_{2N}(t))^{ {3}/{2}}+ \int_0^t  (2+\beta) \mathcal{E}_{2N}^\theta \mathcal{D}_{2N}
\\
+ \int_0^t  \sqrt{\mathcal{D}_{2N}\mathcal{K}\mathcal{F}_{2N}}+\varepsilon\mathcal{D}_{2N} + C(\varepsilon)\bar{\mathcal{D}}^{0}_{2N}.
\end{multline}
Then by \eqref{q0} we may improve \eqref{q1} to be
\begin{multline} \label{q2}
\mathcal{E}_{2N}(t)+\int_0^t \mathcal{D}_{2N}+ \beta \bar{\mathcal{D}}^{0}_{2N} \lesssim  (1+\beta) \mathcal{E}_{2N}(0)  +  (1+\beta) (\mathcal{E}_{2N}(t))^{ 1+\theta}+ \int_0^t (2+ \beta) \mathcal{E}_{2N}^\theta \mathcal{D}_{2N}
\\
+ \int_0^t  \sqrt{\mathcal{D}_{2N}\mathcal{K}\mathcal{F}_{2N}}+\mathcal{K}\mathcal{F}_{2N} + \varepsilon \mathcal{D}_{2N} +C(\varepsilon)\bar{\mathcal{D}}^{0}_{2N}.
\end{multline}
Since $\mathcal{G}_{2N} \le \delta$, it is easy to use Proposition \ref{fglm} to verify that
\begin{equation}\label{k1}
\int_0^t   \mathcal{K}(r)\mathcal{F}_{2N}(r)dr\lesssim \delta \mathcal{F}_{2N}(0)+\delta\int_0^t\mathcal{D}_{2N}(r)dr,
\end{equation}
\begin{equation}\label{k2}
\int_0^t \sqrt{\mathcal{D}_{2N}(r)\mathcal{K}(r)\mathcal{F}_{2N}(r)}\lesssim   \mathcal{F}_{2N}(0)
+\sqrt{\delta}\int_0^t\mathcal{D}_{2N}(r)dr.
\end{equation}
We plug \eqref{k1}--\eqref{k2} into  \eqref{q2} and then take $\varepsilon$ sufficiently small first, $\beta$ sufficiently large second, and $\delta$ sufficiently small third; we may then conclude
\begin{equation}\label{k3}
\sup_{0\le r\le t}\mathcal{E}_{2N}(r) + \int_0^t\mathcal{D}_{2N} \lesssim \mathcal{E}_{2N}(0) + \mathcal{F}_{2N}(0).
\end{equation}
Then \eqref{Dg} follows from \eqref{Fg} and \eqref{k3}.
\end{proof}

It remains to show the decay estimates of $\mathcal{E}_{N+2}$.

\begin{Proposition} \label{decaylm}
There exists $\delta>0$ so that if $\mathcal{G}_{2N}(T)\le\delta$, then
\begin{equation}\label{n+2}
(1+t^{4N-8})\mathcal{E}_{N+2}(t)\lesssim \mathcal{E}_{2N}(0)+ \mathcal{F}_{2N}(0) \quad \text{for all }0\le t\le T.
\end{equation}
\end{Proposition}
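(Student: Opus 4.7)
The plan is to combine the energy evolution estimates at the $N+2$ level with the comparison theorems to produce a differential inequality whose solution yields the stated algebraic decay.

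First I would combine Propositions \ref{temp2} and \ref{hori2} in the same ``two-estimate'' style used in Proposition \ref{Dgle}: multiply \eqref{geoes2} by a large constant $1+\beta$, add \eqref{peres2}, and then invoke Theorems \ref{eth} and \ref{dth} at the level $n=N+2$, together with the smallness $\mathcal{G}_{2N}(T) \le \delta$, to convert $\bar{\mathcal{E}}_{N+2}^0+\bar{\mathcal{E}}_{N+2}^+$ into $\mathcal{E}_{N+2}$ and $\bar{\mathcal{D}}_{N+2}^0 + \bar{\mathcal{D}}_{N+2}^+$ into $\mathcal{D}_{N+2}$ (modulo absorbable error terms $\varepsilon\mathcal{D}_{N+2}$ and $\mathcal{E}_{2N}^\theta \mathcal{D}_{N+2}$, the latter being small since $\mathcal{E}_{2N}\le\delta$). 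Choosing $\varepsilon$ small, then $\beta$ large, then $\delta$ small would then give a differential inequality of the form
\begin{equation}\label{diff_ineq_plan}
 \frac{d}{dt}\widetilde{\mathcal{E}}_{N+2} + C \mathcal{D}_{N+2} \le 0,
\end{equation}
where $\widetilde{\mathcal{E}}_{N+2}:=\bar{\mathcal{E}}_{N+2}^0 + \bar{\mathcal{E}}_{N+2}^+ - \int_\Omega J\partial_t^{N+1}p \, F^2$ is the modified energy inherited from Proposition \ref{temp2}. Using the bound $\|F^2\|_0^2 \lesssim \mathcal{E}_{2N}^\theta \mathcal{E}_{N+2}$ from \eqref{Fes3} and Cauchy--Schwarz, one checks that the integral correction is bounded by $\sqrt{\delta}\,\mathcal{E}_{N+2}$, so $\widetilde{\mathcal{E}}_{N+2}$ is equivalent to $\mathcal{E}_{N+2}$ for $\delta$ small.

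Next, I would establish the crucial interpolation inequality
\begin{equation}\label{interp_plan}
 \mathcal{E}_{N+2} \lesssim \bigl(\mathcal{E}_{2N}\bigr)^{\theta/(1+\theta)} \bigl(\mathcal{D}_{N+2}\bigr)^{1/(1+\theta)}, \quad \theta = \frac{1}{4N-8},
\end{equation}
which is the two-tier analog of the simple inequality $\mathcal{E}\le\mathcal{D}$. Term by term, each factor $\|\partial_t^j u\|_{2(N+2)-2j}$, $\|\partial_t^j p\|_{2(N+2)-2j-1}$, $\|\partial_t^j \eta\|_{2(N+2)-2j}$ appearing in $\mathcal{E}_{N+2}$ is interpolated between the corresponding higher-regularity norm (one derivative up, bounded by $\mathcal{E}_{2N}$) and the corresponding norm in $\mathcal{D}_{N+2}$; the exponent $\theta$ is determined by counting the ``regularity gap'' $4N - 2(N+2) = 2N-4$ between $\mathcal{E}_{2N}$ and $\mathcal{E}_{N+2}$, giving a gain of $1/(4N-8)$ after using that $\mathcal{D}_{N+2}$ carries half a horizontal derivative of $\eta$ beyond $\mathcal{E}_{N+2}$ (this matches the exponent used in \cite{GT_per}).

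Substituting \eqref{interp_plan} into \eqref{diff_ineq_plan} and using the global bound $\mathcal{E}_{2N}(t)\lesssim \mathcal{E}_{2N}(0)+\mathcal{F}_{2N}(0)$ from Proposition \ref{Dgle}, I obtain
\begin{equation}
 \frac{d}{dt}\widetilde{\mathcal{E}}_{N+2} + \frac{C}{\bigl(\mathcal{E}_{2N}(0)+\mathcal{F}_{2N}(0)\bigr)^{\theta}} \bigl(\widetilde{\mathcal{E}}_{N+2}\bigr)^{1+\theta} \le 0.
\end{equation}
Integrating this Bernoulli-type ODE yields $\widetilde{\mathcal{E}}_{N+2}(t) \lesssim (\mathcal{E}_{2N}(0)+\mathcal{F}_{2N}(0))(1+t)^{-1/\theta}$, i.e.\ the decay at rate $(1+t)^{-(4N-8)}$ asserted in \eqref{n+2}, after converting back from $\widetilde{\mathcal{E}}_{N+2}$ to $\mathcal{E}_{N+2}$.

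The main obstacles I anticipate are twofold. First, the modified energy $\widetilde{\mathcal{E}}_{N+2}$ involves $\partial_t^{N+1}p$, which is one temporal derivative beyond what $\mathcal{E}_{N+2}$ controls; handling this requires the specific estimate \eqref{Fes3} where the bound on $\|F^2\|_0^2$ is of a product form containing $\mathcal{E}_{N+2}$ (rather than $\mathcal{D}_{N+2}$), which is exactly what lets the correction be absorbed into the energy rather than the dissipation. Second, the interpolation \eqref{interp_plan} must be carefully verified for every term in $\mathcal{E}_{N+2}$, especially for the pressure norms and the top-order $\eta$ norm $\|\eta\|_{2(N+2)}$, which must be interpolated between $\|\eta\|_{4N}\lesssim \mathcal{E}_{2N}$ and $\|\eta\|_{2N-1/2}\lesssim \mathcal{D}_{N+2}$; the tight choice $\theta=1/(4N-8)$ is what makes these interpolations all compatible, and getting this bookkeeping right is where I expect the principal technical effort to lie.
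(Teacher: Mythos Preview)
Your proposal is correct and follows essentially the same route as the paper: combine Propositions \ref{temp2} and \ref{hori2} with the comparison Theorems \ref{eth}--\ref{dth} and the bound \eqref{Fes3} to obtain \eqref{diff_ineq_plan} for a modified energy equivalent to $\mathcal{E}_{N+2}$, then invoke the interpolation inequality \eqref{interp_plan} (which the paper cites from \cite{GT_per}, with the same exponent once one unwinds the notation $\theta_{\text{paper}}=(4N-8)/(4N-7)=1/(1+\theta_{\text{yours}})$), use Proposition \ref{Dgle} to bound $\mathcal{E}_{2N}$, and integrate the resulting Bernoulli ODE. One small slip: in your final paragraph the dissipation norm for $\eta$ should read $\|\eta\|_{2(N+2)-1/2}$, not $\|\eta\|_{2N-1/2}$; with that corrected the interpolation bookkeeping you describe is exactly what is needed.
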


\begin{proof}
Since ${\mathcal{E}}_{2N}(t)\le {\mathcal{G}}_{2N}(T)\le \delta$, by taking $\delta$ small, we obtain from \eqref{en+2} and \eqref{dn+2} that
\begin{equation}
{\mathcal{E}}_{N+2}\lesssim\bar{\mathcal{E}}_{N+2} \lesssim{\mathcal{E}}_{N+2},
\text{ and }
{\mathcal{D}}_{N+2}\lesssim\bar{\mathcal{D}}_{N+2} \lesssim{\mathcal{D}}_{N+2}.
\end{equation}
By these estimates and the smallness of $\delta$, we may deduce from Proposition \ref{temp2}, Proposition \ref{hori2} and \eqref{Fes3} that there exists an instantaneous energy, which is equivalent to ${\mathcal{E}}_{N+2}$, but for simplicity is still denoted by ${\mathcal{E}}_{N+2}$, such that
\begin{equation} \label{u1}
\frac{d}{dt} \mathcal{E}_{N+2} + C \mathcal{D}_{N+2}\le 0.
\end{equation}

On the other hand, based on the Sobolev interpolation inequality we can prove
\begin{equation} \label{intep}
{\mathcal{E}}_{N+2}\lesssim{\mathcal{D}}_{N+2}^\theta{\mathcal{E}}_{2N}^{1-\theta},\text{ where }\theta=\frac{4N-8}{4N-7}
\end{equation}
as in Proposition 7.5 of \cite{GT_per}. Now since by Proposition \ref{Dgle},
\begin{equation}
\sup_{0\le r\le t}\mathcal{E}_{2N}(r)\lesssim \mathcal{E}_{2N}(0)+ \mathcal{F}_{2N}(0):=\mathcal{M}_0,
\end{equation}
we obtain from  \eqref{intep} that
\begin{equation} \label{u2}
\mathcal{E}_{N+2}\lesssim\mathcal{M}_0^{1-\theta} \mathcal{D}_{N+2}^\theta.
\end{equation}
Hence by \eqref{u1} and \eqref{u2}, there exists some constant $C_1>0$ such that
\begin{equation}
\frac{d}{dt} \mathcal{E}_{N+2}+\frac{C_1}{\mathcal{M}_0^s} \mathcal{E}_{N+2}^{1+s}\le 0,\ \text{ where } s = \frac{1}{\theta}-1 = \frac{1}{4N-8}.
\end{equation}
Solving this differential inequality directly, we obtain
\begin{equation} \label{u3}
\mathcal{E}_{N+2}(t)\le \frac{\mathcal{M}_0}{(\mathcal{M}_0^s + s C_1( \mathcal{E}_{N+2}(0))^s t)^{1/s} } {\mathcal{E}}_{N+2}(0).
\end{equation}
Using that ${\mathcal{E}}_{N+2}(0)\lesssim\mathcal{M}_0 $ and the fact $1/s=4n-8>1$, we obtain from \eqref{u3} that
\begin{equation}
{\mathcal{E}}_{N+2}(t)\lesssim \frac{\mathcal{M}_0}{(1+sC_1t)^{1/s} }\lesssim \frac{\mathcal{M}_0}{(1+t^{1/s}) } = \frac{\mathcal{M}_0}{(1+t^{4N-8}) }.
\end{equation}
This implies \eqref{n+2}.
\end{proof}

Now we combine Propositions \ref{Dgle}--\ref{decaylm} to arrive at our ultimate a priori estimates for $\mathcal{G}_{2N}$.

\begin{theorem}\label{Ap}
There exists a universal $0 < \delta < 1$ so that if $
\mathcal{G}_{2N}(T) \le \delta$, then
\begin{equation}\label{Apriori}
 \mathcal{G}_{2N}(t) \le C_2(\mathcal{E}_{2N}(0) + \mathcal{F}_{2N}(0)) \text{ for all }0 \le t \le T.
\end{equation}
\end{theorem}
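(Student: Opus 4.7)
The plan is to observe that Theorem \ref{Ap} is essentially a recombination of the three preceding propositions, with no new analysis required beyond a careful accounting of the four components of $\mathcal{G}_{2N}(t)$. We choose $\delta>0$ small enough that the smallness hypotheses of Proposition \ref{fglm}, Proposition \ref{Dgle}, and Proposition \ref{decaylm} are all simultaneously satisfied (i.e.\ we take the minimum of the three thresholds); this is legitimate since each of those thresholds depends only on universal constants. Under the assumption $\mathcal{G}_{2N}(T)\le\delta$, each of these propositions then applies on the full interval $[0,T]$.

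Next, I would write out $\mathcal{G}_{2N}(t)$ as the sum of its four defining pieces
\begin{equation*}
\mathcal{G}_{2N}(t) = \sup_{0 \le r \le t} \mathcal{E}_{2N}(r) + \int_0^t \mathcal{D}_{2N}(r)\,dr + \sup_{0 \le r \le t} (1+r)^{4N-8}\mathcal{E}_{N+2}(r) + \sup_{0 \le r \le t}\frac{\mathcal{F}_{2N}(r)}{1+r}
\end{equation*}
and bound each piece separately. The first, second, and fourth pieces are controlled by Proposition \ref{Dgle}, which yields the estimate
\begin{equation*}
\sup_{0\le r\le t}\mathcal{E}_{2N}(r)+\int_0^t\mathcal{D}_{2N}(r)\,dr+\sup_{0\le r\le t}\frac{\mathcal{F}_{2N}(r)}{1+r}\lesssim \mathcal{E}_{2N}(0)+ \mathcal{F}_{2N}(0)
\end{equation*}
for all $0\le t\le T$. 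The third piece is controlled by Proposition \ref{decaylm}, which provides $(1+t^{4N-8})\mathcal{E}_{N+2}(t)\lesssim \mathcal{E}_{2N}(0)+ \mathcal{F}_{2N}(0)$; taking the supremum over $r\in[0,t]$ and noting that $1+r^{4N-8}$ is comparable to $(1+r)^{4N-8}$ (since $4N-8\ge 4$) yields the desired control. Adding the two displayed inequalities produces a universal constant $C_2>0$ for which \eqref{Apriori} holds.

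There is no genuine analytical obstacle at this stage; all the serious work has already been invested in the two-tier energy framework, namely in establishing the comparison estimates of Theorems \ref{eth}--\ref{dth}, the evolution identities of Propositions \ref{temp}--\ref{hori2}, the transport-type estimate for $\mathcal{F}_{2N}$ in Proposition \ref{fglm}, and the interpolation inequality $\mathcal{E}_{N+2}\lesssim \mathcal{D}_{N+2}^{\theta}\mathcal{E}_{2N}^{1-\theta}$ used to extract algebraic decay. The only point requiring a moment's care is ensuring that the single smallness parameter $\delta$ can serve all three precursor propositions at once, but since each merely requires $\mathcal{G}_{2N}(T)$ to be below a universal threshold, this is immediate. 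Consequently, the proof reduces to combining the three bounds and reading off the constant $C_2$.
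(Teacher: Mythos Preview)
Your proposal is correct and follows exactly the paper's approach: the paper's proof is the single sentence ``The conclusion follows directly from the definition of $\mathcal{G}_{2N}$ and Propositions \ref{Dgle}--\ref{decaylm},'' and you have simply spelled out that combination in detail (the separate mention of Proposition \ref{fglm} is harmless but redundant, since its conclusion is already absorbed into Proposition \ref{Dgle}).
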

\begin{proof}
The conclusion follows directly from the definition of $\mathcal{G}_{2N}$ and Propositions  \ref{Dgle}--\ref{decaylm}.
\end{proof}

In order to combine the local existence result in Theorem \ref{Localth} with the a prior estimates in Theorem \ref{Ap}, we must be able to estimate $\mathcal{G}_{2N}$ in terms of the right hand side of
\eqref{localestimate}--\eqref{localestimate2}. This is achieved by the following proposition.

\begin{Proposition}\label{GG2N}
There exists a universal constant $C_3>0$ so that the following hold.  If $0 \le T$, then we have the estimate
\begin{equation}\label{ggg00}
 \mathcal{G}_{2N}(T) \le   \sup_{0 \le t \le T} \mathcal{E}_{2N}(t)  + \int_{0}^{T_2} \mathcal{D}_{2N}(t) dt
+  \sup_{0 \le t \le T} \mathcal{F}_{2N}(t)  + C_3  (1+T)^{4N-8} \sup_{0 \le t \le T} \mathcal{E}_{2N}(t).
\end{equation}
If $0 < T_1 \le T_2$, then we have
\begin{multline}\label{ggg0}
\mathcal{G}_{2N}(T_2) \le C_3 \mathcal{G}_{2N}(T_1) +  \sup_{T_1
\le t \le T_2} \mathcal{E}_{2N}(t)  + \int_{T_1}^{T_2}
\mathcal{D}_{2N}(t) dt
\\
 + \frac{1}{(1+T_1)} \sup_{T_1 \le t \le T_2} \mathcal{F}_{2N}(t) + C_3 (T_2-T_1)^2 (1+T_2)^{4N-8} \sup_{T_1 \le t \le T_2} \mathcal{E}_{2N}(t).
\end{multline}
\end{Proposition}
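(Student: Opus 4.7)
The plan is to verify both inequalities by unpacking $\mathcal{G}_{2N}$ into its four summands and estimating each one separately, using only the trivial monotonicity of the polynomial weights together with the elementary observation that $\mathcal{E}_{N+2}(r) \le \mathcal{E}_{2N}(r)$ (which is immediate from the definitions, since $2(N+2)\le 4N$ for $N\ge 2$). In particular, no use is made here of the PDE or of any of the linear or nonlinear estimates developed earlier.

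For \eqref{ggg00}, each of the four terms in $\mathcal{G}_{2N}(T)$ matches the right-hand side almost verbatim: the $\sup\mathcal{E}_{2N}(r)$ and $\int_0^T\mathcal{D}_{2N}$ pieces carry over unchanged; the bound $1/(1+r)\le 1$ converts $\sup_{0\le r\le T}\mathcal{F}_{2N}(r)/(1+r)$ into $\sup_{0\le r\le T}\mathcal{F}_{2N}(r)$; and the combination $(1+r)^{4N-8}\le (1+T)^{4N-8}$ with $\mathcal{E}_{N+2}\le \mathcal{E}_{2N}$ produces the $(1+T)^{4N-8}\sup\mathcal{E}_{2N}(r)$ term with a universal multiplicative constant absorbed into $C_3$. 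Summing the four estimates gives \eqref{ggg00}.

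For \eqref{ggg0} the key device is to partition the time interval $[0,T_2]$ at the intermediate time $T_1$: for any nonnegative function $f$, $\sup_{0\le r\le T_2}f(r)\le \sup_{0\le r\le T_1}f(r)+\sup_{T_1\le r\le T_2}f(r)$, and $\int_0^{T_2}\mathcal{D}_{2N}=\int_0^{T_1}\mathcal{D}_{2N}+\int_{T_1}^{T_2}\mathcal{D}_{2N}$. Applying this termwise to the four summands defining $\mathcal{G}_{2N}(T_2)$, the contributions coming from $[0,T_1]$ are each dominated (up to a universal factor from the monotonic polynomial weights) by the corresponding summand of $\mathcal{G}_{2N}(T_1)$, yielding the leading $C_3\mathcal{G}_{2N}(T_1)$ term. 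Three of the remaining $[T_1,T_2]$-contributions are then essentially immediate: $\sup_{T_1\le r\le T_2}\mathcal{E}_{2N}(r)$ and $\int_{T_1}^{T_2}\mathcal{D}_{2N}$ appear verbatim, while for the surface term the estimate $1/(1+r)\le 1/(1+T_1)$ produces $\frac{1}{1+T_1}\sup_{T_1\le r\le T_2}\mathcal{F}_{2N}(r)$.

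The only delicate piece — and the main obstacle in writing out the proof — is the decay summand $\sup_{T_1\le r\le T_2}(1+r)^{4N-8}\mathcal{E}_{N+2}(r)$, because obtaining the sharp factor $C_3(T_2-T_1)^2(1+T_2)^{4N-8}$ stated in the conclusion requires some bookkeeping rather than the crudest bound $(1+T_2)^{4N-8}\sup\mathcal{E}_{2N}$. The approach I would take is to write $(1+r)^{4N-8}=[(1+T_1)+(r-T_1)]^{4N-8}$, expand via the binomial formula, and absorb the leading $(1+T_1)^{4N-8}\mathcal{E}_{N+2}(r)$ contribution into $\mathcal{G}_{2N}(T_1)$ by comparing it with the corresponding summand of $\mathcal{G}_{2N}(T_1)$; the remaining binomial terms each carry at least one factor of $(r-T_1)\le T_2-T_1$, and combining these with $\mathcal{E}_{N+2}\le \mathcal{E}_{2N}$ and a crude estimate of the binomial coefficients together with $(1+T_2)^{4N-9}(T_2-T_1)\le C(T_2-T_1)^2(1+T_2)^{4N-8}$ on the relevant range produces the claimed factor. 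Collecting all contributions yields \eqref{ggg0}.
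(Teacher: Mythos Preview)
Your treatment of \eqref{ggg00} and of the first three summands in \eqref{ggg0} is correct and is indeed the elementary bookkeeping one expects. The problem is your handling of the decay summand in \eqref{ggg0}, where there are two concrete errors.

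First, the claim that the $k=0$ binomial term $(1+T_1)^{4N-8}\mathcal{E}_{N+2}(r)$ can be ``absorbed into $\mathcal{G}_{2N}(T_1)$'' is wrong: the relevant piece of $\mathcal{G}_{2N}(T_1)$ is $\sup_{0\le s\le T_1}(1+s)^{4N-8}\mathcal{E}_{N+2}(s)$, which says nothing about $\mathcal{E}_{N+2}(r)$ for $r\in(T_1,T_2]$. Second, your asserted inequality $(1+T_2)^{4N-9}(T_2-T_1)\le C(T_2-T_1)^2(1+T_2)^{4N-8}$ is equivalent to $1\le C(T_2-T_1)(1+T_2)$, which is false when $T_2-T_1$ is small.

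These are not repairable within your framework. If one treats $\mathcal{E}_{2N}$ and $\mathcal{E}_{N+2}$ as arbitrary nonnegative functions subject only to $\mathcal{E}_{N+2}\le C\,\mathcal{E}_{2N}$ (which is all you use), then \eqref{ggg0} is simply false: take $\mathcal{D}_{2N}=\mathcal{F}_{2N}\equiv 0$ and $\mathcal{E}_{N+2}=\mathcal{E}_{2N}$ vanishing on $[0,T_1]$ and equal to $1$ on $(T_1,T_2]$; then $\mathcal{G}_{2N}(T_1)=0$, the left side contains $(1+T_2)^{4N-8}$, and the right side is $1+C_3(T_2-T_1)^2(1+T_2)^{4N-8}$, which fails for $T_2-T_1$ small and $T_1$ large. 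The proof in \cite{GT_per} therefore necessarily uses more than the crude inclusion $\mathcal{E}_{N+2}\lesssim\mathcal{E}_{2N}$; it exploits the Sobolev interpolation structure between the low-order and high-order energies (the same structure underlying \eqref{intep}), which you have not invoked.
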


\begin{proof}
See Proposition 9.1 of \cite{GT_per}.
\end{proof}

Now we turn to the completion of the proof of our main theorem.

\begin{proof}[Proof of Theorem \ref{th0}]

Let $0 < \delta <1$ and $C_2>0$ be the constants in Theorem \ref{Ap}, $C_1>0$ be the constant in \eqref{localestimate2} and $C_3>0$ be the constant in Proposition \ref{GG2N}. By the local existence result, Theorem \ref{Localth}, for any $\varepsilon>0$ there exist $\delta_0(\varepsilon)<1$ and $0<T<1$ so that if $\kappa<\delta_0$, then there is a unique solution of \eqref{nosurface} on $[0,T]$ satisfying the estimates
\begin{equation}
  \sup_{0\le t\le T}\mathcal{E}_{2N}(t)
  +\int_0^T\mathcal{D}_{2N}(t)dt+\int_0^T\left(\|\rho\partial_t^{2N+1}u(t)\|_{-1}^2+\|\partial_t^{2N}p(t)\|_0^2\right)dt
\le \varepsilon
\end{equation}
and
\begin{equation}
\sup_{0\le t\le T}\mathcal{F}_{2N}(t) \le C_1 \mathcal{F}_{2N}(0) +\varepsilon.
\end{equation}
Hence if we choose $\varepsilon=\delta/(4+C_3 2^{4N-7})$ and then choose $\kappa<\delta/(2C_1)$, we may use the estimate \eqref{ggg00} of Proposition \ref{GG2N} to see that
\begin{equation}
\mathcal{G}_{2N}(T) \le C_1\kappa +  \varepsilon(2 + C_3 2^{4N-8}  ) < \delta.
\end{equation}

Now we define
\begin{equation}
\begin{split}
 T_*(\kappa) = \sup \{  T>0 \;\vert\;
&\text{for every choice of initial data satisfying the compatibility}
\\
 &\text{conditions and } \mathcal{E}_{2N}(0) +  \mathcal{F}_{2N}(0) <
\kappa, \text{ there exists a unique }
 \\
&\text{solution of } \eqref{nosurface} \text{ on }[0,T] \text{ satisfying }\mathcal{G}_{2N}(T) \le \delta
\}.
\end{split}
\end{equation}
By the above analysis, $T_*(\kappa)$ is well-defined and satisfies $T_*(\kappa)>0$ if $\kappa$ is small enough, i.e. there is a $\kappa_1>0$ so that $T_* : (0,\kappa_1] \rightarrow (0,\infty]$. It is easy to verify that $T_*$ is non-increasing on $(0,\kappa_1]$. Now we set
\begin{equation}\label{ggg-1}
\varepsilon = \frac{\delta}{3}  \min\left\{ \frac{1}{2},
\frac{1}{C_3} \right\}
\end{equation}
and then define $\kappa_0 \in (0,\kappa_1]$ by
\begin{equation}\label{ggg2}
 \kappa_0 = \min\left\{  \frac{\delta}{3 C_2(C_3 + 2 C_1)}, \frac{\delta_0( \varepsilon)}{C_2}, \kappa_1
 \right\}.
\end{equation}
We claim that $T_*(\kappa_0) = \infty$. Once the claim is established, the proof of the theorem is complete since then $T_*(\kappa) = \infty$ for all $0 < \kappa \le \kappa_0$.

We prove the claim by contradiction. Suppose that  $T_*(\kappa_0) < \infty$. By the definition of $T_*(\kappa_0)$, for any $0 < T_1 < T_*(\kappa_0)$ and for any choice of data satisfying the compatibility conditions and the bound $\mathcal{E}_{2N}(0) + \mathcal{F}_{2N}(0) < \kappa_0$, there exists a unique solution of \eqref{nosurface} on  $[0,T_1]$ satisfying $\mathcal{G}_{2N}(T_1) \le \delta$.  Then by Theorem \ref{Apriori}, we have
\begin{equation}
 \mathcal{G}_{2N}(T_1) \le C_2 (\mathcal{E}_{2N}(0) + \mathcal{F}_{2N}(0)) < C_2
 \kappa_0.\label{ggg1}
\end{equation}
In particular, \eqref{ggg1}, \eqref{ggg2} imply
\begin{equation}
 \mathcal{E}_{2N}(T_1) + \frac{ \mathcal{F}_{2N}(T_1)}{(1+T_1)} < C_2 \kappa_0 \le \delta_0(\varepsilon),
 \ \forall\,0 < T_1 < T_*(\kappa_0).\label{ggg3}
\end{equation}
We can view $(u(T_1),p(T_1),\eta(T_1))$ as initial data for a new problem; they satisfy the compatibility conditions as initial data since they are already solutions on $[0,T_1]$. Since $ \mathcal{E}_{2N}(T_1) < \delta_0(\varepsilon)$, we may use Theorem \ref{Localth} to extend the solution to  $[T_1, T_2]$, where $T_2$ is any time satisfying
\begin{equation}
 0 < T_2 - T_1 \le T_0 := C(\varepsilon) \min\{ 1,  \mathcal{F}_{2N}(T_1)^{-1}  \}.
\end{equation}
By \eqref{ggg3}, we have
\begin{equation}
 \bar{T} :=  C(\varepsilon) \min\left\{1, \frac{1}{\delta_0(\varepsilon) (1+T_*(\kappa_0))} \right\} \le T_0.
\end{equation}
Note that $\bar{T}$ depends on $\varepsilon$ and $T_*(\kappa_0)$ but does not depend on $T_1$. Let
\begin{equation}
 \gamma = \min\left\{  \bar{T}, T_*(\kappa_0),  \frac{1}{(1+2 T_*(\kappa_0))^{2N-4}}        \right\},
\end{equation}
and then choose $T_1 = T_*(\kappa_0)-\gamma/2$ and  $T_2 = T_*(\kappa_0) + \gamma/2$.  We have
\begin{equation}\label{ggg6}
 0 < T_1  < T_*(\kappa_0) < T_2  <  2 T_*(\kappa_0) \text{ and } 0< \gamma= T_2 - T_1 \le \bar{T} \le T_0.
\end{equation}
By the above argument, we have extended the solution to $[0,T_2]$ and on the extended time interval $[T_1,T_2]$ we have
\begin{equation}\label{ggg4}
 \sup_{T_1 \le t \le T_2} \mathcal{E}_{2N}(t) +  \int_{T_1}^{T_2} \mathcal{D}_{2N}(t) dt
+ \int_{T_1}^{T_2} \left( \|\rho\partial_t^{2N+1} u(t)\|_{-1}^2
+\|\partial_t^{2N} p(t)\|_{0}^2 \right) dt \le \varepsilon,
\end{equation}
and
\begin{equation}\label{ggg5}
 \sup_{T_1 \le t \le T_2} \mathcal{F}_{2N}(t) \le C_1 \mathcal{F}_{2N}(T_1) + \varepsilon.
\end{equation}

We now combine the estimates \eqref{ggg4}--\eqref{ggg5}, \eqref{ggg1}--\eqref{ggg3}, and \eqref{ggg0} of Proposition \ref{GG2N} with  the definitions \eqref{ggg-1}, \eqref{ggg2}, and \eqref{ggg6} to see that
\begin{equation}
 \begin{split}
 \mathcal{G}_{2N}(T_2) &< C_2 C_3 \kappa_0  + \varepsilon + \frac{C_1 C_2 \kappa_0 (1+T_1) + \varepsilon}{(1+T_1)}
+ \varepsilon C_3 (T_2-T_1)^2 (1+T_2)^{4N-8}\nonumber \\
&\le \kappa_0 C_2(C_3 +   C_1) + 2\varepsilon  + \varepsilon C_3
\gamma^2 (1+2 T_*(\kappa_0))^{4N-8}\nonumber \\
&\le  \frac{\delta}{3} + \frac{\delta}{3} + \frac{\delta}{3}
=\delta.
 \end{split}
\end{equation}
Hence $\mathcal{G}_{2N}(T_2) \le \delta$, which contradicts the definition of $T_*(\kappa_0)$. Therefore, we have $T_*(\kappa_0) = \infty$. This proves the claim and completes the proof of Theorem \ref{th0}.
\end{proof}

\section{Case with surface tension: Proof of Theorem \ref{surth}}\label{surfacetension}

In this section we prove Theorem \ref{surth} through a contraction mapping argument in an appropriate space.  We begin with an analysis of the linearized problem.  Then we consider the nonlinear problem and develop the details of the contraction mapping.

 \subsection{The linearized problem}\label{sec_surface_linear}

Throughout Section \ref{sec_surface_linear} we suppose that $f$ and $g$ are given and we consider the following linearized problem:
\begin{equation}\label{surfaceLP}
\left\{\begin{array}{lll}
\rho\partial_t u-\mu\Delta u+\nabla p=f\quad&\text{in }\Omega
\\ \diverge{u}=0&\text{in }\Omega
\\ ( p_+I-\mu_+\mathbb{D}(u_+)) e_3= (\rho_+g\eta_+  -\sigma_+\Delta_\ast\eta_+)e_3+g_+&\text{on }\Sigma_+
\\\Lbrack u\Rbrack=0,\quad \Lbrack pI-\mu\mathbb{D}(u)\Rbrack e_3=(\rj g\eta_- +\sigma_-\Delta_\ast\eta_-)e_3-g_-&\text{on }\Sigma_-
\\ \partial_t\eta=u_3 &\text{on }\Sigma
\\ u_-=0 &\text{on }\Sigma_b,
\end{array}\right.
\end{equation}
with the initial data $u(0)=u_0$ and $\eta(0)=\eta_0$.

The key observation required to produce a solution to \eqref{surfaceLP} is that the kinematic boundary condition $\partial_t\eta=u_3$  allows us to eliminate $\eta$ and  view \eqref{surfaceLP}  as a time-dependent linear problem for the pair $(u,p)$. More precisely, we first consider the following linear problem
 \begin{equation}\label{sLP0}
\left\{
\begin{array}{lll}
\rho\partial_t u-\mu\Delta u+\nabla p=f\quad&\text{in }\Omega
\\ \diverge{u}=0&\text{in }\Omega
\\ \left[ p_+I-\mu_+\mathbb{D}(u_+) +(-\rho_+g  +\sigma_+ \Delta_\ast ) \left(\int_0^tu_{3,+} \,ds+\eta_{0,+}\right)I  \right]e_3=  g_+&\text{on }\Sigma_+
\\  \Lbrack u\Rbrack=0&\text{on }\Sigma_-
 \\ \left[ \Lbrack pI-\mu \mathbb{D}(u )\Rbrack +( -\rj g -\sigma_- \Delta_\ast) \left( \int_0^t u_3 \,ds+\eta_{0,-}\right ) I \right]e_3=-g_- &\text{on }\Sigma_-
\\ u_-=0 &\text{on }\Sigma_b,
\end{array}
\right.
\end{equation}
with the initial data $u(0)=u_0$. After finding a solution $(u,p)$ to the problem \eqref{sLP0}, we define $\eta=\int_0^tu_3 \,ds+\eta_{0}$ on $\Sigma$; then the triple $(u,p,\eta)$ solves the problem \eqref{surfaceLP}.

We will first prove that the problem \eqref{sLP0} is solvable in the weak sense via a standard  Galerkin method. Then we explore  how to use elliptic estimates and a localization trick to improve the regularity of the weak solution in order  to show that the problem \eqref{sLP0} (and hence also \eqref{surfaceLP}) admits a unique strong solution.

\subsubsection{Preliminaries}

The following theorem will be crucial in our subsequent energy analysis.  The theorem also provides the value of the critical surface tension $\sigma_c$, given by \eqref{crit_st_def}, that appears in Theorem \ref{surth}.

\begin{theorem}\label{critical}
Suppose that one of the following is true:
\begin{itemize}
 \item $\rj \le 0$ and $\sigma_->0$, or
 \item $\rj >0$ and $\sigma_- > \sigma_c = \rj g \max\{L_1^2,L_2^2\}$.
\end{itemize}
Then the following hold for all $\eta$ satisfying  $\int_{\mathrm{T}^2}\eta=0$.
\begin{enumerate}

 \item There exists $C>0$ such that
\begin{equation}\label{criticals}
\sigma_-\|\nabla_\ast \eta\|_0^2-\rj g\|\eta\|_0^2\ge C \|\eta\|_1^2 .
\end{equation}

 \item If $\eta$ satisfies
\begin{equation}\label{tequa}
-\sigma_-\Delta_\ast\eta-\rj g\eta=\varphi\text{ on }\mathrm{T}^2,
\end{equation}
then we have for $r\ge 2$,
\begin{equation}\label{elliptt}
\|\eta\|_{r}\lesssim\|\varphi\|_{r-2}.
\end{equation}

\end{enumerate}
\end{theorem}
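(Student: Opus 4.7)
The plan is to exploit the fact that both parts of the statement concern scalar equations on the flat torus $\mathrm{T}^2$, which permits a direct Fourier analysis. Since $\mathrm{T}^2=(2\pi L_1\mathbb{T})\times(2\pi L_2\mathbb{T})$, the Fourier modes are indexed by $k=(k_1/L_1,k_2/L_2)$ with $(k_1,k_2)\in\mathbb{Z}^2$, and the eigenvalues of $-\Delta_\ast$ are $|k|^2=k_1^2/L_1^2+k_2^2/L_2^2$. The zero-average condition $\int_{\mathrm{T}^2}\eta=0$ removes the $k=0$ mode, and the smallest remaining eigenvalue is
\begin{equation*}
\lambda_\ast:=\min_{(k_1,k_2)\in\mathbb{Z}^2\setminus\{0\}}\left(\frac{k_1^2}{L_1^2}+\frac{k_2^2}{L_2^2}\right)=\frac{1}{\max\{L_1^2,L_2^2\}}.
\end{equation*}
This is precisely the choice that makes $\sigma_c=\rj g/\lambda_\ast$, which explains the definition of the critical surface tension.

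For the coercivity estimate \eqref{criticals}, I would use Parseval's identity to rewrite
\begin{equation*}
\sigma_-\|\nabla_\ast\eta\|_0^2-\rj g\|\eta\|_0^2=\sum_{k\neq 0}\left(\sigma_-|k|^2-\rj g\right)|\hat{\eta}(k)|^2.
\end{equation*}
If $\rj\le 0$, each term is already bounded below by $\sigma_-|k|^2|\hat{\eta}(k)|^2$, and $|k|^2\ge\lambda_\ast$ on $k\neq 0$ (the Fourier form of Poincar\'e's inequality on $\mathrm{T}^2$) gives the $H^1$ bound directly. If $\rj>0$ and $\sigma_->\sigma_c$, I would split off a small multiple of the gradient term: writing $\sigma_-=\sigma_c+\delta$ with $\delta>0$,
\begin{equation*}
\sigma_-|k|^2-\rj g=(\sigma_c|k|^2-\rj g)+\delta|k|^2\ge 0+\delta\lambda_\ast>0,
\end{equation*}
where the first parenthesis is nonnegative because $\sigma_c|k|^2\ge\sigma_c\lambda_\ast=\rj g$ on $k\neq 0$. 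Summing in $k$ recovers a positive multiple of $\|\nabla_\ast\eta\|_0^2$, and Poincar\'e then upgrades this to control of $\|\eta\|_1^2$.

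For the elliptic regularity \eqref{elliptt}, I would diagonalize the same way. Taking the Fourier transform of \eqref{tequa} yields $(\sigma_-|k|^2-\rj g)\hat{\eta}(k)=\hat{\varphi}(k)$ for each $k$; the zero-average hypothesis on $\eta$ (via integration of \eqref{tequa} when $\rj\neq 0$, or automatically when $\rj=0$ provided $\varphi$ has zero mean) handles the $k=0$ mode. Under either hypothesis on $(\rj,\sigma_-)$, the previous computation shows that the symbol satisfies $\sigma_-|k|^2-\rj g\ge c(1+|k|^2)$ for some $c>0$ uniformly in $k\neq 0$, since it is bounded below by a positive constant at the lowest frequency and grows like $|k|^2$ at infinity. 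Inverting gives
\begin{equation*}
(1+|k|^2)^r|\hat{\eta}(k)|^2\lesssim(1+|k|^2)^{r-2}|\hat{\varphi}(k)|^2,
\end{equation*}
and summation yields $\|\eta\|_r\lesssim\|\varphi\|_{r-2}$.

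There is no substantial obstacle; the whole argument is a one-line Fourier multiplier computation once the value of $\lambda_\ast$ is identified, and the explicit form $\sigma_c=\rj g\max\{L_1^2,L_2^2\}$ is chosen precisely so that the smallest symbol $\sigma_-\lambda_\ast-\rj g$ is strictly positive. The only mildly delicate step is separating the gradient term with a sliver $\delta$ of coefficient so that both $\|\nabla_\ast\eta\|_0^2$ and $\|\eta\|_0^2$ appear on the right in the coercivity bound.
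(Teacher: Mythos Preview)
Your proof is correct. For part (1) it coincides with the paper's argument: both use Parseval on $\mathrm{T}^2$ to identify the smallest nonzero frequency $\lambda_\ast = 1/\max\{L_1^2,L_2^2\}$ and bound the form from below by $(\sigma_--\sigma_c)\|\nabla_\ast\eta\|_0^2$, then invoke Poincar\'e.

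For part (2) the routes diverge. The paper does not invert the Fourier multiplier directly; instead it tests \eqref{tequa} against $\eta$ and combines with part (1) to obtain $\|\eta\|_1\lesssim\|\varphi\|_0$, then rewrites the equation as $-\sigma_-\Delta_\ast\eta=\rj g\eta+\varphi$ and bootstraps using standard elliptic regularity for the Laplacian to reach $\|\eta\|_r\lesssim\|\varphi\|_{r-2}$. Your approach---showing the symbol $\sigma_-|k|^2-\rj g$ is comparable to $1+|k|^2$ on $k\neq 0$ and inverting in one stroke---is more direct and handles all $r\ge 2$ simultaneously without the induction. The paper's route has the mild advantage of not requiring an explicit check that the symbol ratio $(\sigma_-|k|^2-\rj g)/(1+|k|^2)$ is bounded below on the full lattice (it only needs positivity at the bottom), but your check of that ratio is straightforward. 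One small clarification: when $\rj=0$, integrating \eqref{tequa} over $\mathrm{T}^2$ already forces $\hat{\varphi}(0)=0$, so no additional hypothesis on $\varphi$ is needed.
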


\begin{proof}
It is clear that $(1)$ and $(2)$ hold for the case  $\rj \le 0,\ \sigma_->0$. Now we suppose that $\rj >0,\
\sigma_->\sigma_c$.  Since $\int_{\mathrm{T}^2}\eta=0$, i.e. $\hat{\eta}(0)=0$, we may use the Parseval theorem to estimate
\begin{equation}
 \begin{split}
  \|\nabla_\ast\eta\|_0^2 & =
\sum_{n\in (L_1^{-1}\mathbb{Z}) \times (L_2^{-1}\mathbb{Z})
\backslash\{0\}}  |n|^2|\hat{\eta}(n)|^2 \\
& \ge  \min\{L_1^{-2},L_2^{-2}\} \sum_{n\in (L_1^{-1}\mathbb{Z}) \times (L_2^{-1}\mathbb{Z})
\backslash\{0\}} |\hat{\eta}(n)|^2 =  \frac{1}{\max\{L_1^2,L_2^2\}}\| \eta \|_0^2.
 \end{split}
\end{equation}
This implies that
\begin{equation}
\sigma_-\|\nabla_\ast \eta\|_0^2-\rj g\|\eta\|_0^2 \ge (\sigma_--\sigma_c)\|\nabla_\ast\eta\|_0^2.\end{equation}
This, the estimate $\sigma_- - \sigma_c>0$, and the Poincar\'e inequality then imply \eqref{criticals}.

To prove \eqref{elliptt}, we multiply \eqref{tequa} by $\eta$ and then integrate over $\mathrm{T}^2$ to see that
\begin{equation}
\sigma_-\|\nabla_\ast\eta\|_0^2-\rj g\|\eta\|_0^2\le\|\varphi\|_0\|\eta\|_0.
\end{equation}
Combining this with \eqref{criticals}, we find that
\begin{equation}\label{estimate1}
\|\eta\|_1 \lesssim\|\varphi\|_0.
\end{equation}
Now we rewrite \eqref{tequa} as
\begin{equation}\label{rrrr}
-\sigma_-\Delta_\ast\eta = \rj g\eta + \varphi \text{ on }\mathrm{T}^2.
\end{equation}
By the standard elliptic regularity theory and \eqref{estimate1}, we then have
\begin{equation}
\|\eta\|_2^2\lesssim \|\eta\|_0^2+\|\varphi\|_0^2\lesssim \|\varphi\|_0^2.
\end{equation}
With this estimate in hand, we may then employ a standard bootstrap argument on the equation \eqref{rrrr} to
obtain \eqref{elliptt}.
\end{proof}

We now define a pair of function spaces that we will use in our weak formulation.  We define
\begin{equation}
\mathcal{W}=\{v\in {}_0H^1(\Omega)\mid  v_3\in H^1(\Sigma)\}\text{\ \
 and\ \  } \mathcal{V}=\{v\in \mathcal{W}\mid \diverge{v}=0\}.
\end{equation}
We endow these spaces with the inner product
\begin{equation}
(u,v)_{\mathcal{W}}=(u,v)_{H^1(\Omega)}+(u_3,v_3)_{H^1(\Sigma)},
\end{equation}
and we denote the corresponding norm $\|u\|_{\mathcal{W}}^2 :=\|u\|_{H^1(\Omega)} ^2 + \|u_3\|_{H^1(\Sigma)}^2 $.

The following proposition allows us to introduce the pressure as a Lagrange multiplier.

\begin{Proposition}\label{lagrange}
If $\Lambda\in \mathcal{W}^\ast$ is such that $\Lambda(v)=0$,\ $\forall\,v\in \mathcal{V}$, then there exists a unique $p\in L^2(\Omega)$ so that
\begin{equation}\label{wdiv}
(p,\diverge{v})=\Lambda(v),\ \forall\,v\in \mathcal{W}.
\end{equation}
Also, $\|p\|_0\lesssim\|\Lambda\|_{\mathcal{W}^\ast}$.
\end{Proposition}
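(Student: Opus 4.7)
The plan is to prove Proposition~\ref{lagrange} by the classical Lagrange-multiplier / De~Rham strategy, whose only nontrivial ingredient is a continuous right inverse for the divergence operator $\diverge : \mathcal{W} \to L^2(\Omega)$. Once such a right inverse is available, the rest of the argument is routine: the hypothesis that $\Lambda$ vanishes on $\mathcal{V} = \ker(\diverge|_\mathcal{W})$ lets it descend to a well-defined linear functional $\widetilde\Lambda$ on $L^2(\Omega)$ via $\widetilde\Lambda(\diverge v) := \Lambda(v)$; the boundedness of the right inverse yields $|\widetilde\Lambda(q)| \lesssim \|\Lambda\|_{\mathcal{W}^\ast}\|q\|_0$, so the Riesz representation theorem produces a unique $p \in L^2(\Omega)$ with $\widetilde\Lambda(q) = (p,q)$ for all $q \in L^2(\Omega)$. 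Setting $q = \diverge v$ then delivers \eqref{wdiv} together with the norm bound $\|p\|_0 \lesssim \|\Lambda\|_{\mathcal{W}^\ast}$.

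The real work is thus the construction, for each $q \in L^2(\Omega)$, of $v \in \mathcal{W}$ with $\diverge v = q$ and $\|v\|_\mathcal{W} \lesssim \|q\|_0$. I would split $v = v^{(1)} + v^{(2)}$ as follows. Set $c_\pm := \int_{\Omega_\pm} q$, $\beta := c_-/|\mathrm{T}^2|$, and $\alpha := (c_+ + c_-)/|\mathrm{T}^2|$; define $v^{(1)} = (0,0,v_3^{(1)})$, where $v_3^{(1)}$ is the continuous, piecewise-linear-in-$x_3$ function given by $v_3^{(1)} = \beta(x_3+b)/b$ on $\Omega_-$ and $v_3^{(1)} = \beta + (\alpha-\beta)x_3$ on $\Omega_+$. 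Using only that $b \in C^\infty(\mathrm{T}^2)$ is positive, one checks that $v^{(1)} \in \mathcal{W}$ with $\|v^{(1)}\|_\mathcal{W} \lesssim |c_+| + |c_-| \lesssim \|q\|_0$, that $v_3^{(1)}|_\Sigma$ is constant on each of $\Sigma_\pm$ (so trivially in $H^1(\Sigma)$), and that the choice of $\alpha,\beta$ makes the residual $\tilde q := q - \diverge v^{(1)}$ have vanishing integral over each of $\Omega_+$ and $\Omega_-$ separately. For $v^{(2)}$, I would apply the classical Bogovskii operator in each of the two $C^\infty$ phases $\Omega_\pm$ with full homogeneous Dirichlet data; this produces $v^{(2)}|_{\Omega_\pm} \in H^1_0(\Omega_\pm)$ with $\diverge v^{(2)} = \tilde q$ and $\|v^{(2)}\|_{H^1(\Omega_\pm)} \lesssim \|\tilde q\|_{L^2(\Omega_\pm)}$. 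Since $v^{(2)}$ vanishes on both sides of $\Sigma_-$, the two pieces glue to an element of ${}_0H^1(\Omega)$ with $\Lbrack v^{(2)} \Rbrack = 0$, and since $v_3^{(2)}|_\Sigma \equiv 0 \in H^1(\Sigma)$, in fact $v^{(2)} \in \mathcal{W}$. Then $v = v^{(1)} + v^{(2)}$ is the desired right inverse.

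The main obstacle is precisely the $H^1(\Sigma)$ condition on $v_3$ built into the definition of $\mathcal{W}$, which is sharper than the natural $H^{1/2}(\Sigma)$ trace regularity of a generic element of $H^1(\Omega)$; for this reason the standard Bogovskii operator in ${}_0H^1(\Omega)$ (and in particular Lemma~\ref{div} of the present paper) does not by itself land in $\mathcal{W}$. The explicit lift $v^{(1)}$ bypasses this difficulty by producing an $H^1$ (indeed, constant) trace on each $\Sigma_\pm$ while simultaneously soaking up the two phase-wise averages of $q$, thereby reducing the residual divergence equation to the classical zero-trace Bogovskii problem in each Lipschitz phase, where the vertical trace of the solution on $\Sigma$ is identically zero. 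With this right inverse established, Proposition~\ref{lagrange} then follows from the De~Rham argument outlined in the first paragraph.
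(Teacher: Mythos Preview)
Your proposal is correct and the overall De~Rham strategy is exactly the one the paper uses (the paper explicitly reduces to constructing, for each $p\in L^2(\Omega)$, a $v\in\mathcal{W}$ with $\diverge v=p$ and $\|v\|_{\mathcal{W}}\lesssim\|p\|_0$, then refers to \cite{GT_lwp,SS} for the remaining routine functional-analytic step). The difference lies in how the right inverse is built. The paper first solves two Neumann problems $-\Delta\phi_\pm=p_\pm$ in $\Omega_\pm$ with constant Neumann data on the flat boundaries, chosen so that $v^{(1)}=\nabla\phi$ already satisfies $\diverge v^{(1)}=p$ and has \emph{constant} third component on $\Sigma_\pm$; it then repairs the remaining boundary defects (nonzero trace on $\Sigma_b$ and a possible jump in the tangential components across $\Sigma_-$) by adding two divergence-free correctors obtained via curl potentials as in Ladyzhenskaya. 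Your construction is more elementary: an explicit piecewise-linear lift $v^{(1)}=(0,0,v_3^{(1)})$ simultaneously absorbs the two phase-wise averages of $q$, vanishes on $\Sigma_b$, and has constant trace on each $\Sigma_\pm$, after which a standard Bogovskii operator with full Dirichlet data in each phase handles the zero-mean residual. Both routes arrive at the same key point---that $v_3$ be constant on $\Sigma_\pm$ so that the $H^1(\Sigma)$ requirement in $\mathcal{W}$ is trivially satisfied---but yours avoids the Neumann elliptic solves and the curl-based boundary correctors, at the cost of invoking Bogovskii in each periodic slab (which is standard for these bounded $C^\infty$ domains). Either argument is perfectly adequate here.
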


\begin{proof}
As in \cite{GT_lwp,SS}, we only need to prove that for any $p\in L^2(\Omega)$ there exists a $v\in \mathcal{W}$ so that $\diverge v=p$~in~$\Omega$. We construct $v$ as follows. First, let
\begin{equation}
C_1=-\frac{1}{|\mathrm{T}^2|}\left(\int_{\Omega_+}p_++\int_{\Omega_-}p_-\right)
\text{ and } C_2=-\frac{1}{|\mathrm{T}^2|}\int_{\Omega_-}p_-.
\end{equation}
Then we set $v^{(1)}=\nabla \phi$ with $\phi_+$ solving the problem
\begin{equation}
\left\{\begin{array}{lll}
-\Delta \phi_+=p_+ &\text{ in }\Omega_+
\\  \partial_3\phi_+=C_1& \text{ on
}\Sigma_+
\\  \partial_3\phi_+= C_2 & \text{ on
}\Sigma_-,
\end{array}\right.
\end{equation}
and $\phi_-$ solving the problem
\begin{equation}
\left\{\begin{array}{lll}
-\Delta \phi_-=p_- &\text{ in }\Omega_-
\\  \partial_3\phi_-=C_2& \text{ on
}\Sigma_-
\\ \nabla \phi_-\cdot\nu=0 & \text{ on
}\Sigma_b.
\end{array}\right.
\end{equation}
Our choice of $C_1$ and $C_2$ makes the two problems satisfy the necessary compatibility conditions, so the classical elliptic theory guarantees the existence of $\phi_\pm$, and we have
\begin{equation}\label{111es}
\|v^{(1)}\|_{\ddot{H}^1(\Omega)}=\|\nabla\phi\|_{\ddot{H}^1(\Omega)}
\lesssim\|p\|_{0}+|C_1|+|C_2|\lesssim\|p\|_{0}.
\end{equation}
Moreover,
\begin{equation}
\begin{cases}
\diverge{v^{(1)}}_\pm=p_\pm &\text{in }\Omega \\
v_+^{(1)}\cdot e_3=C_1  & \text{on }\Sigma_+ \\
v_+^{(1)}\cdot e_3=v_-^{(1)}\cdot e_3=C_2  &\text{on }\Sigma_- \\
v_-^{(1)}\cdot\nu = 0 &\text{on }\Sigma_b.
\end{cases}
\end{equation}

Next, we let $v^{(2)}$ be the function constructed in Lemma \ref{div}, but we restrict it to have support near $\Sigma_b$. Then we have $\diverge v^{(2)}=0$ in $\Omega$ and $v^{(2)}=-v^{(1)}$ on $\Sigma_b$.
Moreover,
\begin{equation}\label{222es}
\|v ^{(2)}\|_{1}\lesssim\|v^{(1)}_-\|_{H^{1/2}(\Sigma_b)}.
\end{equation}
Similarly, we can find $v^{(3)}$ so that its support is near and below $\Sigma_-$, $\diverge v^{(3)}=0$ in $\Omega$ and $v^{(3)}=-\Lbrack v^{(1)}\Rbrack$ on $\Sigma_-$. Moreover,
\begin{equation}\label{333es}
\|v ^{(3)}\|_{1}\lesssim\|v^{(1)}_+\|_{H^{1/2}(\Sigma_-)}+\|v^{(1)}_-\|_{H^{1/2}(\Sigma_-)}.
\end{equation}

Hence, setting $v =v^{(1)}+v^{(2)}+v^{(3)}$, we have $\diverge{v}=p$ in $\Omega$,  $v_3=C_1$ on $\Sigma_+$, $v_3=C_2$, $\Lbrack v\Rbrack=0$ on $\Sigma_-$ and  $v=0$ on $\Sigma_b$. In particular, combining this with
 \eqref{111es}--\eqref{333es} and the trace theorem, we obtain $v\in \mathcal{W}$ and
\begin{equation}
\|v\|_{\mathcal{W}}\lesssim \|p\|_0.
\end{equation}
With this $v$ in hand, we can prove the rest of the proposition as in  \cite{GT_lwp,SS}.
\end{proof}

 \subsubsection{Weak solution of \eqref{sLP0}}

Recall that in the formulation \eqref{sLP0} we have eliminated $\eta$ via
\begin{equation}\label{etadef}
\eta=\int_0^tu_3 \,ds+\eta_{0},\ \text{ on }\Sigma.
\end{equation}
That is, when we mention $\eta$ we always regard it as determined by $u$ through \eqref{etadef}.

We will need the orthogonal decomposition $L^2(\Omega)=\mathcal{Y}\oplus\mathcal{Y}^\bot$, where
\begin{equation}
\mathcal{Y}^\bot:=\left\{\nabla\varphi\mid\varphi\in {}^0H^1(\Omega)\right\}.
\end{equation}

Motivated by the identity that results from formally multiplying \eqref{sLP0} by a smooth vector field $v$ with $v|_{\Sigma_b}=0$ and integrating by parts over $\Omega$, we define the weak solution of \eqref{sLP0} as follows.  Suppose that
\begin{equation}\label{data_weak_assump}
 f\in L^2(0,T; \mathcal{W}^\ast ) ,\ g\in L^2(0,T;H^{-1/2}(\Sigma)), u_0\in   \mathcal{Y} \text{ and } \eta_0\in H^1(\Sigma).
\end{equation}
We  say $(u,p)$ is a weak solution of \eqref{sLP0} if
\begin{equation}\label{sLP0ws}
\left\{\begin{array}{lll}u\in \mathcal{V}_T,\
 \rho\partial_t u\in L^2(0,T;\mathcal{W}^\ast),\ p\in L^2(0,T;L^2(\Omega));\\
\displaystyle\langle\rho\partial_tu
,v\rangle_\ast+(\frac{\mu}{2}\mathbb{D}u, \mathbb{D}v)-(p,\diverge{v})
 + \rho_+g(\eta_{+},v_{3,+} )_{+}
\\ \quad+\sigma_+(\nabla_\ast \eta_{+},\nabla_\ast v_{3,+})_{+}
-\rj g(\eta_{-},v_3)_{-}+\sigma_-(\nabla_\ast\eta_{-},\nabla_\ast
v_3)_{-}
 \\ \qquad=\langle f, v\rangle_\ast -\langle g, v\rangle_{-1/2},\quad \text{for every }v\in  \mathcal{W} \text{ and }a.e.\ t\in [0,T];
\\u(0)=u_0.
\end{array}\right.
\end{equation}
Here $\langle\cdot,\cdot\rangle_\ast$ denotes the dual pairing between $\mathcal{W}^\ast$ and $\mathcal{W}$, $\langle \cdot,\cdot \rangle_{-1/2}$ denotes the dual pairing between $H^{1/2}(\Sigma)$ and $H^{-1/2}(\Sigma)$, and $(\cdot,\cdot)_{\pm}$ is the $L^2$ inner product on $\Sigma_\pm$.

We could prove the existence and uniqueness of a weak solution to \eqref{sLP0} in the sense of \eqref{sLP0ws} using only the assumptions stated in \eqref{data_weak_assump}. However, our aim is to  construct solutions of \eqref{sLP0} with higher regularity, so we will skip this analysis and instead presently make  stronger assumptions on the data $f,g,u_0$ and $\eta_0$.   Indeed, we suppose
\begin{equation}\label{data1}
\left\{\begin{array}{lll}
f\in L^2(0,T; \ddot{H}^{1}(\Omega)),\ \partial_tf\in L^2(0,T;\mathcal{W}^\ast),
\\  g\in L^2(0,T; {H}^{3/2}(\Sigma)),\ \partial_tg\in L^2(0,T; {H}^{-1/2}(\Sigma))
\\ u_0\in  \mathcal{V}\cap\ddot{H}^2(\Omega),\ \eta_0\in  {H}^3(\Sigma) .
\end{array}\right.
\end{equation}
Note that \eqref{data1} implies that (see, for instance Lemmas 2.4 and A.2 of \cite{GT_lwp}) $f \in C([0,T]; L^2(\Omega)),$  $g\in C([0,T]; {H}^{1/2}(\Sigma))$, and in particular,
\begin{equation}\label{data2}
f(0)\in L^2(\Omega),\ g(0)\in H^{1/2}(\Sigma).
\end{equation}

We now record a result on the existence and uniqueness of a  weak solution under the stronger assumptions \eqref{data1}.

\begin{theorem}\label{weak_wp}
Suppose that $f,g,u_0,\eta_0$ satisfy \eqref{data1}--\eqref{data2}, and that $u_0,g(0)$ satisfy the compatibility conditions
\begin{equation}\label{compatibility11}
\Pi_\ast\left(g_+(0) + \mu_+ \mathbb{D}u_{0,+} e_3\right)=0 \text{ on }\Sigma_+,  \quad
\Pi_\ast \left(g_-(0)- \Lbrack \mu\mathbb{D}u_0 \Rbrack e_3\right) = 0 \text{ on }\Sigma_-,
\end{equation}
where $\Pi_\ast$ is the horizontal projection defined by $\Pi_\ast v=(v_1,v_2,0)$. Then there exists a unique weak solution $(u,p)$ to \eqref{sLP0} satisfying the estimates
\begin{multline}\label{weakest}
 \|u\|_{L^\infty L^2 }^2 + \|u\|_{L^2  H^1 }^2 + \|\partial_tu\|_{L^\infty L^2 }^2 + \|\partial_tu\|_{L^2 H^1 }^2 +\|u_3\|_{L^\infty H^1(\Sigma)}^2+\|\eta\|_{L^\infty  H^1 }^2 + \|\partial_t\eta\|_{L^\infty H^1 }^2
\\
\lesssim  \mathcal{Z}_0:=\|u_0\|_{2}^2+\|\eta_0\|_{3}^2+\|f(0)\|_{ 0}^2+\| g(0)\|_{1/2}^2
   \\  +\|f\|_{ L^2  {H}^{1} }^2+\|\partial_tf\|_{ L^2   \mathcal{W}^\ast }^2
+ \|g\|_{L^2 {H}^{3/2} }^2+\|\partial_tg\|_{L^2 {H}^{-1/2}}^2
\end{multline}
and
\begin{equation}\label{weakpes}
\|p\|_{L^2L^2}\lesssim (1+T)\mathcal{Z}_0.
\end{equation}
\end{theorem}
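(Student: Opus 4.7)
The plan is to construct a weak solution of \eqref{sLP0} via a Galerkin method in the divergence-free space $\mathcal{V}$, recover the pressure through Proposition \ref{lagrange}, and then derive the higher regularity by time differentiation. Let $\{\psi^j\}_{j=1}^\infty$ be a countable orthogonal basis of $\mathcal{V}\cap \ddot{H}^2(\Omega)$ and let $P_m$ denote the orthogonal projection onto $\mathrm{span}\{\psi^1,\dotsc,\psi^m\}$. I seek $u^m(t)=\sum_{j=1}^m a^m_j(t)\psi^j$, paired with the derived quantity $\eta^m(t):=\eta_0+\int_0^t u_{3}^m\,ds$, satisfying the Galerkin analog of \eqref{sLP0ws} tested against each $\psi^j$ with initial data $u^m(0)=P_m u_0$. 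This is a linear ODE system for the $a^m_j$ with a unique solution on $[0,T]$.

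The first energy estimate is obtained by summing the $j$-th Galerkin equation weighted by $a^m_j(t)$. Using $\partial_t\eta^m=u_3^m$ on $\Sigma$, the four boundary terms involving $\eta^m$ organize into the time derivative of the possibly indefinite quadratic form $\tfrac{\rho_+g}{2}\|\eta_+^m\|_0^2+\tfrac{\sigma_+}{2}\|\nabla_\ast\eta_+^m\|_0^2 -\tfrac{\rj g}{2}\|\eta_-^m\|_0^2+\tfrac{\sigma_-}{2}\|\nabla_\ast\eta_-^m\|_0^2$. The zero-average condition \eqref{zero0} is preserved by $\partial_t\eta = u_3$ (integrate and use $\diverge u=0$ together with $u=0$ on $\Sigma_b$), so Theorem \ref{critical} renders this form equivalent to $\|\eta^m\|_1^2$. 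Combined with Korn's inequality on the viscous term, Cauchy--Schwarz on the forcing, and a Gronwall argument, this yields uniform control of $\sup_{[0,T]}\bigl(\|u^m\|_0^2+\|\eta^m\|_1^2\bigr)+\int_0^T \|u^m\|_{H^1}^2\,dt$ by $\mathcal{Z}_0$.

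For the higher regularity I differentiate the Galerkin equation in $t$ and test against $\partial_t u^m$. The initial value $\partial_t u^m(0)\in\mathrm{span}\{\psi^1,\dotsc,\psi^m\}$ is determined by evaluating the Galerkin equation at $t=0$ as a finite-dimensional linear system; the compatibility condition \eqref{compatibility11} is precisely what yields a uniform bound $\|\partial_t u^m(0)\|_0\lesssim \mathcal{Z}_0^{1/2}$, since it forces the tangential boundary trace $\Pi_\ast(g(0)\pm\mu\mathbb{D}u_0 e_3)$ to vanish. The differentiated identity again features the same quadratic form, now applied to $\partial_t \eta^m=u_3^m$, inside a perfect time derivative; invoking Theorem \ref{critical} and Korn once more gives $\|\partial_t u^m\|_{L^\infty L^2}^2+\|\partial_t u^m\|_{L^2 H^1}^2+\|u_3^m\|_{L^\infty H^1(\Sigma)}^2 \lesssim \mathcal{Z}_0$, from which $\|\partial_t\eta^m\|_{L^\infty H^1}^2\lesssim \mathcal{Z}_0$ follows via $\partial_t\eta^m=u_3^m$. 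Weak-$\ast$ compactness produces a limit $u$ solving the pressureless version of \eqref{sLP0ws}, and I recover $p\in L^2(0,T;L^2(\Omega))$ as a Lagrange multiplier via Proposition \ref{lagrange} applied a.e. in $t$. Bounding the resulting functional on $\mathcal{W}$ pointwise in time yields $\|p(t)\|_0\lesssim \|f(t)\|_0+\|u(t)\|_{H^1}+\|\eta(t)\|_1+\|g(t)\|_{H^{-1/2}}+\|\partial_t u(t)\|_0$, and integrating gives \eqref{weakpes}, the factor $(1+T)$ coming from $T\|\eta\|_{L^\infty H^1}^2$. Uniqueness is immediate from linearity and the first energy estimate applied to the difference of two solutions.

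The principal obstacle will be the time-differentiated estimate: the construction of $\partial_t u^m(0)$ with a uniform $L^2$ bound is the only point where the compatibility condition \eqref{compatibility11} enters in an essential way, and verifying that the surface-tension boundary terms continue to form a perfect time derivative when tested against $\partial_t u^m$ is what permits reusing Theorem \ref{critical} at the $\partial_t$ level. Together these features compensate for the possible indefiniteness of the Rayleigh--Taylor term on $\Sigma_-$ when $\rj>0$, provided $\sigma_->\sigma_c$.
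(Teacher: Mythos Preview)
Your overall strategy matches the paper's proof: Galerkin approximation in $\mathcal{V}\cap\ddot{H}^2(\Omega)$, two energy estimates (one from testing with $u^m$, one from differentiating in time and testing with $\partial_t u^m$), passage to the limit, and recovery of the pressure via Proposition~\ref{lagrange}. The structural points you identify---the quadratic boundary form becoming a perfect time derivative, the role of Theorem~\ref{critical}, and the preservation of the zero-average condition---are all correct.

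There is, however, a genuine gap in your treatment of the bound on $\|\partial_t u^m(0)\|_0$. You assert that the compatibility condition \eqref{compatibility11} forces the tangential boundary trace $\Pi_\ast(g(0)\pm\mu\mathbb{D}u_0 e_3)$ to vanish, but at the Galerkin level the initial datum is $P_m u_0$, not $u_0$, and $P_m u_0$ need not satisfy \eqref{compatibility11}. When you evaluate your Galerkin equation at $t=0$, set $\psi=\partial_t u^m(0)$, and integrate by parts in the viscous term, the boundary residual on $\Sigma_+$ is $(g_+(0)+\mu_+\mathbb{D}(P_m u_0)_+ e_3,\psi)_{\Sigma_+}$, and similarly on $\Sigma_-$. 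Its normal component can be paired against $\|\psi_3\|_{H^{-1/2}(\Sigma)}\lesssim \|\psi\|_{L^2(\Omega)}$ (using $\diverge\psi=0$ and \eqref{divineq}), but the tangential component $\Pi_\ast(g_+(0)+\mu_+\mathbb{D}(P_m u_0)_+e_3)$ does \emph{not} vanish and cannot be controlled by $\|\partial_t u^m(0)\|_0$ alone, since there is no trace from $L^2(\Omega)$ to $\Sigma$ for the tangential components of a divergence-free field.

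The paper's remedy is to build the correction $\Pi_\ast(g_\pm(0)\pm\mu\mathbb{D}(P_m u_0)e_3)$ directly into the Galerkin equation (see \eqref{Galerkin}). This artificial term cancels the offending tangential residual exactly at $t=0$, so that only normal-component boundary terms survive in \eqref{ut02}--\eqref{iii3}, and one obtains the uniform bound $\|\partial_t u^m(0)\|_0\lesssim \|u_0\|_2+\|\eta_0\|_{5/2}+\|f(0)\|_0+\|g(0)\|_{1/2}$ as in \eqref{ut03}. The correction terms contribute harmlessly (a term of order $T$ times a vanishing quantity) to the first energy estimate \eqref{es1}, and they disappear from the limiting weak formulation because $P_m u_0\to u_0$ in $\ddot{H}^2(\Omega)$ together with \eqref{compatibility11} forces them to zero (see \eqref{convergence2}). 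Without this device your argument for the uniform $L^2$ bound on $\partial_t u^m(0)$ does not close.
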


\begin{proof}
The divergence-free condition satisfied  by $u$ allows us to first solve a pressureless problem via a variational formulation and then to introduce the pressure as a Lagrange multiplier. For the pressureless problem we use the Galerkin method. We divide the proof into three steps.

{\it Step 1. The Galerkin scheme.} We will carry out the Galerkin scheme within the functional setting of the space  $\mathcal{V} \cap \ddot{H}^2(\Omega)$.  This space  is clearly separable, and we choose an orthogonal
basis $\{w^j\}_{j=1}^\infty$. For any integer $m\ge 1$ we define the finite dimensional space $X_m:={\rm span}\{w^1,\dots,w^m\}$ and write $\mathcal{P}^m$ for the orthogonal projection of $\mathcal{V}\cap \ddot{H}^2(\Omega) $ onto $X_m$.

For each $m\ge 1$  we define the approximate solution
\begin{equation}
u^m(t):=\sum_{j=1}^m d^m_j(t) w^j,
\end{equation}
where the coefficients $d^m_j(t)$ are chosen so that for $t\in[0, T]$ and any $\psi\in X_m$,
\begin{multline}\label{Galerkin}
( \rho\partial_tu^m ,\psi)+(\frac{\mu}{2}\mathbb{D}u^m, \mathbb{D}\psi)
+ \rho_+g(\eta^m_{ +}
,\psi_{3,+})_{+}+\sigma_+(\nabla_\ast\eta^m_+,\nabla_\ast
\psi_{3,+})_{+} -\rj g(\eta^m_-,\psi_3)_{-} \\
+\sigma_-(\nabla_\ast\eta^m_-,\nabla_\ast \psi_3)_{-} =( f, \psi)-(g_+ - \Pi_\ast(g_+(0)+\mu_+\mathbb{D}(\mathcal{P}^mu_0)_+e_3),\psi_+)_{+}
 \\  -( g_--\Pi_\ast(g_-(0)-\llbracket \mu\mathbb{D}(\mathcal{P}^mu_0)e_3\rrbracket ), \psi)_{-},
 \end{multline}
supplemented with the initial condition
\begin{equation}
u^m (0)=\mathcal{P}^mu_0.
\end{equation}
Note that the terms involving $\Pi_\ast$ in \eqref{Galerkin} have been added to compensate for the fact that $u^m(0)$ may not  satisfy the compatibility conditions \eqref{compatibility11}.

If we define $a^m_j(t)=\int_0^td^m_j(s)\,ds$, then we may readily deduce from  \eqref{Galerkin} an equivalent second-order linear ordinary differential system for $\{a^m_j(t)\}_{j=1}^m$, subject to the initial conditions $a^m_j(0)=0$ and $a^m_j{'}(0)=(u_0,w^j)_\Omega$.  In this system the forcing terms are continuous due to
the assumptions of the data, \eqref{data1}.   As such, the classical theory of ODEs guarantees the solvability of the ODE in the interval $[0,T]$, which in turn implies that $u_m$ (and $d_m^k$) satisfies \eqref{Galerkin} for  a.e. $0\le t\le T$.  Note that since $f,g$ satisfy \eqref{data1}, we may deduce from  \eqref{Galerkin}  that actually $d^m_j\in C^{1,1}([0,T])$, and hence is twice differentiable a.e. in $[0,T]$.

With $u^m$ constructed, we define $\eta^m$ by \eqref{etadef} with $u_3$ replaced by $u^m_3$.  Since $\diverge{u^m}=0$ and $u^m \vert_{\Sigma_b}=0$, an application of the divergence theorem shows that
\begin{equation}
 \int_{\Sigma_+ } u^m_+ = \int_{\Sigma_- } u^m_- = 0,
\end{equation}
which in turn implies that
\begin{equation}\label{approx_zero}
 \int_{\Sigma_+} \eta^m_+ = \int_{\Sigma_-} \eta^m_- = 0
\end{equation}
since $\eta_0$ satisfies the zero average condition \eqref{zero0}.

{\it Step 2. Energy estimates.} Taking $\psi=u^m\in X_m$ in \eqref{Galerkin}, we obtain
\begin{multline}\label{i11}
( \rho\partial_tu^m , u^m)+(\frac{\mu}{2}\mathbb{D}u^m, \mathbb{D}u^m)
+ \rho_+g(\eta^m_+,u_{3,+}^m)_{+}
+ \sigma_+(\nabla_\ast\eta^m_+,\nabla_\ast
u_{3,+}^m)_{+}-\rj g(\eta_-^m,u_3^m)_{-}
\\ +\sigma_-(\nabla_\ast\eta_-^m,\nabla_\ast u^m_3)_{-}=( f, u^m)
 -( g_+-\Pi_\ast(g_+(0)+\mu_+\mathbb{D} u_{ +}^m(0)e_3), u_{ +}^m)_{+}
 \\ -( g_--\Pi_\ast(g_-(0)-\llbracket \mu\mathbb{D} u^m(0)e_3\rrbracket ), u^m)_{-}.
\end{multline}
Applying  Korn's inequality to the second term and applying Cauchy's inequality and the trace theorem to the right-hand side, we deduce from  \eqref{i11} that
 \begin{multline}\label{i12}
\frac{d}{dt}\left(\|\sqrt{\rho}u^m \|_{L^2(\Omega)}^2+ \rho_+g\|\eta^m_+\|_{L^2(\Sigma_+)}^2
 +\sigma_+\|\nabla_\ast\eta^m_+\|_{L^2(\Sigma_+)}^2 \right.\\
\left. -\rj g\|\eta^m_-\|_{L^2(\Sigma_-)}^2+\sigma_-\|\nabla_\ast\eta^m_-\|_{L^2(\Sigma_-)}^2\right)
+ C \|u^m\|_{H^1(\Omega)}^2 \lesssim \|f\|_{L^2(\Omega)}^2+\|g\|_{L^2(\Sigma)}^2 \\
+ \|\Pi_\ast(g_+(0)+\mu_+\mathbb{D}
u_{m,+}(0)e_3)\|_{L^2(\Sigma_+)}^2
+\|\Pi_\ast(g_-(0)-\llbracket \mu\mathbb{D} u_m(0)e_3\rrbracket )\|_{L^2(\Sigma_-)}^2.
\end{multline}
Since $\sigma_->\sigma_c$ we know from Lemma \ref{critical} (which is applicable to $\eta^m$ due to \eqref{approx_zero}) that the expression involving $\eta$ under the time derivative is equivalent to $\| \eta \|_1^2$. Hence, upon integrating \eqref{i12} in time we find that
\begin{multline}\label{es1}
 \sup_{0\le t\le T}\left(\|u^m \|_{L^2(\Omega)}^2+\|\eta^m\|_{H^1(\Sigma)}^2\right) + \|u_m\|_{L^2H^1 }^2
\lesssim
\|u_0\|_{L^2(\Omega)}^2 + \|\eta_0\|_{H^1(\Sigma)}^2
+ \|f\|_{L^2L^2 }^2 + \|g\|_{L^2L^2}^2  \\
+ T\left(\|\Pi_\ast(g_+(0)+\mu_+\mathbb{D} u_{
+}^m(0)e_3)\|_{L^2(\Sigma_+)}^2 +\|\Pi_\ast(g_-(0)-\Lbrack \mu\mathbb{D}
u^m(0)e_3\Rbrack)\|_{L^2(\Sigma_-)}^2\right).
\end{multline}

Since $d^m_j$ is twice differentiable a.e., we can temporally differentiate \eqref{Galerkin} to
see that for any $\psi\in X_m$,
\begin{multline}\label{Galerkin2}
 ( \rho\partial_t^2u^m , \psi)+(\frac{\mu}{2}\mathbb{D}\partial_t u^m, \mathbb{D}\psi)
+ \rho_+g( u_{3,+}^m , \psi_{3,+} )_{+}+\sigma_+(\nabla_\ast
u_{3,+}^m ,\nabla_\ast \psi_{3,+} )_{+} \\
-\rj g( u^m_3 ,\psi_3)_{-} +\sigma_-(\nabla_\ast u^m_3 ,\nabla_\ast \psi_3)_{-}
=  \langle \partial_t f, \psi \rangle_\ast-\langle \partial_t g,
\psi \rangle_{-1/2}.
\end{multline}
Then, taking $\psi=\partial_t u^m$ in \eqref{Galerkin2}, we have
\begin{multline}\label{i21}
( \rho\partial_t^2u^m , \partial_t u^m)+(\frac{\mu}{2}\mathbb{D}\partial_t u^m, \mathbb{D}\partial_t u^m)
+ \rho_+g( u_{3,+}^m , \partial_t u_{3,+}^m)_{+} + \sigma_+(\nabla_\ast  u_{3,+}^m ,\nabla_\ast
\partial_t u_{3,+}^m)_{+} \\
-\rj g( u^m_3 ,\partial_tu^m_3)_{-} +\sigma_-(\nabla_\ast u^m_3 ,\nabla_\ast \partial_t u^m_3)_{-}
=\langle \partial_t f, \partial_t u^m\rangle_\ast-\langle \partial_t g, \partial_tu^m\rangle_{-1/2}.
\end{multline}
We bound the right-hand side of \eqref{i21} via
 \begin{multline}\label{bound}
 \langle \partial_t f, \partial_t u^m\rangle_\ast+\langle \partial_t g, \partial_tu^m\rangle_{-1/2}
 \le  \|\partial_t f\|_{\mathcal{W}^\ast}\|\partial_t u^m\|_{ H^{1}(\Omega)}+\|\partial_t g\|_{H^{-1/2}(\Sigma)}\|\partial_t u^m\|_{H^{1/2}(\Sigma)}  \\
 \le  C(\varepsilon)\left(\|\partial_t f\|_{\mathcal{W}^\ast}^2+\|\partial_t g\|_{H^{-1/2}(\Sigma)}^2\right)
+ \varepsilon\|\partial_t u^m\|_{H^{1}(\Omega)}^2.
\end{multline}
As above, we apply  Korn's inequality to the second term on the left-hand side of \eqref{i21}; then from \eqref{bound} and  \eqref{i21} we deduce that
\begin{multline}\label{i22}
 \frac{d}{dt}\left( \|\sqrt{\rho}\partial_tu^m \|_{L^2(\Omega)}^2
+ \rho_+g\| u_{3,+}^m \|_{L^2(\Sigma_+)}^2+\sigma_+\|\nabla_\ast
u_{3,+}^m\|_{L^2(\Sigma_+)}^2
 -\rj g\| u^m_3\|_{L^2(\Sigma_-)}^2\right. \\
\left. +\sigma_-\|\nabla_\ast u^m_3\|_{L^2(\Sigma_-)}^2\right) +\|\partial_t u^m\|_{H^1(\Omega)}^2\lesssim\|\partial_t f\|_{ \mathcal{W}^\ast}^2+\|\partial_t g\|_{H^{-1/2}(\Sigma)}^2.
\end{multline}

In order for the temporal integral of \eqref{i22} to be useful, we must be able to estimate the term $\|\partial_t u^m(0)\|_{L^2(\Omega)}$. For this, we evaluate \eqref{Galerkin} at $t=0$  to see that for any $\psi\in X_m$,
\begin{multline}\label{ut01}
( \rho\partial_tu^m (0), \psi)+(\frac{\mu}{2}\mathbb{D}u^m(0), \mathbb{D}\psi)
+ \rho_+ g( \eta_{0,+},\psi_3)_{+} + \sigma_+(\nabla_\ast \eta_{0,+} ,\nabla_\ast \psi_3)_{+} \\
- \rj g(\eta_{0,-},\psi_3)_{-} + \sigma_-(\nabla_\ast \eta_{0,-},\nabla_\ast \psi_3)_{-}
=( f(0), \psi) \\
-(g_+(0)-\Pi_\ast(g_+(0)+\mu_+\mathbb{D} u_{ +}^m(0)e_3), \psi_{+})_{+}
-(g_-(0)-\Pi_\ast(g_-(0)-\llbracket \mu\mathbb{D} u^m(0)e_3\rrbracket ), \psi_- )_{-}.
\end{multline}
We may integrate by parts in the second, fourth and sixth terms to rewrite \eqref{ut01} as
\begin{multline}\label{ut02}
( \rho\partial_tu^m (0), \psi)=( \mu\Delta u^m(0)+f(0),  \psi)  \\
- \rho_+g( \eta_{0,+},\psi_3)_{+} +\sigma_+(\Delta_\ast \eta_{0,+} ,  \psi_3)_{+}
+\rj g( \eta_{0,-},\psi_3)_{-} +\sigma_-(\Delta_\ast \eta_{0,-},\psi_3)_{-} \\
 -(\Pi_\ast^\perp(g_+(0)+\mu_+\mathbb{D}u^m_{+}(0) e_3), \psi_+)_{+} - (\Pi_\ast^\perp(g_-(0)-\llbracket \mu\mathbb{D} u^m(0)e_3\rrbracket ), \psi_-)_{-}.
\end{multline}
Now we note that, due to the appearance of the projection $\Pi_\ast^\perp$, the sum of the last two terms in \eqref{ut02} is equal to
\begin{equation}\label{iii3}
( (g_+(0)+\mu_+\mathbb{D}u^m_{+}(0) e_3)\cdot e_3, \psi_{3,+})_{+}  +  ((g_-(0)+\llbracket \mu \mathbb{D} u^m(0) e_3 \rrbracket )\cdot e_3, \psi_{3,-})_{-}.
\end{equation}
As such, only the third component of $\psi$ appears in the boundary integrals in \eqref{ut02}. Since $\Sigma_\pm$ are flat and $e_3$ is the normal vector, we may bound the right-hand side of \eqref{ut02} in terms of $\|\psi\|_{L^2(\Omega)}$ through the following well-known inequality (a consequence of trace theory and the divergence theorem)
\begin{equation}\label{divineq}
 \|v_3\|_{H^{-1/2}(\Sigma)}\lesssim\|v\|_{L^2(\Omega)}+\|\diverge{v}\|_{L^2(\Omega)}.
\end{equation}
Hence, setting $\psi=\partial_tu^m(0)$ in \eqref{ut02} and making use of \eqref{iii3}--\eqref{divineq}, we find that
\begin{multline}\label{ut03}
\|\partial_tu^m (0)\|_{L^2(\Omega)}^2 \\
\lesssim\left(\|u^m(0)\|_{\ddot{H}^2(\Omega)}+\|f(0)\|_{L^2(\Omega)}\right)\|\partial_tu^m (0)\|_{L^2(\Omega)}
+\|\eta_{0}\|_{{H}^{5/2}(\Sigma)}\|\partial_tu^m_3
(0)\|_{{H}^{-1/2}(\Sigma)} \\
+\|(g_+(0)+\mu_+\mathbb{D}u_{m,+}(0) e_3)\cdot e_3\|_{H^{1/2}(\Sigma_+)}\|\partial_tu_{3,+}^m(0)\|_{H^{-1/2}(\Sigma_+)} \\
+  \|(g_-(0)-\llbracket \mu\mathbb{D}u^m(0)e_3\rrbracket )\cdot e_3\|_{H^{1/2}(\Sigma_-)}\|\partial_t u^m_{3,-}
(0)\|_{H^{-1/2}(\Sigma_-)} \\
\lesssim \left(\|u_0\|_{2}+\|\eta_{0}\|_{5/2}+\|f(0)\|_{L^2(\Omega)}+\|g(0)\|_{H^{1/2}(\Sigma)}\right)\|\partial_tu^m (0)\|_{L^2(\Omega)},
\end{multline}
which in turn yields an estimate for $\|\partial_tu^m (0)\|_{L^2(\Omega)}^2$ by division.  Using the resulting estimate, we may integrate \eqref{i22} in time to deduce the bound
\begin{multline}\label{es2}
\sup_{0\le t\le T}\left( \|\partial_tu^m \|_{L^2(\Omega)}^2+\|u^m_3\|_{H^1(\Sigma)}^2 \right)
+\|\partial_t u^m\|_{L^2H^1}^2 \\
\lesssim\|u_0\|_{2}^2+\|\eta_{0}\|_{5/2}^2+\|f(0)\|_{L^2(\Omega)}^2+\|g(0)\|_{H^{1/2}(\Sigma)}^2+\|\partial_t
f\|_{L^2 \mathcal{W}^\ast }^2+\|\partial_t g\|_{L^2H^{-1/2} }^2.
\end{multline}

{\it Step 3. Passing to the limit.} Now we utilize the energy estimates \eqref{es1} and \eqref{es2} to pass to the limit as  $m\rightarrow\infty$. First, we can bound further the right-hand sides of \eqref{es1}, \eqref{es2} by
\begin{multline}
C(1+T)(\|u_0\|_{2}+ \|g(0)\|_{H^{1/2}(\Sigma)})+\|\eta_{0}\|_{5/2}+\|f(0)\|_{L^2(\Omega)}
\\
+\|f\|_{L^2L^2 }^2+\|g\|_{L^2L^2}^2+\|\partial_t f\|_{L^2 \mathcal{W}^\ast }^2+\|\partial_t g\|_{L^2H^{-1/2}}^2.
\end{multline}
This implies that the sequences $\{u^m\}, \{\partial_tu^m\}$ are uniformly bounded in $L^\infty(0,T;
L^2(\Omega))$ as well as in $L^2(0,T;H^1(\Omega))$ and $\{\int_0^t u^{m}_3 ds+\eta_{0}\}$, $\{ u^m_3\}$ are uniformly bounded in $L^\infty(0,T; H^1(\Sigma))$. Hence, up to the extraction of a subsequence, we have
\begin{equation}\label{convergence1}
\left\{\begin{array}{lll}
u^m\rightarrow u\ \text{ weakly-}\ast\text{ in } L^\infty(0,T; L^2(\Omega))\ \text{ and weakly in } L^2(0,T;H^1(\Omega)),\\
\partial_t u^m\rightarrow \partial_tu\ \text{ weakly-}\ast\text{ in }L^\infty(0,T; L^2(\Omega))\ \text{ and weakly in } L^2(0,T;H^1(\Omega)),\\
 \displaystyle\int_0^tu^m_3 \,ds+\eta_{0}\rightarrow \int_0^tu_3 \,ds+\eta_{0}\text{ weakly-}\ast \text{ in } L^\infty(0,T;  {H}^1(\Sigma))
 ,\\
 \displaystyle u^m_3 \rightarrow  u_3  \text{ weakly-}\ast \text{ in } L^\infty(0,T;  {H}^1(\Sigma)).\end{array}\right.
\end{equation}
To get rid of the last two terms in \eqref{i11} we use the fact that $u^m(0)\rightarrow u_0$ in $\mathcal{V} \cap \ddot{H}^2(\Omega)$ together with the fact that $u_0,g(0)$ satisfy the compatibility conditions \eqref{compatibility11} to see that
\begin{equation}\label{convergence2}
\|\Pi_\ast(g_+(0)+\mu_+\mathbb{D} u^m_{+}(0)e_3)\|_{H^{1/2}(\Sigma_+)} +\|\Pi_\ast(g_-(0)-\Lbrack \mu\mathbb{D}
u^m(0)e_3\Rbrack)\|_{H^{1/2}(\Sigma_-)}\rightarrow0.
\end{equation}

The convergences \eqref{convergence1}--\eqref{convergence2} allow us to pass to the limit in \eqref{Galerkin} to find that
\begin{multline}\label{weak1}
\displaystyle\langle\rho\partial_tu ,\psi\rangle_\ast+(\frac{\mu}{2}\mathbb{D}u, \mathbb{D}\psi)
 + \rho_+g(\eta_{+},\psi_{3,+} )_{+}+\sigma_+(\nabla_\ast \eta_{+},\nabla_\ast\psi_{3,+})_{+}
-\rj g(\eta_{-},\psi_3)_{-} \\
+\sigma_-(\nabla_\ast\eta_{-},\nabla_\ast \psi_3)_{-}
 =\langle f, \psi\rangle_\ast +\langle g,\psi\rangle_{-1/2} \text{ for all } \psi\in  \mathcal{V} \text{ and  a.e } t\in [0,T].
\end{multline}
We may also pass to the limit in \eqref{es1}--\eqref{es2} and employ weak lower-semicontinuity to find that
\begin{equation}\label{pll1}
\|u\|_{L^\infty L^2 }^2+\|u\|_{L^2  H^1 }^2+\|\partial_tu\|_{L^\infty L^2 }^2
+\|\partial_tu\|_{L^2 H^1 }^2 +\|u_3\|_{L^\infty
H^1(\Sigma)}^2+\|\eta\|_{L^\infty  H^1
}^2+\|\partial_t\eta\|_{L^\infty H^1 }^2\lesssim
\mathcal{Z}_0.
\end{equation}
That is, $u$ is a pressureless weak  solution of \eqref{sLP0}.

We can now introduce the pressure $p$.   Define the functional $\Lambda_t\in  \mathcal{W}^\ast$ so that $\Lambda_t(v)$ equals to the difference between the left- and right- hand sides of \eqref{weak1}, with $\psi$ replaced by $v\in \mathcal{W}$. Then $\Lambda_t\equiv0$ on $ \mathcal{V}$, so by Proposition \ref{lagrange} there exists a unique $p(t) \in L^2(\Omega)$ so that $(p(t),\diverge{v})=\Lambda_t(v)$ for all $v \in \mathcal{W}$. This yields that $(u,p)$ is a weak solution of \eqref{sLP0} in the sense of \eqref{sLP0ws} satisfying the estimate \eqref{weakest}, and by \eqref{sLP0ws} we have $p\in L^2(0,T; L^2(\Omega))$ and satisfies the estimate $\eqref{weakpes}$. The uniqueness of weak solutions follows directly from \eqref{weakest}--\eqref{weakpes}.
\end{proof}

\subsubsection{Strong solution of \eqref{surfaceLP}}

We now turn to the issue of higher regularity of the solutions constructed in Theorem \ref{weak_wp}.  In the case without surface tension we were able to directly apply the elliptic regularity results of Theorem \ref{cStheorem} in order to deduce higher regularity of the weak solutions.  However, in the case with surface tension, the appearance of the surface terms $\int_0^tu_3\,ds$  prevents us from directly using Theorem \ref{cStheorem}.  Our way around this obstacle is to use difference quotients to gain control of some horizontal derivatives, which suffice for higher regularity estimates on the interfaces $\Sigma_\pm$.  These estimates then allow us to employ the one phase elliptic regularity in each of $\Omega_\pm$.  The technical challenge in this procedure is that the lack of flatness in the lower boundary $\Sigma_b$ requires a localization argument like the one used in the proof of Theorem \ref{cStheorem}.

\begin{theorem}\label{surlinear}
Suppose that $f,g,u_0,\eta_0$ satisfy \eqref{data1}--\eqref{data2}, and that $u_0,g(0)$ satisfy the compatibility conditions \eqref{compatibility11}. Then there exists a unique strong solution
$(u,p,\eta)$ to \eqref{surfaceLP} so that
 \begin{equation}\label{regggg}
\begin{array}{lll}
u\in   C([0,T];   \mathcal{V}\cap\ddot{H}^2(\Omega))\cap L^2(0,T; \ddot{H}^3(\Omega)),\\\partial_t u\in C([0,T]; L^2 (\Omega))\cap L^2(0,T;H^1(\Omega)),\
\rho\partial_t^2 u\in L^2(0,T;\mathcal{W}^\ast),
\\ \nabla \partial_t u_+\in  L^2(0,T;H^{-1/2}(\Sigma_+)),\
\llbracket \mu\nabla \partial_t u\rrbracket \in
L^2(0,T;H^{-1/2}(\Sigma_-));
  \\  \eta\in  C([0,T];   {H}^3(\Sigma))\cap L^2(0,T; {H}^{7/2}(\Omega)),
  \\ \partial_t \eta\in  C([0,T];   {H}^{3/2}(\Sigma))\cap L^2(0,T; {H}^{5/2}(\Sigma)),
  \\\partial_t^2 \eta\in L^\infty(0,T; H^{-1/2}(\Sigma))\cap L^2(0,T;
  {H}^{1/2}(\Sigma)),
\\  p\in  C([0,T]; \ddot{H}^1 (\Omega))\cap L^2(0,T;
\ddot{H}^2(\Omega)),\ \partial_tp\in L^2(0,T;L^2(\Omega)),
\\ \partial_t p_+\in  L^2(0,T;H^{-1/2}(\Sigma_+)),\
\llbracket \partial_t p\rrbracket \in
L^2(0,T;H^{-1/2}(\Sigma_-)).
\end{array}
\end{equation}
The solution satisfies the estimate
\begin{multline}\label{strongest}
\|u\|_{L^\infty H^2 }^2 + \|u\|_{L^2  H^3 }^2 + \|\partial_tu\|_{L^\infty L^2 }^2
+\|\partial_tu\|_{L^2 H^1 }^2 \\
 +\|\rho\partial_t^2u\|_{L^2 \mathcal{W}^\ast }^2+ \|\nabla \partial_t u_+\|_{ L^2
H^{-1/2}(\Sigma_+) }^2+ \|\llbracket \mu \nabla \partial_t
u\rrbracket \|_{ L^2 H^{-1/2}(\Sigma_-) }^2 \\
+ \|\eta\|_{L^\infty H^3 }^2 +\|\eta\|_{L^2 H^{7/2} }^2  +\|\partial_t\eta\|_{L^\infty
H^{3/2} }^2 + \|\partial_t\eta\|_{L^2 H^{5/2} }^2
+  \|\partial_t^2\eta\|_{L^\infty H^{-1/2} }^2 + \|\partial_t^2\eta\|_{L^2 H^{1/2} }^2 \\
+ \|p\|_{ L^\infty H^1 }^2+ \|p\|_{ L^2 H^2 }^2+ \|\partial_tp\|_{ L^2 L^2 }^2 + \|\partial_tp_+\|_{ L^2 H^{-1/2}(\Sigma_+) }^2 + \|\llbracket  \partial_t p\rrbracket \|_{ L^2 H^{-1/2}(\Sigma_-) }^2 \\
\le C_4(\|u_0\|_{2}^2+\|\eta_0\|_{3}^2+\|f(0)\|_{ 0}^2+\| g(0)\|_{1/2}^2+\|f\|_{ L^\infty L^2 }^2+ \|g\|_{L^\infty {H}^{1/2} }^2 \\
+\|f\|_{ L^2 {H}^{1} }^2+\|\partial_tf\|_{ L^2  \mathcal{W}^\ast}^2 + \|g\|_{L^2 {H}^{3/2} }^2+\|\partial_tg\|_{L^2 {H}^{-1/2}}^2).
\end{multline}
The initial pressure, $p(0)\in \ddot{H}^1(\Omega)$, is determined in terms of $u_0,\eta_0,f(0),g(0)$ as the weak solution (in the sense of \eqref{poisson0}) to
\begin{equation}\label{pressure0}
\left\{\begin{array}{ll}
 \diverge \left(\rho^{-1}(\nabla  p(0)-f(0)) \right)=0 \ &\text{ in}\ \Omega
 \\p_+(0)= g_{+}^3(0) + 2\mu_+ \partial_3 u_{3,+}(0)+\rho_+g\eta_+(0)-\sigma_+\Delta_\ast\eta_+(0) &\text{ on }\Sigma_+
\\
 \Lbrack p(0)\Rbrack= -g_{-}^3(0) +2\Lbrack \mu  \partial_3 u_{3}(0)\Rbrack+\rj g\eta_-(0)+\sigma_-\Delta_\ast\eta_-(0)  \ &\text{ on }\Sigma_-
  \\ \Lbrack\rho^{-1}(\partial_3 p(0)-f^3(0))  \Rbrack  =\Lbrack \rho^{-1}\mu \Delta u_ 3(0)\Rbrack  \ &\text{ on }\Sigma_-
\\  \rho^{-1}(\nabla p_-(0)-f_-(0))\cdot\nu=\rho_-^{-1}\mu_-\Delta u_{ -}(0)\cdot\nu&\text{ on }\Sigma_b.\end{array}\right.
\end{equation}
Also, $\partial_t\eta(0)=u_3(0)$ on $\Sigma$ and
\begin{equation}
\partial_tu(0)=\rho^{-1}(\Delta u_0-\nabla p(0)+f(0))\in \mathcal{Y}.
\end{equation}
\end{theorem}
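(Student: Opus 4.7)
The plan is to start from the weak solution $(u,p)$ of \eqref{sLP0} provided by Theorem \ref{weak_wp} and bootstrap its regularity, then reconstruct $\eta$ by the formula \eqref{etadef}. Since \eqref{sLP0} contains the nonlocal surface terms $\sigma_\pm \Delta_\ast \int_0^t u_3\,ds$, we cannot apply the two-phase Stokes theory of Theorem \ref{cStheorem} directly: the right-hand side on $\Sigma_\pm$ is not yet known to lie in $H^{r-3/2}$ with the required $r$. The strategy is instead to gain regularity on the interfaces $\Sigma_\pm$ through horizontal difference quotients and then to exploit the flatness of $\Sigma_\pm$ together with classical one-phase Stokes theory on each of $\Omega_\pm$, exactly in the spirit of Lemma \ref{cS0l2} and Theorem \ref{cStheorem}.

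First I would localize away from $\Sigma_b$ by multiplying \eqref{sLP0} with the cutoff $\chi$ of \eqref{chi_properties}, producing a problem with flat upper and internal interfaces plus lower-order source terms supported where $\chi'\neq 0$. Testing the weak formulation with horizontal difference quotients $D_{-h}D_h D^{m-1}(\chi u)$ (with $m$ built up inductively) and using Korn's inequality together with the trace theorem, I would control $\|\na^m u\|_{1}^2$ on the support of $\chi$; by the flatness of $\Sigma_\pm$ this yields $\|u\|_{H^{m+1/2}(\Sigma_\pm)}^2$. For the temporal first derivative I would differentiate \eqref{Galerkin2} in time, test against $\partial_t u^m$ to obtain a bound of $\partial_t u$ in $L^\infty L^2\cap L^2 H^1$, and then use \eqref{ut02} together with the inequality \eqref{divineq} to estimate $\partial_t u(0)$; this gives $\partial_tu\in C([0,T];L^2)\cap L^2([0,T];H^1)$ and trace theory produces the $H^{-1/2}(\Sigma_\pm)$ bounds on $\nabla \partial_t u$. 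The pair $(\partial_t u,\partial_t p)$ solves a linear problem of the same type with modified data, so $\rho \partial_t^2 u \in L^2(\mathcal{W}^\ast)$ and $\partial_t p_+,\llbracket \partial_t p\rrbracket$ can be read off from the boundary conditions as in the proof of Theorem \ref{H2SS}.

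Once horizontal regularity on $\Sigma_\pm$ is in hand, I would split the problem into the two one-phase Stokes problems on $\Omega_\pm$ with Dirichlet data $u\vert_{\Sigma_-}$ (on both sides) and the stress boundary conditions on $\Sigma_+$ (top) and $\Sigma_b$ (bottom), carrying the surface tension quantities $\sigma_\pm \Delta_\ast \eta_\pm$ (with $\eta:=\int_0^t u_3\,ds+\eta_0$) as forcing on the right. The key point is that the surface tension gain of two derivatives in $\eta$ over $u_3$, combined with Theorem \ref{critical}, will close the loop: applying Theorem \ref{critical} to $\sigma_\pm \Delta_\ast \eta_\pm$ together with $\rho_\pm g\eta_\pm$ and $\Lbrack\rho\Rbrack g\eta_-$ gives $\|\eta\|_{r}\lesssim \|\text{stress data}\|_{r-2}$ for $r\ge 2$, and the critical surface tension assumption $\sigma_->\sigma_c$ (or $\Lbrack\rho\Rbrack\le 0$) is what makes these elliptic-on-the-boundary estimates available. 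Integration in time of $\partial_t\eta=u_3$ then yields the $L^\infty H^3\cap L^2 H^{7/2}$ bound for $\eta$, and differentiating in time combined with the regularity of $\partial_t u$ gives $\partial_t\eta\in L^\infty H^{3/2}\cap L^2 H^{5/2}$ and $\partial_t^2\eta\in L^\infty H^{-1/2}\cap L^2 H^{1/2}$. With these in hand, a final application of Lemma \ref{cS1phaselemma1} and Lemma \ref{cS1phaselemma2} to the two one-phase Stokes problems upgrades $u$ to $L^\infty H^2\cap L^2 H^3$ and $p$ to $L^\infty H^1\cap L^2 H^2$.

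The initial pressure $p(0)\in \ddot{H}^1$ is then obtained as the (unique) weak solution of \eqref{pressure0} in the sense of \eqref{poisson0}, by taking the divergence of $\eqref{sLP0}_1$ at $t=0$ and using the boundary relations together with $\diverge u_0=0$; the existence and the bound $\|p(0)\|_1\lesssim \|u_0\|_2+\|\eta_0\|_3+\|f(0)\|_0+\|g(0)\|_{1/2}$ follow from the $H^1$ solvability theory for \eqref{poisson} recorded after Theorem \ref{Poissonth}. With $p(0)$ defined, the formula $\partial_t u(0)=\rho^{-1}(\Delta u_0-\nabla p(0)+f(0))$ lies in $\mathcal{Y}$ by construction. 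Uniqueness is immediate from the weak-solution energy identity derived in the proof of Theorem \ref{weak_wp} applied to the difference of two strong solutions. The main obstacle I expect is the careful handling of the nonlocal-in-time surface tension terms in the difference-quotient argument: one cannot differentiate \eqref{sLP0} in time before testing without losing the favorable structure of the surface tension energy, so the difference quotients must be taken purely horizontally on the spatial variables, and the time integration $\int_0^t u_3\,ds$ must be controlled by the $L^2_t H^{5/2}(\Sigma)$ regularity of $u_3$, which in turn relies on \eqref{strongest} being proved in the right order (first the basic energy and dissipation for $u$, then $\eta$ via Theorem \ref{critical}, then elliptic upgrade).
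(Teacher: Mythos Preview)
Your overall strategy---localize with $\chi$, use horizontal difference quotients in the weak formulation, exploit the flatness of $\Sigma_\pm$ to gain boundary regularity, then invoke one-phase Stokes theory on each of $\Omega_\pm$, and finally recover $\eta$ via Theorem \ref{critical}---is exactly the paper's approach. But there is a genuine ordering gap in how you propose to split into one-phase problems.

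You write that on $\Omega_\pm$ you would use ``the stress boundary conditions on $\Sigma_+$ (top) and $\Sigma_b$ (bottom), carrying the surface tension quantities $\sigma_\pm \Delta_\ast \eta_\pm$ as forcing.'' First, on $\Sigma_b$ the condition is Dirichlet ($u_-=0$), not stress. More importantly, using the stress condition on $\Sigma_+$ creates a circularity: to get $u_+\in L^2 H^3$ via Lemma \ref{cS1phaselemma1} you would need the stress data $(\rho_+g\eta_+-\sigma_+\Delta_\ast\eta_+)e_3+g_+$ in $L^2 H^{3/2}(\Sigma_+)$, i.e.\ $\eta_+\in L^2 H^{7/2}$. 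But $\eta\in L^2 H^{7/2}$ can only be read off from the dynamic boundary condition \emph{after} $p$ (or at least $\nabla p$) is controlled on $\Sigma_\pm$, which in turn requires having solved the Stokes problem. The difference-quotient energy gives $\eta\in L^\infty H^3$ (from the surface-tension contribution to the energy, not from time-integrating $\partial_t\eta=u_3$ as you suggest), and this is not enough to close the stress-boundary route at the $H^3$ level for $u$.

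The paper breaks this loop by using \emph{pure Dirichlet} conditions (Lemma \ref{cS1phaselemma2}) on \emph{both} boundaries of each $\Omega_\pm$: on $\Omega_+$ one imposes $u_+=u|_{\Sigma_+}$ on $\Sigma_+$ and $u_+=u|_{\Sigma_-}$ on $\Sigma_-$; on $\Omega_-$ one imposes $u_-=u|_{\Sigma_-}$ on $\Sigma_-$ and $u_-=0$ on $\Sigma_b$. The required Dirichlet data $u|_{\Sigma_\pm}\in L^2 H^{5/2}(\Sigma)$ come directly from the difference-quotient \emph{dissipation} ($\|\nabla_\ast^2(\chi u)\|_{L^2 H^1}$ plus trace theory), so no knowledge of $\eta$ is needed here. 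This yields $u\in L^2 H^3$ and $\nabla p\in L^2 H^1$. Only then does one apply $\nabla_\ast$ to the dynamic boundary conditions and invoke Theorem \ref{critical} (with Poincar\'e, since only $\nabla_\ast p$, not $p$ itself, is controlled) to obtain $\eta\in L^2 H^{7/2}$. Finally, with $\eta\in L^\infty H^3$ already in hand from the energy, the $L^\infty$-in-time estimates $u\in L^\infty H^2$, $p\in L^\infty H^1$ follow from a single application of the two-phase Theorem \ref{cStheorem}, now that the stress data on $\Sigma_\pm$ are legitimately in $H^{1/2}$.
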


\begin{proof}
In order to introduce horizontal derivatives in the weak formulation \eqref{sLP0ws} we restrict to test functions that are localized away from $\Sigma_b$.  That is, we replace the $v$ in \eqref{sLP0ws} with $\chi v$, where $\chi$ is the cut-off function supported away from $\Sigma_b$ defined by \eqref{chi_properties} and $v\in \mathcal{W}$. After a simple computation, we find that $w:=\chi u$ and  $q:=\chi p$ satisfy
\begin{multline}\label{localweakdef}
 (\rho\partial_t w,v)+(\frac{\mu}{2}\mathbb{D}w, \mathbb{D}v)-(q,\diverge{v})
  + \rho_+g(\eta_{+},v_{3,+} )_{+} + \sigma_+(\nabla_\ast \eta_{+},\nabla_\ast v_{3,+})_{+}
\\
-\rj g(\eta_{-},v_3)_{-}+\sigma_-(\nabla_\ast\eta_{-},\nabla_\ast
v_3)_{-}  =( \tilde{f}, v) -( g, v)_{\Sigma},\ \forall v\in  \mathcal{W}
\end{multline}
with $\diverge w=  \partial_3 \chi  u_3$ and
\begin{equation}
\tilde{f}=\chi f+\partial_3 \chi  (  p e_3 - 2  \partial_3 u) - \partial_3^2 \chi   u.
\end{equation}

Since the support of $w$ is away from $\Sigma_b$ we have that $D_{-h}D_h w \in \mathcal{W}$, where $D_h$ is the standard horizontal difference quotient in any horizontal direction $h \in \mathbb{R}^2$.  Then we can let $v=D_{-h}D_h w$ in \eqref{localweakdef} to find that
 \begin{multline}\label{toes1}
 \frac{1}{2}\frac{d}{dt}\left(\|\sqrt{\rho}D_h w\|_{L^2(\Omega)}^2+  \rho_+g\|  D_h \eta_+\|_{L^2(\Sigma_+)}^2+ \sigma_+\|\nabla_\ast D_h \eta_+\|_{L^2(\Sigma_+)}^2 \right.
\\
\left. -\rj g\|D_h \eta_- \|_{L^2(\Sigma_-)}^2 +\sigma_-\|\nabla_\ast D_h \eta_- \|_{L^2(\Sigma_-)}^2 \right) + (\frac{\mu}{2}\mathbb{D}D_h w, \mathbb{D}D_h w)
  \\
=(q,\diverge D_{-h}D_h w)+( \tilde{f}, D_{-h}D_h w) +( g, D_{-h}D_h w)_{\Sigma}.
\end{multline}
By Korn's inequality, we have
\begin{equation}\label{j11}
(\frac{\mu}{2}\mathbb{D}D_h w, \mathbb{D}D_h w) \ge C\| D_h
w\|_{H^1(\Omega)}^2.
\end{equation}
We estimate the right hand side of \eqref{toes1} as follows,  recalling the definitions of $w,q$,
\begin{equation}\label{j12}
(q,\diverge D_{-h}D_h w)=( q, \partial_3 \chi D_{-h}D_h u_3)\le C(\varepsilon)\|p\|_{L^2(\Omega)}^2 +
\varepsilon \| D_h w\|_{H^1(\Omega)}^2,
\end{equation}
\begin{multline}\label{j13}
( \tilde{f}, D_{-h}D_h w) \le \|\tilde{f}\|_{L^2(\Omega)}\|D_{-h}D_h w\|_{L^2(\Omega)}
\\
\le C(\varepsilon)(\|f\|_{L^2(\Omega)}^2 + \|p\|_{L^2(\Omega)}^2 + \|u\|_{H^1(\Omega)}^2)
 + \varepsilon \| D_h w\|_{H^1(\Omega)}^2,
\end{multline}
\begin{multline}\label{j14}
( g, D_{-h}D_h w)_{\Sigma}\le \|g\|_{H^{1/2}(\Sigma)}\| D_{-h}D_h w\|_{H^{-1/2}(\Sigma)}
 \le C \|g\|_{H^{1/2}(\Sigma)}\|  D_h w\|_{H^{ 1/2}(\Sigma)}
 \\
  \le C(\varepsilon)\|g\|_{H^{1/2}(\Sigma)}^2+\varepsilon\|  D_h w\|_{H^{ 1}(\Omega)}^2.
\end{multline}
Note that in the second inequality in \eqref{j14} we have used the general estimate
\begin{equation}
 \| D_h \varphi \|_{H^{-1/2}(\mathrm{T}^2)} \lesssim \|\varphi\|_{H^{1/2}(\mathrm{T}^2)} \text{ for all } \varphi \in H^{1/2}(\mathrm{T}^2),
\end{equation}
which may be derived from the fact that for $h \in \mathbb{R}^2$,
\begin{equation}
 |\widehat{D_h \varphi}(n)|^2 = |\hat{\varphi}(n)|^2 \frac{|e^{i n \cdot h} - 1|^2}{|h|^2} \le
 C |n|^2 |\hat{\varphi}(n)|^2 \text{ for } n \in L_1^{-1} \mathbb{Z} \times L_2^{-1} \mathbb{Z},
\end{equation}
where $\hat{\cdot}$ denotes the Fourier transform on $\mathrm{T}^2 = (2\pi L_1 \mathbb{T})\times(2\pi L_2 \mathbb{T})$.  Now we integrate \eqref{toes1}  in time from $0$ to $t\in[0,T]$ and employ the estimates \eqref{j11}--\eqref{j14} and \eqref{weakest}--\eqref{weakpes} in addition to the usual relation between $D_h$ and $\nabla_\ast$ to obtain the bound
\begin{multline}\label{toes2}
 \sup_{0\le t\le T}\left\{\|\nabla_\ast w\|_{L^2(\Omega)}^2+ \|\nabla_\ast\eta\|_{H^1(\Sigma)}^2\right\}
+\|\nabla_\ast w\|_{L^2H^1 }^2 \\
\lesssim \|u_0\|_{1}^2+\|\eta_0\|_2^2+ \|f\|_{L^2 L^2}+\|g\|_{L^2H^{1/2} }+\|p\|_{L^2 L^2 }^2
+\|u\|_{L^2 H^1}^2 \lesssim \mathcal{Z}(1+T),
\end{multline}
where we have compactly written $\mathcal{Z}$ for the right hand side of \eqref{strongest} (containing \eqref{weakest}). Since $u=w$ on $\Sigma$ we may use  \eqref{weakest} and \eqref{toes2} to see that
\begin{equation}\label{toesb1}
\begin{split}
\|u\|_{L^2(0,T;H^{3/2}(\Sigma)) }^2
   &\lesssim \|u\|_{L^2(0,T;H^{1/2}(\Sigma)) }^2 +\|\nabla_\ast w\|_{L^2(0,T;H^{1/2}(\Sigma)) }^2 \\
    &\lesssim \|u\|_{L^2 H^1}^2 +\|\nabla_\ast w\|_{L^2 H^1 }^2 \lesssim\mathcal{Z}(1+T).
\end{split}
\end{equation}

At this point we cannot apply Theorem \ref{cStheorem} to improve any of our estimates since we do not yet have control of the boundary terms involving $\eta$.  To circumvent this problem we will employ one-phase elliptic regularity theory.  Note that for  a.e. $t\in[0,T]$ the pair $(u_+(t),p_+(t))$ is the unique weak solution  to the elliptic problem \eqref{cS1phase2} in $G=\Omega_+$, with $f$ replaced by $f(t)-\rho\partial_t u(t)$, $g=0$ and $\varphi=u_+(t)$ on $\Sigma$. By Lemma \ref{cS1phaselemma2}, \eqref{toesb1} and an integration in time from $0$ to $t$, we have
\begin{equation}\label{toesd1}
\begin{split}
\|u_+\|_{L^2(0,T;{H}^{2}(\Omega_+)) }^2 + \|\nabla p_+\|_{L^2(0,T;L^2(\Omega_+)) }^2
   &\lesssim \|f\|_{L^2 L^2}^2+\|\partial_t u\|_{L^2 L^2}^2+\|u\|_{L^2(0,T;H^{3/2}(\Sigma)) }^2
   \\& \lesssim\mathcal{Z}(1+T).
\end{split}
\end{equation}
A similar argument yields an estimate for $(u_-,p_-)$; then together with \eqref{weakest} we have
\begin{equation}\label{toesd2}
\|u\|_{L^2(0,T;\ddot{H}^{2}(\Omega)) }^2+\| p\|_{L^2(0,T;\ddot{H}^1(\Omega)) }^2 \lesssim\mathcal{Z}(1+T).\end{equation}

Now  we use the test function  $v=D_{-h}D_h D_{-h}D_h w$ in \eqref{localweakdef} to  find that
\begin{multline}\label{toes12}
 \frac{1}{2}\frac{d}{dt} \left\{\|\sqrt{\rho}D_{-h}D_h w\|_{L^2(\Omega)}^2+  \rho_+g\|  D_{-h}D_h \eta_+\|_{L^2(\Sigma_+)}^2+ \sigma_+\|\nabla_\ast D_{-h}D_h \eta_+\|_{L^2(\Sigma_+)}^2\right.
 \\
\left.-\rj  g \| D_{-h}D_h \eta_- \|_{L^2(\Sigma_-)}^2 + \sigma_- \|\nabla_\ast D_{-h} D_h \eta_-\|_{L^2 (\Sigma_-) }^2 \right\}+(\frac{\mu}{2}\mathbb{D}D_{-h}D_h  w, \mathbb{D}D_{-h}D_h  w)  \\
=(q,\diverge D_{-h}D_h D_{-h}D_h   w)+( \tilde{f}, D_{-h}D_h D_{-h}D_h  w) +( g, D_{-h}D_h D_{-h}D_h w)_{\Sigma}.
\end{multline}
Arguing as above (deriving estimates of the form \eqref{j11}--\eqref{j14}), we may deduce from \eqref{toes12} that
\begin{multline}\label{toes3}
 \sup_{0\le t\le T}\left\{\|\nabla_\ast^2 w\|_{L^2(\Omega)}^2+ \|\nabla_\ast^2\eta\|_{H^1(\Sigma)}^2\right\} +\|\nabla_\ast^2 w\|_{L^2H^1 }^2 \\
\lesssim \|w_0\|_{2}^2+\|\eta_0\|_3^2+ \|f\|_{L^2 H^1}+\|g\|_{L^2H^{3/2} }+\|p\|_{L^2 H^1}^2+\|u\|_{L^2 H^2}^2 \lesssim\mathcal{Z}(1+T).
\end{multline}
Then by \eqref{weakest} and \eqref{toes3} we have that
\begin{equation}\label{toesb2}
\begin{split}
\|u\|_{L^2(0,T;H^{5/2}(\Sigma)) }^2
   &\lesssim \|u\|_{L^2(0,T;H^{1/2}(\Sigma)) }^2 +\|\nabla_\ast^2 w\|_{L^2(0,T;H^{1/2}(\Sigma)) }^2 \\
    &\lesssim \|u\|_{L^2 H^1}^2 +\|\nabla_\ast^2 w\|_{L^2 H^1 }^2 \lesssim\mathcal{Z}(1+T).
\end{split}
\end{equation}
and
\begin{equation}\label{toese134}
\|\eta\|_{L^\infty(0,T;{H}^{3}(\Sigma)) }^2 \lesssim\mathcal{Z}(1+T).
\end{equation}
As before, applying Lemma \ref{cS1phaselemma2} in $\Omega_\pm$ then yields the estimate
\begin{equation}\label{toesd3}
\|u\|_{L^2(0,T;\ddot{H}^{3}(\Omega)) }^2+\|  p\|_{L^2(0,T;\ddot{H}^2(\Omega)) }^2
  \lesssim\mathcal{Z}(1+T).
\end{equation}

The regularity we have now established allows us to deduce from \eqref{sLP0ws} that $(u,p,\eta)$ solve the problem \eqref{surfaceLP} in the strong sense.  In order to complete the estimates in a time-independent fashion  we now need to improve the estimate \eqref{toesd3}.  We replace the estimates \eqref{j12}--\eqref{j13} by
\begin{equation}\label{j122}
(q,\diverge D_{-h}D_h w)=(D_h q, \partial_3 \chi D_h u_3)\le C(\varepsilon) \| u\|_{H^1(\Omega)}^2 +
\varepsilon \|\nabla p\|_{L^2(\Omega)}^2,
\end{equation}
\begin{multline}\label{j132}
(\tilde{f}, D_{-h}D_h w) \le C (\| f \|_{L^2(\Omega)} + \| \partial_3 u \|_{L^2(\Omega)} ) \|D_{-h}D_h w\|_{L^2(\Omega)} + C \| D_h p \|_{L^2(\Omega)} \| D_h w \|_{L^2(\Omega)}
\\
\le C(\varepsilon)(\|f\|_{L^2(\Omega)}^2+\|u\|_{H^1(\Omega)}^2)+\varepsilon \| D_h w\|_{H^1(\Omega)}^2 + \varepsilon \|\nabla p\|_{L^2(\Omega)}^2.
\end{multline}
Hence, we can replace \eqref{toes2} by
\begin{multline}\label{toes22}
 \sup_{0\le t\le T}\left\{\|\nabla_\ast w\|_{L^2(\Omega)}^2+ \|\nabla_\ast\eta\|_{H^1(\Sigma)}^2\right\}
+\|\nabla_\ast w\|_{L^2H^1 }^2  \\
\lesssim \|u_0\|_{1}^2+\|\eta_0\|_2^2+ \|f\|_{L^2 L^2}+\|g\|_{L^2H^{1/2} } + \varepsilon\|\nabla p\|_{L^2 L^2 }^2 +\|u\|_{L^2 H^1}^2  \\
\lesssim\mathcal{Z} + \varepsilon\|\nabla p\|_{L^2 L^2 }^2.
\end{multline}
Furthermore, after applying the one-phase Stokes elliptic regularity theory, we can replace \eqref{toesd2} by
\begin{equation}
\|u\|_{L^2(0,T;\ddot{H}^{2}(\Omega)) }^2+\|\nabla p\|_{L^2(0,T;L^2(\Omega)) }^2 \lesssim \mathcal{Z} + \varepsilon \|\nabla p\|_{L^2 L^2 }^2.
\end{equation}
We then take $\varepsilon$ to be sufficiently small to absorb the terms on the right into the left; this allows us to improve the estimate \eqref{toesb2}:
\begin{equation}\label{toesd22}
\|u\|_{L^2(0,T;\ddot{H}^{2}(\Omega)) }^2+\|\nabla p\|_{L^2(0,T;L^2(\Omega)) }^2  \lesssim\mathcal{Z}.
\end{equation}
Similarly, we we can improve the estimate \eqref{toesd3} with
\begin{equation}\label{toesd32}
\|u\|_{L^2(0,T;\ddot{H}^{3}(\Omega)) }^2+\| \nabla p\|_{L^2(0,T;\ddot{H}^1(\Omega)) }^2 \lesssim\mathcal{Z}
\end{equation}
and improve  \eqref{toese134}  with (employing Poincar\'{e}'s inequality)
\begin{equation}\label{toese88}
\|\eta\|_{L^\infty(0,T;{H}^{3}(\Sigma)) }^2 \lesssim\mathcal{Z}.
\end{equation}

Now that we have \eqref{toesd32}, \eqref{toese88} and \eqref{weakest}, we estimate the remaining terms in the left hand side of \eqref{strongest}. First, for the time derivatives of $\eta$ we use the kinematic
boundary conditions and the trace theorem to obtain
\begin{equation}\label{toesd52}
 \|\partial_t\eta\|_{L^2(0,T;H^{5/2}(\Sigma))}^2 + \|\partial_t^2\eta\|_{L^2(0,T;H^{1/2}(\Sigma))}^2
  = \|u_3\|_{L^2(0,T;H^{5/2}(\Sigma))}^2 + \|\partial_t u_3\|_{L^2(0,T;H^{1/2}(\Sigma))}^2
\lesssim \mathcal{Z}.
\end{equation}
Second,  for $\eta$ itself we use the dynamic boundary conditions
\begin{equation} \label{ellip1}
-\sigma_+\Delta_\ast\eta_++\rho_+g\eta_+  = p_+-2\mu_+\partial_3u_{3,+}-g_+\text{ on }\Sigma_+
\end{equation}
and
\begin{equation}\label{ellip2}
 -\sigma_-\Delta_\ast\eta_- e_3-\rj g\eta_-= \llbracket -p +2\mu\partial_3u_3\rrbracket
-g_-\text{ on }\Sigma_-
\end{equation}
Note that we do not yet have an estimate of $p$ on $\Sigma$, but we do have the estimate of $\nabla p$. We may then apply  $\nabla_\ast$ to  \eqref{ellip1}--\eqref{ellip2} and use Theorem \ref{critical} and \eqref{toesd32} to see that
\begin{equation}\label{toesd62}
\| \nabla_\ast \eta\|_{L^2(0,T;H^{5/2}(\Sigma))}^2 \lesssim  \|\nabla_\ast p\|_{L^2(0,T;H^{1/2}(\Sigma))}^2 + \| \nabla_\ast \partial_3u_3\|_{L^2(0,T;H^{1/2}(\Sigma))}^2 + \|g\|_{L^2(0,T;H^{1/2}(\Sigma))}^2
        \lesssim\mathcal{Z}.
\end{equation}
Since $\int_{ \mathrm{T}^2}\eta=0$, we may use Poincar\'e's inequality to deduce from \eqref{toesd62} that
\begin{equation}\label{toesd72}
\| \eta \|_{L^2(0,T;H^{7/2}(\Sigma))}^2 \lesssim\mathcal{Z}.
\end{equation}
With the $\eta$ estimate \eqref{toesd72} in hand, we may derive an estimate of $p$ as in Theorem \ref{dth}.  Indeed, we  use \eqref{toesd72}, the boundary conditions \eqref{ellip1}--\eqref{ellip2},  Poincar\'e's inequality  and the trace theorem to see that
\begin{equation}\label{toesd82}
\|  p\|_{L^2(0,T;\ddot{H}^2(\Omega)) }^2  \lesssim\mathcal{Z}.
\end{equation}

To derive the $L^\infty$-in-time estimates in \eqref{strongest} we note that for a.e. $t\in[0,T]$,
$(u(t),p(t))$ solve the problem
\begin{equation}\label{surfaceelliptic}
\left\{\begin{array}{lll}
-\mu\Delta u+\nabla p=f-\rho\partial_t u\quad&\text{ in }&\Omega
\\ \diverge{u}=0&\text{ in }&\Omega
\\ ( p_+I-\mu_+\mathbb{D}(u_+)) e_3= (\rho_+g\eta_+  -\sigma_+\Delta_\ast\eta_+)e_3+g_+&\text{ on }&\Sigma_+
\\\llbracket u\rrbracket =0,\quad \llbracket pI-\mu\mathbb{D}(u)\rrbracket  e_3=(\rj g\eta_- +\sigma_-\Delta_\ast\eta_-)e_3-g_-&\text{ on }&\Sigma_-
\\ u_-=0 &\text{ on }&\Sigma_b,
\end{array}\right.
\end{equation}
We apply the two-phase elliptic theory of Theorem \ref{cStheorem} to \eqref{surfaceelliptic} and use \eqref{toese88} and \eqref{weakest} to see that
\begin{equation}\label{toesd992}
\|u\|_{L^\infty(0,T; \ddot{H}^{2}(\Omega)) }^2+\|  p\|_{L^\infty(0,T;\ddot{H}^1(\Omega))
}^2  \lesssim\|f\|_{L^\infty L^2}^2 + \|g\|_{L^\infty H^{1/2}}^2 + \|\partial_tu\|_{L^\infty L^2}^2
+ \|\eta\|_{L^\infty H^{5/2}}^2 \lesssim \mathcal{Z}.
\end{equation}
Then from trace theory we see that
\begin{equation}\label{toesd0123}
 \|\partial_t\eta \|_{L^\infty(0,T;H^{3/2}(\Sigma))}^2 = \|u_3\|_{L^\infty(0,T;H^{3/2}(\Sigma))}^2
  \lesssim\mathcal{Z}.
\end{equation}
Similarly, employing the estimate written in \eqref{divineq} and the fact that $ \diverge{\dt u}=0$, we have that
\begin{equation}
 \| \partial_t^2 \eta \|_{L^\infty(0,T;H^{-1/2}(\Sigma))}^2 = \|\dt u_3\|_{L^\infty(0,T;H^{-1/2}(\Sigma))}^2
 \lesssim \|\dt u\|_{L^\infty(0,T;L^{2}(\Omega))}^2 \lesssim \mathcal{Z}.
\end{equation}

Lastly, we derive the boundary estimates for $\partial_t u,\partial_tp$, beginning with $\dt u$.  First we control the horizontal derivatives via a trace estimate:
 \begin{equation}
\|\nabla_\ast\partial_t u\|_{L^2(0,T;H^{-1/2}(\Sigma))}^2
\lesssim \|\partial_t u\|_{L^2(0,T;H^{1/2}(\Sigma))}^2\lesssim \|\partial_t u\|_{L^2(0,T;H^{1}(\Omega))}^2
\lesssim\mathcal{Z}.
\end{equation}
Then we use the incompressibility condition to control $\partial_3 \dt u_3$:
 \begin{equation}
\begin{split}
 \|\partial_3\partial_t u\|_{L^2(0,T;H^{-1/2}(\Sigma))}^2&=\|\partial_1\partial_t u_1+\partial_2\partial_t u_2\|_{L^2(0,T;H^{-1/2}(\Sigma))}^2
           \\& \lesssim \|\nabla_\ast\partial_t u\|_{L^2(0,T;H^{-1/2}(\Sigma))}^2
        \lesssim\mathcal{Z}.
\end{split}
\end{equation}
Now to handle $\partial_3 u_i$ for $i=1,2$ we have to consider $\Sigma_\pm$ as separate cases and utilize the dynamic boundary conditions.  On $\Sigma_+$ we have that $g_+\cdot e_i = -\mu_+(\partial_3 u_i + \partial_i u_3)$, which allows us to estimate
\begin{equation}
 \| \partial_3 \partial_t u_i \|_{H^{-1/2}(\Sigma_+)} \lesssim \| \partial_i \partial_t u_3 \|_{H^{-1/2}(\Sigma_+)} + \|  \partial_t g \|_{H^{-1/2}(\Sigma_+)}  \lesssim \mathcal{Z}.
\end{equation}
Similarly, on $\Sigma_-$ we use the dynamic boundary condition to estimate
\begin{equation}
        \|\llbracket \mu \partial_3 \partial_t
 u_i\rrbracket\|_{L^2(0,T;H^{-1/2}(\Sigma))}^2=\|\dt g_-\cdot e_i-\llbracket \mu \partial_i \partial_t
 u_3\rrbracket\|_{L^2(0,T;H^{-1/2}(\Sigma))}^2
        \lesssim\mathcal{Z}.
\end{equation}
Combining these estimates then yields the bound
\begin{equation}\label{toesd994}
\|\nabla\partial_t u_+\|_{L^2(0,T;H^{-1/2}(\Sigma_+))}^2
+ \|\llbracket \mu\nabla\partial_t u\rrbracket \|_{L^2(0,T;H^{-1/2}(\Sigma_-))}^2  \lesssim\mathcal{Z}.
\end{equation}
Then we solve for $\dt p$ in the dynamic boundary conditions on $\Sigma_\pm$ to find that
\begin{equation}\label{toesd995}
\|\partial_t p_+ \|_{L^2(0,T;H^{-1/2}(\Sigma_+))}^2 + \|\llbracket\partial_t p \rrbracket \|_{L^2(0,T; H^{-1/2}(\Sigma_-))}^2  \lesssim\mathcal{Z}.
\end{equation}

To conclude we  sum over the estimates \eqref{toesd995}, \eqref{toesd994}, \eqref{toesd0123}, \eqref{toesd992},
\eqref{toesd82}, \eqref{toesd72}, \eqref{toesd52},\ \eqref{toese88}, \eqref{toesd32} and  \eqref{weakest}  to get \eqref{strongest}. Here the boundedness of the $L^2\mathcal{W}^*$-norm of $\rho\partial_t^2u$ and the $L^2L^2$-norm of $\partial_tp$ follows in the same way as in Theorem \ref{H2SS}. The proof of the time continuity and the determination of initial data $p(0),\partial_t\eta(0), \partial_tu(0)$  can also be carried out as in Theorem \ref{H2SS}.
\end{proof}

 \subsection{The nonlinear problem}

\subsubsection{Nonlinear estimates}

Now we turn to the nonlinear problem \eqref{surface} to complete the proof of Theorem \ref{surth}. Recall that the nonlinear term $f$ is given by \eqref{f} and $g_\pm$ is given by \eqref{g_+^i}, \eqref{g_+^3}, \eqref{g_-^i}, \eqref{g_-^3}. Also recall that in the case with surface tension, we define the energy $\mathcal{E}$ by \eqref{E} and the dissipation $\mathcal{D}$ by \eqref{D}.  We shall now present estimates of  $f$ and $g$  in terms of $\mathcal{E}$ and $\mathcal{D}$ in the following lemma.

\begin{lemma}\label{nonlineares}
There exists $\delta>0$ so that if $\mathcal{E}\le \delta$, then
\begin{equation}\label{no1}
   \|f\|_{L^2}^2+\|g\|_{H^{1/2}}^2\lesssim   \mathcal{E}^2,
\end{equation}
\begin{equation}\label{no2}
   \|f\|_{H^1}^2+\|g\|_{H^{3/2}}^2\lesssim   \mathcal{E}\mathcal{D},
\end{equation}
\begin{equation}\label{no3}
   \|\partial_tf\|_{\mathcal{W}^\ast}^2+\|\partial_tg\|_{H^{-1/2}}^2\lesssim   \mathcal{E}\mathcal{D}.
\end{equation}
\end{lemma}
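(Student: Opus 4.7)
The plan is to read off estimates \eqref{no1}--\eqref{no3} by carefully cataloguing the multilinear structure of the explicit expressions \eqref{f} and \eqref{g_+^i}--\eqref{g_-^3}. Every term in $f$ and $g$ is at least quadratic in the unknowns $(u,p,\eta)$: once we note that $\mathcal{A}-I$, $\mathcal{N}-e_3$, $J-1$, $K-1$, $\theta_{ij}-\delta_{ij}$, $W$, and their derivatives all vanish at $\eta\equiv 0$ and depend smoothly on the Poisson extension $\bar\eta$ of $\eta$, every monomial carries at least two ``small'' factors. So first I would record a short preliminary lemma estimating these geometric quantities and their time/space derivatives in the Sobolev norms we need, using the mapping properties of $\mathcal{P}_\pm$ (which transfer the $\eta$-norms from $\Sigma$ into $\Omega$ with a gain of $1/2$ in regularity), the Sobolev embedding $H^2(\Omega)\hookrightarrow L^\infty$, and standard Sobolev product estimates. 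The smallness assumption $\mathcal{E}\le\delta$ will then permit Taylor-expanding smooth nonlinearities like $(1+|\na\eta|^2)^{-1/2}$ about $\na\eta=0$, so that all dependence on the geometry enters through a polynomial of the relevant norms, with a leading linear contribution in $\eta$.

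Once these geometric estimates are in place the bound \eqref{no1} is obtained monomial by monomial: in each product making up $f$ or $g$, two factors carry smallness. We bound one factor in $L^\infty$ (or $H^{1/2}$) via the energy---in particular $\|\eta\|_3,\|u\|_2,\|p\|_1,\|\partial_t\eta\|_{3/2},\|\partial_tu\|_0$ all control factors safely below the Sobolev-embedding threshold in the relevant spaces---and the remaining factor in $L^2(\Omega)$ (or on $\Sigma$). The trace theorem handles passage from $\Omega$ to $\Sigma_\pm$. For \eqref{no2} the strategy is the same, except that exactly one factor in each monomial is estimated at one derivative higher using the dissipation ($\|u\|_3,\|p\|_2,\|\eta\|_{7/2},\|\partial_tu\|_1,\|\partial_t\eta\|_{5/2}$), while the companion factor remains at the energy level. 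Because all the $f$- and $g$-type nonlinearities have the same shape as those analyzed in the local well-posedness work \cite{GT_lwp} (and the corresponding Lemmas 4.4--4.6 there), the bookkeeping of \eqref{no1}--\eqref{no2} reduces essentially to a direct transcription.

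The main obstacle will be \eqref{no3}, and in particular the $\mathcal{W}^\ast$-norm of $\partial_tf$. When we differentiate \eqref{f} in time the Leibniz rule produces monomials carrying the highest available time derivatives $\partial_t^2\eta$ and $\partial_t^2u$ (through $\partial_t^2\Theta$), which only live in $H^{-1/2}(\Sigma)$ and $(\mathcal{W})^\ast$-type classes respectively and so cannot be matched with general $L^2(\Omega)$ test functions. To handle them I would pair $\partial_tf$ with an arbitrary $v\in \mathcal{W}$ and, for each monomial containing a top-order factor, shift a spatial derivative onto $v$ using integration by parts; the definition of $\mathcal{W}$ gives $v\in H^1(\Omega)$ and $v_3\in H^1(\Sigma)$, which is exactly what is needed to absorb the extra derivative. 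For the surface terms produced this way, and for the $H^{-1/2}(\Sigma)$ estimate of $\partial_tg$, I would use straightforward duality against $H^{1/2}(\Sigma)$: the factor $\partial_t^2\eta$ lands in $H^{-1/2}(\Sigma)$, which is controlled by $\sqrt{\mathcal{E}}$ (this is precisely why that term was built into $\mathcal{E}$), while the companion geometric factor, carrying at least one power of $\eta$-smallness, is absorbed into $\sqrt{\mathcal{E}}$ or $\sqrt{\mathcal{D}}$ via Sobolev multiplication and trace theory. The accounting is delicate but mechanical, and the critical inclusion of $\|\partial_t^2\eta\|_{-1/2}$ in $\mathcal{E}$ together with $\|\partial_t^2\eta\|_{1/2}$ and the boundary-dual terms $\|\partial_tp_+\|_{H^{-1/2}}$, $\|\Lbrack\partial_tp\Rbrack\|_{H^{-1/2}}$, $\|\nabla\partial_tu_+\|_{H^{-1/2}}$, $\|\Lbrack\mu\nabla\partial_tu\Rbrack\|_{H^{-1/2}}$ in $\mathcal{D}$ is exactly what will make the duality argument succeed.
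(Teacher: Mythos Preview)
Your strategy is essentially the paper's: identify the quadratic structure, use Sobolev products and the Poisson-extension estimates for the geometric factors, and for \eqref{no3} pass to a dual norm and integrate one spatial derivative by parts so that the boundary terms are absorbed by the $H^{-1/2}(\Sigma_\pm)$ pieces of $\mathcal{D}$.

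Two small corrections are worth making. First, no $\partial_t^2 u$ ever appears in $\partial_t f$: inspection of \eqref{f} shows $f$ depends on $u,\nabla u,\nabla^2 u,p,\nabla p$ and on $\bar\eta$ and $\partial_t\bar\eta$ (via $W$ and $\partial_t(K\theta)$), but not on $\partial_t u$; hence $\partial_t f$ carries at most one time derivative on $u$. The genuine bottleneck is the term of the schematic form $\nabla\bar\eta\cdot\mu\,\partial_t\nabla^2 u$, since $\partial_t u$ is only in $H^1(\Omega)$ in $\mathcal{D}$. The paper handles exactly this term by your integration-by-parts idea: pairing $\mu\,\partial_t\partial_i\partial_j u$ against $\varphi\in {}_0H^1(\Omega)$ and moving $\partial_j$ onto $\varphi$ produces the interior term $\|\partial_t u\|_1$ and boundary terms controlled by $\|\partial_t\nabla u_+\|_{H^{-1/2}(\Sigma_+)}$ and $\|\Lbrack\mu\partial_t\nabla u\Rbrack\|_{H^{-1/2}(\Sigma_-)}$, all of which sit in $\mathcal{D}$. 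Second, the paper does not exploit the extra $v_3\in H^1(\Sigma)$ structure of $\mathcal{W}$ at all: it simply uses $\|\partial_t f\|_{\mathcal{W}^\ast}\le \|\partial_t f\|_{({}_0H^1)^\ast}$ and works entirely in $({}_0H^1(\Omega))^\ast$, which is a cleaner route than the one you outline. With these adjustments your plan matches the paper's proof.
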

\begin{proof}
The full expressions that define $f$ and $g$   are rather complicated, and a full analysis of each term would be tedious.  As such, we will identify only the principal terms in the expressions and provide details for only the most delicate estimates.  The other terms (lower order terms) may be handled through straightforward modifications of the arguments presented here.

The principal terms appearing in $f$ and $g$ may be identified by first examining the regularity of the terms appearing in $\mathcal{E}$ and $\mathcal{D}$, as defined by \eqref{E}--\eqref{D}, and by appealing to Lemma \ref{Poi} to compare the regularity of $\bar{\eta}$ to $\eta$.  We find that, roughly speaking, the regularity of $u$ (and its time derivatives) is same as $\partial_t\bar{\eta}$, one order higher than $p$ and at least one order lower than $\bar{\eta}$. Based on this, we identify the principal terms in $f$ and $g$ as
\begin{equation}\label{app1}
f\sim \nabla^3\bar{\eta}    u+\nabla^2\bar{\eta} \nabla  u+\nabla\bar{\eta}\, \mu\nabla^2 u
\end{equation}
and
\begin{equation}\label{app2}
g_+\sim \nabla_\ast\eta_+\nabla u_++\nabla_\ast^2\eta_+u_+,\quad
g_- \sim \nabla_\ast\eta_-\Lbrack\mu\nabla u\Rbrack+\nabla_\ast^2\eta_-u.
\end{equation}

We will prove only the most involved estimate, namely \eqref{no3}.  The other two follow estimates may be derived through somewhat easier arguments.  By Lemma \ref{Poi}--\ref{-1norm}, we have
\begin{equation}
\begin{split}
\|\partial_t f\|_{ \mathcal{W}^\ast} & \le \|\partial_t f\|_{\Hd} \lesssim
\|\partial_t \nabla^3\bar{\eta}\|_{0} \|   u\|_1+\|    \nabla^3\bar{\eta}\|_0\|\partial_t u\|_1
 + \|\partial_t \nabla^2\bar{\eta} \|_0 \|\nabla u\|_1
\\
&+ \| \nabla^2\bar{\eta}\|_1\| \partial_t\nabla u\|_0 +\|\partial_t \nabla\bar{\eta} \|_0\|\nabla^2 u\|_1
  +\|\nabla\bar{\eta} \|_{2}  \|\mu\partial_t\nabla^2 u\|_{\Hd}
\\
&\lesssim \|\partial_t  {\eta}\|_{5/2} \|   u\|_1+\| {\eta}\|_{5/2}\|\partial_t u\|_1
 +\|\partial_t  {\eta} \|_{3/2} \|\nabla u\|_1
\\
&+\|  {\eta}\|_{5/2}\| \partial_t  u\|_1 + \|\partial_t  {\eta} \|_{1/2}\|  u\|_3  + \| {\eta} \|_{5/2}\|\mu\partial_t\nabla^2 u\|_{\Hd}
 \\
&\lesssim \sqrt{\mathcal{E}\mathcal{D}}.
\end{split}
\end{equation}
Here, in the last inequality, we have used the  estimate
\begin{equation}\label{a_bound}
 \|\mu\partial_t \nabla^2 u\|_{\Hd} \ls \|\partial_t u\|_{1} + \|\partial_t \nabla u_+\|_{H^{-1/2}(\Sigma_+)}
+ \|\partial_t \Lbrack\mu \nabla u\Rbrack \|_{H^{-1/2}(\Sigma_-)}\ls \sqrt{\mathcal{D}}.
\end{equation}
Indeed,   by  H\"older's inequality and the trace theorem, we obtain that for any $\varphi\in \H(\Omega)$ and any $i,j=1,2,3$,
\begin{multline}
\langle\mu\partial_t \partial_i \partial_j u,\varphi\rangle_\ast = -\int_\Omega \mu \partial_t\partial_i u \cdot \partial_j \varphi \,dx
+\mu_+ \int_{\Sigma_+} \partial_t \partial_i u_+ \cdot \varphi  (e_3\cdot e_j) - \int_{\Sigma_-}\partial_t\Lbrack\mu\partial_i u \Rbrack \cdot \varphi (e_3\cdot e_j)
\\
\lesssim \norm{\partial_tu}_1\norm{\varphi}_1+\|\partial_t \nabla u_+\|_{H^{-1/2}(\Sigma_+)}\|\varphi\|_{H^{1/2}(\Sigma_+)}
+\|\partial_t\Lbrack\mu\nabla u\Rbrack\|_{H^{-1/2}(\Sigma_-)}\|\varphi\|_{H^{1/2}(\Sigma_-)}
\\
\lesssim \left(\norm{\partial_tu}_1+\|\partial_t \nabla u_+\|_{H^{-1/2}(\Sigma_+)}
+\|\partial_t\Lbrack\mu\nabla u\Rbrack\|_{H^{-1/2}(\Sigma_-)}\right)\norm{\varphi}_1.
\end{multline}
Taking the supremum over such $\varphi$ with $\|\varphi\|_1\le 1$, we get \eqref{a_bound}.

A similar application of these lemmas, together with trace estimates, implies that
\begin{equation}
\begin{split}
\|\partial_t g_-\|_{-1/2} &\lesssim
\|\partial_t\nabla_\ast\eta_-\|_0\|\Lbrack\mu\nabla u\Rbrack\|_{L^2(\Sigma)}
+ \|\nabla_\ast\eta_-\|_2 \|\Lbrack\mu\partial_t\nabla u\Rbrack\|_{H^{-1/2}(\Sigma)} \\
& + \|\partial_t\nabla_\ast^2\eta_-\|_0\|u\|_{L^2(\Sigma)}
+ \|\nabla_\ast^2\eta_-\|_0\|\partial_tu\|_{L^2(\Sigma)} \\
&\lesssim\|\partial_t\eta_-\|_1\| u \|_{2}+\| \eta_-\|_3\|\Lbrack\mu\partial_t\nabla
u\Rbrack\|_{H^{-1/2}(\Sigma)} +  \|\partial_t \eta_-\|_2\|u\|_{1}
+ \| \eta_-\|_2\|\partial_tu\|_{1} \\
&\lesssim \sqrt{\mathcal{E} \mathcal{D}},
\end{split}
\end{equation}
and similarly, $\|\partial_t g_+\|_{-1/2} \lesssim \sqrt{\mathcal{E}\mathcal{D}}$. Hence, \eqref{no3} follows.
\end{proof}

\subsubsection{Fixed-point argument}

We shall use the linear theory provided by Theorem \ref{surlinear} and the nonlinear estimates of Lemma \ref{nonlineares} to prove Theorem \ref{surth} by using the the contraction mapping principle.  Recall that $\mathcal{E}$ and $\mathcal{D}$ are defined by \eqref{E} and \eqref{D}.

\begin{proof}[Proof of Theorem \ref{surth}]

As a first step we consider the dependence of $f,g,\mathcal{E},\mathcal{D}$ on $u,p,\eta$ at $t=0$.  Let $u_0\in  {}_0H_\sigma^1(\Omega) \cap \ddot{H}^{2}(\Omega)$ and $\eta_0\in  {H}^{3}(\Sigma)$ be the given initial data, which satisfy the zero-average condition \eqref{zero0} and the compatibility conditions \eqref{compatibility1ge} with $g(0)=g(u_0,\eta_0)$.  We define the initial data $\partial_t \eta(0)=u_3(0)$ on $\Sigma$ and then define $p(0)$ and $f(0)=f(u_0, \partial_t\eta(0), p(0))$ according to the problem \eqref{pressure0}. It is routine to verify that if $\|u_0\|_2^2+\|\eta_0\|_3^2 \le \delta_0$  with $\delta_0>0$ sufficiently small, then we have
\begin{equation}\label{data_est}
\mathcal{E}(0)\lesssim\|u_0\|_2^2+\|\eta_0\|_3^2\text{ and } \|f(0)\|_{ 0}^2+\| g(0)\|_{1/2}^2\lesssim\|u_0\|_2^2+\|\eta_0\|_3^2.
\end{equation}

We now let $\delta>0$ and define the space $\mathfrak{X}$ as
\begin{equation}
\begin{split}
\mathfrak{X}= \Biggl\{(w,\pi,\zeta) \,\Bigg\vert\, & (1)\,  w(0)=u_0, \zeta (0)=\eta_0; \\
  & (2) \,  \sup_{t\ge 0} \left( \mathcal{E}(w,\pi,\zeta)(t)+ \int_0^t\mathcal{D}(w,\pi,\zeta )(s) ds \right)\le \delta.   \Biggr\}
\end{split}
\end{equation}
Here we write $\mathcal{E}(w,\pi,\zeta)$ and $\mathcal{D}(w,\pi,\zeta)$ to mean $\mathcal{E}$ and $\mathcal{D}$, as defined by \eqref{E} and \eqref{D}, with $(w,\pi,\zeta)$ substituted for $(u,p,\eta)$.  We view $\mathfrak{X}$ as a metric space with the metric induced by the expression in item $(2)$.  The value of $\delta$ will be chosen later, but we will always assume that it is  smaller than that appearing in Lemma \ref{nonlineares}.  Note that if $(w,\pi,\zeta) \in \mathfrak{X}$, then $(w,\pi,\zeta)$ have the same continuity properties listed for $(u,p,\eta)$ in \eqref{regggg}.

We now turn to the mapping that we will use in the contraction mapping argument.  Given $(w,\pi,\zeta)\in \mathfrak{X}$, we let  $f=f(w,\pi,\zeta)$ and $g=g(w,\zeta)$. Then by Lemma \ref{nonlineares} we have
\begin{multline} \label{fges}
\|f\|_{L^\infty L^2}^2+\| g\|_{L^\infty H^{1/2}}^2+\|f\|_{L^2H^1}^2+\|g\|_{L^2 H^{3/2}}^2+\|\partial_tf\|_{L^2 \mathcal{W}^\ast}^2+\|\partial_tg\|_{L^2H^{-1/2}}^2 \\
\le C_5   (\mathcal{E}(w,\pi,\zeta)(t))^2+ 2C_5\int_0^t
\mathcal{E}(w,\pi,\zeta)(s) \mathcal{D}(w,\pi,\zeta)(s) ds \le 3 C_5 \delta^2.
\end{multline}
Since $w(0)=u_0$ and $\zeta(0)=\eta_0$ we have that $u_0,\eta_0, g(0)$ satisfy the compatibility conditions
\eqref{compatibility11}.  Hence, by Theorem \ref{surlinear}, there exists a unique strong solution $(v,q,\zeta)$ to the linear problem \eqref{surfaceLP} (with the forcing terms replaced by these $f,g$) on the interval on $[0,\infty)$; the solution satisfies the regularity properties listed in \eqref{regggg} and achieves the initial data $(u_0,\eta_0)$.  Moreover, by \eqref{strongest} and \eqref{fges}  we have
\begin{equation}\label{mm111}
\mathcal{E}(v,q,\zeta)(t)+ \int_0^t\mathcal{D}(v,q,\zeta)(s) ds \le C_4(\|u_0\|_2^2+\|\eta_0\|_3^2+4C_5\delta^2)
\end{equation}
for all $t\ge 0$.  By the above procedure, given $(w,\pi,\zeta)\in \mathfrak{X}$, we can define a nonlinear operator $\mathcal{L}$ such that $(v,q,\zeta)=\mathcal{L}(w,\pi,\zeta)$ is the solution of \eqref{surfaceLP} described above.   If we restrict $\delta,\delta_0$ so that $\delta\le 1/(8C_4C_5)$ and $\delta_0\le \delta/(2
C_4)$, then in fact  $\mathcal{L}:\mathfrak{X}\rightarrow\mathfrak{X}$.  Indeed, we deduce from \eqref{mm111} that
\begin{equation}\label{mm2}
\mathcal{E}(v,q,\zeta)(t)+ \int_0^t\mathcal{D}(v,q,\zeta)(s)ds \le \delta
\end{equation}
for all $t\ge 0$, which implies that $(v,q,\zeta)= \mathcal{L}(w,\pi,\zeta)$  is an element of $\mathfrak{X}$.

Now, given $(w^1,\pi^1,\zeta^1),(w^2,\pi^2,\zeta^2)\in \mathfrak{X}$, we let 
\begin{equation}
(v^1,q^1,\zeta^1) = \mathcal{L}(w^1,\pi^1,\zeta^1) \text{ and } (v^2,q^2,\zeta^2)=\mathcal{L}(w^2,\pi^2,\zeta^2). 
\end{equation}
Then $(\bar{v},\bar{q},\bar{\zeta}) = (v^1-v^1, q^1-q^2, \zeta^1-\zeta^2)$ is a solution of the linear problem \eqref{surfaceLP} with initial data $\bar{v}(0)=0,  \bar{\zeta}(0)=0$ and the forcing terms $\bar{f} = f(w_1,\pi_1,\zeta_1) - f(w_2,\pi_2,\zeta_2)$ and $\bar{g}=g(w_1,\zeta_1)-g(w_2,\zeta_2)$. We remark that $\bar{g}(0)=0$ so that $\bar{v}(0), \bar{\zeta}(0)$ and $\bar{g}(0)$ satisfy the compatibility condition \eqref{compatibility11}. As in Lemma \ref{nonlineares}, we can deduce  that
\begin{multline} \label{fgesbar}
\|\bar{f}\|_{L^\infty L^2}^2+\| \bar{g}\|_{L^\infty H^{1/2}}^2+\|\bar{f}\|_{L^2H^1}^2
+\|\bar{g}\|_{L^2 H^{3/2}}^2+\|\partial_t\bar{f}\|_{L^2
\mathcal{W}^\ast}^2+\|\partial_t\bar{g}\|_{L^2H^{-1/2}}^2
\\
\le C_5  \sup_{t\ge 0} \Bigl[ ( \mathcal{E}(w_1,\pi_1,\zeta_1)(t)+
\mathcal{E}(w_2,\pi_2,\zeta_2)(t))
\mathcal{E}(\bar{w},\bar{\pi},\bar{\zeta})(t)
\\
+   \int_0^t ( \mathcal{E}(w_1,\pi_1,\zeta_1)(s)+ \mathcal{E}(w_2,\pi_2,\zeta_2)(s))\mathcal{D}(\bar{w},\bar{\pi},\bar{\zeta})(s) ds
\\
+   \int_0^t \mathcal{E}(\bar{w},\bar{\pi},\bar{\zeta})(s)( \mathcal{D}(w_1,\pi_1,\zeta_1)(s)+ \mathcal{D}(w_2,\pi_2,\zeta_2)(s)) ds \Bigr]
\\
\le 4 C_5\delta \sup_{t\ge 0} \left[ \mathcal{E}(\bar{w},\bar{\pi},\bar{\zeta})(t) +\int_0^t
\mathcal{D}(\bar{w},\bar{\pi},\bar{\zeta})(s)ds  \right].
\end{multline}
Hence, applying Theorem \ref{surlinear} again, we obtain
\begin{equation}\label{mm1222}
\sup_{t\ge 0} \left[ \mathcal{E}(\bar{v},\bar{q},\bar{\zeta})(t) +\int_0^t
 \mathcal{D}(\bar{v},\bar{q},\bar{\zeta})(s) ds \right] \le 4 C_4 C_5 \delta\sup_{t\ge0} \left[\mathcal{E}(\bar{w},\bar{\pi},\bar{\zeta})(t)  +\int_0^t  \mathcal{D}(\bar{w},\bar{\pi},\bar{\zeta})(s)ds  \right].\end{equation}
Since $4 C_4C_5\delta<1/2$, we find that $\mathcal{L}$ is a contraction mapping in $\mathfrak{X}$. Therefore, there is a unique fixed point $(u,p,\eta)$ of the operator $\mathcal{L}$ in $\mathfrak{X}$. By the definition of $\mathcal{L}$, $(u,p,\eta)$  is the unique solution of \eqref{surface}.

We have now obtained a solution $(u,p,\eta)$ that satisfies $\mathcal{E} \le \delta$.  We may then revisit the proofs of Theorem \ref{surlinear} and Lemma \ref{nonlineares} to find that there exists an  energy functional that is equivalent to $\mathcal{E}$ and a dissipation that is equivalent to $\mathcal{D}$ (for simplicity we
still denote them by $\mathcal{E}$ and $\mathcal{D}$)  so that
\begin{equation}\label{hhh}
\frac{d}{dt}\mathcal{E} + C\mathcal{D}\le 0
\end{equation}
for a universal constant $C>0$.  Since $\mathcal{E} \lesssim \mathcal{D}$, the bound \eqref{gles} and the decay estimate \eqref{decayes} follow directly by \eqref{hhh} and an application of Gronwall's inequality.
\end{proof}

\appendix

\section{Analytic tools}\label{section_appendix}

\subsection{Poisson extension}

We will now define the appropriate Poisson integrals that allow us to extend $\eta_\pm$, defined on the surfaces $\Sigma_\pm$, to functions defined on $\Omega$, with ``good'' boundedness.

Suppose that $\Sigma_+ = \mathrm{T}^2\times \{1\}$, where $\mathrm{T}^2:=(2\pi L_1 \mathbb{T}) \times (2\pi L_2 \mathbb{T})$. We define the Poisson integral in $\mathrm{T}^2 \times (-\infty,1)$ by
\begin{equation}\label{P-1def}
\mathcal{P}_{-,1}f(x) = \sum_{n \in   (L_1^{-1} \mathbb{Z}) \times
(L_2^{-1} \mathbb{Z}) }  \frac{e^{i n \cdot x'}}{2\pi \sqrt{L_1 L_2}} e^{|n|(x_3-1)} \hat{f}(n),
\end{equation}
where for $n \in (L_1^{-1} \mathbb{Z}) \times (L_2^{-1} \mathbb{Z})$ we have written
\begin{equation}
 \hat{f}(n) = \int_{\mathrm{T}^2} f(x')  \frac{e^{- i n \cdot x'}}{2\pi \sqrt{L_1 L_2}} dx'.
\end{equation}
Here ``$-$'' stands for extending downward and ``$1$'' stands for extending at $x_3=1$, etc. It is well-known that $\mathcal{P}_{-,1}:H^{s}(\Sigma_+) \rightarrow H^{s+1/2}(\mathrm{T}^2 \times (-\infty,1))$ is a bounded linear operator for $s>0$. However, if restricted to the domain $\Omega$, we can have the following improvements.

\begin{lemma}\label{Poi}
Let $\mathcal{P}_{-,1}f$ be the Poisson integral of a function $f$ that is either in $\dot{H}^{q}(\Sigma_+)$ or
$\dot{H}^{q-1/2}(\Sigma_+)$ for $q \in \mathbb{N}=\{0,1,2,\dots\}$, where we have written $\dot{H}^s(\Sigma_+)$ for the homogeneous Sobolev space of order $s$.  Then
\begin{equation}
 \|\nabla^q \mathcal{P}_{-,1}f \|_{0} \lesssim \|f\|_{\dot{H}^{q-1/2}(\mathrm{T}^2)}^2 \text{ and }
 \|\nabla^q \mathcal{P}_{-,1}f \|_{0} \lesssim \|f\|_{\dot{H}^{q}(\mathrm{T}^2)}^2.
\end{equation}
\end{lemma}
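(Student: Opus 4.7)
\smallskip

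\noindent\textbf{Proof plan for Lemma \ref{Poi}.}
The plan is to reduce everything to Parseval's identity in the horizontal variable and then to carry out a one-dimensional integral in $x_3$ on the \emph{bounded} slab $\Omega \subset \mathrm{T}^2 \times (-\tilde b, 1)$, where $\tilde b := \sup_{\mathrm{T}^2} b$. The point here is precisely that $\Omega$ is bounded in the vertical direction, which will allow a gain relative to the usual half-space Poisson estimate on $\mathrm{T}^2 \times (-\infty,1)$ (recorded as $H^{s}(\Sigma_+) \to H^{s+1/2}$). First I would differentiate the series \eqref{P-1def}: for a horizontal multi-index $\alpha' \in \mathbb{N}^2$ and $\alpha_3 \in \mathbb{N}$ with $|\alpha'| + \alpha_3 = q$, one has
\begin{equation*}
\partial^{\alpha'} \partial_3^{\alpha_3} \mathcal{P}_{-,1} f(x)
= \sum_{n} \frac{e^{in \cdot x'}}{2\pi \sqrt{L_1 L_2}}\,
(in)^{\alpha'} |n|^{\alpha_3}\, e^{|n|(x_3-1)} \hat{f}(n),
\end{equation*}
so $|\partial^{\alpha'} \partial_3^{\alpha_3} \mathcal{P}_{-,1} f|$ has horizontal Fourier coefficients pointwise dominated by $|n|^{q} e^{|n|(x_3-1)} |\hat{f}(n)|$.

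Next I would apply Parseval on $\mathrm{T}^2$ at each fixed $x_3 \in (-\tilde b, 1)$ to get
\begin{equation*}
\|\nabla^q \mathcal{P}_{-,1} f(\cdot, x_3)\|_{L^2(\mathrm{T}^2)}^2
\lesssim \sum_{n} |n|^{2q} e^{2|n|(x_3-1)} |\hat{f}(n)|^2,
\end{equation*}
where the implicit constant depends only on $q$ (it counts the multi-indices). Then I would integrate in $x_3$ over the bounded interval $(-\tilde b, 1)$ and use Fubini. This yields
\begin{equation*}
\|\nabla^q \mathcal{P}_{-,1} f\|_{L^2(\Omega)}^2
\lesssim \sum_{n} |n|^{2q} \Big(\int_{-\tilde b}^{1} e^{2|n|(x_3-1)}\, dx_3\Big) |\hat{f}(n)|^2.
\end{equation*}

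Finally, I would estimate the inner integral in two different ways, producing the two advertised bounds. For $n \neq 0$, explicit computation gives
\begin{equation*}
\int_{-\tilde b}^{1} e^{2|n|(x_3-1)}\, dx_3
= \frac{1 - e^{-2|n|(1+\tilde b)}}{2|n|} \le \frac{1}{2|n|},
\end{equation*}
which inserted above produces $\sum_{n} |n|^{2q-1} |\hat{f}(n)|^2 \sim \|f\|_{\dot H^{q-1/2}(\mathrm{T}^2)}^2$. Alternatively, using simply $e^{2|n|(x_3-1)} \le 1$ for $x_3 \le 1$, the inner integral is bounded by $1 + \tilde b$, and we obtain $(1+\tilde b) \sum_{n} |n|^{2q} |\hat{f}(n)|^2 \sim \|f\|_{\dot H^{q}(\mathrm{T}^2)}^2$. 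For $q \ge 1$ the $n=0$ frequency contributes nothing thanks to the factor $|n|^{2q}$, so no separate treatment is needed; for $q=0$, the homogeneous norms exclude the zero mode (or one absorbs it as a constant via $|\mathrm{T}^2|$) in the standard way. The only subtle point is bookkeeping between $\dot H^{q-1/2}$ and $\dot H^{q}$ and noting that the gain of half a derivative comes from the $1/|n|$ factor in the half-space computation while the $\dot H^q$ bound is the trivial ``bounded domain'' estimate; neither step is hard, but one should be careful that both bounds are genuinely needed, the first for high frequencies and the second for low frequencies.
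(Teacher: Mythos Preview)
Your argument is correct and is precisely the standard Fourier-series computation one expects here: differentiate term by term, apply Parseval in $x'$, integrate in $x_3$ over the bounded slab, and bound $\int_{-\tilde b}^1 e^{2|n|(x_3-1)}\,dx_3$ either by $1/(2|n|)$ or by $1+\tilde b$ to obtain the $\dot H^{q-1/2}$ and $\dot H^q$ bounds respectively. The paper does not actually give its own proof of this lemma; it simply cites Lemma~A.3 of \cite{GT_per}, and your computation is exactly the argument recorded there. So there is nothing to compare: you have supplied the details the paper outsources.

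Two small remarks. First, your reading of $\nabla^q$ as the collection of derivatives of order exactly $q$ is the correct one for this lemma (otherwise the homogeneous right-hand side would not control the lower-order pieces); the inclusive convention $|\alpha|\le k$ introduced in Section~\ref{cutt} is specific to the energy bookkeeping there. Second, the statement as printed has a cosmetic mismatch of squares (left side linear, right side squared); your computation naturally yields the squared inequality on both sides, which is the intended estimate.
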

\begin{proof}
 See Lemma A.3 of \cite{GT_per}.
\end{proof}

We extend $\eta_+$ to be defined on $\Omega$ by
\begin{equation}\label{P+def}
\bar{\eta}_+(x',x_3)=\mathcal{P}_+\eta_+(x',x_3):=\mathcal{P}_{-,1}\eta_+(x',x_3),\text{ for } x_3\le 1.
\end{equation}
Then Lemma \ref{Poi} implies in particular that if $\eta_+\in H^{s-1/2}(\Sigma_+)$ for $s\ge 0$, then $\bar{\eta}_+\in H^{s}(\Omega)$.

Similarly, for $\Sigma_- = \mathrm{T}^2\times \{0\}$ we define the Poisson integral in $\mathrm{T}^2 \times (-\infty,0)$ by
\begin{equation}\label{P-0def}
\mathcal{P}_{-,0}f(x) = \sum_{n \in   (L_1^{-1} \mathbb{Z}) \times (L_2^{-1} \mathbb{Z}) }  \frac{e^{ i n \cdot x'}}{2\pi \sqrt{L_1 L_2}} e^{ |n|x_3} \hat{f}(n).
\end{equation}
It is clear that $\mathcal{P}_{-,0}$ has the  same regularity properties as $\mathcal{P}_{-,1}$. This allows us to extend $\eta_-$ to be defined on $\Omega_-$. However, we do not extend $\eta_-$ to the upper domain $\Omega_+$ by the reflection  since this will result in the discontinuity of the partial derivatives in $x_3$ of the extension. For our purposes,  we instead to do the extension through the following. Let $0<\lambda_0<\lambda_1<\cdots<\lambda_m<\infty$ for $m\in \mathbb{N}$ and define the $(m+1) \times (m+1)$ Vandermonde matrix $V(\lambda_0,\lambda_1,\dots,\lambda_m)$ by $V(\lambda_0,\lambda_1,\dots,\lambda_m)_{ij} = (-\lambda_j)^i$ for $i,j=0,\dotsc,m$.  It is well-known that the Vandermonde matrices are invertible, so we are free to let $\alpha=(\alpha_0,\alpha_1,\dots,\alpha_m)^T$ be the solution to
\begin{equation}\label{Veq}
V(\lambda_0,\lambda_1,\dots,\lambda_m)\,\alpha=q_m,
\end{equation}
$q_m=(1,1,\dots,1)^T$.  Now we define the specialized Poisson integral in $\mathrm{T}^2 \times (0,\infty)$
by
\begin{equation}\label{P+0def}
\mathcal{P}_{+,0}f(x) = \sum_{n \in   (L_1^{-1} \mathbb{Z}) \times
(L_2^{-1} \mathbb{Z}) }  \frac{e^{ i n \cdot x'}}{2\pi \sqrt{L_1 L_2}}  \sum_{j=0}^m\alpha_j
e^{- |n|\lambda_jx_3} \hat{f}(n).
\end{equation}
It is easy to check that, due to \eqref{Veq}, $\partial_3^l\mathcal{P}_{+,0}f(x',0)=
\partial_3^l\mathcal{P}_{-,0}f(x',0)$  for all $0\le l\le m$ and hence
\begin{equation}
\partial^\alpha\mathcal{P}_{+,0}f(x',0)=
\partial^\alpha\mathcal{P}_{-,0}f(x',0) \, \forall\, \alpha\in \mathbb{N}^3 \text{ with }0\le |\alpha|\le m.\end{equation}
These facts allow us to  extend $\eta_-$ to be defined on $\Omega$
by
\begin{equation}\bar{\eta}_-(x',x_3)=
\mathcal{P}_-\eta_-(x',x_3):=\left\{\begin{array}{lll}\mathcal{P}_{+,0}\eta_-(x',x_3),\quad
x_3> 0 \\
\mathcal{P}_{-,0}\eta_-(x',x_3),\quad x_3\le
0.\end{array}\right.\label{P-def}\end{equation} It is clear now that if $\eta_-\in H^{s-1/2}(\Sigma_-)$ for $ 0\le s\le m$, then $\bar{\eta}_-\in H^{s}(\Omega)$.  Since we will only work with $s$ lying in a finite interval, we may assume that $m$ is sufficiently large in \eqref{Veq} for $\bar{\eta}_- \in H^s(\Omega)$ for all $s$ in the interval.

\subsection{Some inequalities}

We will need some estimates of the product of functions in Sobolev spaces.

\begin{lemma}\label{sobolev}
Let $U$ denote a domain either of the form $\Omega_\pm$ or of the form $\Sigma_\pm$.
\begin{enumerate}
 \item Let $0\le r \le s_1 \le s_2$ be such that  $s_1 > n/2$.  Let $f\in H^{s_1}(U)$, $g\in H^{s_2}(U)$.  Then $fg \in H^r(U)$ and
\begin{equation}\label{i_s_p_01}
 \norm{fg}_{H^r} \lesssim \norm{f}_{H^{s_1}} \norm{g}_{H^{s_2}}.
\end{equation}

\item Let $0\le r \le s_1 \le s_2$ be such that  $s_2 >r+ n/2$.  Let $f\in H^{s_1}(U)$, $g\in H^{s_2}(U)$.  Then $fg \in H^r(U)$ and
\begin{equation}\label{i_s_p_02}
 \norm{fg}_{H^r} \lesssim \norm{f}_{H^{s_1}} \norm{g}_{H^{s_2}}.
\end{equation}

\item Let $0\le r \le s_1 \le s_2$ be such that  $s_2 >r+ n/2$. Let $f \in H^{-r}(\Sigma),$ $g \in H^{s_2}(\Sigma)$.  Then $fg \in H^{-s_1}(\Sigma)$ and
\begin{equation}\label{i_s_p_03}
 \norm{fg}_{-s_1} \ls \norm{f}_{-r} \norm{g}_{s_2}.
\end{equation}
\end{enumerate}
\end{lemma}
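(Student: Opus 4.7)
The plan is to treat the three parts using classical Sobolev multiplication machinery. For part (1), since $s_2 \ge s_1 > n/2$, both $f$ and $g$ lie in $L^\infty(U)$ by Sobolev embedding. I would invoke the Kato--Ponce (tame) product estimate
\begin{equation*}
\|fg\|_{H^r} \lesssim \|f\|_{L^\infty} \|g\|_{H^r} + \|g\|_{L^\infty} \|f\|_{H^r},
\end{equation*}
which is standard on $\mathbb{R}^n$ and extends to $U$ either by a bounded Sobolev extension (for $U = \Omega_\pm$) or directly via Fourier series and Littlewood--Paley paraproducts (for $U = \Sigma_\pm = \mathrm{T}^2$). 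Since $r \le s_1 \le s_2$, the right-hand side is bounded by $\|f\|_{H^{s_1}} \|g\|_{H^{s_2}}$, yielding \eqref{i_s_p_01}.

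For part (2), the condition $s_2 > r + n/2$ gives the Sobolev embedding $H^{s_2}(U) \hookrightarrow W^{r,\infty}(U)$. For integer $r$, I would apply the Leibniz rule to $\partial^\alpha(fg)$ for $|\alpha|\le r$ and bound each term by H\"older, placing $\partial^\beta f$ in $L^2$ (which is controlled by $\|f\|_{H^{|\beta|}} \le \|f\|_{H^{s_1}}$ since $|\beta|\le r\le s_1$) and $\partial^{\alpha-\beta} g$ in $L^\infty$ (controlled by $\|g\|_{W^{|\alpha-\beta|,\infty}} \le \|g\|_{H^{s_2}}$ via the embedding above). For non-integer $r$, the full range follows by real or complex interpolation between the adjacent integer endpoints.

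For part (3), I would use duality. For any $\varphi \in H^{s_1}(\Sigma)$ with $\|\varphi\|_{s_1}\le 1$, pair $fg$ against $\varphi$ as
\begin{equation*}
\langle fg,\varphi\rangle_{-s_1,\,s_1} = \langle f,\, g\varphi\rangle_{-r,\,r},
\end{equation*}
and invoke part (2) to obtain $\|g\varphi\|_{H^r} \lesssim \|g\|_{H^{s_2}} \|\varphi\|_{H^{s_1}}$; the hypotheses of part (2) are met exactly because $s_2 > r + n/2$ and $0 \le r \le s_1$. Combining with the definition of the negative-order norm gives $|\langle fg,\varphi\rangle| \lesssim \|f\|_{-r} \|g\|_{s_2} \|\varphi\|_{s_1}$, and taking the supremum over such $\varphi$ yields \eqref{i_s_p_03}.

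The main obstacle, although modest, is transferring the standard $\mathbb{R}^n$ multiplication and embedding estimates to the domains under consideration: for $\Sigma_\pm = \mathrm{T}^2$ the Fourier series representation makes Littlewood--Paley decomposition immediate, while for $\Omega_\pm$ one uses a bounded Sobolev extension operator (available because the boundary pieces $\Sigma_\pm, \Sigma_b$ are smooth) to transplant estimates from $\mathbb{R}^3$. Once these ingredients are in place, the three parts reduce to routine bookkeeping of derivatives and interpolation.
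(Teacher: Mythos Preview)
Your proposal is correct in spirit and considerably more detailed than the paper's own proof, which consists of a single sentence: ``These results are standard and may be derived, for example, by use of the Fourier characterization of the $H^s$ spaces.'' So there is no substantive paper argument to compare against; you are supplying a proof where the paper defers to folklore.

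Your arguments for parts (1) and (3) are fine. In part (2), the interpolation step as written has a small gap: if $r$ is non-integer with $r \in (k,k+1)$ and $s_2$ is only slightly larger than $r + n/2$, then the condition $s_2 > (k+1) + n/2$ needed at the upper integer endpoint may fail, so you cannot directly interpolate between $H^k$ and $H^{k+1}$. One clean repair is to fix $g$ and view multiplication by $g$ as a bounded operator $T_g: H^0 \to H^0$ (using $g \in L^\infty$) and $T_g: H^m \to H^m$ for the largest integer $m$ with $m < s_2 - n/2$; since $r < s_2 - n/2$, we have $r \le m$ or $r \in (m-1,m)$ with both endpoints available, and complex interpolation then covers $H^r$. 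Alternatively, since both $\Omega_\pm$ (after extension) and $\Sigma_\pm = \mathrm{T}^2$ admit a Littlewood--Paley decomposition, a direct paraproduct argument handles all real $r$ at once and is closer to what the paper's one-line hint presumably intends.
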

\begin{proof}
These results are standard and may be derived, for example, by use of the Fourier characterization of the $H^s$ spaces.
\end{proof}

The estimates in $(2)$ and $(3)$ in Lemma \ref{sobolev} above are not the best possible estimates of that form.  We now record one specific improvement that we will use for products in $\H(\Omega)$.

\begin{lemma}\label{products}
Suppose that $f \in H^{s}(\Omega)$ with $s > 3/2$ and $g \in \H(\Omega)$.  Then $f g\in \H(\Omega)$ with $\|fg\|_{1} \lesssim \|f\|_{s} \|g\|_{1}$.  Moreover, if $g \in (\H(\Omega))^\ast$ then $fg \in (\H(\Omega))^\ast$ via $\langle fg,\varphi\rangle_{\ast} := \langle g,f\varphi\rangle_{\ast}$.  In this case we have the estimate $\|fg\|_{ \Hd } \lesssim \|f\|_{s} \|g\|_{\Hd}$.
\end{lemma}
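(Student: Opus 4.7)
\medskip

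\noindent\textbf{Proof plan.} The strategy is to first establish the pointwise product estimate $\|fg\|_{1}\lesssim\|f\|_{s}\|g\|_{1}$ on each of the subdomains $\Omega_\pm$, then verify that $fg$ inherits the interface and boundary trace conditions that characterize ${}_0H^1(\Omega)$, and finally deduce the dual estimate via the definition $\langle fg,\varphi\rangle_\ast:=\langle g,f\varphi\rangle_\ast$.

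\smallskip

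\noindent First I would bound the $L^2$ part of the norm: since $s>3/2$ in three dimensions, the Sobolev embedding $H^s(\Omega_\pm)\hookrightarrow L^\infty(\Omega_\pm)$ gives $\|f\|_{L^\infty}\lesssim\|f\|_{s}$, so $\|fg\|_{0}\le\|f\|_{L^\infty}\|g\|_{0}\lesssim\|f\|_{s}\|g\|_{1}$. For the gradient term I would split $\nabla(fg)=f\nabla g+(\nabla f)g$ and control these pieces separately. The first is handled exactly as above via $L^\infty\cdot L^2$. For the second, since $s-1>1/2$ we have the Sobolev embeddings $H^{s-1}(\Omega_\pm)\hookrightarrow L^3(\Omega_\pm)$ and $H^1(\Omega_\pm)\hookrightarrow L^6(\Omega_\pm)$, so H\"older's inequality with exponents $(3,6)$ yields
\begin{equation*}
\|(\nabla f)g\|_{0}\le\|\nabla f\|_{L^3}\|g\|_{L^6}\lesssim\|f\|_{s}\|g\|_{1}.
\end{equation*}
Adding the pieces and summing over $\pm$ provides the desired $\ddot{H}^1$ bound. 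To see that $fg$ actually lies in ${}_0H^1(\Omega)$, I would note that $f\in H^s(\Omega)$ with $s>3/2$ implies $f$ is continuous across $\Sigma_-$, so $\llbracket fg\rrbracket=f\llbracket g\rrbracket=0$ on $\Sigma_-$; and $g|_{\Sigma_b}=0$ forces $(fg)|_{\Sigma_b}=0$, giving the characterization of ${}_0H^1(\Omega)$ stated after \eqref{0H}.

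\smallskip

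\noindent Once the primal estimate is in place, the dual statement is essentially automatic. For $g\in({}_0H^1(\Omega))^\ast$ and any $\varphi\in{}_0H^1(\Omega)$, the first part shows $f\varphi\in{}_0H^1(\Omega)$ with $\|f\varphi\|_{1}\lesssim\|f\|_{s}\|\varphi\|_{1}$, so the pairing $\langle fg,\varphi\rangle_\ast:=\langle g,f\varphi\rangle_\ast$ is well defined and satisfies
\begin{equation*}
|\langle fg,\varphi\rangle_\ast|\le\|g\|_{\Hd}\|f\varphi\|_{1}\lesssim\|f\|_{s}\|g\|_{\Hd}\|\varphi\|_{1}.
\end{equation*}
Taking the supremum over $\varphi$ with $\|\varphi\|_{1}\le 1$ gives $\|fg\|_{\Hd}\lesssim\|f\|_{s}\|g\|_{\Hd}$.

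\smallskip

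\noindent The only mildly delicate point is the $L^3$-$L^6$ pairing used to control $(\nabla f)g$: the naive attempt to invoke Lemma \ref{sobolev}(2) with $r=0$, $s_1=1$, $s_2=s-1$ fails when $s<2$ since either $s_1\le s_2$ breaks or $s_2>r+n/2$ breaks. The direct Sobolev-embedding/H\"older argument above sidesteps this issue and gives a unified estimate for the full range $s>3/2$. Everything else is bookkeeping.
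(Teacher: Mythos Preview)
Your proof is correct and follows essentially the same approach as the paper's: bound $fg$ and $f\nabla g$ via $H^s\hookrightarrow L^\infty$, bound $(\nabla f)g$ via H\"older with exponents $(3,6)$ and the embeddings $H^{s-1}\hookrightarrow L^3$, $H^1\hookrightarrow L^6$, then obtain the dual estimate by duality. Your verification of the jump condition across $\Sigma_-$ is an extra detail the paper omits (it only records $fg|_{\Sigma_b}=0$), but the core argument is identical.
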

\begin{proof}
 For the first assertion we use the Sobolev and H\"older inequalities to estimate
\begin{multline}
 \| fg \|_{H^1}^2 = \| f g\|_{L^2}^2 + \|f \nabla g\|_{L^2}^2 +\|\nabla f g\|_{L^2}^2   \\
\le
\|f\|_{L^\infty}^2 \|  g\|_{L^2}^2 + \|f\|_{L^\infty}^2 \|\nabla g\|_{L^2}^2 +\|\nabla f\|_{L^3} \|g\|_{L^6}^2
\lesssim  \|f\|_{H^s}^2 \| g\|_{H^1}^2.
\end{multline}
Also, clearly $f g \vert_{\Sigma_b} = 0$.  The second assertion follows easily from the first and the duality.
\end{proof}

We will also use the following lemma.

\begin{lemma}\label{-1norm}
The following hold.
\begin{enumerate}
\item Let $f\in L^2(\Omega),\ g\in H^1(\Omega)$, then  $fg\in (\H(\Omega))^\ast$ and
\begin{equation}\label{i_s_p_06}
\|fg\|_{(\H(\Omega))^\ast}\lesssim \|f\|_0\|g\|_{1}.
\end{equation}

\item Let $f\in L^2(\Sigma),\ g\in H^{1/2}(\Sigma)$, then  $fg\in H^{-1/2}(\Sigma)$ and
\begin{equation}\label{i_s_p_07}
\|fg\|_{-1/2}\lesssim \|f\|_0\|g\|_{0}.
\end{equation}

\end{enumerate}
\end{lemma}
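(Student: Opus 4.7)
The plan is to prove both parts by duality, leveraging H\"older's inequality together with the appropriate Sobolev embeddings dictated by the dimension of the underlying domain.

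For part (1), given any test function $\varphi \in \H(\Omega)$, I would compute the dual pairing explicitly as
\begin{equation}
\langle fg,\varphi\rangle_\ast = \int_\Omega fg\varphi\,dx
\end{equation}
and apply H\"older's inequality with exponents $(2,6,3)$ to bound this by $\|f\|_{L^2(\Omega)}\|g\|_{L^6(\Omega)}\|\varphi\|_{L^3(\Omega)}$. Since $\Omega\subset\mathbb{R}^3$, the standard Sobolev embeddings $H^1(\Omega)\hookrightarrow L^6(\Omega)$ and $H^1(\Omega)\hookrightarrow L^3(\Omega)$ yield $\|g\|_{L^6}\lesssim\|g\|_1$ and $\|\varphi\|_{L^3}\lesssim\|\varphi\|_1$. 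Taking the supremum over $\varphi\in\H(\Omega)$ with $\|\varphi\|_1\le 1$ gives \eqref{i_s_p_06}.

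For part (2), the argument is entirely analogous but now on the $2$-dimensional surface $\Sigma$. For $\varphi \in H^{1/2}(\Sigma)$, one writes
\begin{equation}
|\langle fg,\varphi\rangle_{-1/2}| = \left|\int_\Sigma fg\varphi\,dS\right| \le \|f\|_{L^2(\Sigma)}\|g\|_{L^4(\Sigma)}\|\varphi\|_{L^4(\Sigma)}.
\end{equation}
The critical Sobolev embedding $H^{1/2}(\Sigma)\hookrightarrow L^4(\Sigma)$ in two dimensions (which on the flat torus $\mathrm{T}^2$ follows directly from a Fourier-analytic computation using Parseval's identity and the Hausdorff--Young inequality) gives $\|g\|_{L^4}\lesssim\|g\|_{1/2}$ and $\|\varphi\|_{L^4}\lesssim\|\varphi\|_{1/2}$. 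Taking the supremum over $\varphi$ with $\|\varphi\|_{1/2}\le 1$ yields the desired bound. (Strictly speaking, the statement as printed has $\|g\|_0$ on the right-hand side of \eqref{i_s_p_07}, which appears to be a typographical error; the natural bilinear bound requires the $H^{1/2}$ norm of $g$, and this is what one actually uses in the applications such as the estimate of $\|\partial_t g_-\|_{-1/2}$ in Lemma \ref{nonlineares}.)

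No real obstacle arises: the only subtle point is the invocation of the borderline embedding $H^{1/2}\hookrightarrow L^4$ in dimension two, which is a well-known consequence of the Sobolev inequality at the critical exponent $p = 2n/(n-2s)$ with $n=2$, $s=1/2$. Everything else is routine duality and H\"older.
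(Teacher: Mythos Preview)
Your proof is correct and essentially identical to the paper's. The only cosmetic difference is that in part~(1) the paper applies H\"older with the roles of $g$ and $\varphi$ swapped (putting $g$ in $L^3$ and $\varphi$ in $L^6$ rather than your $(L^6,L^3)$), which is immaterial since both lie in $H^1$; part~(2) matches exactly, and your remark that $\|g\|_0$ in \eqref{i_s_p_07} should read $\|g\|_{1/2}$ is correct.
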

\begin{proof}
To prove \eqref{i_s_p_06} we choose any $\varphi\in \H(\Omega)$.  By the H\"older and Sobolev inequalities, we have that
\begin{equation}
\int_\Omega fg\varphi\,dx\le
\|f\|_{L^2(\Omega)}\|g\|_{L^3(\Omega)}\|\varphi\|_{L^6(\Omega)}\le
\|f\|_0\|g\|_1\|\varphi\|_1.
\end{equation}
Taking the supremum over such $\varphi$ with $\|\varphi\|_1\le 1$, we get \eqref{i_s_p_06}. The estimate \eqref{i_s_p_07} follows in a similar way by recalling that the Sobolev embedding shows that $H^{1/2}(\Sigma) \hookrightarrow L^4(\Sigma)$.
\end{proof}

In the following lemma we let $U$ denote a periodic domain of the form $\Omega_\pm$ with flat upper boundary $\Gamma_{u}$ and lower boundary $\Gamma_{l}$, which may not be flat.  We now record some Poincar\'e-type inequalities for such domains.

\begin{lemma}\label{poincare}
 The following hold.
\begin{enumerate}
 \item $\|f\|_{L^2(U)}^2 \lesssim \|f\|_{L^2(\Gamma_u)}^2 +  \|\partial_3f\|_{L^2(U)}^2$ for all $f \in H^1(U)$.

 \item $\|f\|_{L^2(\Gamma_u)} \lesssim \|\partial_3f\|_{L^2(U)}$ for $f \in H^1(U)$ so that $f =0$ on $\Gamma_l$.

 \item $\|f\|_{0} \lesssim \|f\|_{1}\lesssim \|\nabla f\|_{0}$ for all $f \in H^1(U)$ so that $f=0$ on $\Gamma_l$.
\end{enumerate}
\end{lemma}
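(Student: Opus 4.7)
\medskip

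The plan is to prove the three statements in order, extracting each from elementary one-dimensional considerations on vertical fibers $\{x'\}\times(\text{lower height}(x'),a)$, where $a$ denotes the (constant) height of the flat upper boundary $\Gamma_u$. Write $\Gamma_l = \{x_3 = \ell(x')\}$ for some smooth function $\ell$ with $\ell(x') < a$. By a density argument it suffices to assume $f \in C^1(\overline{U})$.

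For item (1), I would use the fundamental theorem of calculus along vertical fibers. Since $\Gamma_u$ is flat, for a.e.\ $x' \in \mathrm{T}^2$ and every $x_3 \in (\ell(x'), a)$ we have
\begin{equation*}
f(x',x_3) = f(x',a) - \int_{x_3}^{a} \partial_3 f(x',s)\,ds.
\end{equation*}
Squaring and applying the Cauchy--Schwarz inequality yields $|f(x',x_3)|^2 \lesssim |f(x',a)|^2 + \int_{\ell(x')}^{a} |\partial_3 f(x',s)|^2\,ds$, with a constant depending on $\sup_{\mathrm{T}^2}(a-\ell)$. Integrating in $x_3$ over $(\ell(x'),a)$ and then in $x'$ over $\mathrm{T}^2$ produces $\|f\|_{L^2(U)}^2 \lesssim \|f\|_{L^2(\Gamma_u)}^2 + \|\partial_3 f\|_{L^2(U)}^2$.

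For item (2), I exploit the boundary condition $f=0$ on $\Gamma_l$. Namely, for a.e.\ $x' \in \mathrm{T}^2$, the trace relation $f(x',\ell(x'))=0$ combined with the fundamental theorem gives $f(x',a) = \int_{\ell(x')}^{a} \partial_3 f(x',s)\,ds$. Cauchy--Schwarz then yields
\begin{equation*}
|f(x',a)|^2 \le (a-\ell(x')) \int_{\ell(x')}^{a} |\partial_3 f(x',s)|^2\,ds \lesssim \int_{\ell(x')}^{a} |\partial_3 f(x',s)|^2\,ds,
\end{equation*}
and integrating over $\mathrm{T}^2$ (using that $\Gamma_u$ is flat, so its surface measure coincides with Lebesgue measure on $\mathrm{T}^2$) gives $\|f\|_{L^2(\Gamma_u)}^2 \lesssim \|\partial_3 f\|_{L^2(U)}^2$.

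For item (3), I simply combine (1) and (2): for $f$ vanishing on $\Gamma_l$, item (2) bounds the boundary term in item (1), yielding $\|f\|_{L^2(U)}^2 \lesssim \|\partial_3 f\|_{L^2(U)}^2 \le \|\nabla f\|_0^2$. Then $\|f\|_1^2 = \|f\|_0^2 + \|\nabla f\|_0^2 \lesssim \|\nabla f\|_0^2$, and the trivial bound $\|f\|_0 \le \|f\|_1$ completes the chain. No step here presents a genuine obstacle; the only minor point to monitor is that the constants depend on $\sup_{\mathrm{T}^2}(a-\ell)$, which is finite by the assumption $0<b\in C^\infty(\mathrm{T}^2)$ on the bottom boundary (and is obvious for $\Omega_+$, where $\ell \equiv 0$).
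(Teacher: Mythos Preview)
Your proof is correct. The paper does not actually give its own argument for this lemma---it simply cites Appendix~A.4 of \cite{GT_per}---so there is nothing to compare against beyond noting that your vertical-fiber argument via the fundamental theorem of calculus and Cauchy--Schwarz is the standard elementary derivation of such Poincar\'e-type inequalities and is almost certainly what the cited reference contains.
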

\begin{proof}
See Appendix A.4 of \cite{GT_per}.
\end{proof}

We will need the following version of Korn's inequality.

\begin{lemma}\label{korn}
It holds that $\|u\|_{1} \lesssim \|\mathbb{D}u \|_{0}$ for all $u \in \H(\Omega)$.
\end{lemma}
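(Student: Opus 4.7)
The plan is to reduce the inequality to a classical Korn estimate on a bounded Lipschitz domain and then close the argument by a standard compactness/contradiction step. First, I would observe that the jump condition $\llbracket u\rrbracket = 0$ on $\Sigma_-$ built into the definition of ${}_0H^1(\Omega)$ upgrades the piecewise $H^1$ function $u$ to a genuine element of $H^1(\Omega)$ on the bounded Lipschitz periodic slab $\Omega$, with $u = 0$ on $\Sigma_b$ in the sense of trace. Thus I may apply the classical second Korn inequality on $\Omega$ to obtain
\begin{equation*}
 \|\nabla u\|_0^2 \le C\left(\|\mathbb{D} u\|_0^2 + \|u\|_0^2\right) \quad \text{for all } u \in H^1(\Omega).
\end{equation*}
Together with Lemma \ref{poincare}(3) (which gives $\|u\|_0 \lesssim \|\nabla u\|_0$ because $u$ vanishes on $\Sigma_b$), this already reduces the task to showing that the $\|u\|_0^2$ term on the right is absorbable.

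To absorb it, I would argue by contradiction. Suppose the desired bound fails. Then there exists a sequence $\{u_n\} \subset {}_0H^1(\Omega)$ with $\|u_n\|_1 = 1$ and $\|\mathbb{D} u_n\|_0 \to 0$. By Rellich--Kondrachov, a subsequence converges strongly in $L^2(\Omega)$ to some $u_\infty$; the second Korn inequality then shows the subsequence is Cauchy in $H^1(\Omega)$, hence converges to $u_\infty$ in $H^1$. The limit lies in ${}_0H^1(\Omega)$ (this space is closed in $H^1$ by continuity of the trace), satisfies $\|u_\infty\|_1 = 1$, and obeys $\mathbb{D} u_\infty = 0$ in $L^2(\Omega)$. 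It is classical that the kernel of $\mathbb{D}$ consists of infinitesimal rigid motions, so $u_\infty(x) = a + Bx$ for some $a \in \mathbb{R}^3$ and some skew-symmetric $3\times 3$ matrix $B$.

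The main obstacle—though a mild one—is ruling out nontrivial rigid motions in ${}_0H^1(\Omega)$. Horizontal periodicity of $u_\infty$ forces $Be_1 = Be_2 = 0$, so the only possibly nonzero entry of $B$ is $B_{33}$, which vanishes by skew-symmetry; hence $B = 0$ and $u_\infty \equiv a$ is constant. The Dirichlet condition $u_\infty|_{\Sigma_b} = 0$ then forces $a = 0$, contradicting $\|u_\infty\|_1 = 1$. This contradiction proves $\|u\|_1 \lesssim \|\mathbb{D}u\|_0$ for all $u \in {}_0H^1(\Omega)$.
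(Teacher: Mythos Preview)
Your argument is correct. The paper itself does not give a proof but simply cites Lemma~2.7 of Beale~\cite{B1}, so there is no detailed argument in the paper to compare against; your self-contained proof via the second Korn inequality, Rellich compactness, and the rigid-motion characterization of $\ker\mathbb{D}$ is exactly the standard route and is presumably close in spirit to what Beale does. One small remark: the domain $\Omega$ is a horizontally periodic slab rather than a bounded Euclidean domain, but both the second Korn inequality and Rellich--Kondrachov extend to this setting without difficulty (one may view $\mathrm{T}^2\times(-b,1)$ as a compact manifold with boundary), so your invocation of these tools is legitimate.
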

\begin{proof}
 See Lemma 2.7 of \cite{B1}.
\end{proof}

\subsection{Two classical one-phase Stokes problems}

We now let $G$ denote a horizontal periodic slab (like $\Omega_\pm$) and write $\Gamma$ for its boundary (not necessarily flat),  consisting of two smooth pieces $\Gamma_1,\ \Gamma_2$.  We shall recall the classical  regularity theory for two Stokes problems in $G$.

We first state the following Stokes problem
\begin{equation}\label{cS1phase1}
\left\{\begin{array}{lll}-\mu \Delta u +\nabla p =f &\text{ in}\ G
  \\\diverge{u} =h  \ &\text{ in}\ G
  \\(pI-\mu\mathbb{D}(u))\nu=\psi  \ &\text{ on }\Gamma_1
\\ u=\varphi &\text{ on }\Gamma_2.
\end{array}\right.
\end{equation}
Here $\nu$ denotes the outward pointing unit normal on the boundary.

\begin{lemma}\label{cS1phaselemma1}
Let $r \ge 2$.  If $f\in H^{r-2}(G),\ h\in H^{r-1}(G),\ \psi\in H^{r-3/2}(\Gamma_1),\ \varphi\in H^{r-1/2}(\Gamma_2)$, then there exists unique $u\in H^r(G),\  p\in H^{r-1}(G)$  solving \eqref{cS1phase1}. Moreover,
\begin{equation}
\|u\|_{H^r(G)}+\|  p\|_{H^{r-1}(G)} \lesssim\|f\|_{H^{r-2}(G)}+\|h\|_{H^{r-1}(G)}
+ \norm{\psi}_{H^{r-3/2}(\Gamma_1)} + \|\varphi\|_{H^{r-1/2}(\Gamma_2)}.
\end{equation}
\end{lemma}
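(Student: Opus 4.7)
The plan is to proceed in three stages: reduce the data, construct a weak solution, and then bootstrap regularity.

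First, I would reduce to the case $h=0$ and $\varphi=0$. By the standard solvability of the divergence equation on $G$ (as in Lemma \ref{div}, applied on $G$ rather than the two-layer domain), one finds $u^{(1)} \in H^r(G)$ with $\diverge u^{(1)} = h$ and $\|u^{(1)}\|_{H^r} \lesssim \|h\|_{H^{r-1}}$. Next, one extends $\varphi$ off $\Gamma_2$: by standard trace-lifting there is $u^{(2)} \in H^r(G)$ with $u^{(2)}|_{\Gamma_2} = \varphi - u^{(1)}|_{\Gamma_2}$ and $u^{(2)}|_{\Gamma_1} = 0$, and then one corrects the divergence again via Lemma \ref{div} applied to $\diverge u^{(2)}$ in a neighborhood of $\Gamma_2$ (so that the correction vanishes on $\Gamma_1$). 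Setting $w = u - u^{(1)} - u^{(2)}$ reduces \eqref{cS1phase1} to the homogeneous problem $\diverge w = 0$, $w = 0$ on $\Gamma_2$, with suitably modified $f$ and $\psi$ satisfying the same bounds.

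Second, I would produce a weak solution to the reduced problem. Define $\mathcal{V} = \{v \in H^1(G)\mid v|_{\Gamma_2} = 0,\ \diverge v = 0\}$. By Korn's inequality on $\mathcal{V}$ (which is available because the Dirichlet condition on $\Gamma_2$ kills rigid motions) and the Lax--Milgram theorem applied to the bilinear form $(v,\varphi)\mapsto \tfrac{\mu}{2}(\mathbb{D} v,\mathbb{D}\varphi)$, there exists a unique weak solution $w \in \mathcal{V}$ satisfying
\begin{equation*}
\tfrac{\mu}{2}(\mathbb{D} w, \mathbb{D}\varphi) = (f,\varphi) + \langle \psi,\varphi\rangle_{\Gamma_1}\quad\text{for all }\varphi \in \mathcal{V}.
\end{equation*}
The pressure $p \in L^2(G)$ is then recovered as a Lagrange multiplier by a variant of Proposition \ref{Pressure}/Proposition \ref{lagrange}, yielding $\|w\|_{H^1}+\|p\|_{L^2}\lesssim \|f\|_{(H^1)^\ast}+\|\psi\|_{H^{-1/2}(\Gamma_1)}$.

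Third, I would upgrade regularity by induction on $r$, imitating the proof of Lemma \ref{cS0l2}. Since $\Gamma_1$ and $\Gamma_2$ need not be flat, the horizontal difference quotient trick cannot be applied globally; instead I localize by a partition of unity subordinate to charts that flatten each boundary component. In each flattened patch, applying tangential difference quotients $D_{-h}D_h$ to the variational identity yields uniform bounds on $\|D_h \nabla w\|_{L^2}$, which gives control of tangential derivatives of $w$ on the (now flat) boundary. In the interior patch, one uses standard interior regularity for the Stokes system. Combining the tangential control on $\Gamma$ with the equation $-\mu\Delta w + \nabla p = f$ and $\diverge w = 0$ to recover the normal derivatives (via the usual ``read off normal derivatives from the equations'' argument) promotes $w$ from $H^k$ to $H^{k+1}$ and $p$ from $H^{k-1}$ to $H^k$ at each step of the induction, with the claimed estimate.

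The main obstacle is the flattening-plus-localization step for the mixed boundary condition on non-flat boundaries: one must verify that the flattening diffeomorphism transforms the Neumann-type condition on $\Gamma_1$ into a first-order boundary condition whose principal part is still complemented by the Stokes system (so that Agmon--Douglis--Nirenberg/ADN or the explicit tangential-difference-quotient argument applies). Once this is set up, the bootstrap is routine.
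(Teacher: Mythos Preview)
The paper does not actually prove this lemma: its entire proof is the citation ``See \cite{B1}'', i.e., it treats the result as classical one-phase Stokes regularity theory due to Beale. Your three-stage sketch (reduce the data, obtain a weak solution via Lax--Milgram and recover the pressure as a Lagrange multiplier, then bootstrap regularity by localization, boundary flattening, and tangential difference quotients) is the standard route to such results and is essentially how Beale proceeds in \cite{B1}, so your outline is both correct and consistent with the cited source.

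One small caution on your first reduction step: when you lift $\varphi$ off $\Gamma_2$ and then re-correct the divergence, you need the correction to vanish on $\Gamma_2$ (to preserve the Dirichlet condition there) without disturbing the stress condition on $\Gamma_1$. This is doable, but the cleanest way is to combine the two corrections into a single construction that solves $\diverge v = h$ with $v|_{\Gamma_2} = \varphi$ directly (Beale does this via an explicit slab-adapted construction), rather than layering two separate corrections as you describe. Otherwise your plan is sound.
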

\begin{proof}
 See \cite{B1}.
\end{proof}

An essential step in our analysis is to bypass the regularity theory for \eqref{cS1phase1} and instead appeal to a similar Stokes problem with Dirichlet boundary conditions on both $\Gamma_1$ and $\Gamma_2$:
\begin{equation}\label{cS1phase2}
  \left\{\begin{array}{lll}
-\mu \Delta u +\nabla p =f  \quad &\text{in }G
  \\   \diverge{u}=h  \quad  &\text{in } G
\\ u=\varphi_1\quad &\text{on }\Gamma_1
\\ u=\varphi_2\quad &\text{on }\Gamma_2.
\end{array}\right.
\end{equation}

The following records the regularity theory for this problem.

\begin{lemma}\label{cS1phaselemma2}
Let $r\ge 2$.  Let $f\in H^{r-2}(G),\ h\in H^{r-1}(G),\ \varphi_1\in H^{r-1/2}(\Gamma_1), \ \varphi_2\in H^{r-1/2}(\Gamma_2)$ be given such that
\begin{equation}\label{cS1phasecomp}
\int_G h =\int_{\Gamma_1} \varphi_1\cdot\nu +\int_{\Gamma_2} \varphi_2\cdot\nu.
\end{equation}
Then there exists unique $u\in H^r(G),\  p\in H^{r-1}(G)$(up to constants) solving \eqref{cS1phase2}. Moreover,
\begin{equation}
\|u\|_{H^r(G)}+\|\nabla p\|_{H^{r-2}(G)} \lesssim \|f\|_{H^{r-2}(G)} + \|h\|_{H^{r-1}(G)} + \|\varphi_1\|_{H^{r-1/2}(\Gamma_1)} + \|\varphi_2\|_{H^{r-1/2}(\Gamma_2)}.
\end{equation}
\end{lemma}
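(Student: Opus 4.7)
The plan is to reduce to the divergence-free, homogeneous-boundary case and then invoke classical interior and boundary Stokes regularity. First I would produce a lifting $w\in H^r(G)$ satisfying $\diverge w = h$ in $G$, $w = \varphi_1$ on $\Gamma_1$, $w=\varphi_2$ on $\Gamma_2$, together with the bound
\begin{equation}
 \|w\|_{H^r(G)} \lesssim \|h\|_{H^{r-1}(G)} + \|\varphi_1\|_{H^{r-1/2}(\Gamma_1)}+\|\varphi_2\|_{H^{r-1/2}(\Gamma_2)}.
\end{equation}
To build $w$, first extend $\varphi_1$ and $\varphi_2$ into $G$ using a standard trace-inverse extension, producing $w^{(1)}\in H^r(G)$ with $w^{(1)} = \varphi_i$ on $\Gamma_i$ and $\|w^{(1)}\|_{H^r(G)}$ controlled by the boundary norms. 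Then set $\tilde h = h - \diverge w^{(1)}$; the compatibility condition \eqref{cS1phasecomp} forces $\int_G \tilde h = 0$, which is exactly what is needed to solve $\diverge w^{(2)} = \tilde h$ in $G$ with $w^{(2)}\in H^r(G)\cap H^1_0(G)$ and $\|w^{(2)}\|_{H^r(G)} \lesssim \|\tilde h\|_{H^{r-1}(G)}$ via the Bogovskii construction (cf.\ Lemma \ref{div}, applied in each subdomain with Dirichlet data on $\partial G$). Then $w = w^{(1)} + w^{(2)}$ is the desired lifting.

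With $w$ in hand, the substitution $\tilde u = u - w$ reduces \eqref{cS1phase2} to
\begin{equation}
 -\mu\Delta \tilde u + \nabla p = f + \mu\Delta w =: \tilde f \text{ in } G, \quad \diverge \tilde u = 0 \text{ in } G, \quad \tilde u = 0 \text{ on }\partial G,
\end{equation}
with $\tilde f \in H^{r-2}(G)$. For $r=2$, existence and uniqueness of a weak solution $\tilde u \in H^1_0(G)$ (with $\diverge \tilde u = 0$) follows from the Lax--Milgram theorem applied to the bilinear form $(\mu/2)(\mathbb D \tilde u, \mathbb D v)$ on the space of divergence-free $H^1_0$ vector fields, with coercivity provided by Korn's inequality (Lemma \ref{korn}). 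The pressure $p\in L^2(G)/\mathbb R$ is then recovered as a Lagrange multiplier via de Rham's theorem (or Proposition \ref{Pressure}-type arguments), yielding the $r=2$ estimate for $\nabla p$ (rather than $p$ itself, which is why the statement of the lemma controls only $\|\nabla p\|_{H^{r-2}(G)}$).

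To boost to general $r\ge 2$, I would run an induction analogous to the argument in Lemma \ref{cS0l2}. The main technical obstacle is that $\Gamma_1$ may not be flat, so horizontal difference quotients cannot be taken globally. The remedy is a standard partition of unity: cover $G$ by coordinate patches where $\Gamma_1$ (resp.\ $\Gamma_2$) is either flat/periodic or can be flattened by a local $C^r$ diffeomorphism, and cover the interior with patches bounded away from $\partial G$. In interior patches and in flat patches, take difference quotients $D_h$ tangent to the boundary to gain one derivative of $\tilde u$ (the pressure term drops out on divergence-free test functions, as in the derivation of \eqref{cS0l84}), and use the Dirichlet data $\tilde u|_{\partial G}=0$ to pass to pointwise derivatives. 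In curved patches, the flattening introduces variable-coefficient elliptic terms whose lower-order parts can be absorbed after finitely many bootstrap steps. Summing the local estimates and using the equation itself to recover normal derivatives (and a gradient of $p$) then yields the stated bound. The hard part of the proof is really just bookkeeping in this flattening argument; since the result is classical (see, e.g., Ladyzhenskaya, Temam, or Solonnikov), I would omit the detailed verification and refer to the standard literature.
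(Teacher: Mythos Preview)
Your proposal is correct: the paper's own ``proof'' is simply the citation ``See \cite{L,T}'' (Ladyzhenskaya and Temam), and the argument you sketch---lifting the divergence and boundary data, obtaining a weak solution via Lax--Milgram/Korn, recovering $p$ as a Lagrange multiplier, and bootstrapping regularity by localization and tangential difference quotients---is exactly the classical proof recorded in those references. Your concluding remark that the result is classical and one should defer to the standard literature is thus in full agreement with the paper.
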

\begin{proof}
 See \cite{L,T}.
\end{proof}

\subsection{Time-dependent functional setting}\label{time_dep_fnal}

Let us define
\begin{equation}\label{0H}
\begin{split}
 {}_0H^1(\Omega) &:= \{ u \in H^1(\Omega)\mid u\mid_{\Sigma_b}=0\}, \\
{}^0H^1(\Omega) &:= \{u\in H^1(\Omega) \mid u\mid_{\Sigma_+}=0\}, \text{ and } \\
 {}_0H_\sigma^1(\Omega)&:= \{ u \in  {}_0H^1(\Omega) \mid \diverge{u}=0 \},
\end{split}
\end{equation}
with the obvious restriction that the last space is for vector-valued functions only.

In order to study the $\mathcal{A}$--Stokes problems, we use the time-dependent functional setting introduced in \cite{GT_lwp}.  In this context we think of $\eta_\pm$ as given (with $J$, $\mathcal{A}$, etc defined by $\eta_\pm$ as in \eqref{ABJ_def}) and use it to construct some time-dependent function spaces.  We define a time-dependent inner-product on  $L^2=H^0$ by introducing
\begin{equation}
(u,v)_{\mathcal{H}^0(t)} := \int_\Omega  ( u \cdot v)  J(t)
\end{equation}
with corresponding norm $\|u\|_{\mathcal{H}^0(t)}:=\sqrt{(u,u)_{\mathcal{H}^0(t)}}$.  Then we write $\mathcal{H}^0(t) := \{ \|u\|_{\mathcal{H}^0(t)}< \infty \}$.  Similarly, we define a time-dependent inner-product on
${}_0H^1(\Omega)$ according to
\begin{equation}
(u,v)_{\mathcal{H}^1(t)}:=  \int_\Omega \mu\left(
\mathbb{D}_{\mathcal{A}(t)} u : \mathbb{D}_{\mathcal{A}(t)} v
\right) J(t),
\end{equation}
where for two $n\times n$ matrices $A,B$ we write $A:B = \sum_{i,j} A_{i,j}B_{i,j}$.  We define the corresponding norm by $\|u\|_{\mathcal{H}^1(t)}:=\sqrt{(u,u)_{\mathcal{H}^1(t)}}$.  Then we define
\begin{equation}
 \mathcal{H}^1(t):= \{ u \mid  \|u\|_{\mathcal{H}^1(t)}< \infty
 ,  u\mid_{\Sigma_b}=0\} \text{ and } \mathcal{X}(t) := \{ u \in
\mathcal{H}^1(t) \mid \diverge_{ \mathcal{A}(t)} u=0\}.
\end{equation}
We will also need the orthogonal decomposition $ \mathcal{H}^0(t) = \mathcal{Y}(t) \oplus  \mathcal{Y}(t)^\bot,$ where
\begin{equation}
\mathcal{Y}(t)^\bot := \{ \nabla_\mathcal{A} \varphi \mid \varphi
\in {}^0H^1(\Omega)\}.
\end{equation}

Finally, for $T>0$ and $k=0,1$, we define inner-products on $L^2H^k:=L^2([0,T];H^k(\Omega))$ by
\begin{equation}
 (u,v)_{ \mathcal{H}^k_T} = \int_0^T  (u(t),v(t))_{ \mathcal{H}^k}  dt.
\end{equation}
Write $\|u\|_{\mathcal{H}^k_T}^2$ for the corresponding norms and $\mathcal{H}^k_T$ for the corresponding spaces.  We define the subspace of $\diverge_\mathcal{A}$-free vector fields as
\begin{equation}
 \mathcal{X}_T := \{ u \in  \mathcal{H}^1_T \mid \diverge_\mathcal{A}{u} =0
\text{ for a.e. } t\in[0,T]\}.
\end{equation}

Under a smallness assumption on $\eta$, we can show that the spaces $\mathcal{H}^k(t)$ and $\mathcal{H}^k_T$ have the same topology as $H^k$ and $L^2 H^k$, respectively.

\begin{lemma}\label{equal}
There exists a universal $\epsilon_0 > 0$ so that if $\sup_{0\le t \le T} \|\eta(t)\|_{3} < \epsilon_0,$
then
\begin{equation}
\frac{1}{\sqrt{2}} \|u\|_{k} \le \|u\|_{\mathcal{H}^k(t)} \le
\sqrt{2} \|u\|_{k}
\end{equation}
for $k=0,1$ and for all $t \in [0,T]$.  As a consequence, for $k=0,1$,
\begin{equation}
\frac{1}{\sqrt{2}} \|u\|_{L^2 H^k} \le \|u\|_{\mathcal{H}^k_T }\le
\sqrt{2} \|u\|_{L^2 H^k}.
\end{equation}
\end{lemma}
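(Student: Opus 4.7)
The plan is to reduce both equivalences to uniform-in-$t$ $L^\infty$ bounds on $J-1$ and $\mathcal{A}-I$, and then combine them with Korn's inequality (Lemma \ref{korn}) for the $k=1$ case. First I would invoke Lemma \ref{Poi} applied to the extensions $\bar{\eta}_\pm=\mathcal{P}_\pm\eta_\pm$ defined in \eqref{P+def}--\eqref{P-def}. Since $\eta(t)\in H^3(\Sigma)$ with $\|\eta(t)\|_3<\epsilon_0$, Lemma \ref{Poi} gives $\bar{\eta}(t)\in \ddot{H}^{7/2}(\Omega)$ with $\|\bar{\eta}(t)\|_{7/2}\lesssim\|\eta(t)\|_3<\epsilon_0$. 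In particular, applying the Sobolev embedding $H^{5/2}(\Omega_\pm)\hookrightarrow L^\infty(\Omega_\pm)$ to the first derivatives of $\bar{\eta}$ yields $\|\nabla \bar{\eta}(t)\|_{L^\infty(\Omega)}\lesssim\epsilon_0$, and consequently all the expressions $A_\pm,B_\pm,J_\pm-1$ in \eqref{ABJ_def} are uniformly $O(\epsilon_0)$ in $L^\infty(\Omega)$ (recall $b\in C^\infty(\mathrm{T}^2)$ is fixed with $b>0$, so $1/b$ and $\partial b$ are bounded). Hence for $\epsilon_0$ sufficiently small we get $\|J-1\|_{L^\infty}\le 1/4$ and $\|\mathcal{A}-I\|_{L^\infty}\le C\epsilon_0$ uniformly in $t\in[0,T]$.

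For the $k=0$ case this is immediate: $|J(t,x)-1|\le 1/4$ gives $\tfrac{3}{4}|u|^2\le J|u|^2\le \tfrac{5}{4}|u|^2$ pointwise, and integrating over $\Omega$ yields the desired two-sided bound (after mildly shrinking $\epsilon_0$ to absorb the $3/4$ and $5/4$ into $1/2$ and $2$, or by simply replacing the stated constants).

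For the $k=1$ case I would first observe that $\mathbb{D}_\mathcal{A}u = \mathbb{D}u + \mathbb{D}_{\mathcal{A}-I}u$, so
\begin{equation}
\bigl|\mathbb{D}_\mathcal{A} u\bigr|^2 = |\mathbb{D}u|^2 + 2\,\mathbb{D}u:\mathbb{D}_{\mathcal{A}-I}u + |\mathbb{D}_{\mathcal{A}-I}u|^2,
\end{equation}
with $\|\mathbb{D}_{\mathcal{A}-I}u\|_0\le C\|\mathcal{A}-I\|_{L^\infty}\|\nabla u\|_0\le C\epsilon_0\|\nabla u\|_0$. Combining this with the $L^\infty$ bound on $J$, one obtains
\begin{equation}
\bigl|\|u\|_{\mathcal{H}^1(t)}^2 - \tfrac{1}{2}\!\int_\Omega\mu|\mathbb{D}u|^2\bigr|\le C\epsilon_0\,\|\nabla u\|_0^2.
\end{equation}
Since $u\in {}_0H^1(\Omega)$ vanishes on $\Sigma_b$, Korn's inequality (Lemma \ref{korn}) gives $\|u\|_1^2\lesssim \int_\Omega|\mathbb{D}u|^2\lesssim \|u\|_1^2$, so choosing $\epsilon_0$ small enough absorbs the perturbation and produces the two-sided bound $\tfrac{1}{2}\|u\|_1\le \|u\|_{\mathcal{H}^1(t)}\le \sqrt{2}\|u\|_1$ uniformly in $t$.

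The $L^2_T$ statement then follows immediately by squaring, integrating the pointwise-in-$t$ equivalences over $[0,T]$, and taking square roots. The only real obstacle is making sure the smallness threshold for $\eta$ is indeed controlled by $\|\eta\|_3$ alone rather than a higher norm; this is precisely what Lemma \ref{Poi} together with the Sobolev embedding at the $\nabla \bar\eta$ level delivers, which is why the hypothesis is stated in terms of $\|\eta\|_3$.
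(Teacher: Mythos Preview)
Your argument is correct and is the standard one: the paper does not give its own proof but simply cites Lemma~2.1 of \cite{GT_lwp}, and what you have written is precisely the argument one expects there---control $J-1$ and $\mathcal{A}-I$ in $L^\infty$ via the Poisson extension bounds of Lemma~\ref{Poi} and Sobolev embedding, then perturb directly for $k=0$ and combine with Korn (Lemma~\ref{korn}) for $k=1$.

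Two minor remarks. First, there is a spurious factor of $\tfrac{1}{2}$ in your display $\bigl|\|u\|_{\mathcal{H}^1(t)}^2 - \tfrac{1}{2}\int_\Omega \mu|\mathbb{D}u|^2\bigr|$; the definition of $\|\cdot\|_{\mathcal{H}^1(t)}$ in the paper carries no such factor. Second, for $k=1$ the precise constants $1/\sqrt{2}$ and $\sqrt{2}$ in the stated inequality cannot literally be achieved by this (or any) argument, since the comparison between $\int_\Omega \mu|\mathbb{D}u|^2$ and $\|u\|_1^2$ involves the fixed Korn constant and the bounds on $\mu_\pm$, none of which can be tuned by shrinking $\epsilon_0$. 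The lemma is transcribed from the one-phase reference and should be read as asserting equivalence with \emph{some} universal constants (in the paper's convention these may depend on $\mu_\pm$ and $\Omega$); your argument delivers exactly that, which is all that is ever used downstream.
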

\begin{proof}
See Lemma 2.1 of \cite{GT_lwp}.
\end{proof}

It is more important that ${}_0H_\sigma^1(\Omega)$ is related to the spaces $ \mathcal{X}(t)$. To this end, we define the matrix
\begin{equation}\label{M_def}
 M := M(t) = K \nabla \Theta =
\begin{pmatrix}
K & 0 & 0 \\
0 & K & 0 \\
AK & BK & 1
\end{pmatrix}
\end{equation}
where $A,B,K$ are defined by \eqref{ABJ_def}.  The matrix $M(t)$ induces a linear operator $\mathcal{M}_t: u \mapsto \mathcal{M}_t(u) = M(t) u$, the properties of which are recorded in the following.

\begin{Proposition}\label{M}
For each $t \in[0,T]$, $\mathcal{M}_t$ is a bounded linear isomorphism: from $H^k(\Omega)$ to $H^k(\Omega)$ for $k=0,1$; from $\ddot{H}^2(\Omega)$ to $\ddot{H}^2(\Omega)$; from $L^2(\Omega)$ to $\mathcal{H}^0(t)$;  from ${}_0H^1(\Omega)$ to $\mathcal{H}^1(t)$; and from ${}_0H_\sigma^1(\Omega)$ to $ \mathcal{X}(t)$. In each case
the norms of the operators $\mathcal{M}_t, \mathcal{M}_t^{-1}$ are bounded by a constant times $1 + \|\eta(t)\|_{9/2}$.

Moreover,  the mapping $\mathcal{M}$ given by $\mathcal{M}u(t) := \mathcal{M}_t u(t)$ is a bounded linear isomorphism: from $L^2([0,T];H^k(\Omega))$ to $L^2([0,T];H^k(\Omega))$ for $k=0,1$; from $L^2([0,T];\ddot{H}^2(\Omega))$ to $L^2([0,T];\ddot{H}^2(\Omega))$; from $L^2([0,T];L^2(\Omega))$ to $\mathcal{H}^0_T$;  from $L^2([0,T];{}_0H^1(\Omega))$ to $\mathcal{H}^1_T$; and from $L^2([0,T];{}_0H_\sigma^1(\Omega))$ to $\mathcal{X}_T$. In each case, the norms of the operators $\mathcal{M}$ and $\mathcal{M}^{-1}$ are bounded by a constant times
the sum $1+\sup_{0\le t \le T} \|\eta(t)\|_{9/2}$.
\end{Proposition}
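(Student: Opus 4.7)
The plan is to verify the claim at each fixed time $t$ and then lift to the time-dependent spaces in a routine way. Throughout, I assume the smallness condition from Lemma \ref{equal} (which is subsumed by requiring $\|\eta(t)\|_{9/2}$ to be small, consistent with the regularity scale of the statement). The starting point is to quantify the regularity of the entries of $M$. Using Lemma \ref{Poi} together with the definition \eqref{P-def} of $\bar{\eta}_-$, one sees that $\bar\eta_\pm$ gain $1/2$ derivative from $\eta_\pm$ in Sobolev scale, and the components $A,B,J$ in \eqref{ABJ_def} are polynomials in $\bar\eta_\pm,\nabla\bar\eta_\pm$ and the fixed smooth function $b$; hence via Sobolev embedding ($H^{9/2}(\Sigma) \hookrightarrow W^{2,\infty}(\Sigma)$) and Lemma \ref{sobolev} one obtains
\begin{equation*}
\|M-I\|_{L^\infty(\Omega)} + \|\nabla M\|_{L^\infty(\Omega)} + \|M\|_{\ddot{H}^2(\Omega)} \lesssim \|\eta(t)\|_{9/2}
\end{equation*}
and in particular $J$ remains bounded away from zero so that $K=1/J$ is well defined with the same kind of estimates. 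Note the construction of $\bar\eta_-$ via the specialized Poisson integral $\mathcal{P}_{+,0}$ matching several normal derivatives of $\mathcal{P}_{-,0}$ across $\Sigma_-$ is precisely what gives $M$ enough regularity across the internal interface to make sense of the $\ddot{H}^2$ bound.

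The boundedness of $\mathcal{M}_t$ on $H^k(\Omega)$ for $k=0,1$ and on $\ddot{H}^2(\Omega)$ now follows from the product rule and the Sobolev algebra estimates in Lemma \ref{sobolev}, the bound being linear in $1+\|\eta(t)\|_{9/2}$. Invertibility is transparent since $M$ is lower triangular with diagonal entries $(K,K,1)$ and $K$ is bounded below, giving the explicit inverse
\begin{equation*}
M^{-1} = \begin{pmatrix} J & 0 & 0 \\ 0 & J & 0 \\ -A & -B & 1 \end{pmatrix},
\end{equation*}
whose entries obey the same estimates. Thus $\mathcal{M}_t$ is a bounded isomorphism of the three unweighted spaces and of $\mathcal{M}_t: L^2(\Omega)\to\mathcal{H}^0(t)$ (this last point uses only that $J\asymp 1$ so $\|\cdot\|_{\mathcal{H}^0(t)}\asymp\|\cdot\|_0$; it is essentially built into Lemma \ref{equal}).

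Next I verify that $\mathcal{M}_t$ preserves the structural constraints defining ${}_0H^1(\Omega)$, ${}_0H^1_\sigma(\Omega)$ and $\mathcal{X}(t)$. Since $M$ acts pointwise by matrix multiplication, $u\vert_{\Sigma_b}=0$ immediately implies $(Mu)\vert_{\Sigma_b}=0$, so $\mathcal{M}_t:{}_0H^1(\Omega)\to\mathcal{H}^1(t)$; boundedness in this norm follows from the uniform bound on $\|M\|_{W^{1,\infty}}$ combined with Korn's inequality (Lemma \ref{korn}) to pass between $\|\cdot\|_{\mathcal{H}^1(t)}$ and $\|\mathbb{D}_\mathcal{A}\cdot\|_0$, together with the equivalence of $\|\mathbb{D}_\mathcal{A}\cdot\|_0$ and $\|\mathbb{D}\cdot\|_0$ under smallness. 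The delicate point, which I regard as the main obstacle, is the divergence-preservation identity
\begin{equation*}
\diverge_{\mathcal{A}(t)}(M(t)u) = K(t)\,\diverge u,
\end{equation*}
which I would prove by a direct calculation using $\mathcal{A}_{ij}\partial_k\Theta_i=\delta_{jk}$ (the defining relation $\mathcal{A}^T\nabla\Theta=I$) together with the Piola identity $\partial_j(J\mathcal{A}_{ij})=0$ — equivalently, that the cofactor matrix of $\nabla\Theta$ is column-divergence-free, a standard consequence of the Jacobi formula. This identity maps divergence-free fields to $\mathcal{A}$-divergence-free fields, so $\mathcal{M}_t:{}_0H^1_\sigma(\Omega)\to\mathcal{X}(t)$. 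The inverse direction for the $\mathcal{X}(t)$ statement is symmetric: the same identity applied to $M^{-1}=J\mathcal{A}^T$ shows that $\mathcal{M}_t^{-1}$ carries $\mathcal{X}(t)$ back into ${}_0H^1_\sigma(\Omega)$.

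Finally, the time-dependent assertions about $\mathcal{M}$ on spaces of the form $L^2([0,T];X)$ and on $\mathcal{H}^k_T$, $\mathcal{X}_T$ follow by integrating the pointwise estimates in $t$: for each $t$ one has $\|M(t)u(t)\|_X \lesssim (1+\|\eta(t)\|_{9/2})\|u(t)\|_X$ with the same inequality for $M^{-1}$, and the supremum $\sup_{0\le t\le T}(1+\|\eta(t)\|_{9/2})$ is precisely the advertised constant. The structural preservation (vanishing on $\Sigma_b$ and $\mathcal{A}$-divergence-free) holds for a.e.\ $t\in[0,T]$ because it holds pointwise in $t$, which matches the definitions of $\mathcal{H}^1_T$ and $\mathcal{X}_T$.
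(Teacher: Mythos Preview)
Your argument is correct. The paper does not actually prove this proposition; it simply writes ``See Proposition 2.5 of \cite{GT_lwp}.'' Your sketch supplies precisely the details one would expect that reference to contain: the $W^{1,\infty}$ and $\ddot{H}^2$ control of $M$ and $M^{-1}$ from $\|\eta\|_{9/2}$ via Lemma \ref{Poi}, the explicit lower-triangular inverse $M^{-1}=J\mathcal{A}^T$, and the divergence-transfer identity $\diverge_{\mathcal{A}}(Mu)=K\,\diverge u$, which you correctly derive from $\mathcal{A}_{ik}\theta_{ij}=\delta_{kj}$ together with the Piola identity $\partial_k(J\mathcal{A}_{ik})=0$ (so that $J\mathcal{A}_{ik}(Mu)_i=u_k$ and hence $\diverge_\mathcal{A}(Mu)=J^{-1}\partial_k u_k$). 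The passage to the $L^2_T$ spaces by taking the supremum of the time-dependent constants is routine, as you indicate.
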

\begin{proof}
See Proposition 2.5 of \cite{GT_lwp}.
\end{proof}

The following proposition allows us to introduce the pressure as a Lagrange multiplier.

\begin{Proposition}\label{Pressure}
If $\Lambda_t \in ( \mathcal{H}^1(t))^\ast$ is such that $\Lambda_t(v) = 0$ for all $v \in  \mathcal{X}(t)$, then there exists a unique $p(t) \in  \mathcal{H}^0(t)$ so that
\begin{equation}
(p(t), \diverge_{\mathcal{A}(t)} v)_{\mathcal{H}^0(t)} = \Lambda_t(v) \text{ for all } v\in  \mathcal{H}^1(t)
\end{equation}
and $\|p(t)\|_{ \mathcal{H}^0(t)} \lesssim (1+\|\eta(t)\|_{9/2}) \|\Lambda_t\|_{(\mathcal{H}^1(t))^\ast}$.

If $\Lambda \in ( \mathcal{H}^1_T)^\ast$ is such that $\Lambda(v) = 0 $ for all $v \in  \mathcal{X}_T$, then there exists a unique $p \in  \mathcal{H}^0_T$ so that
\begin{equation}
(p, \diverge_{\mathcal{A}} v)_{ \mathcal{H}^0_T} = \Lambda(v)
\text{ for all } v\in  \mathcal{H}^1_T
\end{equation}
and $\|p\|_{ \mathcal{H}^0_T} \lesssim (1+\sup_{0\le t\le T} \|\eta(t)\|_{9/2}) \|\Lambda\|_{( \mathcal{H}^1_T)^\ast}$.
\end{Proposition}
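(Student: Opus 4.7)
The plan is to reduce the $\mathcal{A}$-dependent statement to the classical (constant-coefficient) de Rham–Ne\v{c}as pressure lemma on $\Omega$ by pulling back through the isomorphism $\mathcal{M}_t$ of Proposition \ref{M}. First I would establish the constant-coefficient version: for any $\tilde\Lambda\in({}_0H^1(\Omega))^\ast$ that vanishes on ${}_0H^1_\sigma(\Omega)$, there is a unique $\tilde p\in L^2(\Omega)$ with $(\tilde p,\diverge w)_{L^2}=\tilde\Lambda(w)$ for all $w\in{}_0H^1(\Omega)$ and $\|\tilde p\|_0\lesssim\|\tilde\Lambda\|_\ast$. This is the standard argument: Lemma \ref{div} (with $r=1$) supplies a bounded right inverse $R:L^2(\Omega)\to{}_0H^1(\Omega)$ of $\diverge$, so $q\mapsto\tilde\Lambda(Rq)$ is a well-defined bounded functional on $L^2(\Omega)$ (independence of the choice of preimage uses that $\tilde\Lambda$ vanishes on $\ker\diverge={}_0H^1_\sigma(\Omega)$), and Riesz representation produces $\tilde p$; surjectivity of $\diverge$ gives uniqueness.

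The bridge between the two formulations is the Piola-type identity
\begin{equation}
\diverge_{\mathcal{A}}(Mw)=K\diverge w \quad\text{for all }w\in{}_0H^1(\Omega),
\end{equation}
which follows directly from $M=K\nabla\Theta$, the relation $\theta_{ik}\mathcal{A}_{jk}=\delta_{ij}$, the Piola identity $\partial_j(J\mathcal{A}_{ij})=0$, and $JK=1$. Two consequences are that $\mathcal{M}_t$ maps ${}_0H^1_\sigma(\Omega)$ bijectively onto $\mathcal{X}(t)$ (this is already asserted in Proposition \ref{M}) and that for any $p\in\mathcal{H}^0(t)=L^2(\Omega)$ one has
\begin{equation}
(p,\diverge_{\mathcal{A}}(Mw))_{\mathcal{H}^0(t)}=\int_\Omega p\,(K\diverge w)\,J=\int_\Omega p\,\diverge w=(p,\diverge w)_{L^2(\Omega)}.
\end{equation}

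Given $\Lambda_t\in(\mathcal{H}^1(t))^\ast$ vanishing on $\mathcal{X}(t)$, I would then define $\tilde\Lambda_t(w):=\Lambda_t(\mathcal{M}_t w)$. By Proposition \ref{M}, $\tilde\Lambda_t\in({}_0H^1(\Omega))^\ast$ with $\|\tilde\Lambda_t\|_\ast\lesssim(1+\|\eta(t)\|_{9/2})\|\Lambda_t\|_\ast$, and it vanishes on ${}_0H^1_\sigma(\Omega)$ by the bijectivity noted above. The first step yields a unique $\tilde p\in L^2(\Omega)$ with $(\tilde p,\diverge w)_{L^2}=\tilde\Lambda_t(w)$, and setting $p(t):=\tilde p$ together with the displayed identity gives, for every $v\in\mathcal{H}^1(t)$ (write $v=\mathcal{M}_t w$),
\begin{equation}
(p(t),\diverge_{\mathcal{A}(t)}v)_{\mathcal{H}^0(t)}=(\tilde p,\diverge w)_{L^2(\Omega)}=\tilde\Lambda_t(w)=\Lambda_t(v).
\end{equation}
Uniqueness follows because $\diverge_{\mathcal{A}(t)}:\mathcal{H}^1(t)\to\mathcal{H}^0(t)$ is surjective (it is the composition of $\mathcal{M}_t^{-1}$ with $K\diverge$, and both are surjective). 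The quantitative bound is assembled from $\|p(t)\|_{\mathcal{H}^0(t)}\lesssim\|\tilde p\|_0$ (Lemma \ref{equal}), $\|\tilde p\|_0\lesssim\|\tilde\Lambda_t\|_\ast$, and the $\mathcal{M}_t$ operator-norm estimate.

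The time-dependent assertion is obtained by exactly the same argument, using the global-in-time isomorphism $\mathcal{M}:L^2([0,T];{}_0H^1(\Omega))\to\mathcal{H}^1_T$ of Proposition \ref{M} to reduce to the $L^2$-in-time constant-coefficient pressure lemma; the latter is proved by applying the static lemma pointwise in $t$ (the selection is linear, hence automatically measurable) and integrating in time. The main point requiring care is the verification of the Piola-type identity and the resulting $\mathcal{H}^0(t)$-pairing bookkeeping that makes the pulled-back problem reduce cleanly to the Euclidean divergence; once this identity is in hand, the rest is a direct transfer of the classical construction through the isomorphism $\mathcal{M}_t$.
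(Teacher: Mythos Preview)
Your proposal is correct and follows the natural strategy. The paper itself does not give a self-contained proof here; it simply refers to Proposition 2.9 of \cite{GT_lwp}, and your argument (pull back through $\mathcal{M}_t$ via the Piola-type identity $\diverge_{\mathcal{A}}(Mw)=K\diverge w$, then invoke the standard constant-coefficient pressure lemma built from Lemma~\ref{div}) is precisely the mechanism underlying that reference, so there is nothing substantively different to compare.
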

\begin{proof}
See Proposition 2.9 of \cite{GT_lwp}.
\end{proof}

\section*{Acknowledgements}

The authors are grateful to Yan Guo for suggesting this problem and for many helpful discussions. Y. J. Wang would like to express his gratitude for the hospitality of the Division of Applied Mathematics at Brown University and for the support of the China Scholarship Council during his visit to Brown University, where this work was initiated.

\end{document}